\documentclass{article}
\usepackage{fancyhdr}
\usepackage[british]{babel}

\usepackage{amssymb, amsmath, amsthm, amsfonts, enumerate}
\usepackage{amsmath,setspace,scalefnt}
\usepackage[usenames,dvipsnames,svgnames,table]{xcolor}
\usepackage{graphicx,tikz,caption,subcaption}
\usetikzlibrary{patterns}
\usetikzlibrary{shapes}
\usepackage{fullpage}
\usepackage{hyperref}
\usepackage{enumitem,verbatim}
\usepackage{multicol}
\usepackage{float}
\usepackage{cleveref}
\usepackage[normalem]{ulem}

\usepackage[margin=2.5cm]{geometry}

\definecolor{gray}{rgb}{0.25, 0.25, 0.25}

\newtheorem{theorem}{Theorem}[section]
\newtheorem{lemma}[theorem]{Lemma}
\newtheorem{cor}[theorem]{Corollary}

\newtheorem*{theorem*}{Theorem}

\theoremstyle{definition}

\newcounter{propcounter}
\newenvironment{proplist}[1][]{%
    \stepcounter{propcounter}%
    \enumerate[label = {\bfseries \Alph{propcounter}\arabic{enumi}}, #1]%
}
{\endenumerate}

\theoremstyle{plain}
\newtheorem{claim}[theorem]{Claim}

\newtheorem{prop}[theorem]{Proposition}
\theoremstyle{definition}

\theoremstyle{definition}

\theoremstyle{definition}

\theoremstyle{definition}
\newtheorem{defn}[theorem]{Definition}
\theoremstyle{definition}

\theoremstyle{definition}

\newcommand{\eps}{\varepsilon}

\title{Ramsey--Dirac theory for bounded degree hypertrees}
\author{
Jie Han
\thanks{School of Mathematics and Statistics and Center for Applied Mathematics, Beijing Institute of Technology, Email: \texttt{han.jie@bit.edu.cn}.
Partially supported by National Natural Science Foundation of China (12371341).}
\and
Seonghyuk Im
\thanks{Department of Mathematical Science, KAIST, South Korea. Email: {\tt $\{$seonghyuk, jaehoon.kim$\}$@kaist.ac.kr}. Supported by the National Research Foundation of Korea (NRF) grant funded by the Korean government(MSIT) No. RS-2023-00210430. 
}~\thanks{Extremal Combinatorics and Probability Group (ECOPRO), Institute for Basic Science (IBS). Supported by the Institute for Basic Science (IBS-R029-C4)}
\and Jaehoon Kim\footnotemark[2]
\and Donglei Yang 
\thanks{School of Mathematics, Shandong University, Email: {\tt dlyang@sdu.edu.cn}. Supported by Natural Science Foundation of China (12101365) and Natural Science Foundation of Shandong Province (ZR2021QA029).}
}
\date{\today}

\begin{document}

\maketitle
\begin{abstract}
Ramsey--Tur\'an theory considers Tur\'an type questions in Ramsey-context, asking for the existence of a small subgraph in a graph $G$ where the complement $\overline{G}$ lacks an appropriate subgraph $F$, such as a clique of linear size. 
Similarly, one can consider Dirac-type questions in  Ramsey context, asking for the existence of a spanning subgraph $H$ in a graph $G$ where the complement $\overline{G}$ lacks an appropriate subgraph $F$, which we call a Ramsey--Dirac theory question.

When $H$ is a connected spanning subgraph, the disjoint union $K_{n/2}\cup K_{n/2}$ of two large cliques shows that it is natural to consider complete bipartite graphs $F$. Indeed, Han, Hu, Ping, Wang, Wang and Yang in 2024 proved that if $G$ is an $n$-vertex graph with $\delta(G)=\Omega(n)$ where the complement $\overline{G}$ does not contain any complete bipartite graph $K_{m,m}$ with $m=\Omega(n)$, then $G$ contains every $n$-vertex bounded degree tree $T$ as a subgraph. 

Extending this result to the Ramsey--Dirac theory for hypertrees, we prove that if $G$ is an $n$-vertex $r$-uniform hypergraph with $\delta(G)=\Omega(n^{r-1})$ where the complement $\overline{G}$ does not contain any complete $r$-partite hypergraph $K_{m,m,\dots, m}$ with $m=\Omega(n)$, then $G$ contains every $n$-vertex bounded degree hypertree $T$ as a subgraph.
We also prove the existence of matchings and loose Hamilton cycles in the same setting, which extends the result of Mcdiarmid and Yolov into hypergraphs. 

The lack of $K_{m,m,\dots, m}$ in $\overline{G}$ can be viewed as a very weak form of pseudorandomness condition on the hypergraph $G$. Hence, our results have some interesting implications on pseudorandom hypgraphs.
For examples, they generalize the universality result on randomly perturbed graphs by B\"ottcher, Han, Kohayakawa, Montgomery, Parczyk and Person in 2019 into hypergraphs and 
also strengthen the results on quasirandom hypergraphs by
Lenz, Mubayi and Mycroft in 2016 and Lenz and Mubayi in 2016 into hypergraphs satisfying a much weaker pseudorandomness condition.
\end{abstract}

\section{Introduction}\label{sec:intro}
Dirac-type questions ask to find a desired spanning structure in a dense (hyper-)graph.
Such a name came from the classical result of Dirac~\cite{dirac1952some} which asserts that every graph with minimum degree at least $n/2$ contains a Hamilton cycle. 
Since this influential work by Dirac, the minimum degree condition that guarantees the existence has been extensively studied for various spanning subgraphs.
For example, Hajnal and Szemer\'{e}di~\cite{hajnal1970proof} proved that if $G$ has minimum degree at least $(1-\frac{1}{k})n$ and $n$ is divisible by $k$, then $G$ contains a $K_k$-factor and Koml\'{o}s, S\'{a}rk\"{o}zy, and Szemer\'{e}di~\cite{komlos1995proof} proved that every graph with minimum degree at least $(1/2+o(1))n$ contains every bounded degree spanning tree as a subgraph. 
They further extended this result to trees with maximum degree $O(n/\log n)$ in \cite{komlos2001spanning}.
Generalizing these works, B\"{o}ttcher, Schacht and Taraz~\cite{bottcher2009proof} proved the bandwidth theorem which yields asymptotic minimum degree conditions for containing any bounded degree spanning subgraph with small size separators.

All of these results are (asymptotically) tight and extremal examples are complete $k$-partite graphs with slightly unbalanced $k$-partition for some $k$.
These extremal examples are not typical graphs as they admit a very rigid structure. In particular, they contain large independent sets. Thus, it is natural to ask whether the minimum degree condition can be weakened if we know that the host graph $G$ does not admit such rigid structures, hence far from the extremal examples. Such a question was considered for the Tur\'an-type problem of finding small graphs, giving rise to Ramsey--Tur\'an theory. A similar question can also be considered for Dirac-type questions, and perhaps the answers to such questions should be called Ramsey--Dirac theory. 
Unlike Ramsey--Tur\'an theory, although very natural, Ramsey--Dirac questions were not considered until Balogh, Molla and Sharifzadeh~\cite{Balogh2016triangle}.
In \cite{Balogh2016triangle}, they considered a Ramsey--Dirac question for triangle factors and proved that if an $n$-vertex graph $G$ with $3\mid n$ has sublinear independence number $\alpha(G) = o(n)$ and $\delta(G)\geq (1/2+o(1))n$, then $G$ contains $K_3$-factor, which is significantly weaker bound than $\frac{2}{3}n$ which comes from the Hajnal-Szemer\'{e}di theorem. This result was further extended by Knierim and Su~\cite{Knierim2021Kr} to $K_k$-factor for every $k \geq 4$.

While complete multipartite graphs are the only extremal examples for containing $K_k$-factors, there are other extremal examples if we consider `connected' spanning structures. Indeed, the disjoint union of two copies of cliques $K_{n/2}$ forms an extremal example for Dirac's theorem on Hamilton cycle. 
While this extremal example also admits very rigid structures, it does not contain any independent set of size at least three. This shows that sublinear independence number is not the right condition to include `rigid structures' in the host graph when we want to guarantee a connected spanning structures.
Inspired from this extremal example, one can consider `bipartite hole' of size $m$ in graph $G$, which is a pair of disjoint sets $X$ and $Y$ where $G$ has no edges between two sets.
This notion of a bipartite hole was introduced by 
Mcdiarmid and Yolov~\cite{McDiarmid2017Hamilton} and they proved that for $\delta >0$, there exists $\alpha=\alpha(\delta)>0$ such that any $n$ vertex graph $G$ having no bipartite holes of size $\alpha n$ with $\delta(G)\geq \delta n$ contains a Hamilton cycle. 
Such results were proved for bounded degree spanning trees by Han, Hu, Ping, Wang, Wang,
and Yang~\cite{Han2024Spanning} and for
the powers of Hamilton cycle by Chen, Han, Tang and Yang~\cite{Chen2023powers}.
Unlike the subsequent results, McDiarmid and Yolov's result actually determined that the best possible value of $\alpha(\delta)$ is $\delta$.
Another notable result is by Dragani\'c, Munh\'a Correia and Sudakov~\cite{Draganic2024} who proved a pancyclicity theorem that generalizes the result McDiarmid and Yolov.

Dirac-type questions were considered not only for graphs but also for hypergraphs.
An $r$-uniform hypergraph $G$ (or simply an $r$-graph) is a collection of $r$-sets of vertices called edges.
For a set $S$ and an $r$-graph $G$, degree $d_G(S)$ of $S$ in $G$ is the number of edges of $G$ containing $S$ and the minimum $k$-codegree of $G$ is $\delta_k(G)=\min_{S\in \binom{V(G)}{k}} d_G(S)$.
R\"{o}dl, Ruci\'{n}ski, and Szemer\'{e}di~\cite{Rodl2009perfect} proved that any $n$-vertex $r$-graph with minimum $(r-1)$-codegree at least $(1 + o(1))\frac{n}{2}$ contains a ``tight Hamiltonian cycle'' when $n$ is sufficiently large. Inspired by this result, minimum (co-)degree conditions for the existence of various spanning structures were intensively studied in hypergraphs. See the following surveys~\cite{Kuhn2009embedding, Kuhn2014hamilton,Zhao2016recent} for more details. 

While Dirac-type questions have been extensively studied for hypergraphs as above, Ramsey--Dirac questions remain largely unexplored in this context. As far as we know, the only Ramsey--Dirac result on hypergraphs is by Han in \cite{Han2018} which determined the minimum $(k-1)$-degree condition guaranteeing perfect matchings in uniform hypergraphs with sublinear independence number. 
As perfect matching is far from being connected, sublinear independence number is the right parameter for this. 
In this paper, we prove the first Ramsey--Dirac result for connected stuctures in hypergraphs by considering loose Hamilton cycles and hypertrees. To properly discuss Ramsey--Dirac questions for connected hypergraphs, we first need to define a notion of $r$-partite hole in hypergraphs.

For an $r$-graph $G$ and vertex sets $X_1,\dots,X_r \subseteq V(G)$, let $e_G(X_1,\dots,X_r)$ be the number of tuples $(x_1,\dots,x_r)$ with $x_i \in X_i$ for each $i \in [r]$ and $\{x_1,\dots,x_r\} \in E(G)$. Note that an edge may be counted more than once in $e_G(X_1,\dots, X_r)$ if the sets $X_1,\dots, X_r$ are not pairwise disjoint.
We define \emph{the $r$-partite hole number} $\alpha^*(G)$ of $G$ to be  the largest $t$ such that there exist $X_1, \ldots, X_r \subseteq V(G)$ with $|X_i|=t$ and $e_G(X_1, \ldots, X_r)=0$. Those sets do not have to be pairwise disjoint as it is convenient for our discussion later. However, even if one insists that they are pairwise disjoint, it will make very little difference in our discussion because it will only make the parameter smaller by at most $r$ times. 
For a given tuple $\mathcal{V}=(V_1,\dots, V_r)$ of subsets of $V(G)$, we write $\alpha^*_{\mathcal{V}}(G)$ be the largest $t$ such that there exist $X_1\subseteq V_1, \dots, X_{r}\subseteq V_r$ with $|X_i|=t$ for all $i\in [r]$ and $e_G(X_1,\dots, X_r)=0$. 
A loose cycle is an $r$-graph with vertex set $v_1, \ldots, v_{(r-1)n}$ and edges $\{v_{(r-1)(i-1)+1}, v_{(r-1)(i-1)+2}, \ldots, v_{(r-1)i+1}\}$ for every $i \in [n]$ where $v_{(r-1)n+1}=v_1$.

With these notations, our main results can be stated as follows.

\begin{theorem}\label{thm:main_theorem_cycle}
    For every integer $r>0$ and a real $\varepsilon>0$, there exists a real $\alpha=\alpha(r,\varepsilon)>0$ such that the following holds for every $n$ divisible by $r-1$.
    If an $n$-vertex $r$-graph $G$ satisfies $\delta_1(G) \geq \varepsilon n^{r-1}$ and $\alpha^*(G) < \alpha n$, then $G$ contains loose Hamilton cycle.
\end{theorem}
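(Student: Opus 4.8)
The plan is to run the absorption method, with the $r$-partite hole number feeding into a connecting lemma that is the essential new device (and presumably also underlies the matching and hypertree results of the paper). Fix constants $1\gg\beta\gg\beta'\gg\alpha>0$ with $\alpha$ chosen last. The first step is a connecting lemma: if $W\subseteq V(G)$ with $|W|\ge(1-\beta)n$, then any two vertices $u,v$ are joined by a loose path with a bounded number of edges whose interior vertices all lie in $W\setminus F$, for any prescribed $O(1)$-set $F$. The engine is that $\delta_1(G)\ge\varepsilon n^{r-1}$ forces, for every vertex $u$, a ``robust co-neighbourhood'' $N^\ast(u)\subseteq W$ with $|N^\ast(u)|\ge\gamma n$ for some $\gamma=\gamma(\varepsilon,r)$: after discarding the $o(n^{r-1})$ edges through $u$ that leave $W$, double-counting the remaining $\Omega(n^{r-1})$ edges shows that at least $\gamma n$ vertices $w\in W$ lie in $\Omega(n^{r-2})$ edges through $\{u,w\}$ inside $W$. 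Choosing $\alpha\ll\gamma$, the hole condition applied to $N^\ast(u)$, $N^\ast(v)$ and $r-2$ copies of $W$ produces an edge linking a robust co-neighbour $x$ of $u$ to a robust co-neighbour $y$ of $v$, and robustness lets me add the two pendant loose edges $u$--$x$ and $y$--$v$ while dodging $F$ and the other interior vertices.

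Next I would build the absorbers. Since a loose Hamilton cycle on $n$ vertices has $n/(r-1)$ edges, a short gadget can never absorb a single vertex: parity forces absorbing $r-1$ vertices at a time, which is exactly where the divisibility $(r-1)\mid n$ enters. For an $(r-1)$-set $S$, call a short loose path $L$ (with fixed ends) an $S$-absorber if $V(L)\cup S$ also spans a loose path with the same ends and one more edge; I would then show, using the degree condition to realise the host edges of $L$ and the hole condition to realise the ``splitting'' edge even when $S$ spans no edge of $G$ (which forces the absorber to spread $S$ over several edges), that every $(r-1)$-set has $\Omega(n^{c_0})$ absorbers, where $c_0=O(r^2)$ is the number of non-$S$ vertices of an absorber. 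The usual random-subset argument then yields an absorbing loose path $P_{\mathrm{abs}}$ on at most $\beta n$ vertices, with fixed ends $a,b$, such that for any $Q\subseteq V\setminus V(P_{\mathrm{abs}})$ with $|Q|\le\beta' n$ and $(r-1)\mid|Q|$ there is a loose path from $a$ to $b$ on $V(P_{\mathrm{abs}})\cup Q$.

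The \textbf{main obstacle} will be the almost-spanning step: after setting aside $P_{\mathrm{abs}}$ and a random connecting reservoir $C$ with $|C|=\beta n$, I must cover all but at most $\beta' n$ of the rest by boundedly many loose paths and splice them together with $P_{\mathrm{abs}}$. Here the hole condition must do genuine work, because $\delta_1(G)\ge\varepsilon n^{r-1}$ alone only yields loose paths on $\Theta_\varepsilon(n)$ vertices (the $r$-graph of all edges meeting a fixed $\varepsilon n$-set, which has a huge $r$-partite hole, shows longer is impossible), and one cannot patch a near-perfect matching into a loose Hamilton cycle since the edge counts, $n/r$ versus $n/(r-1)$, are incompatible. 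My approach is to grow a long loose cycle directly: start from a loose cycle on $\Theta_\varepsilon(n)$ vertices obtained greedily, then iteratively \emph{insert} batches of $r-1$ ``pool'' vertices by rerouting a short segment of the current cycle -- each insertion nets exactly one new edge and $r-1$ new vertices, so the edge count stays consistent -- using dense links together with the hole condition to locate a feasible insertion at each step, and continuing until the pool has size below $\beta' n$. (Alternatively, one could try a rotation--extension argument tailored to loose paths, where the hole condition converts many rotation endpoints plus many unused vertices into an extension; or, since a loose Hamilton cycle is a loose path on $n-(r-2)$ vertices plus a closing edge and a loose path is a bounded-degree hypertree, one could fix a closing edge and invoke a rooted version of the hypergraph tree-embedding theorem.) I expect making the insertion step work down to a sublinear pool -- in particular controlling disjointness and carrying out the last few insertions -- to be the crux.

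Finally, for the assembly, I would use the connecting lemma with interior vertices drawn from the unused part of $C$ to join the bounded family of long loose paths and $P_{\mathrm{abs}}$ cyclically, a routine adjustment of the piece lengths ensuring that the uncovered remainder $Q$ (contained in $C$) satisfies $|Q|\le\beta' n$ and $(r-1)\mid|Q|$; then $P_{\mathrm{abs}}$ absorbs $Q$, completing a loose Hamilton cycle and hence Theorem~\ref{thm:main_theorem_cycle}.
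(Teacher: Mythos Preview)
Your plan diverges sharply from the paper's. The paper does not build $(r-1)$-set absorbers, nor does it grow a long cycle by insertion. Instead it feeds the loose Hamilton cycle straight into the hypertree machinery: a loose Hamilton cycle trivially contains $\Theta(n)$ vertex-disjoint bare paths of any fixed length $t$, so one deletes $\gamma n$ of them to obtain a linear forest $T'$, takes a random partition of $V(G)$, embeds $T'$ into one part via Lemma~\ref{lem:almost_spanning}, attaches the endpoint pairs of the deleted paths by matchings (Lemma~\ref{lem: find matching}), and then fills in all the missing bare paths simultaneously with the transversal cycle-factor lemma (Lemma~\ref{lem:perfect_cycle_factor}). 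All absorption is hidden inside that last lemma, which works in an $(r-1)t$-partite set-up and combines weak hypergraph regularity with the lattice-based reachability machinery of \S\ref{sec:absorbing}; it only ever absorbs \emph{balanced transversal} sets, never arbitrary $(r-1)$-sets. Your parenthetical third alternative --- view the cycle as a spanning loose path (a hypertree) plus a closing edge --- is much closer to what the paper actually does than your main line.

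The serious gap in your main line is the absorber step, not the almost-spanning step. To show that every $(r-1)$-set $S$ has $\Omega(n^{c_0})$ absorbers you must count labelled copies of the union $L\cup L'$ of two overlapping loose paths with the $S$-vertices prescribed. Any such gadget contains two edges sharing a pair disjoint from $S$, so at some stage of the count you must lower-bound a codegree $d_G(x,y)$ for a pair already fixed by earlier choices; but $\delta_1(G)\ge\varepsilon n^{r-1}$ gives no codegree control at all, and the hole condition only certifies \emph{one} crossing edge per $r$-tuple of linear-size sets, not the $\Omega(n)$ choices each step of a polynomial count requires. For $r=3$ the shortest candidate ($L=\{u,v,w\}$, $L'=\{u,s_1,v\}\cup\{v,s_2,w\}$) already fails: the middle vertex $v$ must lie in an edge with $s_1$ and in an edge with $s_2$, and for small $\varepsilon$ these two vertex sets can be disjoint; longer gadgets hit the same wall one junction later. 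Your insertion step runs into the identical obstruction, since rerouting a short segment to swallow $r-1$ pool vertices is exactly an absorber in disguise. This is precisely why the paper routes absorption through a partitioned cycle-factor problem: in the $(r-1)t$-partite set-up consecutive edges meet in a fixed vertex class on which one has the full $\delta_1$-condition, so counting two-edge paths never needs a raw codegree, and weak regularity supplies the density needed to close cycles.
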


A hypergraph $T$ is a \emph{linear hypertree} if $T$ is linear (i.e., for every two distinct vertices, there is at most one edge containing both of them) hypergraph and for every two vertices, there exists a unique path connecting two of them. For the formal definition of linear hypertree, see Section~\ref{sec:prelim_hypertree}.

\begin{theorem}\label{thm:main_theorem}
    For every integers $r, \Delta$ and a real $\varepsilon>0$, there exists a real $\alpha=\alpha(r,\Delta,\varepsilon)>0$ such that the following holds for every $n$ where $n-1$ is divisible by $r-1$.
    If an $n$-vertex $r$ graph $G$ satisfies $\delta_1(G) \geq \varepsilon n^{r-1}$ and $\alpha^*(G) < \alpha n$ (i.e., $G$ has no $r$-partite hole of size $\alpha n$), then $G$ contains every $n$-vertex linear hypertree $T$ with $\Delta(T)\leq \Delta$ as a subgraph.
\end{theorem}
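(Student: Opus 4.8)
The plan is to use the absorption method in conjunction with a randomized embedding of most of the hypertree, mirroring the structure of the proof of Theorem~\ref{thm:main_theorem_cycle} but accounting for the fact that hypertrees need not have the rigid periodic structure of loose cycles. First I would fix a small constant hierarchy $1/n \ll \alpha \ll \gamma \ll \eps, 1/r, 1/\Delta$ and set up the basic consequences of the hypothesis $\alpha^*(G) < \alpha n$: most importantly, that for \emph{any} choice of sets $X_1,\dots,X_r$ of size $\alpha n$ there is an edge meeting all of them, and more usefully the ``supersaturated'' version obtained by repeatedly deleting edges, namely that for sets $X_1,\dots,X_r$ of size $\geq \gamma n$ the number $e_G(X_1,\dots,X_r)$ is at least $c\gamma^r n^r$ for some $c = c(\eps)>0$. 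Combined with $\delta_1(G) \geq \eps n^{r-1}$, this gives a robust expansion property: every vertex has many edges, and every moderately large vertex set has many edges into every other moderately large set. This is the engine that replaces the usual minimum-codegree or quasirandomness hypotheses.

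The embedding itself I would carry out in three stages. (1) \textbf{Partitioning the hypertree.} Since $T$ has bounded degree and is a linear hypertree on $n$ vertices, a standard argument lets me cut $T$ into $O(1/\gamma)$ subtrees $T_1,\dots,T_k$ each of size $\Theta(\gamma n)$, glued together along a bounded number of ``connector'' vertices/edges, so that the quotient structure is itself a bounded-size tree; I would also reserve a small linear-size subforest to be used by the absorber. (2) \textbf{Absorption structure.} Before embedding the bulk, I would build an absorbing set: a linear-sized set $A \subseteq V(G)$ and a partial embedding of a designated small subtree such that, for \emph{any} leftover set $W$ of $o(n)$ vertices with $|W|$ of the correct residue, the vertices of $A \cup W$ can be completed to host the reserved subforest. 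The existence of the required local absorbing gadgets for each potential leftover vertex follows from the expansion property above: one shows that a typical $r$-set of vertices has polynomially many ``absorbing configurations'', then applies a random selection / deletion argument (à la Rödl--Ruciński--Szemerédi) to extract a single absorbing set that works for all leftovers simultaneously. (3) \textbf{Main embedding.} Embed $T_1,\dots,T_k$ greedily one subtree at a time, at each step extending along edges of $T$; when embedding a vertex $v$ of $T$ whose parent edge has already been placed, I need to find, in the as-yet-unused part of $V(G)$, vertices completing that edge --- here the expansion property guarantees that the candidate sets are always of linear size, so greedy choices (with a little care, e.g. maintaining that each partially-used set stays large) never get stuck, until only $o(n)$ vertices remain. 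Finally invoke the absorber to finish.

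The main obstacle I anticipate is the interaction between the \emph{linearity} constraint of the hypertree and the embedding: when placing an edge $e = \{v, w_1,\dots,w_{r-1}\}$ of $T$ where $v$ is already embedded to some $x \in V(G)$, I must choose the images of $w_1,\dots,w_{r-1}$ so that (a) $\{x, \text{images}\}$ is an edge of $G$, (b) the images are previously unused, and (c) no spurious coincidences are created. Point (a) is where a naive degree argument is not enough --- knowing $d_1(x) \geq \eps n^{r-1}$ does not by itself let me prescribe which ``slots'' of the edge are filled by which already-committed connector vertices. I would handle this by processing the subtrees in an order dictated by the quotient tree and by pre-committing the images of all connector vertices at the very start, using the supersaturation bound to show that a random choice of images for the connectors leaves, for each subtree $T_i$ and each of its entry edges, a linear-sized ``link'' available; then the within-$T_i$ embedding is a clean greedy argument. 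A secondary technical point is ensuring the reserved absorber-subforest can be attached to the main embedding at the designated connector vertices, which requires building the absorber so that its ``interface'' vertices are embedded compatibly with the connector choices --- so in practice stages (2) and (3) must be interleaved rather than strictly sequential. Controlling these dependencies carefully, while keeping all working sets of size $\Omega(\gamma n)$ throughout, is the crux of the argument.
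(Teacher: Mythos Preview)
There are two substantive gaps. First, your greedy ``edge-by-edge'' embedding cannot reach anywhere near $n-o(n)$ vertices. When extending at an already-placed vertex $x$, you need an edge of $G$ through $x$ whose other $r-1$ vertices are unused; but $\delta_1(G)\geq \eps n^{r-1}$ only guarantees this while the used set has size below roughly $\eps n$, since the $(r-1)$-sets in $N_G(x)$ might all meet a fixed set of that size. Your supersaturation claim $e_G(X_1,\dots,X_r)\geq c\gamma^r n^r$ does not help here---it concerns edges between large \emph{sets}, not through a \emph{prescribed} vertex---and in any case ``repeatedly deleting edges'' does not establish it, as deletion destroys the hole condition. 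The paper avoids greedy extension entirely: it uses a decomposition of $T$ (Lemma~\ref{lem:tree_split}) into a tiny tree $T_0$, a bounded number of matching layers, and one layer of length-$3$ paths; each matching layer is embedded all at once using the $r$-partite-hole condition (Lemma~\ref{lem: find matching}) to cover all but $\alpha n$ of the required centres, with a reserved random set absorbing the shortfall. This is the device that makes an almost-spanning embedding possible under only $\delta_1=\Omega(n^{r-1})$.

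Second, even that almost-spanning step (Lemma~\ref{lem:almost_spanning}) leaves $\Theta(n)$ vertices uncovered, not $o(n)$, so an absorber for an $o(n)$ leftover is inadequate; and ``build absorbing gadgets for each $r$-set'' is far too vague for a hypertree, where the reserved subforest must match the structure of $T$. The paper's key idea is a structural dichotomy (Lemma~\ref{lem:pendant_and_caterpillars}): $T$ contains either $\Omega(n)$ vertex-disjoint non-trivial pendant stars or $\Omega(n)$ vertex-disjoint caterpillars of a fixed length. These pieces are removed first, the remaining forest is embedded almost-spanningly, and then the removed pieces are reinserted \emph{exactly}. In the pendant-star case this reduces to a star-factor problem (Lemma~\ref{lem:embed_stars}), essentially a perfect matching with multiplicities, handled by template absorption. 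In the caterpillar case one must find a transversal $C_t^{(r)}$-factor in an auxiliary blown-up cycle (Lemma~\ref{lem:perfect_cycle_factor}), which requires weak hypergraph regularity together with a lattice-based closure and absorption argument. This case split, and the two dedicated completion lemmas it enables, are the heart of the proof and have no counterpart in your plan.
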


In the course of proving the theorem, we also prove the theorem for perfect matchings.
\begin{theorem}\label{thm:perfect_matching}
    For every integer $r$ and a real $\varepsilon>0$, there exists a real $\alpha=\alpha(r,\varepsilon)>0$ such that the following holds for every $n$ divisible by $r$.
    If an $n$-vertex $r$ graph $G$ satisfies $\delta_1(G) \geq \varepsilon n^{r-1}$,  $\alpha^*(G) < \alpha n$ (i.e., $G$ has no $r$-partite hole of size $\alpha n$), then $G$ contains a perfect matching.
\end{theorem}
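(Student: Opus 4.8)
The plan is to reduce the problem to an application of the absorption method, which is the standard tool for finding perfect matchings under degree-type hypotheses, combined with the structural strength provided by the smallness of $\alpha^*(G)$. First I would establish a basic consequence of $\alpha^*(G) < \alpha n$ and $\delta_1(G) \ge \varepsilon n^{r-1}$: every sufficiently large vertex subset spans a positive proportion of the ``local'' edges, and in particular any $r$ disjoint sets of size $\ge \alpha n$ have an edge between them (a \emph{supersaturation} statement, obtained by iterated deletion of low-degree vertices). Since the host $r$-graph has no $r$-partite hole of size $\alpha n$, one gets that for any set $W$ with $|W| \ge r\alpha n$, the induced subgraph $G[W]$ has minimum degree $\Omega(|W|^{r-1})$ after deleting at most $(r-1)\alpha n$ vertices — this ``robust local density'' is what replaces the usual codegree hypothesis.

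The core of the argument is the absorption lemma: I would show there is a small matching $M_{\mathrm{abs}}$, covering at most $\mu n$ vertices for a tiny constant $\mu$, such that for \emph{every} set $R \subseteq V(G) \setminus V(M_{\mathrm{abs}})$ with $|R|$ divisible by $r$ and $|R| \le \mu' n$, the set $V(M_{\mathrm{abs}}) \cup R$ has a perfect matching. The standard way to build this is to first produce, for each $r$-set $S$ that we might need to absorb, a large family of ``absorbing configurations'' (constant-size structures each of which can accommodate $S$), and then take a random subfamily; here one needs that \emph{many} absorbing configurations exist for each $S$, which follows from the supersaturation/robust-density facts above applied repeatedly to build the configuration vertex by vertex. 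A subtlety specific to the hole hypothesis (as opposed to a codegree hypothesis) is that the relevant vertex sets at each step are already large, so the density bounds apply; one must be careful that after removing the $o(n)$ vertices used by the configurations one still controls the remainder.

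After reserving $M_{\mathrm{abs}}$, I would find an almost-perfect matching in the leftover $r$-graph $G' = G - V(M_{\mathrm{abs}})$, leaving an uncovered set $R$ of size $o(n)$. For this ``almost-covering'' step the cleanest route is a greedy/random-greedy argument: repeatedly pick an edge inside the current uncovered set, which exists as long as the uncovered set has size $\ge r\alpha n$ plus a small slack — here we again invoke that $r$ disjoint large sets span an edge, so in fact as long as the uncovered set is large it contains an edge (take $X_1 = \dots = X_r$ equal to the uncovered set minus its low-degree vertices). Iterating until the uncovered set $R$ has size below $\mu' n$, we finish by absorbing $R$ into $M_{\mathrm{abs}}$, noting the divisibility $r \mid |R|$ holds because $r \mid n$ and the two matchings cover a multiple of $r$ vertices. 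The main obstacle I expect is the absorption lemma: unlike the codegree setting, the hole hypothesis gives density only for sufficiently large sets, so every step of the configuration-building must be arranged to operate on linear-sized sets, and the counting of absorbing configurations must be done with this restriction in mind; getting a clean ``every small $R$ is absorbable'' statement out of this is the delicate part, whereas the almost-perfect-matching step and the final assembly are comparatively routine.
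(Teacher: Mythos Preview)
Your high-level plan (absorption plus a greedy almost-perfect matching) is the same as the paper's, and the almost-covering step is exactly right: as long as the uncovered set has size at least $r\alpha n$ you split it into $r$ parts of size $\ge\alpha n$ and the hole condition hands you an edge. The paper does precisely this.

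The gap is in the absorber step, and it stems from your ``robust local density'' claim, which is false: from $\delta_1(G)\ge\varepsilon n^{r-1}$ and $\alpha^*(G)<\alpha n$ you \emph{cannot} conclude that $G[W]$ has minimum degree $\Omega(|W|^{r-1})$ for large $W$, even after deleting $O(\alpha n)$ vertices. (For $r=2$, take a complete bipartite graph $K_{n/2,n/2}$ and sprinkle a sparse random graph of constant density $p$ on each side; then $\alpha^*(G)=o(n)$ and $\delta(G)\approx n/2$, but $G[W]$ for $W$ one side has all degrees $\approx pn/2$, not $\Omega(n)$.) So you cannot ``build the configuration vertex by vertex'' using local density. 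The paper instead builds an absorber for a set $S=\{v_1,\dots,v_r\}$ by a short iterative construction that alternates the two hypotheses at the \emph{global} scale: at stage $i$ one has partial sets $e^i_1,\dots,e^i_r$ with $v_j\in e^i_j$ and $d_G(e^i_j)\ge 3^{-i}\varepsilon n^{r-i}$; using Lemma~\ref{lem: find a set} one extracts, for each $j$, a set $W_j\subseteq V(G)$ of $\Omega(\varepsilon n)$ vertices whose addition keeps the degree large, and then the hole condition produces a single edge crossing $(W_1,\dots,W_r)$ which extends all $r$ partial sets simultaneously. After $r$ rounds the $e^r_j$ are edges covering $S$, and the transverse edges picked along the way form the matching on $S':=\bigcup_j e^r_j\setminus S$. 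This ``lattice'' trick is the missing idea.

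A second, smaller difference: the paper does not take a random subfamily of absorbers (which would require a supersaturation count you do not have), but uses Montgomery's bipartite template (Lemma~\ref{lem:bipartite_template}) via Lemma~\ref{lem:absorbing_using_template}. That lemma only asks for $\eta n$ \emph{pairwise disjoint} absorbers per target set, which the construction above yields by a greedy argument, and then assembles the absorbing set deterministically from the template. Your random-subfamily route would need $\Omega(n^c)$ absorbers for the correct exponent $c$, and while one can probably extract such a count from the lattice construction, it is extra work the template method sidesteps.
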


Another important line of research is about pseudorandom hypergraphs. Many types of of pseudorandomness conditions have been explored in conjuction with the existence of substructures. As there are too many important results to list in this direction, we just refer the reader to \cite{Krivelevich2006, Towsner} for examples of (though certainly not all) pseudorandomness conditions considered for graphs and hypergraphs.
Our $r$-partite hole condition can be viewed as a very weak pseudorandomness condition. Hence, it has implications on hypergraphs with certain pseudorandomness conditions.
For example, our results \Cref{thm:main_theorem_cycle} and \ref{thm:perfect_matching} imply the result of Lenz, Mubayi and Mycroft~\cite{Lenz2016Hamilton} and Lenz and Mubayi~\cite{Lenz2016Perfect} regarding the existence
of loose Hamilton cycles and perfect matchings in quasirandom hypergraphs which satisfy a $(d, \mu)$-dense condition. Here,  $(d,\mu)$-dense condition requires $e_G(X_1,\dots, X_r)\geq \Omega(n^{r})$ for all linear size sets $X_i$ while the $r$-partite hole condition on $\alpha^*(G)$ requires only $e_G(X_1,\dots, X_r)>0$.

\subsection{Randomly perturbed graph}

Graphs without rigid structures in Ramsey--Dirac theory are also closely related to the Dirac-type questions on randomly perturbed graphs. A randomly perturbed graph is the union of a given graph with a binomial random graph $G(n,p)$. This is a graph-theoretic analogue of the smoothed analysis of algorithm introduced by Speilman and Teng \cite{Spielman2004smoothness}. In \cite{Spielman2004smoothness}, they estimate the performance of the simplex algorithm in 'realistic' settings using randomly perturbed inputs and a combination of worst case and average case analysis.
Using the same philosophy, Bohman, Frieze and Marti~\cite{Bohman2003random} proved that for a given $\varepsilon>0$, there exists a constant $C=C(\varepsilon)>0$ such that for any graph $G$ with $\delta(G) \geq \varepsilon n$, the union $G \cup G(n, C/n)$ contains a Hamilton cycle.
There are numerous results on randomly perturbed graphs~\cite{Bedenknecht2019powers,Bohman2004adding, Chang2023powers, Han2020Hamiltonicity, McDowell2018Hamilton}, including the Dirac-type question on bounded degree spanning trees by Krivelevich, Kwan and Sudakov~\cite{Krivelevich2017bounded}.
They proved that for every $\varepsilon>0$ and integer $\Delta>0$, there exists $C=C(\varepsilon,\Delta)>0$ such that if $G$ is an $n$-vertex graph with $\delta(G) \geq \varepsilon n$, then $G \cup G(n, C/n)$ contains every bounded degree spanning tree $T$ with maximum degree at most $\Delta$ with high probability.
This was extended into trees with maximum degree $O(n/\log n)$ by Joos and Kim~\cite{Joos2020Spanning}.
As a random graph $G(n, C/n)$ has no bipartite hole of size $C^{-1/3}n$ for large $C$ with high probability, the Ramsey--Dirac type results by Mcdiarmid and Yolov~\cite{McDiarmid2017Hamilton} and by Han, Hu, Ping, Wang, Wang and Yang~\cite{Han2024Spanning} imply the corresponding results for randomly perturbed graphs. Moreover, they yield a stronger universality result.

In the same manner, our main result implies the following statement for randomly perturbed hypergraphs.
We write $G^{(r)}(n, p)$ to denote the random $r$-graph on $n$ vertices where each $r$-set is an edge independently with probability $p$.

\begin{cor}\label{thm:random_perturbation}
    For positive integers $r, \Delta$ and a real $\varepsilon>0$, there exists a real $C>0$ and an integer $n_0$ such that the following holds for every $n \geq n_0$.
    If $G$ is an $n$-vertex $r$-graph with $\delta_1(G) \geq \varepsilon n^{r-1}$, then the following holds with high probability.
    \begin{enumerate}
        \item If $(r-1) \mid (n-1)$, then $G \cup G^{(r)}(n, \frac{C}{n^{r-1}})$ contains
        all $n$-vertex linear hypertrees with maximum degree $\Delta$; 
        \item if $(r-1) \mid n$, then $G \cup G^{(r)}(n, \frac{C}{n^{r-1}})$ contains a loose Hamilton cycle; 
        \item if $r \mid n$, then then $G \cup G^{(r)}(n, \frac{C}{n^{r-1}})$ contains a perfect matching.
    \end{enumerate}
\end{cor}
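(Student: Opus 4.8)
The plan is to deduce \Cref{thm:random_perturbation} directly from \Cref{thm:main_theorem}, \Cref{thm:main_theorem_cycle} and \Cref{thm:perfect_matching}, the point being that the random hypergraph $G^{(r)}(n,p)$ by itself has no large $r$-partite hole with high probability, while the minimum degree condition is inherited for free from $G$. Let $\alpha_1=\alpha_1(r,\Delta,\varepsilon)$, $\alpha_2=\alpha_2(r,\varepsilon)$ and $\alpha_3=\alpha_3(r,\varepsilon)$ be the constants supplied by \Cref{thm:main_theorem}, \Cref{thm:main_theorem_cycle} and \Cref{thm:perfect_matching} respectively, put $\alpha:=\min\{\alpha_1,\alpha_2,\alpha_3\}$, and write $p:=C/n^{r-1}$ and $H:=G\cup G^{(r)}(n,p)$, where $C=C(r,\Delta,\varepsilon)$ will be chosen at the end, large in terms of $r$ and $\alpha$.

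Two elementary monotonicity observations reduce the corollary to a single probabilistic estimate. First, $E(G)\subseteq E(H)$ gives $\delta_1(H)\ge\delta_1(G)\ge\varepsilon n^{r-1}$ deterministically, so no property of $G$ beyond its minimum degree is used. Second, $E(G^{(r)}(n,p))\subseteq E(H)$ means that any tuple $(X_1,\dots,X_r)$ with $e_H(X_1,\dots,X_r)=0$ also has $e_{G^{(r)}(n,p)}(X_1,\dots,X_r)=0$, hence $\alpha^*(H)\le\alpha^*(G^{(r)}(n,p))$. Thus it suffices to show that $\alpha^*(G^{(r)}(n,p))<\alpha n$ with high probability: on that event, $H$ satisfies $\delta_1(H)\ge\varepsilon n^{r-1}$ and $\alpha^*(H)<\alpha n\le\alpha_i n$, so each of the three theorems applies to $H$ under its respective divisibility hypothesis, yielding the three conclusions.

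For the estimate, suppose $\alpha^*(G^{(r)}(n,p))\ge\alpha n$; then there are sets $X_1,\dots,X_r$ of size $\lceil\alpha n\rceil$ with no edge of $G^{(r)}(n,p)$ transversal to them. Processing $i=1,\dots,r$ greedily and noting that at step $i$ at least $|X_i|-(r-1)s\ge\alpha n-(r-1)(\alpha n/r)=\alpha n/r\ge s$ vertices of $X_i$ are still unused, we extract pairwise disjoint $Y_i\subseteq X_i$ with $|Y_i|=s:=\lfloor\alpha n/r\rfloor$, and still $e_{G^{(r)}(n,p)}(Y_1,\dots,Y_r)=0$. Since the $Y_i$ are disjoint, there are exactly $s^r$ distinct $r$-sets with exactly one vertex in each $Y_i$, each an edge of $G^{(r)}(n,p)$ independently with probability $p$, so this event has probability $(1-p)^{s^r}\le e^{-ps^r}$. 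A union bound over the at most $\binom{n}{s}^r\le(en/s)^{rs}$ choices of $(Y_1,\dots,Y_r)$ yields, using $s\ge\alpha n/(2r)$ for $n$ large,
\[
\Pr\bigl[\alpha^*(G^{(r)}(n,p))\ge\alpha n\bigr]\ \le\ \exp\!\bigl(rs\ln(en/s)-ps^r\bigr)\ \le\ \exp\!\Bigl(n\bigl(\alpha\ln(2er/\alpha)-C\alpha^r/(2r)^r\bigr)\Bigr).
\]
Choosing $C$ so large (in terms of $r$ and $\alpha$, hence in terms of $r,\Delta,\varepsilon$) that $C\alpha^r/(2r)^r>\alpha\ln(2er/\alpha)+1$ makes the right-hand side at most $e^{-n}$, which tends to $0$; this is the only place $C$ enters.

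I do not anticipate a real obstacle: the argument is a routine monotonicity-and-union-bound deduction from the main theorems, and the only thing needing a little care is that the $2^{\Theta(n)}$ factor coming from the union bound over hole-witnessing tuples must be beaten by the $2^{-\Theta(n)}$ probability that a fixed tuple is a hole, which is precisely what forces $C$ to be large relative to $\alpha$; the passage to pairwise disjoint witness sets is the other minor bookkeeping point.
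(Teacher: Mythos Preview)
Your proposal is correct and follows exactly the approach the paper indicates (the paper does not give a formal proof of this corollary, merely remarking that the random hypergraph $G^{(r)}(n,C/n^{r-1})$ has no $r$-partite hole of size $\alpha n$ with high probability, so the main theorems apply). Your explicit union-bound computation and the passage to pairwise disjoint witness sets are a clean way to flesh out that remark.
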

In fact, this corollary yields a stronger universality result. In \cite{Krivelevich2017bounded}, Krivelevich, Kwan and Sudakov asked whether a randomly perturbed graph $G\cup G(n,C/n)$ contains all bounded degree trees simultaneously with high probability. 
In other words, whether $G\cup G(n,C/n)$  is universal for all bounded degree trees.
Indeed, this was affirmatively answered by B\"ottcher, Han, Kohayakawa, Montgomery, Parczyk and Person~\cite{bottcher19}. \Cref{thm:random_perturbation} confirms that the universality holds for bounded degree hypertrees. 

\subsection{Rainbow version}
Another recent trends on the Dirac-type questions are their rainbow generalizations. 
Consider a system $\mathcal{G} = (G_1, \ldots, G_m)$ of graphs on the common vertex set $V$. 
A graph $H$ on the vertex set $V$ is called a \emph{rainbow} (or transversal) subgraph of $\mathcal{G}$ if there exists an injective function $\varphi:e(H) \rightarrow [m]$ such that for every $e \in E(H)$, $e \in E(G_{\varphi(e)})$.
This concept of rainbow subgraphs generalize the concept of rainbow subgraphs in edge-colored graphs, and is analogous to the colorful objects considered in discrete geometry (see \cite{Barany1982colorful, Holmsen2008caratheodory, Kalai2005Helly}). 
Dirac-type question for rainbow subgraphs was first considered by Joos and Kim~\cite{Joos2020rainbow}. 
They proved that if $m=n$ and $\delta(G_i) \geq n/2$ for every $i \in [n]$, then $\mathcal{G}$ contains a rainbow Hamilton cycle. After their work, Dirac-type questions for rainbow structure has received a lot of attention in recent years~\cite{Chakraborti2023bandwidth, Cheng2023transversal,Gupta2023general, Lee2023highdimensional, Montgomery2022transversal, Tang2023rainbow}.
Indeed, our main result can be extended into rainbow version as follows.

\begin{theorem}\label{thm:main_rainbow_version}
    For every integers $r, \Delta$ and a real $\varepsilon>0$, there exists a real $\alpha>0$ such that the following holds for every $n$ with $(r-1)\mid (n-1)$.
    Let $m = \frac{n-1}{r-1}$ and $\mathcal{G}=(G_1, \ldots, G_m)$ be a system of $n$-vertex $r$-graphs on the common vertex set $V$.
    If $\delta_1(G_i) \geq \varepsilon n^{r-1}$ and $\alpha^*(G_i) < \alpha n$ for every $i \in [m]$, then every $n$-vertex tree $T$ with $\Delta(T) \leq \Delta$ is a rainbow subgraph of $\mathcal{G}$.
\end{theorem}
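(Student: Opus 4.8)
The plan is to adapt the proof of \Cref{thm:main_theorem}, carrying along a partial injection $\varphi$ from the edges of $T$ embedded so far into $[m]$ with the property that each embedded edge $e$ is realized by an edge of $G_{\varphi(e)}$. Since a linear $n$-vertex hypertree with $(r-1)\mid(n-1)$ has exactly $m=\frac{n-1}{r-1}$ edges, completing such a procedure yields a bijection $E(T)\to[m]$, that is, a full rainbow (transversal) embedding. The feature of the hypothesis that enables the adaptation is that \emph{each} $G_i$ separately satisfies $\delta_1(G_i)\ge\varepsilon n^{r-1}$ and $\alpha^*(G_i)<\alpha n$; so whenever the proof of \Cref{thm:main_theorem} invokes, for a single host $r$-graph with these parameters, a lemma producing $\Omega(n^{r-1})$ choices for some bounded-size local extension, the same lemma applies inside $G_c$ for whatever color $c$ we wish to use at that step.

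Concretely, the proof of \Cref{thm:main_theorem} should run roughly as follows: (i) decompose $T$, via a hypertree analogue of Krivelevich's leaves-or-bare-paths lemma, into an almost-spanning sub-hypertree $T_0$ together with a family of vertex-disjoint small absorbable pieces (loose leaf-edges and bare loose paths); (ii) set aside $O(\gamma n)$ vertices carrying an absorbing structure for those pieces; (iii) embed $T_0$ into the remainder of $G$ by a sequence of local extensions governed by the minimum degree and the $r$-partite hole condition; (iv) complete the embedding by absorption. For the rainbow version I split $[m]=M\sqcup R$ with $|R|=\gamma m$ at the outset, choosing the decomposition in (i) so that the absorbable pieces carry exactly $\gamma m$ edges of $T$ and $T_0$ carries the other $|M|$. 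In phase (iii) I process the edges of $T_0$ in the order the monochromatic argument prescribes; before placing each edge I pick any color $c\in M$ not yet used by $\varphi$ --- one exists since we place $|M|$ edges with colors from $M$ --- and perform the local extension inside $G_c$, which is legitimate because $G_c$ meets the hypotheses of \Cref{thm:main_theorem}. This defines $\varphi$ on the new edge and keeps it injective; phase (iv) then routes the absorbable pieces using the remaining colors of $R$.

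The absorbing structure in phase (ii) is built gadget by gadget. Since $\delta_1(G_c)\ge\varepsilon n^{r-1}$ and only $O(\gamma n)$ vertices have been touched when a gadget is constructed, each step of building it still has at least $(\varepsilon-O(\gamma))n^{r-1}$ options inside any chosen color class $G_c$ --- forbidding $k$ vertices destroys at most $kn^{r-2}$ edges through a fixed vertex --- so these constant-size gadgets can be found greedily; we keep successive gadgets vertex-disjoint, arrange that the whole absorption scheme draws its colors from $R$, and assemble it by distributive absorption exactly as in the proof of \Cref{thm:main_theorem}.

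I expect the main obstacle to be turning the hypertree absorber into a genuinely \emph{rainbow} absorber: one must make the distributive-absorption gadgets color-disjoint, size their joint palette to be precisely the $\gamma m$ colors left unused after phase (iii), and verify that the absorbing property holds uniformly over all admissible leftover configurations \emph{and} all admissible residual color sets, so that committing to the reserve $R$ in advance never obstructs the final re-embedding. A secondary point to check is that the proof of \Cref{thm:main_theorem} is ``local'' enough --- accessing the host graph only through per-vertex neighborhoods and bounded-size configurations --- that one may legitimately use a different color class at each step; were it instead to rely on a global decomposition of $G$, one would have to produce such a decomposition common to all $G_i$, which would be the true difficulty. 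Given the nature of the hole condition and of the embedding process, I expect the former to hold, so that the almost-embedding in phase (iii) and all degree and hole estimates transfer verbatim with $G$ replaced, step by step, by the active color class $G_c$, and essentially all the extra work resides in the rainbow absorber.
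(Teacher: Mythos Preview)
Your approach is genuinely different from the paper's. Rather than re-running the proof of \Cref{thm:main_theorem} with per-step colour bookkeeping, the paper observes a clean reduction: encode the colour system $(G_1,\dots,G_m)$ as a single $(r+1)$-uniform $(r,1)$-graph $H$ on $V\cup[m]$ by declaring $e\cup\{i\}\in E(H)$ whenever $e\in E(G_i)$, and turn $T$ into an $(r+1)$-tree $\hat T$ by attaching one fresh degree-one vertex to each edge. A rainbow copy of $T$ in $\mathcal G$ is then precisely a copy of $\hat T$ in $H$ that sends the new vertices into $[m]$, so one only needs a bipartite analogue of \Cref{thm:main_theorem} (stated as \Cref{thm:main_bipartite_version}), which the paper proves alongside the monochromatic theorem with the same machinery. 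The payoff is that every ``rainbow'' difficulty you identify --- colour-disjoint absorbers, palette sizing, uniform absorbability over residual colour sets --- is automatically handled by the embedding in the larger hypergraph; no separate rainbow absorber is ever built.

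Your plan is not obviously doomed, but the key expectation you rely on is not met: the proof of \Cref{thm:main_theorem} is \emph{not} purely local. In the caterpillar case it appeals to \Cref{lem:perfect_cycle_factor}, whose proof uses weak hypergraph regularity on the host graph (\Cref{thm:weak_hypergraph_regularity}, \Cref{lem: reg matching}) together with the lattice-based absorption scheme of \Cref{lem:closed_to_absorbing} and \Cref{lem:merge_closed_and_nonclosed_sets}. Carrying colours through that step would require a rainbow version of the regularity-based cycle factor and of closedness/reachability --- exactly the ``true difficulty'' you flagged as a contingency and then set aside. This is likely doable, but it is substantial extra work that the paper's $(r+1)$-uniform encoding sidesteps entirely.
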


The rainbow generalizations of Theorem~\ref{thm:main_theorem_cycle} for loose Hamilton cycles and \ref{thm:perfect_matching} for perfect matchings also can be proved. As their proof and statements are analogous to the ones of Theorem~\ref{thm:main_rainbow_version}, we omit them.

In order to prove Theorem~\ref{thm:main_rainbow_version}, we in fact prove a ``bipartite'' version of the Theorem~\ref{thm:main_theorem}. With this bipartite version on our hand, we consider a system of $r$-graphs $(G_1, \ldots, G_m)$ as a ``bipartite'' $(r+1)$-graph by adding vertices corresponds to each colors to deduce \Cref{thm:main_rainbow_version} follows.

We now define what ``bipartite'' means here. An $r$-graph $G$ is \emph{$(x,y)$-graph} if there exists a vertex partition $X, Y$ of $V(G)$ such that for every $e \in E(G)$, $|e \cap X|=x$ and $|e \cap Y|=y$.
\begin{theorem}\label{thm:main_bipartite_version}
    For every integers $r, \Delta$ and a real $\varepsilon>0$, there exists a real $\alpha>0$ and such that the following holds for sufficiently large $n$ with $r\mid (n-1)$. 
    Let $G$ be an $n$-vertex $(r,1)$-graph with partition $X, Y$ where $|X| = \frac{(r-1)n+1}{r}$ and $|Y| = \tfrac{n-1}{r}$ and let $\mathcal{V}= (X,\dots, X, Y)$. 
    
    Suppose that $\delta_1(G) \geq \varepsilon n^r$ and $\alpha^*_{\mathcal{V}}(G) <\alpha n$, and
    suppose that $T$ is an $n$-vertex $(r+1)$-hypertree with partition $A, B$ such that all the vertices in $B$ has degree $1$ satisfying $\Delta(T)\leq \Delta$.
    Then $G$ contains a copy of $T$ such that all the vertices in $A$ mapped into $X$ and all the vertices in $B$ mapped into $Y$.
\end{theorem}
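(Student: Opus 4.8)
The plan is to prove this by an absorption argument parallel to the proof of \Cref{thm:main_theorem}, threading the bipartition $(X,Y)$ through every step so that the copy of $T$ built respects it. I would first record the combinatorics. Since every edge of $T$ meets $B$ in exactly one vertex and every vertex of $B$ is a leaf, $B$ is in bijection with $E(T)$, and deleting the $B$-vertices turns $T$ into an $r$-uniform linear hypertree $T_A$ on $A$ with one pendant $B$-leaf glued to each edge; a short count using $r\mid(n-1)$ gives $|A|=|X|$ and $|B|=|E(T_A)|=|Y|$, so the copy we seek is spanning on both $X$ and $Y$. Rooting $T$ at a vertex of $A$, the parent vertex of every edge (its vertex nearest the root) lies in $A$, so processing edges in order of distance from the root, each new edge $e$ shares exactly its parent $p(e)\in A$ with the part embedded so far (linearity of $T$ rules out further overlap), and one extends by placing the $r-1$ new $A$-vertices of $e$ into $X$ and the unique new $B$-vertex into $Y$, keeping $A\to X$, $B\to Y$ automatically. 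What prevents the two sides from decoupling --- and forces us to carry the $Y$-side along rather than handle it afterwards --- is that the edge-images in $X$ must collectively receive their $Y$-vertices through a system of distinct representatives exhausting $Y$; the saving grace is that $Y$-vertices never need to be reused.

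The mechanism that replaces the minimum-codegree conditions usual in Dirac-type embeddings is the following consequence of $\alpha^*_{\mathcal{V}}(G)<\alpha n$: for any $U_1,\dots,U_{r-1}\subseteq X$ and $Z\subseteq Y$ each of size at least $\alpha n$, all but fewer than $\alpha n$ vertices $p\in X$ lie in an edge $\{p,u_1,\dots,u_{r-1},z\}$ of $G$ with $u_i\in U_i$ and $z\in Z$ --- for if the exceptional vertices $\mathcal B$ numbered at least $\alpha n$, then $e_G(\mathcal B,U_1,\dots,U_{r-1},Z)=0$ with $\mathcal B,U_1,\dots,U_{r-1}\subseteq X$, $Z\subseteq Y$ all of size at least $\alpha n$, contradicting the hypothesis; a $\delta_1(G)$-weighted version of the count moreover shows all but $o(n)$ such $p$ have linearly many witnessing edges. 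Applying this at every extension step with $U_1,\dots,U_{r-1}$ the unused $X$-vertices and $Z$ the unused $Y$-vertices, and packaging it in the standard way for tree embeddings --- each placed-but-unextended vertex carrying a linear-sized candidate set of admissible images for its next child-vertex, new vertices placed at random inside these sets, the weighted count plus $\Delta(T)\le\Delta$ plus routine concentration keeping the candidate sets large --- embeds $T$ edge by edge so long as the unused parts of $X$ and of $Y$ stay of linear size.

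Because the copy must be exactly spanning, the last $\Theta(\alpha n)$ vertices cannot be placed this way, so I would finish by absorption, using the by-now-standard reservoir/absorber framework for spanning-tree embeddings. Using the structure theory of bounded-degree (linear hyper)trees, decompose $T=T_0\cup T_1$ where $T_1$ is a forest of $O_{\Delta,r}(1)$-vertex subtrees of total order $\Theta(\alpha n)$, each attached to $T_0$ at one vertex, and $|V(T_0)|\le(1-\Omega(\alpha))n$. Before embedding anything, build a flexible absorber: $\Theta(\alpha n)$ vertex-disjoint constant-size gadgets in $G$ spanning reserved sets $X^\ast\subseteq X$ and $Y^\ast\subseteq Y$ together with designated link vertices, chosen so that, whatever small leftover is left by the bulk embedding, $T_1$ can be completed onto $X^\ast\cup Y^\ast\cup(\text{leftover})$ with its attachment vertices landing on the links. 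Each gadget and each way of using it is located by the good-vertex argument above (all target sets in sight have size at least $\alpha n$), there are enough gadgets to go round by $\delta_1(G)\ge\varepsilon n^r$, and they are distributed over gadget types by a Hall/defect-matching argument. I would then reserve $X^\ast,Y^\ast$, embed $T_0$ into $G$ minus the reserved sets by the greedy/random procedure while steering the $\Theta(\alpha n)$ attachment vertices of $T_0$ onto the link vertices, and finally call the absorber on $T_1$, using up exactly the rest of $X$ and $Y$.

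The hard part will be the absorber for bounded-degree hypertrees: designing a finite family of constant-size host gadgets that between them absorb every shape of $O(1)$-vertex subtree of $T_1$ at a prescribed location; packing linearly many vertex-disjoint copies of each into $G$ using only $\alpha^*_{\mathcal{V}}(G)<\alpha n$ and $\delta_1(G)\ge\varepsilon n^r$, with no codegree control, so that even producing one gadget takes work; and then solving the distribution problem so that the bulk embedding and the absorbed part together span $X$ and $Y$ simultaneously. A secondary technical point, shared with \Cref{thm:main_theorem}, is propagating a strong enough quasirandomness of the still-free vertex set through $\Theta(n)$ greedy steps, so that the good-vertex argument can be maintained to the end of the bulk phase rather than invoked only once; boundedness of $\Delta(T)$ is what keeps the dependencies local enough for this.
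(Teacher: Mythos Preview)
Your high-level architecture --- embed most of $T$ first, then absorb the leftover --- matches the paper, but the two proofs diverge sharply in how both phases are carried out, and your version leaves the decisive step open.

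For the bulk phase, you propose an edge-by-edge greedy/random embedding maintained by the ``good-vertex'' consequence of the hole condition. The paper instead invokes the tree decomposition of \Cref{lem:tree_split}: $T$ is built as $T_0\subseteq T_1\subseteq\cdots\subseteq T_\ell$ where $T_0$ is tiny, one step adds $o(n)$ paths of length three, and every other step adds a matching of leaf-edges. Each matching step is then handled by a single application of \Cref{lem: find matching} plus a small random reservoir $R$ to mop up the $\le\alpha n$ failures; the path step is handled greedily inside $R$. This sidesteps exactly the difficulty you flag at the end --- propagating control through $\Theta(n)$ dependent greedy steps --- by reducing the entire bulk embedding to $O_{\Delta,r,\mu}(1)$ applications of a one-shot matching lemma.

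The more serious issue is the absorber. You propose to reserve a family of constant-size gadgets that can absorb \emph{arbitrary} $O(1)$-vertex subtrees of $T_1$, and you correctly identify this as ``the hard part'' --- but you give no construction, and it is far from clear one exists under only $\alpha^*_{\mathcal V}(G)<\alpha n$ and a vertex-degree bound, with no codegree control. The paper avoids this entirely via \Cref{lem:pendant_and_caterpillars}: any bounded-degree hypertree contains either $\Omega(n)$ isomorphic non-trivial pendant stars or $\Omega(n)$ isomorphic caterpillars of any fixed length $t$. These, not generic subtrees, are what get deleted and re-inserted at the end, and for each of the two cases there is a dedicated tool: pendant stars are handled by the star-factor \Cref{lem:embed_stars} (proved by a short lattice-absorber argument), while caterpillars reduce to a transversal linear-cycle factor via \Cref{lem:perfect_cycle_factor}, whose proof combines weak hypergraph regularity with the reachability/closedness machinery of \Cref{sec:absorbing}. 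In other words, the structural dichotomy on $T$ is precisely what makes the absorber design tractable; your plan bypasses it and is left with an open-ended gadget-design problem that the paper never has to solve.
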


\begin{proof}[Proof of \Cref{thm:main_rainbow_version}]
    For given a system of $n$-vertex $r$-graphs $(G_1, \ldots, G_m)$ on $V$ and a tree $T$ with $\Delta(T) \leq \Delta$, we construct an $(r+1)$-graph $H$ on $V \cup C$ with $|C|=m $ and a linear $(r+1)$-hypertree $\hat{T}$ as follows.
    We label vertices of $C$ by $[m]$ and if an $r$-set $e$ is an edge of $G_i$, then we add an edge $e \cup \{i\}$.
    To construct $\hat{T}$, we add one additional vertex to each of the edge of $T$ and those newly added vertices will be in $B$. 
    Then we observe that $(G_1, \ldots, G_m)$ contains a rainbow copy of $T$ if and only if $H$ contains a copy of $\hat{T}$.
    By applying \Cref{thm:main_bipartite_version} to $H$, we conclude the proof.
\end{proof}

\section{Outline of the proof}
\label{sec:outline}
To prove \Cref{thm:main_theorem}, we develop the ideas introduced in \cite{Han2024Spanning} for the graph case.

Assume that we have a bounded degree $r$-uniform hypertree $T$ and an $r$-graph $G$ with $\delta_1(G) = \Omega(n^{r-1})$ and $\alpha^*(G)= o(n)$. 
An edge $e$ in $T$ is a leaf-edge if $r-1$ vertices in $e$ has degree one.
Consider the tree $T'$ obtained from $T$ by deleting all the leaf-edges and isolated vertices.
A leaf-edge $e$ of $T'$ is the center of a ``pendant star'' in the original tree $T$ which consists of $e$ and the leaf-edges of $T$ incident to $e$. A bare path (a linear path whose vertices other than two end vertices are not incident to any edges outside the path) $P$ in $T'$ together with the leaf-edges of $T$ adjacent to $P$ form a ``caterplillar'' in $T$. 
As $T'$ has either many leaf-edges or many disjoint bare paths, the original tree $T$ has either many non-trivial pendant stars or many disjoint caterpillars. Our proof deals with these two cases in different ways. See Section~\ref{sec:prelim_hypertree} for the formal definitions.

Let $V=V(G)$. We take $S_1,\dots, S_m$ to be either non-trivial pendant stars in $T$ or caterpillars in $T$, where these $S_i$ are all isomorphic. 
Let $T^*$ be the hyperforest we obtain by deleting the edges of $S_1,\dots, S_m$ from $T$ and further deleting the isolated vertices therein.
Then for each $S_i$, $V(S_i)\cap T^*$ consists of one vertex if $S_i$ is a pendant star and two vertices in different components of $T^*$ if it is a caterpillar.
We first wish to find an embedding $\varphi: T^*\rightarrow G$, and then embed all of $S_i$ with the root $V(S_i)\cap V(T^*)$ already fixed. 
Assuming that we have obtained the embedding $\varphi$, let $R_1=\bigcup_{i\in [m]}\varphi(V(T^*)\cap V(S_i))$ and $U=V\setminus \varphi(T^*)$.

An issue with this approach is that the embedding $\varphi$ may not behave nicely. For all we know, a vertex $v$ in $R_1$ or a vertex $u$ in $U$ may be isolated vertices in $G[R_1\cup U]$. In order to overcome this issue, we first partition $V$ into random sets $V_1,V_2,V_3$ before finding the embedding $\varphi$. As these sets are randomly chosen, we can ensure that every vertex $v$ in $V_i$ has many edges in crossing $(\{v\},V_j,\dots, V_j)$ for any $i,j\in [3]$.
Hence, once we find an embedding $\varphi$ of $T^*$ into $G[V_1]$, the remaining vertices in $V_1\setminus\varphi(T^*)$ have high degrees `towards' $V_2$ and $V_3$.
Still, some vertices $u\in V_i$ for $i\in \{2,3\}$ may not belong to any edges that intersect $R_1$. However, we can prove that these vertices are only a fraction of $V_2\cup V_3$ and belong to many edges in $G[\{u\}\cup V_{5-i}]$.
As our $S_i$ (which is either a pendant star or a caterpillar) contains a path of length at least two from `the root' $V(S_i)\cap V(T^*)$ to a leaf, we can embed such a path through $V_{5-i}$ connecting vertices in $R_1$ to such a vertex $u$. This will guarantee that we can first embed some of $S_i$ to occupy all such vertices $u$, overcoming the possible degree issue.
For this plan, we need the following three steps.
\begin{proplist}
        \item Find an almost spanning hyperforest $T^*$ in $G[V_1]$. \label{A1}
    \item Find a spanning collection of pendant stars in $G[R_1\cup U]$, where each $v\in R$ is covered by the center of a pendant star. \label{A2}
    \item Find a spanning collection of caterplillars in $G[R_1 \cup U]$, where each correct pair $v,v'\in R$ are connected by the caterpillar. \label{A3} 
\end{proplist}

The first step \ref{A1} is encapsulated in \Cref{lem:almost_spanning}.
This lemma is proved by adapting the proof method from \cite{Im2022proof}. We first show that a bounded degree hypertree $T$ yields a sequence $T_0 \subseteq T_1 \subseteq \cdots \subseteq T_\ell = T$ of subtrees for some $\ell = O(1)$ such that $|V(T_0)| = o(n)$ and each $T_{i+1}$ is obtained from $T_i$ by adding a matching for all $i\in [\ell]\setminus \{s\}$ for some $s$. For such exceptional index $s$, the tree $T_{s+1}$ is obtained from $T_s$ by adding $o(n)$ many paths of length three between vertices of $T_s$.
This sequence yields a decomposition of $T$ into a small tree $T_0$ and matchings and paths of length three, and this decomposition is useful for our purpose.
Using the lack of $r$-partitie holes and the minimum degree condition, we can add an almost perfect matching to each $T_i$. By setting aside a random vertex set $R$  at the beginning, we can utilize the minimum degree condition and the set $R$ to complete this almost perfect matching and obtain $T_{i+1}$. The minimum degree condition and the vertex set $R$ also allow us to obtain many disjoint paths of length three, hence help us to extend $T_s$ to $T_{s+1}$ as well, yielding a copy of $T^*$ in $G[V_1]$.
This will be stated in Section~\ref{sec:almost_spanning}.

For \ref{A2}, we use the absorption method together with the template method introduced by Montgomery~\cite{Montgomery2019spanning} to achieve the goals. For \ref{A3}, we need extra care in addition to the proof method for \ref{A2}. We utilize the weak hypergraph regularity method in \Cref{sec:weak_hypergraph_regularity}, to find an almost spanning collection of caterpillars connecting appropriate pairs of root vertices, and
we complete this to a spanning collection of caterpillars in \Cref{sec:absorbing} using the lattice-based absorbing method introduced by Han~\cite{Han2017decision} to conclude the proof.

\section{Preliminary}\label{sec:prelim}
\subsection{Basics}

For $n,n'\in \mathbb{N}$, we write $[n]=\{1,\dots, n\}$ and $[n,n']= \{n,n+1,\dots, n'\}$.
For two sets $X$ and $Y$, we write $X \triangle Y$ to denote the symmetric difference $(X\setminus Y)\cup (Y\setminus X)$. 
For $a, b, c>0$, we write $a=b \pm c$ if $a \in [b-c, b+c]$.
We use the symbol $\ll$ to denote the hiearchy of constants we choose. 
More precisely, if we claim that a statement holds with $0<a \ll b, c \ll d$, it means that there exist functions $f_1, f_2:(0, 1) \rightarrow (0, 1)$ and $g:(0, 1)^2 \rightarrow (0, 1)$ such that the statement holds with every $b \leq f_1(d), c \leq f_2(d)$ and $a \leq g(b, c)$. We do not explicitly compute these functions.
Throughout the paper, all hypergraphs we consider are $r$-uniform hypergraphs unless otherwise mentioned. Hence, we often omit its uniformity and just state that it is a hypergraph or a hypertree.

Consider an $r$-graph $G$. For a vertex $v \in V(G)$, we write $N_G(v)$ to denote the set $N_G(v) = \{e \setminus \{v\} \mid v \in e,  e \in E(G)\}$ of neighborhood of $v$. 
Similarly, we define $N_G(u,v) = \{e\setminus\{u,v\} \mid \{u,v\}\subseteq e, e\in E(G)\}$ as the common neighborhood of $u$ and $v$.
For a tuple $\mathcal{X}=(X_1,\dots, X_r)$ of (not necessarily disjoint) subsets of $V(G)$, we say that a set $\{x_1,\dots, x_r\}$ of $r$ distinct vertices is \emph{crossing} $\mathcal{X}$ if $x_i\in X_i$ for all $i\in [r]$. 
We write $e_G(\mathcal{X})= e_G(X_1,\dots, X_r)$ to count the number of tuples $(x_1,\dots, x_r) \in X_1\times \dots \times X_r$ with $\{x_1,\dots, x_r\}\in E(G)$. Note that this counts tuples, not edges, so an edge can be counted multiple times.
For a vertex $v$ and a tuple $\mathcal{Y}= (Y_1,\dots, Y_{r-1})$ of (not necessarily disjoint) subsets of $V(G)$, we write $d_G(v,\mathcal{Y}) = e_G(\{v\},Y_1,\dots, Y_{r-1})$.
We write $G[\mathcal{X}]=G[X_1, \ldots, X_r]$ for the $r$-partite subgraph of $G$ on the vertex set $X_1\cup \dots \cup X_r$ consisting of all edges crossing $\mathcal{X}$.


For a given tuple $\mathcal{V}=(V_1,\dots, V_s)$ of sets and a set $I\subseteq [s]$, we write $\mathcal{V}_I = (V_i: i\in I)$. For a hypergraph $H$ with $V(H)= [t]$, we say that $G$ \emph{admits a partition $(\mathcal{V},H)$} for some $\mathcal{V}=(V_1,\dots, V_t)$ if $G[\mathcal{V}_{e'}]$ is empty for all $r$-sets $e'\notin E(H)$ where $\mathcal{V}_{e'}= (V_i: i\in e')$.

The following lemma will be useful in many places\footnote{seems only used in the proof of Lemma 3.2} where we need to obtain a matching utilizing the $r$-partite hole condition.

\begin{lemma}\label{lem: find matching}
Suppose that $G$ is an $r$-graph and $\mathcal{V}=(V_0,\dots, V_{r-1})$ and $\mathcal{U}=(U_1,\dots, U_{r-1})$ are tuples of vertex sets where $V_0,\dots, V_{r-1}$ are pairwise disjoint satisfying $|U_i| = m$ for all $i\in [r-1]$.
Suppose that $\bigcup_{i\in [r-1]} U_i$ is disjoint from $\bigcup_{0\leq i\leq r-1} V_i$ and the following holds for any positive integer $m^*\leq m$.
\begin{proplist}
    \item $|V_0| \leq \min_{i\in [r-1]} |V_i|$.
    \item $\alpha^*_{\mathcal{V}}(G) \leq m^*$.
\end{proplist}
Then $G$ contains a matching $M'$ in $G[\mathcal{V}]$ covering at least $|V_0|-m^*$ vertices of $V_0$.
Furthermore, suppose the following holds in addition
\begin{enumerate}[label = {\bfseries \Alph{propcounter}\arabic{enumi}}] \setcounter{enumi}{2}
    \item $d_G(v,\mathcal{U}) \geq  (r-1)^2 m^{r-2}m^*$ for all $v\in V_0$.
\end{enumerate}
Then there exists an additional matching $M''$ such that $M= M'\cup M''$ is a matching that covers all the vertices of $V_0$ where each edge of $M''$ crosses $(V_0,U_1,\dots, U_{r-1})$ and $|M''|\leq m^*$.
\end{lemma}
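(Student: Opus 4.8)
The plan is to prove the two assertions of \Cref{lem: find matching} in turn, building the matching greedily.

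\textbf{Finding $M'$.} First I would construct $M'$ by a greedy/extremal argument. Suppose we have already built a matching $M_0$ in $G[\mathcal{V}]$ and let $V_0' \subseteq V_0$ be the set of vertices of $V_0$ not yet covered by $M_0$; write $V_i' \subseteq V_i$ similarly for $i \in [r-1]$. If $|V_0'| > m^*$, then since $|V_0| \le \min_{i} |V_i|$ and each edge of $M_0$ uses exactly one vertex of each $V_i$, we still have $|V_i'| \ge |V_0'| > m^*$ for every $i \in [r-1]$; in particular each $V_i'$ contains a subset of size exactly $|V_0'|$ (or we may just take subsets $X_i \subseteq V_i'$ of size $m^*+1 \le |V_0'|$ and $X_0 \subseteq V_0'$ of size $m^*+1$). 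By hypothesis \textbf{B2}, $\alpha^*_{\mathcal V}(G) \le m^*$, so there cannot be a zero-edge configuration of sets of size $m^*+1$ inside $(V_0', V_1', \dots, V_{r-1}')$; hence $e_G(X_0, X_1, \dots, X_{r-1}) > 0$, which gives an edge of $G$ crossing $\mathcal V$ and disjoint from $M_0$. We add it to $M_0$ and repeat. This process terminates with a matching $M'$ in $G[\mathcal V]$ leaving at most $m^*$ vertices of $V_0$ uncovered, as required.

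\textbf{Finding $M''$.} Now let $W \subseteq V_0$ be the set of (at most $m^*$) vertices of $V_0$ left uncovered by $M'$; I want a matching $M''$, disjoint from $M'$, whose edges cross $(V_0, U_1, \dots, U_{r-1})$ and cover all of $W$. Note $M''$ touches $V_0$ only in $W$ (the other $V_0$-vertices are already covered by $M'$), so $M'$ and $M''$ are automatically vertex-disjoint inside the $V_i$'s, and they are disjoint inside the $U_i$'s since $M'$ lives entirely in $\bigcup V_i$. Again build $M''$ greedily: process the vertices $v \in W$ one at a time. When we come to $v$, at most $|W| - 1 \le m^* - 1 < m^*$ vertices of each $U_i$ have been used by the partial matching already built. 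By \textbf{B3}, $d_G(v, \mathcal U) \ge (r-1)^2 m^{r-2} m^*$. I claim this forces an edge through $v$ crossing $(\{v\}, U_1, \dots, U_{r-1})$ avoiding the used vertices: an edge through $v$ lying in $G[\{v\}\cup U_1 \cup \dots \cup U_{r-1}]$ but hitting some used vertex of $U_i$ is counted, over all tuples, at most $m^* \cdot (r-1) \cdot m^{r-2}$ times (choose the used slot in one of $r-1$ positions, with fewer than $m^*$ choices, and fewer than $m$ choices in each of the other $r-2$ positions), and summing over the $r-1$ positions this is at most $(r-1)^2 m^{r-2} m^* \le d_G(v,\mathcal U)$ — wait, this needs the inequality to be strict, so I would instead note that fewer than $m^*$ vertices are used (namely at most $|W|-1$), making the count of ``bad'' tuples strictly less than $(r-1)^2 m^{r-2} m^*$, hence strictly less than $d_G(v,\mathcal U)$, leaving a good tuple. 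We add the corresponding edge to $M''$ and continue; after processing all of $W$ we obtain $M''$ with $|M''| = |W| \le m^*$, and $M = M' \cup M''$ covers all of $V_0$.

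\textbf{Main obstacle.} The conceptually easy part is the greedy construction; the only place requiring care is the counting estimate in the $M''$ step — making sure the number of edges through $v$ that are blocked by previously used $U_i$-vertices is genuinely smaller than $d_G(v,\mathcal U)$, which is why the constant $(r-1)^2 m^{r-2} m^*$ appears and why it matters that strictly fewer than $m^*$ vertices have been used when we process each $v \in W$ (equivalently that $|W| \le m^*$, so at most $m^*-1$ are used). A secondary point to get right is bookkeeping of disjointness: that $M'$ and $M''$ do not collide, which follows because $M'$ covers $V_0 \setminus W$ and does not meet any $U_i$, while $M''$ meets $V_0$ only in $W$. I would also double-check the degenerate case $m^* = 0$ (then $M' $ already covers $V_0$ and $M'' = \emptyset$) and the boundary cases where some $|V_i'|$ shrinks, but these are routine.
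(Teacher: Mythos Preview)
Your approach is correct and essentially the same as the paper's: take a maximum (equivalently, greedily built) matching $M'$ in $G[\mathcal V]$ and use the bound on $\alpha^*_{\mathcal V}(G)$ to see at most $m^*$ vertices of $V_0$ remain uncovered, then greedily cover these through the $U_i$'s using the degree condition.

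One small correction in the $M''$ counting, precisely at the spot you flagged as the main obstacle: your claim that at most $|W|-1$ vertices of \emph{each} $U_i$ have been used is not quite right, because the $U_i$'s need not be pairwise disjoint, so every one of the $(r-1)k$ vertices used by the first $k$ edges of $M''$ could lie in a given $U_i$. The clean way to count (and this is what the paper does) is: there are at most $(r-1)k < (r-1)m^*$ used vertices in total, each lies in at most $r-1$ of the $U_j$'s, and for each such pair contributes at most $m^{r-2}$ blocked tuples, giving strictly fewer than $(r-1)^2 m^{r-2} m^*$ bad tuples --- exactly your stated final bound. So the conclusion stands, only the intermediate bookkeeping needs this fix.
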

\begin{proof}
    Take a maximum matching $M'$ in $G[V_0,\dots, V_{r-1}]$. As $G[ V_0\setminus V(M'),\dots, V_{r-1}\setminus V(M')]$ contains no edges, the assumption
    $\alpha^*_{\mathcal{V}}(G) \leq m^*$ implies  $|M'|\geq \min_{i\in [r-1]\cup\{0\}} |V_i| - m^* = |V_0|-m^*$.
    For the remaining vertices $v$ in $V_0\setminus V(M')$, we greedily choose disjoint edges $e_1,\dots,e_{|V_0|-|M'|} $ that cross $(\{v\},U_1,\dots, U_{r-1})$. Once we choose $i< m^*$ such edges, for each vertex $v\in V_0\setminus(V(M')\cup e_1\cup \dots\cup e_i)$,  
    there are still at least
    $$\frac{1}{(r-1)!}\left( d_G(v,\mathcal{U}) - (r-1)^2 i m^{r-2}\right) \geq \frac{1}{(r-1)!}\left((r-1)^2 m^* m^{r-2} - (r-1)^2 i m^{r-2}\right) > 0$$ available edges disjoint from $e_1,\dots, e_i$ and crossing $(\{v\},U_1,\dots, U_{r-1})$. 
    Here, recall that $d_G(v,\mathcal{U})$ counts tuples instead of edges, so an edge can be counted at most $(r-1)!$ times. Also, we subtract $(r-1)^2i m^{r-2}$ from $d_G(v,\mathcal{U})$ in the first term because there are $(r-1)i$ vertices $u$ in $e_1\cup\dots\cup e_{i}$ and such a vertex $u$ may belong to $U_j$ for at most $(r-1)$ choices of $j$ (recall that $U_1,\dots, U_{r-1}$ may not be disjoint). 
    As $|V_0|-|M'|\leq m^*$, we can choose $e_1,\dots, e_{|V_0|-|M'|}$. This yields a desired matching $M''=\{e_1,\dots, e_{|V_0|-|M'|}\}$.
\end{proof}

By repeatedly applying this lemma, one can prove the existence of a linear path as follows.

\begin{lemma}\label{lem: find path}
Suppose that $G$ is an $r$-graph and $\mathcal{V}=(V_1,\dots, V_{(r-1)t+1})$ is a tuple of pairwise disjoint sets 
with $|V_i|> ((r-1)t+1)\alpha_{\mathcal{V}}^{*}(G)$ for each $i\in [(r-1)t+1]$.
Then $G$ contains a linear path $P$ connecting a vertex in $V_1$ to a vertex in $V_{(r-1)t+1}$ whose vertex set crosses $\mathcal{V}$.
\end{lemma}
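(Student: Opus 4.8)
\textbf{Proof plan for Lemma~\ref{lem: find path}.}

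The plan is to build the linear path one edge at a time, using Lemma~\ref{lem: find matching} (or rather the first half of it, which only needs the $r$-partite hole condition) to control each extension step. Recall that a linear path with $t$ edges crossing $\mathcal{V}=(V_1,\dots, V_{(r-1)t+1})$ has its $j$-th edge occupying vertices in $V_{(r-1)(j-1)+1},\dots, V_{(r-1)(j-1)+r}$, so that consecutive edges share exactly the one vertex sitting in a block of index $\equiv 1 \pmod{r-1}$. The idea is to greedily choose these $t$ edges in order, each time picking an edge crossing the next block of $r$ coordinates that is compatible with the already-chosen portion of the path. The key point is that failing to extend would produce a large $r$-partite hole, contradicting the hypothesis on $\alpha^*_{\mathcal{V}}(G)$.

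First I would set up the greedy process. Suppose after $j$ steps we have built a linear path $P_j$ with edges $e_1,\dots,e_j$ whose vertex set crosses $(V_1,\dots,V_{(r-1)j+1})$, and let $v_j \in V_{(r-1)j+1}$ be its current endpoint. We want to extend by one edge using coordinates $(r-1)j+2,\dots,(r-1)j+r$ together with the vertex $v_j$. Write $k=(r-1)j+1$ and consider the link hypergraph of $v_j$ restricted to the tuple $(V_{k+1},\dots,V_{k+r-1})$; we must find an edge here avoiding the (at most $(r-1)t$) vertices already used in $P_j$. Since we have only used at most one vertex from each block, in each of the blocks $V_{k+1},\dots,V_{k+r-1}$ we have at least $|V_i| - 1 > ((r-1)t+1)\alpha^*_{\mathcal{V}}(G) - 1 \geq \alpha^*_{\mathcal{V}}(G)$ unused vertices; more carefully, after removing used vertices each of these $r-1$ blocks still has more than $\alpha^*_{\mathcal{V}}(G)$ vertices. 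If there were no edge crossing $(\{v_j\}, V_{k+1}', \dots, V_{k+r-1}')$ where the $V_i'$ are the unused portions, then — after padding $\{v_j\}$ up to a set of size $\alpha^*_{\mathcal{V}}(G)+1$ using leftover vertices of $V_k$ (here I use that $|V_k|$ is large and the path uses only one vertex of $V_k$, leaving more than $\alpha^*_{\mathcal{V}}(G)$) and shrinking each $V_i'$ to size $\alpha^*_{\mathcal{V}}(G)+1$ — we would exhibit sets $X_k \subseteq V_k, X_{k+1}\subseteq V_{k+1},\dots,X_{k+r-1}\subseteq V_{k+r-1}$, each of size $\alpha^*_{\mathcal{V}}(G)+1$, with $e_G(X_k,\dots,X_{k+r-1})=0$. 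Completing this to a tuple over all of $[(r-1)t+1]$ (the other coordinates are irrelevant to whether this particular $r$-subset is an edge, so we just shrink every remaining $V_i$ to size $\alpha^*_{\mathcal{V}}(G)+1$) contradicts the definition of $\alpha^*_{\mathcal{V}}(G)$. Hence an extending edge exists, and we may add it as $e_{j+1}$, with new endpoint in $V_{k+r-1}=V_{(r-1)(j+1)+1}$.

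Iterating this for $j=0,1,\dots,t-1$ (starting from $P_0$ being a single vertex of $V_1$, chosen arbitrarily) produces a linear path $P=P_t$ with $t$ edges whose vertex set crosses $\mathcal{V}$, with one endpoint in $V_1$ and the other in $V_{(r-1)t+1}$, as desired. I should double-check the bookkeeping: at each of the $t$ steps the path has accumulated at most $(r-1)t+1$ vertices total, so when I look at a block $V_i$ I have removed at most one vertex from it (each block of index $\equiv 1\pmod{r-1}$ is touched twice over the whole process — once as the "new endpoint" of step $j$ and once as the "old endpoint" of step $j+1$ — but these are the same vertex, so still at most one removed vertex per block at any given time, and the counting works out). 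The size requirement $|V_i| > ((r-1)t+1)\alpha^*_{\mathcal{V}}(G)$ is comfortably more than enough: we really only need $|V_i| \geq \alpha^*_{\mathcal{V}}(G)+2$ at each step, and the stated bound absorbs the few vertices used by the path with huge room to spare. The main (very mild) obstacle is purely organizational — making sure the "which block is shared by which two consecutive edges" indexing is stated cleanly so that the padding-and-shrinking argument that invokes the definition of $\alpha^*_{\mathcal{V}}(G)$ is manifestly correct; there is no genuine difficulty beyond that.
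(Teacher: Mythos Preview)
Your greedy-extension argument has a genuine gap at the padding step. You assume that if there is no edge crossing $(\{v_j\}, V_{k+1}', \dots, V_{k+r-1}')$, then after padding $\{v_j\}$ to a set $X_k\subseteq V_k$ of size $\alpha^*_{\mathcal{V}}(G)+1$ you obtain $e_G(X_k,X_{k+1},\dots,X_{k+r-1})=0$. This does not follow: the absence of an edge through the single vertex $v_j$ says nothing about edges through the other vertices you have added to $X_k$. The $r$-partite hole condition only guarantees \emph{some} edge crossing large sets, not an edge through a prescribed vertex, so it cannot drive a greedy extension from a fixed endpoint.

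The paper's proof avoids this by working in bulk rather than greedily. It takes equal-sized subsets $V'_i\subseteq V_i$ of size $m=\min_i|V_i|$, applies Lemma~\ref{lem: find matching} to each of the $t$ blocks $(V'_{1+(r-1)(i-1)},\dots,V'_{1+(r-1)i})$ to obtain a matching $M_i$ of size at least $m-\alpha^*_{\mathcal{V}}(G)$, and then concatenates: the $M_i$-endpoints in the shared class $V'_{1+(r-1)i}$ overlap with the $M_{i+1}$-endpoints in at least $m-2\alpha^*_{\mathcal{V}}(G)$ vertices, and iterating this loss over $t-1$ overlaps leaves at least $m-t\alpha^*_{\mathcal{V}}(G)>0$ full-length transversal paths. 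The point is that large matchings, unlike single vertices, are robust enough to survive the intersection losses at each junction.
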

\begin{proof}
   Let $m$ be the minimum $|V_i|$ over all $i\in [(r-1)t+1]$.
   For each $i\in [(r-1)t+1]$, we choose a subset $V'_i\subseteq V_i$ of size exactly $m$. 
   For each $i\in [t]$, 
   apply \Cref{lem: find matching} for each $G[\mathcal{V}'_{e_i}]$
   where $e_i=\{1+(r-1)(i-1),\dots, 1+(r-1)i\}$. This yields a matching of size $m-\alpha_{\mathcal{V}}^{*}(G)$ in each $G[\mathcal{V}'_{e_i}]$.
   As $m > ((r-1)t+1)\alpha_{\mathcal{V}}^{*}(G)$, concatenating the edges in these matchings, we obtain at least one desired $\mathcal{V}$-transversal path.   
\end{proof}

Finally, the following lemma is useful for finding a set of large degree vertices.
\begin{lemma}\label{lem: find a set}
Suppoes $t>0$.
    Suppose that $G$ is an $r$-partitite $r$-garph with an $r$-partition $\mathcal{V}=(V_1,\dots, V_r)$ such that $|V_j|\leq n$ for all $j\in [r]$. 
    For $i\in [r]$ and a subset $U\subseteq V(G)$ satisfying $U\cap V_i=\emptyset$, let $W = \{ w\in V_i: d_G(U\cup \{w\})\geq t\}$. Then $|W|\geq \frac{1}{n^{r-|U|-1}-t+1}(d_G(U)- (t-1)n)$
 \end{lemma}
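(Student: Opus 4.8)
The statement is a counting estimate: in an $r$-partite $r$-graph with parts of size at most $n$, if we already fix a partial transversal set $U$ (disjoint from $V_i$) then the number of vertices $w\in V_i$ whose joint degree $d_G(U\cup\{w\})$ exceeds a threshold $t$ cannot be too small. The natural approach is a double-counting / averaging argument on the quantity $d_G(U)$, which (by the notational conventions set up in the Basics subsection) counts tuples $(x_1,\dots,x_r)$ with $x_j\in V_j$, all $r-|U|$ coordinates outside $U$ ranging freely, such that $\{x_1,\dots,x_r\}\in E(G)$ and the $|U|$ coordinates indexed by the parts meeting $U$ are exactly the vertices of $U$. Every such tuple determines, in particular, its coordinate in the part $V_i$; call that vertex $w$. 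So $d_G(U)=\sum_{w\in V_i} d_G(U\cup\{w\})$, where we interpret $d_G(U\cup\{w\})$ as $0$ when there is no edge through $U\cup\{w\}$ at all.

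First I would split this sum over $w\in V_i$ into the contribution from $W=\{w\in V_i: d_G(U\cup\{w\})\ge t\}$ and from its complement $V_i\setminus W$. For $w\notin W$ we have $d_G(U\cup\{w\})\le t-1$, and there are at most $|V_i|\le n$ such vertices, so the complement contributes at most $(t-1)n$. For $w\in W$ we use the crude bound $d_G(U\cup\{w\})\le n^{r-|U|-1}$: once $U$ and $w$ are fixed, the tuple has $r-|U|-1$ free coordinates, each lying in a part of size at most $n$. Hence
\begin{equation*}
d_G(U)\le |W|\cdot n^{r-|U|-1} + (t-1)n.
\end{equation*}
Rearranging gives $|W|\ge \frac{d_G(U)-(t-1)n}{n^{r-|U|-1}}$, which is already slightly stronger than the claimed bound (the claimed denominator is $n^{r-|U|-1}-t+1$, which is at most $n^{r-|U|-1}$, so dividing by the smaller quantity can only increase the right-hand side — provided $n^{r-|U|-1}-t+1>0$, which one should note is implicit, since otherwise the bound is vacuous as $t-1\ge n^{r-|U|-1}$ forces the numerator non-positive anyway). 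To land exactly on the stated form one can instead bound, for $w\in W$, $d_G(U\cup\{w\})\le n^{r-|U|-1}$ and split off one unit more carefully; but since any bound of the displayed shape with denominator $n^{r-|U|-1}$ implies the stated one, I would simply prove the cleaner inequality and remark that it suffices. Alternatively, to get the precise denominator, note that for $w\in W$ we may write $d_G(U\cup\{w\}) = n^{r-|U|-1} - (n^{r-|U|-1}-d_G(U\cup\{w\}))$ and combine the ``deficiency'' terms with the complement count, but this is cosmetic.

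I do not expect any serious obstacle here: the only points requiring minor care are (i) making sure the tuple-counting convention is applied consistently (an edge counted with multiplicity, with the $U$-coordinates pinned), so that the identity $d_G(U)=\sum_{w\in V_i}d_G(U\cup\{w\})$ is literally correct, and (ii) the trivial observation that if $n^{r-|U|-1}-t+1\le 0$ the stated inequality is either vacuous or follows because its right-hand side is non-positive. The substantive content is just the two-sided split of the sum over $V_i$ together with the two elementary upper bounds $d_G(U\cup\{w\})\le n^{r-|U|-1}$ and $|V_i|\le n$, so the proof is a short averaging argument.
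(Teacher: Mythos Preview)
Your approach is exactly the paper's double-counting argument, but you have the direction of the comparison backwards. When you obtain
\[
|W|\;\ge\;\frac{d_G(U)-(t-1)n}{n^{r-|U|-1}},
\]
this is \emph{weaker} than the stated bound, not stronger: the stated denominator $n^{r-|U|-1}-t+1$ is smaller, hence the stated right-hand side is larger (when the numerator is positive). So your displayed inequality does not imply the lemma.

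The fix is the one-line sharpening the paper uses: instead of bounding the complement contribution by $(t-1)\cdot|V_i|\le (t-1)n$, use $|V_i\setminus W|=|V_i|-|W|\le n-|W|$, giving
\[
d_G(U)\;\le\; n^{r-|U|-1}\,|W|\;+\;(t-1)(n-|W|)\;=\;\bigl(n^{r-|U|-1}-t+1\bigr)|W|+(t-1)n,
\]
and now rearranging yields the exact stated bound. No ``deficiency'' bookkeeping is needed.
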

\begin{proof}
For each $w$, $w\cup U$ belongs to at most $n^{r-|U|-1}$ edges. 
Counting all edges of $G$ containing $U$, we have 
$d_G(U) \leq n^{r-|U|-1} |W| + (t-1)(n-|W|)$. Thus we have $|W|\geq \frac{1}{n^{r-|U|-1}-t+1}(d_G(U)-(t-1)n)$.
\end{proof}

\subsection{Concentration inequalities}\label{sec:prelim_concentration}

We also collect some useful probabilistic tools regarding concentration of random variables and their consequences on degrees on certain random subgraphs. 
We first need the following Chernoff bound (see, for example, \cite{Alonprobabilistic}). 
\begin{lemma}[The Chernoff Bound]\label{lem:chernoff}
    Let $X_1, X_2, \cdots, X_n$ be i.i.d. Bernoulli random variables and let $X= \sum_{i=1}^n X_i$. Then for $\varepsilon \in (0, 1)$, $$\mathbb{P}(|X-\mathbb{E}[X]| \geq \varepsilon \mathbb{E}[X] ) \leq 2\exp\left(-\frac{\varepsilon^2 \mathbb{E}[X]}{3}\right).$$     
\end{lemma}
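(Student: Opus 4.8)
The plan is to establish the two one-sided tail bounds $\mathbb{P}(X \geq (1+\varepsilon)\mu) \leq e^{-\varepsilon^2\mu/3}$ and $\mathbb{P}(X \leq (1-\varepsilon)\mu) \leq e^{-\varepsilon^2\mu/3}$ separately, where $\mu := \mathbb{E}[X]$, and then combine them by a union bound. Both bounds come from the exponential moment (Bernstein--Chernoff) method, and the argument in fact only uses independence of the $X_i$, not that they are identically distributed.

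For the upper tail I would fix a parameter $t>0$ and apply Markov's inequality to the nonnegative variable $e^{tX}$, obtaining
\[
\mathbb{P}(X \geq (1+\varepsilon)\mu) \;=\; \mathbb{P}\!\left(e^{tX} \geq e^{t(1+\varepsilon)\mu}\right) \;\leq\; e^{-t(1+\varepsilon)\mu}\,\mathbb{E}\!\left[e^{tX}\right].
\]
By independence $\mathbb{E}[e^{tX}] = \prod_i \mathbb{E}[e^{tX_i}]$, and writing $p_i = \mathbb{P}(X_i=1)$ one has $\mathbb{E}[e^{tX_i}] = 1 + p_i(e^t-1) \leq \exp\!\big(p_i(e^t-1)\big)$ from $1+x\leq e^x$, so $\mathbb{E}[e^{tX}] \leq \exp\!\big((e^t-1)\mu\big)$ since $\sum_i p_i = \mu$. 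Substituting and choosing $t = \ln(1+\varepsilon)$ to optimise the exponent yields $\mathbb{P}(X \geq (1+\varepsilon)\mu) \leq \big(e^{\varepsilon}(1+\varepsilon)^{-(1+\varepsilon)}\big)^{\mu}$. The lower tail is symmetric: for $s>0$, Markov applied to $e^{-sX}$ gives $\mathbb{P}(X \leq (1-\varepsilon)\mu) \leq e^{s(1-\varepsilon)\mu}\,\mathbb{E}[e^{-sX}] \leq \exp\!\big(((e^{-s}-1)+s(1-\varepsilon))\mu\big)$, and $s=-\ln(1-\varepsilon)$ gives $\big(e^{-\varepsilon}(1-\varepsilon)^{-(1-\varepsilon)}\big)^{\mu}$.

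It then remains to verify the two elementary scalar inequalities $e^{\varepsilon}(1+\varepsilon)^{-(1+\varepsilon)} \leq e^{-\varepsilon^2/3}$ and $e^{-\varepsilon}(1-\varepsilon)^{-(1-\varepsilon)} \leq e^{-\varepsilon^2/3}$ for $\varepsilon\in(0,1)$. The first is equivalent to $g(\varepsilon) := (1+\varepsilon)\ln(1+\varepsilon) - \varepsilon - \varepsilon^2/3 \geq 0$ on $[0,1]$: here $g(0)=0$, $g'(\varepsilon)=\ln(1+\varepsilon)-2\varepsilon/3$ with $g'(0)=0$, and $g''(\varepsilon)=(1+\varepsilon)^{-1}-2/3$ is nonnegative for $\varepsilon\leq 1/2$ and negative afterwards, so together with $g'(1)=\ln 2 - 2/3 > 0$ one gets $g'\geq 0$ on $[0,1]$, hence $g\geq 0$. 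The second is cleaner: expanding $\ln(1-\varepsilon)=-\sum_{k\geq 1}\varepsilon^k/k$ gives $(1-\varepsilon)\ln(1-\varepsilon)+\varepsilon = \sum_{k\geq 2}\varepsilon^k/(k(k-1)) \geq \varepsilon^2/2$, so $-\varepsilon-(1-\varepsilon)\ln(1-\varepsilon) \leq -\varepsilon^2/2 \leq -\varepsilon^2/3$. Adding the two resulting tail bounds gives $\mathbb{P}(|X-\mu|\geq \varepsilon\mu) \leq 2e^{-\varepsilon^2\mu/3}$.

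The only step requiring genuine care — everything else is mechanical — is the scalar estimate $(1+\varepsilon)\ln(1+\varepsilon) \geq \varepsilon + \varepsilon^2/3$ over the whole range $\varepsilon\in(0,1)$, since no single convexity argument covers it and a small case split (around $\varepsilon = 1/2$) is needed. If one prefers to bypass this, the inequality, and indeed the whole lemma, can simply be quoted from a standard reference such as \cite{Alonprobabilistic}.
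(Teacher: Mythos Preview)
Your proof is correct and is the standard exponential-moment argument; the paper, however, does not prove this lemma at all --- it simply states it and points to \cite{Alonprobabilistic} as a reference. So you have supplied strictly more than the paper does. Your verification of the scalar inequality $(1+\varepsilon)\ln(1+\varepsilon)\ge \varepsilon+\varepsilon^2/3$ via the sign analysis of $g'$ and $g''$ is fine, and the power-series computation for the lower tail is clean. Since the paper treats this as a black-box citation, either your self-contained argument or the one-line reference you mention at the end would be acceptable here.
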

A hypergeometric version of the above lemma will also be used frequently.
\begin{lemma}\label{lem:chernoff_hypergeometric}
    Let $0<m, r \leq n$. Let $S \subseteq [n]$ be a fixed subset of size $m$ and $X \subseteq [n]$ be a random subset of $[n]$ of size $r$ chosen uniformly at random. Then for every $t \geq 0$, we have 
    $$\mathbb{P}\left(\left||S \cap X| - \frac{mr}{n}\right| \ge t\right) \leq 2\exp\left(-\frac{t^2}{8r}\right)$$
\end{lemma}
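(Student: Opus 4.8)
Write $Y=|S\cap X|$. The plan is to recognise $Y$ as a hypergeometric random variable (so $\mathbb{E}[Y]=mr/n$) and then apply a standard concentration estimate for sampling without replacement; because the claimed bound carries the generous constant $8$ in the exponent, essentially any such argument works, and in fact gives something stronger. Concretely, I would realise $X$ as the set of the first $r$ symbols of a uniformly random permutation $\pi=(\pi_1,\dots,\pi_n)$ of $[n]$, so that $Y=\sum_{j=1}^{r}\mathbf{1}[\pi_j\in S]$, and consider the Doob martingale $Y_j=\mathbb{E}[Y\mid\pi_1,\dots,\pi_j]$, which satisfies $Y_0=\mathbb{E}[Y]=mr/n$ and $Y_j=Y$ for all $j\ge r$. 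Writing $a_j=\sum_{i\le j}\mathbf{1}[\pi_i\in S]$, one has the explicit formula $Y_j=a_j+(r-j)\frac{m-a_j}{n-j}$ for $j\le r$, since conditionally on $\pi_1,\dots,\pi_j$ each later position lies in $S$ with probability $\frac{m-a_j}{n-j}$.

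The key point is that $|Y_{j+1}-Y_j|\le 1$ for every $j$. Using the formula above, a direct computation of $Y_{j+1}-Y_j$ in the two cases $\pi_{j+1}\notin S$ and $\pi_{j+1}\in S$, together with $r\le n$, $j\le r-1$ and $0\le m-a_j\le n-j$, shows that the increment lies in $[-1,1]$ (and $=1$ can occur, so this is tight). With the increment bound in hand, the Azuma--Hoeffding inequality applied to $Y_0,\dots,Y_r$ (where only the first $r$ steps contribute) yields
$$\mathbb{P}\big(|Y-\mathbb{E}Y|\ge t\big)\le 2\exp\!\left(-\frac{t^2}{2r}\right)\le 2\exp\!\left(-\frac{t^2}{8r}\right),$$
which is the claimed inequality. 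Two alternative routes give the same conclusion and avoid the martingale bookkeeping: (i) Hoeffding's theorem that $\mathbb{E}[f(Y)]\le\mathbb{E}[f(\widetilde Y)]$ for every continuous convex $f$, where $\widetilde Y\sim\mathrm{Bin}(r,m/n)$ is the with-replacement analogue, applied to $f(x)=e^{\pm\lambda x}$ to transfer the exponential-moment bounds and hence the usual Chernoff tail bound from $\widetilde Y$ to $Y$; and (ii) the observation that the indicators $(\mathbf{1}[\pi_i\in S])_{i\in[r]}$ are negatively associated (being a subfamily of the permutation distribution of a fixed $0/1$ vector with $m$ ones), and negatively associated variables obey the Chernoff--Hoeffding inequality exactly as independent ones do.

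I expect the only point requiring genuine (though routine) care is the verification that the Doob-martingale increments are bounded by $1$: the subtlety is that revealing $\pi_{j+1}$ changes not merely the partial count $a_j$ but also the conditional expectation of the still-unrevealed contribution, so one really has to compute with the explicit formula for $Y_j$ rather than argue ``each element moves $Y$ by at most one'' informally. Everything else is a black-box invocation of a standard inequality, and the slack between the exponents $t^2/(2r)$ and $t^2/(8r)$ means no constants need to be tracked carefully.
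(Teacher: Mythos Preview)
The paper does not prove this lemma; it is stated without proof as a standard concentration result for hypergeometric random variables (introduced simply as ``A hypergeometric version of the above lemma will also be used frequently''). Your proposal supplies a correct proof via the Doob martingale and Azuma--Hoeffding, and the two alternative routes you mention (Hoeffding's convex-order comparison with $\mathrm{Bin}(r,m/n)$, or negative association of the indicators) are equally valid and standard. In fact your argument yields the stronger exponent $t^2/(2r)$, so the stated bound with $t^2/(8r)$ follows a fortiori; there is nothing to compare since the paper offers no argument of its own.
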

We also need the following inequality regarding the degree of a vertex in a random partition of $r$-graphs. We provide proof of this lemma in the appendix.

\begin{lemma}\label{lem:general_concent}
Suppose $0<1/n \ll 1/t, 1/r, p_0 \leq 1$.
Let $G$ be an $n$-vertex $r$-graph with $\delta_1(G)\geq \varepsilon n^{r-1}$.
Let $p_1,\dots, p_t$ be positive numbers with $p_i\geq p_0$ for all $i\in [t]$ and $\sum_{i\in [t]} p_i=1$.
    Let $\mathcal{X}=(X_1, \ldots, X_t)$ be a random partition of $V(G)$ into $t$ parts, each of size $p_in$, chosen uniformly at random.
Then with probability $1-o(1)$, we have 
$$d_G(v,Y_1,\dots, Y_{r-1}) \geq \frac{1}{2} \varepsilon \prod_{i\in [r-1]} |Y_i|$$
for every $v\in V(G)$ and all choices of $Y_1,\dots, Y_{r-1}\in \{X_1,\dots, X_t\}$ with possible repetitions.
\end{lemma}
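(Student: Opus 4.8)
\textbf{Proof proposal for \Cref{lem:general_concent}.}
The plan is to fix $v$ and a choice of $Y_1,\dots,Y_{r-1}\in\{X_1,\dots,X_t\}$, estimate $\mathbb{E}[d_G(v,Y_1,\dots,Y_{r-1})]$, prove concentration around this mean, and then union-bound over the at most $n\cdot t^{r-1}=O(n)$ pairs $(v,(Y_1,\dots,Y_{r-1}))$. Since $\delta_1(G)\geq\varepsilon n^{r-1}$, the neighbourhood $N_G(v)$ is a set of at least $\varepsilon n^{r-1}/(r-1)!$ many $(r-1)$-subsets of $V(G)\setminus\{v\}$; each such $(r-1)$-set contributes $(r-1)!$ to $d_G(v,Y_1,\dots,Y_{r-1})$ precisely when its elements can be injectively distributed among $Y_1,\dots,Y_{r-1}$ in the prescribed way. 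The first step is therefore to show that the expected count is $\Theta(\varepsilon\prod_i|Y_i|)$; the target size $|Y_i|=p_in$ with $p_i\geq p_0$ keeps all the relevant probabilities bounded below by a constant depending only on $p_0,r,t$.

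First I would handle the expectation. Write $d_G(v,Y_1,\dots,Y_{r-1})=\sum_{(u_1,\dots,u_{r-1})}\mathbf{1}[u_j\in Y_j\ \forall j]$, where the sum is over ordered $(r-1)$-tuples of distinct vertices in $V(G)\setminus\{v\}$ with $\{u_1,\dots,u_{r-1}\}\in N_G(v)$ (as an unordered set); each edge through $v$ contributes $(r-1)!$ ordered tuples. For a fixed tuple, the probability that $u_j$ lands in $X_{i_j}$ for the specified indices $i_1,\dots,i_{r-1}$ (with multiplicities) is, by a standard computation for a uniformly random equipartition into parts of prescribed sizes, within a $(1+o(1))$ factor of $\prod_j p_{i_j}$ — more carefully, it is $\prod_j p_{i_j}\bigl(1+O(1/n)\bigr)$ uniformly, since the parts are of linear size. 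Multiplying by $\delta_1(G)\geq\varepsilon n^{r-1}$ edges and grouping, $\mathbb{E}[d_G(v,Y_1,\dots,Y_{r-1})]\geq(1-o(1))\varepsilon\prod_{j\in[r-1]}|Y_j|\cdot c(r)$ for an absolute constant — and in fact, tracking constants, one gets at least $(1-o(1))\varepsilon\prod_j|Y_j|$, comfortably above the target $\tfrac12\varepsilon\prod_j|Y_j|$ with room to spare. (One subtlety: when some $Y_j$ coincide, the distinctness constraint on $u_1,\dots,u_{r-1}$ costs only a lower-order correction since each part has linear size.)

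The second step is concentration. The random variable $d_G(v,Y_1,\dots,Y_{r-1})$ is determined by the random partition $\mathcal{X}$, and moving a single vertex from one part to another changes $d_G(v,Y_1,\dots,Y_{r-1})$ by at most $O(n^{r-2})$ (that vertex lies in at most $n^{r-2}$ edges through $v$, each contributing $O(1)$). So I would apply a bounded-differences inequality for random permutations / equipartitions (McDiarmid-type, or alternatively rewrite the partition via a uniformly random bijection $[n]\to[n]$ and use the permutation version of Azuma), which gives $\mathbb{P}\bigl(|d_G(v,\dots)-\mathbb{E}[d_G(v,\dots)]|\geq\lambda\bigr)\leq 2\exp\bigl(-\Omega(\lambda^2/(n\cdot n^{2(r-2)}))\bigr)$. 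Taking $\lambda=\tfrac12\mathbb{E}[d_G(v,\dots)]=\Theta(n^{r-1})$ makes the exponent $\Omega(n^{2r-2}/n^{2r-3})=\Omega(n)$, so each failure probability is $e^{-\Omega(n)}$; since $0<1/n\ll 1/t,1/r,p_0$, a union bound over the $O(n)$ choices still gives total failure probability $o(1)$. Alternatively, one may avoid martingale machinery by a direct second-moment or hypergeometric-type argument using \Cref{lem:chernoff_hypergeometric} applied coordinate by coordinate, though the bounded-differences route is cleaner.

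The main obstacle I anticipate is not the concentration — with $n^{r-1}$-size means and $n^{r-2}$ Lipschitz constant the deviation probability is easily $e^{-\Omega(n)}$ — but rather the bookkeeping in the expectation computation when the sets $Y_j$ are allowed to coincide and when we must respect that the $r-1$ vertices forming an edge with $v$ are distinct. This is where the hypothesis $p_i\geq p_0$ is essential: it prevents the per-tuple landing probability $\prod_j p_{i_j}$ from degenerating, and it ensures that even after subtracting the $O(1/n)$ corrections from distinctness and from sampling-without-replacement effects, the bound stays above $\tfrac12\varepsilon\prod_j|Y_j|$. I would organize this as a short lemma: for a fixed $(r-1)$-set $e\not\ni v$ and a fixed multi-index $(i_1,\dots,i_{r-1})$, the probability that the vertices of $e$ occupy the prescribed parts in the prescribed pattern is $(1\pm O(1/n))\prod_j p_{i_j}$, proved by writing the equipartition as sequential uniform sampling without replacement.
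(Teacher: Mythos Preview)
Your proposal is correct and takes a genuinely different route from the paper. The paper deduces \Cref{lem:general_concent} from a stronger two-sided weighted statement (\Cref{lem:general_concent_weighted}) whose proof proceeds by induction on the number of parts~$t$: the base case (all $Y_i$ equal to a single part) is handled by the Kim--Vu polynomial concentration inequality, the case $t=2$ by an inclusion--exclusion argument layered on top of that, and the general case by reducing to $t-1$ via an auxiliary weighted $(r-\ell)$-graph. Your argument instead goes straight at the lower bound: realise the partition by a uniformly random permutation, observe that a single transposition moves at most two vertices between parts and hence changes $d_G(v,Y_1,\dots,Y_{r-1})$ by at most $O_{r}(n^{r-2})$, and apply McDiarmid's inequality for random permutations to get failure probability $e^{-\Omega(n)}$, easily surviving the union bound over $n\cdot t^{r-1}$ choices. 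Your expectation computation is fine --- the sampling-without-replacement correction is genuinely $O(1/n)$ since each part has size at least $p_0 n$, and coinciding $Y_j$'s only help as the distinctness of the $u_j$ is already forced by $\{v,u_1,\dots,u_{r-1}\}$ being an edge. What the paper's approach buys is a sharper two-sided estimate $d_G(v,Y_1,\dots,Y_{r-1})=(1\pm\eta)(r-1)!\,d_G(v)\prod_j|Y_j|/n^{r-1}$ and a formulation for general weighted hypergraphs, at the cost of heavier machinery; what your approach buys is a considerably shorter and more elementary proof of exactly the statement needed, with no recourse to Kim--Vu.
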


\subsection{Hypertrees}\label{sec:prelim_hypertree}
We also need to collect definitions and useful facts about hypertrees.
A hypergraph $H$ is \emph{linear} if $|e\cap f|\leq 1$ for all $e,f\in E(H)$.
A \emph{linear ($r$-)path $P_\ell^{(r)}$ of length $\ell$} is a hypergraph consisting of $(r-1)\ell+1$ vertices $v_0,\dots, v_{(r-1)\ell}$ and edges $\{v_{(r-1)(i-1)}, v_{(r-1)(i-1)+1}\dots, v_{(r-1)i}\}$ for each $i\in [\ell]$.
Each of the two sets $\{v_0,\dots, v_{r-2}\}$ and $\{v_{(r-1)(\ell-1)+1},\dots, v_{(r-1)\ell}\}$ of size $r-1$ is an \emph{end set} of the path. 
A \emph{$u$--$v$ path} $P$ (or a path $P$ between $u$ and $v$) is a path $P$ paired with the specified end vertices $u$ and~$v$, each of which is chosen from each end set, respectively.
The vertices other than $u$ and $v$ in a $u$--$v$ path $P$ are called the \emph{internal} vertices of $P$.

A linear $r$-graph $T$ is a \emph{($r$-)hypertree} if there is a unique linear path between any pair of distinct vertices. A \emph{($r$-)hyperforest} is a vertex-disjoint union of $r$-hypertrees. 
In a hypertree $T$, a \emph{bare path} $P$ is a path in $T$ of length $\ell \geq 2$ where no edges in $T\setminus E(P)$ are incident to any internal vertices of $P$.

A \emph{leaf-edge} of an $r$-uniform hypertree $T$ is an edge $e$ that contains at least $r-1$ vertices of degree one. 
A \emph{leaf} of a hypertree $T$ is a vertex $v\in V(T)$ of degree one in a leaf-edge of $T$ .
A \emph{star} of size $d$ with \emph{center} $v$ is an $r$-graph with $d$ edges $e_1,\dots, e_d$ with $e_i\cap e_j = \{v\}$ for all $i\neq j \in [d]$.
A \emph{matching}~$M$ is a collection of pairwise disjoint edges. 
If a matching $M$ consists of leaf-edges of a hypertree~$T$, the set of leaves of $T$ in $M$ is called the \emph{leaf set} of $M$. 
We simply say that a vertex subset $X$ of $T$ is a  \emph{matching leaf set} of $T$ if $X$ is the leaf set of a matching $M$ in $T$. 

For a hypertree $T$, let $T'$ be a hypertree obtained by deleting all the leaf-edges of $T$.
Let $e$ be a leaf-edge of $T'$ and $v \in e$ be a unique vertex of degree at least $2$ in $T'$ (if it does not exists, choose any).
A subgraph $S$ of $T$ consists with $e$ together with leaf-edges of $T$ incident to $e \setminus \{v\}$ is called a \emph{pendant star} of $T$.
The edge $e$ is called the \emph{center} and the vertex $v$ in $e$ is called the \emph{root} of the pendant star $S$.
A subgraph $L$ of $T$ that consists of a bare path $P$ of $T'$ of length $\ell$ and leaf-edges of $T$ incident to its internal vertices is called a \emph{caterpillar of length $\ell$}. 
The path $P$ is called the \emph{central path} of the caterpillar $L$.

It is well known that a ($2$-uniform) tree $T$ on $n$ vertices contains either many leaves or many bare paths (see, e.g., \cite{Krivelevich2010embedding}).
By using this, Han, Hu, Ping, Wang and Yang proved every tree has many pandent stars or caterpillars.
\begin{lemma}[\cite{Han2024Spanning}]\label{lem:pendant_and_caterpillars_for_graph}
    For an $n$-vertex tree $T$ with $\Delta(T) \leq \Delta$, either it contains at least $\frac{n}{4k\Delta}$ pendant stars or a collection of at least $\frac{n}{4k\Delta}$ vertex-disjoint caterpillars each of length $k$.
\end{lemma}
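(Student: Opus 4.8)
The plan is to reduce the statement to the classical tree dichotomy recalled just above this lemma: for every integer $k\ge 2$, every $N$-vertex tree has either at least $N/(4k)$ leaves, or a collection of at least $N/(4k)$ vertex-disjoint bare paths of length $k$ (see \cite{Krivelevich2010embedding}). The crucial idea is to apply this not to $T$ itself but to the pruned tree $T':=T-L(T)$, where $L(T)$ denotes the set of leaves of $T$. With the definitions of this subsection, each leaf of $T'$ determines a pendant star of $T$ (the one whose centre edge is the unique edge of $T'$ incident to that leaf), and the bare paths of $T'$ are precisely the central paths of the caterpillars of $T$. Hence a dichotomy for $T'$ translates directly into the desired dichotomy for $T$, once the count is transferred from $|V(T')|$ back to~$n$.

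To carry this out I would first dispose of degenerate cases: if $n/(4k\Delta)<1$ there is nothing to prove, and if $n/(4k\Delta)\ge 1$ an easy check shows $T$ has at least three non-leaf vertices (otherwise some vertex of $T$ has degree at least $n/2$, forcing $\Delta\ge n/2$ and $n/(4k\Delta)\le 1/(2k)<1$), so $T'$ is a tree on $n':=|V(T')|\ge 3$ vertices. The key elementary inequality is then $n'\ge n/\Delta$: every leaf of $T$ has a unique neighbour, which lies in $V(T')$, while every vertex of $T'$ is adjacent to at most $\Delta-1$ leaves of $T$ (one of its at most $\Delta$ incident edges is used inside the connected graph $T'$), so $n-n'=|L(T)|\le(\Delta-1)n'$, i.e.\ $n'\ge n/\Delta$. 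Now apply the classical dichotomy to $T'$ with parameter $k$. If $T'$ has at least $n'/4k$ leaves, then assigning to each leaf $a$ of $T'$ the subgraph of $T$ formed by the unique $T'$-edge at $a$ together with all leaf-edges of $T$ at $a$ produces at least $n'/4k\ge n/(4k\Delta)$ distinct pendant stars of $T$ (distinctness uses $n'\ge 3$). Otherwise $T'$ contains at least $n'/4k\ge n/(4k\Delta)$ vertex-disjoint bare paths of length $k$; extending each such path $P$ by the leaf-edges of $T$ incident to the internal vertices of $P$ turns it into a caterpillar of length $k$, and these caterpillars are vertex-disjoint because the paths $P$ are and because each leaf of $T$ has a unique parent. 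Either way the claimed dichotomy holds.

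There is not much genuine difficulty here, and the care needed is essentially bookkeeping; the linchpin is the estimate $n'\ge n/\Delta$, which is exactly what prevents the factor $\Delta$ in the denominator from being lost when one passes from $T'$ back to $T$. One must also verify that a ``bare path of $T'$ of length $k$'' delivered by the classical lemma really is an admissible central path of a caterpillar, but this is immediate from the definitions: its internal vertices have degree $2$ in $T'$ and meet no $T'$-edge outside the path, which is precisely the caterpillar requirement. Finally, if the available version of the classical lemma only guarantees bare paths of length at least $k$, one truncates each to length exactly $k$, which preserves both bareness and vertex-disjointness.
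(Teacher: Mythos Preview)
Your proposal is correct and follows essentially the same strategy the paper uses. Note that the paper does not itself prove Lemma~\ref{lem:pendant_and_caterpillars_for_graph} (it is quoted from \cite{Han2024Spanning}), but the paper's proof of the hypergraph generalisation Lemma~\ref{lem:pendant_and_caterpillars} proceeds exactly as you do: pass to the pruned tree $T'$ obtained by deleting leaf-edges, use the degree bound to show $e(T')\ge e(T)/(r(\Delta-1))$ (your inequality $n'\ge n/\Delta$ is the $r=2$ instance), and then apply the leaf/bare-path dichotomy to $T'$ to produce either many pendant stars or many caterpillars.
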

This result can be extended to hypertrees as follows.

\begin{lemma}\label{lem:pendant_and_caterpillars}
Let $r \geq 2$ and $t \geq 6$ and let $T$ be a $r$-uniform hypertree on $n$ vertices and $\Delta(T) \leq \Delta$ having at least one non-leaf-edge.
    Then $T$ contains at least $\frac{n}{12r^2(t+1)\Delta}$ vertex-disjoint non-trivial pendant stars or $\frac{n}{4r^2(t+1)\Delta}$ vertex-disjoint caterpillars each of length $t$.
\end{lemma}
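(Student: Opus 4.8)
The plan is to reduce the hypergraph statement to the graph statement, \Cref{lem:pendant_and_caterpillars_for_graph}, via a suitable auxiliary $2$-uniform tree. First I would handle the trivial reduction: delete all leaf-edges of $T$ to obtain $T'$, and observe that since $T$ has a non-leaf-edge, $T'$ is a nonempty hypertree. The dichotomy we want is a dichotomy on $T'$: either $T'$ has many vertex-disjoint leaf-edges (which will seed non-trivial pendant stars of $T$) or $T'$ has many vertex-disjoint bare paths of length $t$ (which will seed caterpillars of $T$). To get this, I would pass from $T'$ to its ``skeleton'' graph $\hat T'$, the $2$-uniform tree whose vertices are the vertices of $T'$ of degree $\neq$ (something appropriate) — more precisely, contract each edge of the linear hypertree $T'$ to a vertex and join two such edge-vertices if the corresponding edges of $T'$ share a vertex; this yields a $2$-uniform tree $\hat T'$ on roughly $|E(T')| = \Theta(|V(T')|/r)$ vertices with $\Delta(\hat T') \le r\Delta$. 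Apply \Cref{lem:pendant_and_caterpillars_for_graph} to $\hat T'$ with parameter $k = t$: we obtain either $\Omega(|V(\hat T')|/(t\cdot r\Delta))$ vertex-disjoint pendant stars, or $\Omega(|V(\hat T')|/(t\cdot r\Delta))$ vertex-disjoint caterpillars of length $t$ in $\hat T'$.

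Next I would translate these structures back to $T$. A pendant star in $\hat T'$ centered at an edge-vertex corresponds, in $T'$, to a leaf-edge $e$ of $T'$ together with a designated root $v \in e$; in $T$, the edge $e$ together with the leaf-edges of $T$ incident to $e \setminus \{v\}$ is exactly a pendant star of $T$ in the sense defined above, and it is non-trivial provided $e$ actually carries some leaf-edges of $T$ hanging off it — which I would need to check (if $e$ were ``bare'' in $T$ one could instead absorb it into a bare path; I would set the counting thresholds so that at most a bounded fraction of the candidate stars are trivial, or simply redefine which edge-vertices of $\hat T'$ we call ``good''). A caterpillar of length $t$ in $\hat T'$ is a path of $t$ consecutive edge-vertices whose corresponding edges of $T'$ form a path $P$ of length $t$ in $T'$; provided the endpoints of this $\hat T'$-path were chosen to have degree $2$ in $\hat T'$ (which the graph lemma guarantees for the internal structure), $P$ is a bare path of $T'$, and $P$ together with the leaf-edges of $T$ incident to its internal vertices is a caterpillar of length $t$ in $T$.

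For the numerics, each such structure uses $O(r t \Delta)$ vertices of $T$ (a pendant star: the $r$ vertices of $e$ plus up to $(r-1)\Delta$ leaf-edges each with $r-1$ new vertices, so $O(r^2\Delta)$; a caterpillar: $(r-1)t+1$ path vertices plus $O(rt\Delta)$ leaf vertices, so $O(r^2 t \Delta)$), so the number of vertex-disjoint copies we extract is at least $\Omega\!\left(\frac{|V(\hat T')|}{t r\Delta}\right) = \Omega\!\left(\frac{|V(T')|}{t r^2 \Delta}\right)$. I would then relate $|V(T')|$ to $n$: since every vertex of $T \setminus T'$ lies in a leaf-edge and $\Delta(T)\le\Delta$, we have $n \le |V(T')| + (r-1)\Delta |V(T')|$ roughly, hence $|V(T')| \ge \frac{n}{r\Delta}$ up to constants; chasing the constants carefully (and being slightly wasteful) yields the stated bounds $\frac{n}{12r^2(t+1)\Delta}$ for pendant stars and $\frac{n}{4r^2(t+1)\Delta}$ for caterpillars. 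The main obstacle I anticipate is purely bookkeeping: correctly defining the skeleton $\hat T'$ so that pendant stars and bare paths of $\hat T'$ pull back to \emph{non-trivial} pendant stars and genuine bare paths of $T'$ (the non-triviality is the subtle point — one must ensure the chosen leaf-edges of $T'$ actually have pendant leaf-edges of $T$, and treat the degenerate case $T' = $ a single edge separately), and then tracking the constants through three levels of vertex-blowup ($T \to T' \to \hat T'$ and back) to land on exactly the claimed fractions. Everything else is a direct invocation of \Cref{lem:pendant_and_caterpillars_for_graph}.
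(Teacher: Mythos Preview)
Your plan takes a different route from the paper and has a genuine quantitative gap. The paper does not pass to any auxiliary $2$-uniform tree; it applies the hypergraph bare-path lemma (\Cref{lem: leaf/bare paths}) directly to $T'$. After observing that every leaf-edge of $T$ is adjacent to some non-leaf-edge, one gets $e(T') \geq \frac{e(T)}{r(\Delta-1)} \geq \frac{n}{r^2\Delta}$. Then either $T'$ has at least $\frac{n}{12r^2(t+1)\Delta}$ leaf-edges---each one being the center of a pendant star of $T$---or it has fewer, and \Cref{lem: leaf/bare paths} supplies at least $\frac{e(T')}{4t} \geq \frac{n}{4r^2(t+1)\Delta}$ bare paths of length $t$ in $T'$, each the central path of a caterpillar of $T$. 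That is the whole argument.

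Your edge-intersection tree $\hat{T}'$ has $|V(\hat{T}')| = e(T')$, which by your own estimate is only about $n/(r^2\Delta)$, and $\Delta(\hat{T}') \leq r(\Delta-1)$. Feeding this into \Cref{lem:pendant_and_caterpillars_for_graph} yields roughly $\frac{e(T')}{4t\cdot r\Delta} \approx \frac{n}{4tr^3\Delta^2}$ structures, weaker than the stated bounds by a factor of $r\Delta$. This loss is intrinsic to the detour through $\hat{T}'$ (you pay once for the blown-up maximum degree and once for the shrunken vertex count) and cannot be recovered by tighter bookkeeping; you would prove a correct but strictly weaker lemma. Separately, your worry about non-triviality is a red herring: a leaf-edge $e$ of $T'$ is by construction a non-leaf-edge of $T$, so $e$ has at least two vertices of degree $\geq 2$ in $T$; the one different from the root has degree $1$ in $T'$ but $\geq 2$ in $T$, hence carries a leaf-edge of $T$, and the pendant star is automatically non-trivial.
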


It is easy to check that the following restatement of Lemma~A.2 in \cite{Im2022proof} follows from the proof of Lemma~A.2 in \cite{Im2022proof}.

\begin{lemma}[\cite{Im2022proof}, Lemma~A.2]\label{lem: leaf/bare paths}
Let $r,m\geq 2$ and let $T$ be a hypertree with at most $\ell$ leaf-edges. Then there exist pairwise edge-disjoint bare paths $P_1,\dots, P_s$ of length $m$ such that $e(T-P_1 -\dots -P_s) \leq 6(m+1)\ell + \frac{2e(T)}{m+2}$. In particular, if $\ell\leq \frac{e(T)}{12(m+1)}$ and $m\geq 6$, then $s\geq \frac{e(T)}{4m}$.
\end{lemma}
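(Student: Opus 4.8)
The plan is to read off the coarse structure of $T$ from a carefully chosen auxiliary tree, peel off the ``long loose-path segments'' of $T$, and cut each of them into bare paths of length $m$.

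\emph{Step 1 (incidence tree).} Let $B$ be the bipartite graph on vertex set $V(T)\sqcup E(T)$ whose edges are the incident pairs $(v,f)$ with $v\in f$. Then $|V(B)|=\big((r-1)e(T)+1\big)+e(T)=re(T)+1$ and $|E(B)|=re(T)$, and $B$ is connected because $T$ is, so $B$ is a tree; in it a node $v\in V(T)$ has degree $d_T(v)$ and a node $f\in E(T)$ has degree $r$. Delete from $B$ every $T$-vertex of degree $1$ and call the result $B^*$; for $e(T)\ge2$ every edge of $T$ still meets a surviving vertex, so $B^*$ is again a tree. A surviving $T$-vertex keeps degree $d_T(v)\ge2$, while a $T$-edge $f$ now has degree equal to the number of its vertices of degree $\ge2$ in $T$; hence the leaves of $B^*$ are exactly the edges with precisely $r-1$ vertices of degree $1$, i.e.\ the leaf-edges of $T$, so $B^*$ has at most $\ell$ leaves. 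Standard tree counting then gives: at most $\ell-2$ nodes of $B^*$ have degree $\ge3$, at most $2\ell-3$ maximal paths of $B^*$ have all internal nodes of $B^*$-degree $2$, and (since the degree-$\ge3$ $T$-vertices are among the branch nodes of $B^*$) $\sum_{v:\,d_T(v)\ge3}\big(d_T(v)-2\big)\le \ell-2$.

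\emph{Step 2 (link edges and segments).} Call $f\in E(T)$ a \emph{link edge} if it has exactly two vertices of degree $2$ and $r-2$ of degree $1$ (equivalently, $B^*$-degree $2$ and no vertex of degree $\ge3$); call all other edges \emph{bad}. By the inequalities of Step 1 the number of bad edges is $O(\ell)$: there are at most $\ell$ leaf-edges, at most $\sum_{v:\,d_T(v)\ge3}d_T(v)\le 3\ell$ edges meeting a vertex of degree $\ge3$, and at most $\ell$ edges having three or more vertices of degree $2$ (each such edge is a degree-$\ge3$ node of $B^*$). The link edges, glued at their degree-$2$ vertices, split into vertex-disjoint \emph{loose-path segments}: no cycles since $T$ is acyclic, and no branching since a degree-$2$ vertex of $T$ lies in only two edges. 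Each segment is contained in a maximal degree-$2$ path of $B^*$, possibly broken up by bad edges on that path, so the number of segments is at most $(\#\text{ maximal degree-}2\text{ paths of }B^*)+(\#\text{ bad edges})=O(\ell)$.

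\emph{Step 3 (carving and counting).} Fix a segment of link edges $f_1,\dots,f_L$ with $f_i\cap f_{i+1}=\{u_i\}$ and $d_T(u_i)=2$. Any block $f_{j+1},\dots,f_{j+m}$ of $m$ consecutive edges is a bare path of length $m$ in $T$: its internal vertices are either degree-$1$ vertices of the $f_i$ or the junctions $u_{j+1},\dots,u_{j+m-1}$, each of which has degree exactly $2$ with both incident edges inside the block, so no edge of $T$ outside the block meets an internal vertex. I carve such blocks out greedily along the segment, leaving a two-edge gap between consecutive blocks (this keeps the carved paths' endpoints from becoming isolated in the residual hyperforest) and discarding the last $<m$ edges; the carved paths are then pairwise edge-disjoint, across segments as well, and avoid all bad edges. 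A segment of length $L$ leaves at most $L-m\big\lfloor\tfrac{L+2}{m+2}\big\rfloor\le \tfrac{2L}{m+2}+m$ uncovered edges, so over all segments the number of edges not used by $P_1,\dots,P_s$ is at most $(\#\text{ bad edges})+m\cdot(\#\text{ segments})+\tfrac{2}{m+2}\sum_j L_j$; careful accounting of the $O(\ell)$ quantities brings this to $6(m+1)\ell+\tfrac{2e(T)}{m+2}$, the claimed bound on $e(T-P_1-\dots-P_s)$. The ``in particular'' is then arithmetic: if $\ell\le \tfrac{e(T)}{12(m+1)}$ then $6(m+1)\ell\le \tfrac{e(T)}{2}$, if $m\ge6$ then $\tfrac{2e(T)}{m+2}\le \tfrac{e(T)}{4}$, so $sm=e(T)-e(T-P_1-\dots-P_s)\ge \tfrac{e(T)}{4}$ and $s\ge \tfrac{e(T)}{4m}$.

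\emph{Main obstacle.} The structural skeleton above is robust up to absolute constants, but pinning the leading term to exactly $6(m+1)\ell$ requires a frugal count of how many edges are really lost near each bad edge and at the end of each segment (not the crude $O(\ell)$, $O(m)$ estimates used above) and some care to avoid double-counting between the three species of bad edges. The other delicate point is verifying in Step 3 that the carved sub-blocks are genuinely bare in $T$ — this is precisely why one must work with link edges and their degree-$2$ junctions rather than with arbitrary paths of $B^*$ — and one should also check the degenerate cases ($e(T)\le1$, or $T$ having no segment of length $\ge m$, where $s=0$ and the bound reads $e(T)\le 6(m+1)\ell+\tfrac{2e(T)}{m+2}$), which are immediate.
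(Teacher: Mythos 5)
The paper itself does not prove this lemma; it only restates Lemma~A.2 of \cite{Im2022proof} and defers to the proof there, so your argument can only be judged on its own terms. Your structural skeleton is sound and close in spirit to what such a proof must do: the incidence tree $B$, the pruned tree $B^*$ whose leaves are exactly the leaf-edges, the classification into link edges and $O(\ell)$ bad edges, the vertex-disjoint segments of link edges, and the carving of length-$m$ blocks are all correct (the bareness of a block does require you to say explicitly that its two path endpoints are taken to be the two \emph{outer} degree-two vertices of the block; with any other choice those vertices are internal and are met by an edge outside the block, so the claim as phrased is only correct modulo that choice, which you yourself flag).

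The genuine gap is the quantitative step, and it is precisely the one you defer in your ``main obstacle'' paragraph: the bound you actually establish is weaker than the bound the lemma asserts. With your own estimates --- bad edges at most $5\ell$, segments at most (number of maximal degree-$2$ paths of $B^*$) plus (number of bad edges), i.e.\ roughly $7\ell$, and leftover at most $\tfrac{2L_j}{m+2}+m$ per segment --- the uncovered edge count is about $5\ell+7m\ell+\tfrac{2e(T)}{m+2}$, which exceeds $6(m+1)\ell+\tfrac{2e(T)}{m+2}$ for every $m\ge 2$; the sentence ``careful accounting \dots brings this to $6(m+1)\ell+\tfrac{2e(T)}{m+2}$'' is an assertion, not a derivation. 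Since the ``in particular'' conclusion uses the constant $6(m+1)$ exactly (via $6(m+1)\ell\le e(T)/2$ when $\ell\le e(T)/(12(m+1))$), the statement $s\ge e(T)/(4m)$ is also not established as written. The gap is fixable inside your framework: only bad edges of $B^*$-degree two (edges with exactly two vertices of $T$-degree at least $2$, at least one of them of degree at least $3$) can interrupt a maximal degree-$2$ path of $B^*$, and each such edge is $B^*$-adjacent to a $T$-vertex of degree at least $3$, so there are at most $\sum_{v:\,d_T(v)\ge 3}d_T(v)\le 3(\ell-2)$ of them; hence the number of segments is at most $(2\ell-3)+3(\ell-2)\le 5\ell$, and the total loss is at most $5\ell+5m\ell+\tfrac{2e(T)}{m+2}\le 6(m+1)\ell+\tfrac{2e(T)}{m+2}$. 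Until a count of this kind is actually carried out, the proof does not give the stated lemma.
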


\begin{proof}[Proof of Lemma~\ref{lem:pendant_and_caterpillars}]
Note that $T$ contains $\frac{n-1}{r-1}$ edges.
Delete all leaf-edges of $T$ to obtain a hypertree $T'$. As $T$ has at least one non-leaf-edge, $T'$ contains at least one edge. Moreover, since every non-leaf-edge is adjacent to at most $r(\Delta-1)$ leaf-edges, we have $e(T')\geq \frac{e(T)}{r(\Delta-1)} \geq \frac{n}{r^2\Delta}.$

If $T'$ contains at least $\frac{n}{ 12r^2(t+1)\Delta}$ leaf-edges, then they correspond to that many  pendant stars. Moreover, these pendant stars are non-trivial pendant stars as their centers are not leaf-edges of $T$. Otherwise, Lemma~\ref{lem: leaf/bare paths} implies that it contains at least $\frac{e(T')}{4t} \geq \frac{n}{4r^2(t+1)\Delta}$ bare paths of length $t$. Again, each such bare path corresponds to a caterpillar in $T$.
\end{proof}

Later we will utilize the following lemma regarding a decomposition of hypertrees. Its graph version was originally proved by Montgomery, Pokrovskiy and Sudakov in \cite{Montgomery2020embedding} and its hypergraph version was considered by Im, Kim, Lee and Methuku~\cite{Im2022proof}.
\begin{lemma}[\cite{Im2022proof}]\label{lem:tree_split}
    Let $n \geq 2$ be an integer and let $0<\mu<1$.
    For any hypertree $T$ with at most $n$ edges, there exist integers $\ell \leq 10^5 r \Delta(T) \mu^{-2}$ and $s \in [\ell]$ and a sequence of subgraphs $T_0 \subseteq T_1 \subseteq \cdots \subseteq  T_\ell = T$ such that the following holds:
    \begin{proplist}
               \item\label{it:T0} $T_0$ has at most $\mu n$ edges.
       \item\label{it:matching} For $i \in [\ell-1]\setminus\{s\}$, 
       $T_{i+1}$ is obtained by adding a matching to $T_i$ such that $V(T_{i+1})\setminus V(T_i)$ is a matching leaf set of $T_{i+1}$.
       \item\label{it:path} $T_{s+1}$ is obtained by adding at most $\mu n$ vertex-disjoint bare paths of length $3$ to $T_s$ 
       such that every bare path we add is a $u$--$v$ path $P$ where $u,v\in V(T_s)$, and $V(P) \setminus \{u,v\}$ is disjoint from $V(T_s)$. 
    \end{proplist}
\end{lemma}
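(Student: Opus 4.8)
\noindent\emph{Proof idea.}
I would build the chain \emph{backwards}, from $T_\ell:=T$ down to $T_0$, using two kinds of moves. A \emph{matching move} deletes from the current subforest $F$ a matching of leaf-edges of $F$ together with the vertices it orphans; read forwards this adds a matching whose new vertices form exactly its leaf set, which is \ref{it:matching}. A \emph{bare-path move}, performed exactly once, deletes the interior vertices of a suitable family of vertex-disjoint bare paths of length $3$ of $F$; read forwards this re-attaches length-$3$ bare paths between pairs of already-present vertices with fresh interiors, which is \ref{it:path}. We stop once at most $\mu n$ edges remain and call that forest $T_0$, so \ref{it:T0} holds. We may assume $e(T)=n$, since replacing $n$ by $e(T)\le n$ only tightens the two budgets, and also $e(T)>10^5 r\Delta\mu^{-2}$, since otherwise one may build $T$ one leaf-edge at a time. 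This is the hypertree analogue of the tree-decomposition lemma of Montgomery, Pokrovskiy and Sudakov~\cite{Montgomery2020embedding}, worked out in~\cite{Im2022proof}.

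\emph{Phase A: strip leaves while they are abundant.} Because $\Delta(T)\le\Delta$ and $T$ is linear, each vertex lies in at most $\Delta$ leaf-edges, so a \emph{maximal} matching $M$ of leaf-edges of $F$ picks a leaf-edge at every root and hence $|M|\ge(\text{number of leaf-edges of }F)/\Delta$. Thus, while $F$ has at least $\beta n$ leaf-edges for a threshold $\beta=\beta(r,\Delta,\mu)$ chosen below, each matching move deletes at least $\beta n/\Delta$ edges, and after at most $\Delta/\beta$ matching moves we reach a subforest $F_1$ with fewer than $\beta n$ leaf-edges. A bounded-degree hypertree with few leaf-edges has correspondingly few vertices of degree at least $3$, so $F_1$ has at most $O((r\Delta)^2\beta n)$ maximal bare paths, and by \Cref{lem: leaf/bare paths} all but $6(m+1)\beta n+\tfrac{2}{m+2}e(F_1)$ of its edges lie on edge-disjoint bare paths of length $m$, for any $m$ we choose. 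So $F_1$ is a small ``branch core'' — the degree-$\ge 3$ vertices and the short bare paths among them — together with a controlled number of long (internal or pendant) bare paths.

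\emph{Phase B: cut the long bare paths.} The crux is that a matching move can never add an edge both of whose endpoints are already present, since such an edge has too few degree-one vertices to be a leaf-edge; only a bare-path move can bridge such a junction, and we have just one. So we set up all the junctions at once: along each long bare path I mark vertex-disjoint length-$3$ segments spaced $\Theta(\mu^{-1})$ apart, and the single bare-path move deletes the interiors of all of them simultaneously, cutting every long bare path into pieces of length $O(\mu^{-1})$. Into $T_0$ go only the branch core and the bare paths of length at most $2$ running between two degree-$\ge 3$ vertices (too short to bridge and not ending at a leaf, hence unaddable by matching moves); every piece, and every short pendant bare path, is then peeled off by $O(\mu^{-1})$ further matching moves, legitimate since after the cuts these are small components with genuine leaf-edges at their ends (the internal pieces starting life as isolated edges). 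Forwards, this reads: start from $T_0$; grow out all the pieces with $O(\mu^{-1})$ matching moves; perform the one bare-path move closing all the length-$3$ junctions between the grown pieces (their endpoints now in $V(T_s)$, interiors fresh and pairwise disjoint); then the $\Delta/\beta$ matching moves of Phase A.

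\emph{The obstacle.} The skeleton is short, but the real work — and the origin of the $\mu^{-2}$ and the explicit $r,\Delta$ — is the simultaneous budgeting: $\beta$ must be small enough that the branch core together with the short inter-branch bare paths uses at most $\mu n$ edges and that the number of length-$3$ segments we cut (hence the number of bridges) stays at most $\mu n$, yet large enough that $\Delta/\beta$ plus the $O(\mu^{-1})$ growth rounds stays below $10^5 r\Delta\mu^{-2}$; and one must check at every matching move that the deleted edges really are leaf-edges and that their orphaned vertices form precisely a matching leaf set of the larger forest, and at the bare-path move that each inserted length-$3$ path has both endpoints present and pairwise-disjoint fresh interiors. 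Pushing this through uniformly in $r,\Delta,\mu$ — in particular recursing on the branch core when it is itself too large relative to $\mu n$ — is exactly the careful bookkeeping carried out in~\cite{Im2022proof}, which I would invoke for the quantitative statement.
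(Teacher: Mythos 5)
The paper itself gives no proof of this lemma: it is imported from \cite{Im2022proof} (the graph case being \cite{Montgomery2020embedding}), and your outline is a faithful reconstruction of that argument — strip maximal leaf-edge matchings while leaf-edges are abundant (a maximal matching has size at least a $1/\Delta$-fraction of the leaf-edges, so $\Delta/\beta$ rounds suffice), invoke \Cref{lem: leaf/bare paths} on the low-leaf remainder, use the single special step to delete length-$3$ segments of the long bare paths, place the small leftover into $T_0$, and peel the resulting pieces in $O(\mu^{-1})$ further matching rounds; the budgeting $\beta\approx\mu^2$, giving $O(\Delta\mu^{-2})$ rounds, is exactly where the stated bound comes from. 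So in substance you do what the paper does (defer to the citation) plus a correct sketch of its proof.

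One caution: your internal pieces ``starting life as isolated edges''. Under the literal definition of a matching leaf set an added edge disjoint from $T_i$ is admissible (all $r$ of its vertices are leaves of $T_{i+1}$), but the way \ref{it:matching} is used in Section 5 — where $|D_i|=r|C_i|$, i.e.\ every edge of the added matching is assumed to meet $V(T_{i-1})$ in exactly one vertex — shows the intended reading excludes isolated additions. This is easy to repair inside your scheme: either put one seed edge of each internal piece into $T_0$ (at most as many extra edges as cut segments, absorbed by adjusting constants), or cut only one segment per bare path of length $\Theta(\mu^{-1})$ and peel pendant bare paths from their free ends without cutting them, so every piece stays attached to the existing forest. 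With that adjustment your outline matches the proof carried out in \cite{Im2022proof}, to which, like the paper, you ultimately appeal for the bookkeeping.
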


\subsection{Weak hypergraph regularity}
\label{sec:weak_hypergraph_regularity}
In order to achieve \ref{A3}, we need to obtain many disjoint copies of cycles in a desired graph $G$. We plan to achieve this using the condition on $\alpha^*(G)$, which yields many paths. However, closing these paths into cycles requires additional dense and robust structures, which can be derived from the minimum degree condition using the tool of weak hypergraph regularity. In this section, we introduce weak hypergraph regularity.

\begin{defn}
For an $r$-graph $G$, a tuple $(X_1, \ldots, X_r)$ of disjoint vertex sets is $(\tau, d)$-regular if for every $Y_i \subseteq X_i$ for each $i \in [r]$ with $|Y_i| \geq \tau |X_i|$, we have 
$$|d - \frac{e_G(Y_1, \ldots, Y_r)}{ \prod_{i\in [r]} |Y_i| }| \leq \tau.$$
We say that it is $\tau$-regular if it is $(\tau,d)$-regular for some $d$ and we say it is $(\tau, d+)$-regular if it is $(\tau, d')$-regular for some $d' \geq d$.
\end{defn}

The following weak hypergraph regularity lemma is a generalization of the Szemer\'edi regularity lemma for graphs and this follows the lines of the original proof of Szemer\'edi in \cite{Szemeredi1978regular} (see, e.g. \cite{Chung1991regularity, Frankl1992uniformity, Steger1990kleitman}).

\begin{lemma}[Weak hypergraph regularity lemma]\label{thm:weak_hypergraph_regularity}
Let $0< 1/n \ll 1/T \ll 1/k, 1/p, 1/r, \tau \leq 1$.
    Let $G$ be an $n$-vertex $r$-graph and $\mathcal{V}=(V_1, \ldots, V_k)$ be an equitable partition of $V(G)$. Then there exists a partition $\mathcal{W}=(W_0, W_1, \ldots, W_t)$ for some $p \leq t \leq T$ of $V(G)$ such that the following hold.
    \begin{proplist}
          \item $|W_0| \leq \tau n$. \label{reg1}
        \item For every $i \in [t]$, $W_i$ is a subset of some $U_j$. \label{reg2}
        \item For every $i, j \in [t]$, $|W_i|=|W_{j}|$. \label{reg3}
        \item For all but at most $\tau {t \choose r}$ $r$-sets $I$ in $\binom{[t]}{r}$, the tuple $\mathcal{W}_I$ is $\tau$-regular. \label{reg4}
    \end{proplist}
\end{lemma}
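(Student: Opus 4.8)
The plan is to run the classical Szemer\'edi index-increment (energy-increment) argument, with pairs of parts replaced by crossing $r$-tuples of parts.

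\textbf{The mean-square index.} For a partition $\mathcal{P}=(P_1,\dots,P_s)$ of $V(G)$ define, for each $r$-subset $I=\{i_1,\dots,i_r\}\in\binom{[s]}{r}$, the density $d_I=e_G(P_{i_1},\dots,P_{i_r})/\prod_{j}|P_{i_j}|$ (with $d_I=0$ if some part is empty), and set
\[
q(\mathcal{P})=\sum_{I\in\binom{[s]}{r}}\frac{\prod_{j\in I}|P_{j}|}{n^r}\,d_I^{\,2}.
\]
Since $\sum_{I\in\binom{[s]}{r}}\prod_{j\in I}|P_j|\le n^r/r!$ by the multinomial inequality and each $d_I\in[0,1]$, we have $0\le q(\mathcal{P})\le 1$ for every partition, and $q$ does not decrease under refinement (by convexity of $x\mapsto x^2$, i.e.\ the Cauchy--Schwarz inequality applied within each $P_{i_j}$). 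This bounded, monotone quantity is what we will push upward.

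\textbf{Index increment from irregularity.} The technical core is the following: if $\mathcal{P}$ partitions $V(G)$ into $s$ parts of equal size and more than $\tau\binom{s}{r}$ of the $r$-subsets $I$ are \emph{not} $\tau$-regular, then there is a refinement $\mathcal{P}'$ with $q(\mathcal{P}')\ge q(\mathcal{P})+c$, where $c=c(\tau,r)>0$ depends only on $\tau$ and $r$, and in which each $P_i$ is cut into at most $2^{\binom{s-1}{r-1}}$ pieces. For each irregular $I$, fix witnessing sets $Y_{i_j}\subseteq P_{i_j}$ with $|Y_{i_j}|\ge\tau|P_{i_j}|$ and $|d(Y_{i_1},\dots,Y_{i_r})-d_I|>\tau$; refine each $P_i$ by the common refinement of all witnessing subsets of $P_i$ arising from the (at most $\binom{s-1}{r-1}$) irregular $r$-subsets through $i$. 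The defect form of Cauchy--Schwarz shows that separating $Y_{i_1},\dots,Y_{i_r}$ raises the contribution of the $r$-subsets sitting inside $I$ by at least $\tau^{r}\cdot\frac{\prod|P_{i_j}|}{n^r}\cdot\tau^2$; since the parts are equal-sized this local gain is $\ge\tau^{r+2}/s^r$, and summing over the $>\tau\binom{s}{r}$ irregular $r$-subsets (their local gains add because the overall refinement dominates each per-tuple refinement and $q$ is monotone) yields a net gain $c\ge\tau^{r+2}\cdot\tau\binom{s}{r}/s^r\ge\tau^{r+3}/r!$, independent of $s$. This Cauchy--Schwarz computation, together with bookkeeping to ensure the separate per-tuple gains genuinely add up in the common refinement, is the main obstacle; everything else is routine.

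\textbf{Iteration and cleanup.} Start from the equitable partition $\mathcal{V}=(V_1,\dots,V_k)$; subdivide each $V_j$ into equal blocks of a common small size, discarding the (boundedly many per block) leftover vertices into an exceptional set $W_0$, to get an equitable partition into $t_0\ge\max(k,p)$ equal parts, each contained in some $V_j$ --- this secures \ref{reg2}, \ref{reg3} and $t\ge p$. Then repeatedly apply the index-increment step; whenever the current partition already satisfies \ref{reg4} we stop. Because $q$ lies in $[0,1]$ and rises by at least $c(\tau,r)$ at each step, the process halts after at most $1/c(\tau,r)$ rounds. After each round one re-equipartitions the refined clusters into a common size and sweeps the (boundedly many) remainders into $W_0$, so the final partition has equal non-exceptional parts and $|W_0|\le\tau n$, using $1/n\ll1/T$ so that the total number of swept vertices --- bounded in terms of $T,\tau,r$ only --- is at most $\tau n$. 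The number of parts is obtained by iterating the map $s\mapsto s\cdot 2^{\binom{s-1}{r-1}}$ a bounded ($\le 1/c(\tau,r)$) number of times starting from $\max(k,p)$, giving a tower-type bound $T=T(k,p,r,\tau)$ with $p\le t\le T$, as required. $\qed$
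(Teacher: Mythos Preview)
Your sketch is correct and follows exactly the standard Szemer\'edi energy-increment argument that the paper itself invokes rather than spells out: the paper does not give its own proof of this lemma but simply states that it ``follows the lines of the original proof of Szemer\'edi'' and cites \cite{Chung1991regularity, Frankl1992uniformity, Steger1990kleitman}. Your mean-square index, defect Cauchy--Schwarz gain of $\tau^{r+3}/r!$ per irregular step, and tower-type iteration match the approach in those references.
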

A partition satisfying \ref{reg1}--\ref{reg4} is called a \emph{$\tau$-regular partition} of $G$ refining $\mathcal{V}$. If $\mathcal{V}=(V(G))$, then we simply say that it is a $\tau$-regular partition of $G$. We also define the clustered hypergraph, which is an analogue of the reduced graph.
\begin{defn}
For a given $\tau$-regular partition $\mathcal{W}=(W_0,W_1,\dots, W_t)$ of $G$, \emph{the clustered graph} $\mathcal{H}(\tau,d,\mathcal{W})$ is an $r$-graph on the vertex set $[t]$ such that an $r$-set $I\in \binom{[t]}{r}$ is an edge if and only if the tuple $\mathcal{W}_I$ is $(\tau,d+)$-regular.
\end{defn}

\begin{lemma}\label{lem: reg matching}
Let $0<1/n \ll 1/T \ll \tau \ll \varepsilon, 1/r \leq 1$ and $ \tau n < n^* < \varepsilon n/(8r)^r$.
    Let $G$ be an $r$-partite $r$-graph with the $r$-partition $\mathcal{V}=(V_1 \cup V_{r+1}, V_2\cup V_{r+2},\dots, V_r\cup V_{2r})$ and $(1-\varepsilon)n\leq |V_i|\leq (1+\varepsilon)n$ for all $i\in [2r]$.
    If each of $G[V_1,\dots, V_r]$ and $G[V_{r+1},\dots, V_{2r}]$ contains at least $\varepsilon n^{r}$ edges, then we can find subsets $V_{i,j}\subseteq V_i$ for each $(i,j)\in [2r]\times [p]$ for some $p\leq T$ such that the following hold.
    \begin{proplist}
\item For each $j,j'\in [p]$, we have $|V_{1,j}|=\dots = |V_{2r,j}| \geq \tau n/T$ and 
$\big||V_{1,j}| - |V_{1,j'}|\big|\leq 1$. \label{reg matching 1}
\item For each $j\in [p]$, each of the tuples $(V_{1,j},\dots, V_{r,j})$ and $(V_{r+1,j},\dots, V_{2r,j})$ is $(\tau,\tfrac{\varepsilon}{10}+)$-regular. \label{reg matching 2}
\item $\sum_{j\in [p]} |V_{1,j}| = n^*$. \label{reg matching 3}
    \end{proplist}
\end{lemma}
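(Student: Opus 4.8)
The plan is to apply the weak hypergraph regularity lemma (\Cref{thm:weak_hypergraph_regularity}) separately to the two $r$-partite subgraphs $G[V_1,\dots,V_r]$ and $G[V_{r+1},\dots,V_{2r}]$, then use the fact that each has $\Omega(n^r)$ edges to extract a single $(\tau,\tfrac{\varepsilon}{10}+)$-regular $r$-tuple of clusters in each, and finally trim these two regular tuples down to a common size summing to exactly $n^*$. Concretely, choose auxiliary constants $1/T \ll \tau' \ll \tau$ and apply the regularity lemma to $G[V_1,\dots,V_r]$ with equitable partition $(V_1,\dots,V_r)$ (possibly padding with dummy vertices so the parts are exactly balanced, absorbing the slack $\varepsilon n$ into the exceptional set) to get a $\tau'$-regular partition refining it, with cluster size $\Theta(n/T)$. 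A standard counting argument shows that since $G[V_1,\dots,V_r]$ has at least $\varepsilon n^r$ edges, after discarding the $\leq \tau' n$ vertices in $W_0$, the edges within irregular $r$-tuples of clusters, the edges lying inside a single $V_i$-class (there are none, since each cluster sits inside one $V_i$), and the $r$-tuples of clusters that are regular but with density below $\varepsilon/20$, a positive proportion of the edges survives; in particular at least one $r$-tuple $(W^{(1)},\dots,W^{(r)})$ of clusters — one inside each $V_i$ — is $(\tau',\tfrac{\varepsilon}{20}+)$-regular. The same argument applied to $G[V_{r+1},\dots,V_{2r}]$ yields a $(\tau',\tfrac{\varepsilon}{20}+)$-regular $r$-tuple $(W^{(r+1)},\dots,W^{(2r)})$ with $W^{(r+i)}\subseteq V_{r+i}$.

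Now I have two regular $r$-tuples of roughly equal size $q=\Theta(n/T)$. The next step is to repartition each cluster into $p$ equal-sized pieces for an appropriate $p\leq T$ so that (i) every resulting sub-tuple inherits regularity, (ii) all pieces have a common size $\geq \tau n/T$, and (iii) the total size across all pieces equals exactly $n^*$. For (i) I use the standard slicing fact: if $(X_1,\dots,X_r)$ is $(\tau',d')$-regular and $Y_i\subseteq X_i$ with $|Y_i|\geq \beta|X_i|$, then $(Y_1,\dots,Y_r)$ is $(\tau'/\beta, d'\pm\tau')$-regular; choosing $\beta$ bounded below (the pieces will each be a $\Theta(1/T)$-fraction of their cluster, but all such fractions are bounded below by a function of $T$ alone) keeps $\tau'/\beta\leq \tau$ and the density stays above $\varepsilon/20-\tau'\geq \varepsilon/10$ after a harmless adjustment of constants (one should set up the hierarchy so the surviving density is at least, say, $\varepsilon/5$ before slicing). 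For (iii), since $\tau n < n^* < \varepsilon n/(8r)^r$ and each cluster has size $\Theta(n/T)$ which is much larger than $\tau n$ but the two clusters together can comfortably supply $n^*$ vertices (here one checks $n^* < \min(|W^{(1)}|,\dots,|W^{(2r)}|)\cdot$ something, using $n^*<\varepsilon n/(8r)^r$ against cluster size $\gtrsim \tau n/T$ — this forces $p\geq \lceil n^*/q\rceil$, which is at most $T$ provided the constants are chosen with $n^*$ small relative to total cluster mass), we can pick $p$ and then pick $p$ disjoint pieces of the common size $\lfloor n^*/p\rfloor$ or $\lceil n^*/p\rceil$ inside each of the $2r$ clusters, adjusting at most $p\leq T$ of them by one vertex so that $\sum_j|V_{1,j}|=n^*$ exactly while keeping $\big||V_{1,j}|-|V_{1,j'}|\big|\leq 1$. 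Setting $V_{i,j}$ to be the $j$-th such piece of the cluster $W^{(i)}\subseteq V_i$ gives \ref{reg matching 1}, \ref{reg matching 2}, \ref{reg matching 3}.

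The main obstacle is bookkeeping rather than conceptual: one must verify that the arithmetic is consistent — that the single regular tuple extracted from each side really is large enough (size $\gtrsim \tau n/T$, which it is since clusters have size $\asymp n/T$ and we only discard a $\tau'$-fraction and split off bounded pieces), that $p$ comes out $\leq T$ (this is where the upper bound $n^* < \varepsilon n/(8r)^r$ is used, guaranteeing $n^*$ is small compared to $2r$ times a cluster's worth of vertices so that a modest $p$ suffices — indeed one can even take all $p$ pieces inside a single pair of clusters), and that slicing does not degrade regularity below the stated threshold (handled by choosing $1/T\ll\tau'\ll\tau$ and starting from density $\varepsilon/5$). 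The extraction of a dense regular $r$-tuple from a subgraph with $\Omega(n^r)$ edges is the one genuinely nontrivial ingredient, but it is the routine "many edges survive the regularity cleanup" counting that underlies every application of regularity, so I expect no real difficulty there.
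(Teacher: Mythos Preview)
Your overall strategy --- apply weak regularity, use the $\Omega(n^r)$ edge count to find dense regular $r$-tuples of clusters, then trim to hit $n^*$ exactly --- is the right one and matches the paper's. But there is a real gap in the sizes, and it comes from a hierarchy error.

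You write that ``each cluster has size $\Theta(n/T)$ which is much larger than $\tau n$''. This is backwards: the hypothesis $1/T \ll \tau$ means $n/T \ll \tau n$. The regularity lemma only promises that the number of clusters $t$ lies somewhere in $[t_0,T]$, so the cluster size $m$ could be as small as roughly $n/T$, which is far below $\tau n < n^*$. Consequently a \emph{single} regular $r$-tuple of clusters on each side cannot accommodate $p$ pairwise disjoint pieces summing to $n^*$ (and disjointness is required --- in the application these pieces become parts of a partition of $V'_i$). The same miscount breaks your slicing step: if pieces are a $\Theta(1/p)$-fraction of a cluster with $p$ potentially of order $T$, then inheriting $\tau$-regularity would need $\tau'\lesssim \tau/T$, which is incompatible with $1/T \ll \tau'$.

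The fix, and what the paper does, is to take not one but a \emph{matching} of $p=\lceil 2n^*/m\rceil$ regular $r$-tuples in the clustered graph on each side. The edge count $e(H_i)\gtrsim \varepsilon t^r$ gives a matching of size $\gtrsim \varepsilon t/(4r)^r$, and the upper bound $n^* < \varepsilon n/(8r)^r$ combined with $m\approx 2rn/t$ is exactly what makes $\varepsilon t/(4r)^r > 2n^*/m$. With $p$ disjoint tuples of common size $m$, you then only need to shrink each by a bounded factor (between $1/3$ and $1$) to hit $\sum_j|V_{1,j}|=n^*$, so regularity survives with the given $\tau$ and no extra slicing hierarchy is needed.
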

\begin{proof}
By deleting edges if necessary, we assume that $G=G_1\cup G_2$ where $G_1=G[V_1,\dots, V_r]$ and  $G_2=G[V_{r+1},\dots, V_{2r}]$.
Note that $G_i$ has $(1\pm \varepsilon)rn$ vertices for each $i=1,2$.

We first apply \Cref{thm:weak_hypergraph_regularity} to $G$ to obtain a $\tau/3$-regular partition $\mathcal{W} = (W_0,W_1,\dots, W_t)$ with $ \varepsilon^{-1} <  t < T$ such that $\mathcal{W}$ refines $(V_1,V_2,\dots, V_{2r})$.
Let $m=|W_1|=\dots=|W_t|$. Then $mt = (2\pm 3\varepsilon)rn$.

By reindexing if necessary, assume that $W_i \subseteq V(G_1)$ if $i\in [t_1]$ and $W_i\subseteq V(G_2)$ if $i\in [t]\setminus[t_1]$ for some $t_1$.
Let $t_2= t-t_1$.
Consider the clustered graph $H= \mathcal{H}(\tau/3,d,\mathcal{W})$ for $d=\varepsilon/(10r)^r$ and let $H_1 = H[[t_1]]$ and $H_2= H[[t]\setminus [t_1]]$.
Then each $H_i$ is a clustered graph of $G_i$ for $i=1,2$. Furthermore, as each $G_i$ has $(1\pm \varepsilon)rn$ vertices, we have $t/3\leq t_1,t_2\leq 2t/3$.

By \ref{reg1}--\ref{reg4}, for $i\in [2]$ and $I\notin E(H_i)$, $\mathcal{W}_I$ is either $(\tau/3,\delta)$-regular with $\delta<d$ or not  $\tau/3$-regular at all.
As the latter case happens for at most $\frac{1}{3}\tau\binom{t_i}{r}$ tuples, we have
$$ e(H_i) m^{r} + \left|\binom{t_i}{r}-e(H)\right| d m^r + \frac{1}{3} \tau \binom{t}{r} m^r +|W_0|(2rn)^{r-1}\geq  \varepsilon n^{r}.$$
As $ m = (2\pm 3\varepsilon)rn/t$ and $\tau\ll \varepsilon$, this implies that 
$$e(H_i) \geq\frac{1}{m^r} \left(\varepsilon n^{r} -  \frac{d (2rn)^{r}}{r!} - \frac{\tau (2rn)^{r}}{r!} -\tau (2rn)^{r} \right)\geq \frac{1}{(3r)^r}\varepsilon t^{r}.$$
This implies that each $H_i$ contains a matching of size at least $\frac{1}{(4r)^r}\varepsilon t > 2n^*/m$.

For each $i\in [2]$, 
take a matching in $H_i$ of size exactly $p=\lceil 2n^*/m\rceil$. Then this corresponds to a set of $2p$ disjoint $\tau/3$-regular tuples, denoted as $(W_{(i-1)r+1,j},\dots, W_{(i-1)r+r,j})$  for all $j\in [p]$ (renamed to ease the notation), and $\sum_{j\in [p]} |W_{(i-1)r+1,j}| = p m = 2n^*\pm p$.

From the definition of $H_i$, we may further assume that the sets $W_{(i-1)r+k,j}$ lie in $V_{(i-1)r+k}$. 
As $2n^*\pm p$ has size between $n^*$ and $3n^*$, we can choose for all $i\in [2r]$ and $j\in [p]$ arbitrary subsets $V_{i,j}\subseteq W_{i,j}$ with $\frac{1}{3} |W_{i,j}| \leq |V_{i,j}| \leq |W_{i,j}| $ such that \ref{reg matching 1},\ref{reg matching 3} hold.
Finally, since $V_{i,j}$ is a subset of size at least $\frac{1}{3}|W_{i,j}|$, it is straightforward to check from the definition of $\tau$-regularity that \ref{reg matching 2} holds. 
\end{proof}

\subsection{Absorption}
\label{sec:absorbing}
As we discussed in \Cref{sec:outline}, we will use the absorption method to complete an almost-spanning structure into a spanning structure. For this, we need the following definitions.

\begin{defn}
Let $F$ be an $r$-graph on the vertex set $[t]$ and consider an $r$-graph $G$ admitting a partition $(\mathcal{V},F)$ where $\mathcal{V}=(V_1,\dots, V_t)$ and $V(G)=\bigcup_{i\in [t]} V_i$.     A set $S \subseteq V(G)$ is called \emph{$\mathcal{V}$-balanced} if $|S \cap V_i|$ is equal for every $i \in [t]$. If $|S \cap V_i|=1$ for all $i \in [t]$, then $S$ is called a \emph{$\mathcal{V}$-transversal set}. 

    For a real number $\gamma>0$ and $W \subseteq V(G)$, a set $S \subseteq V(G)$ is called a \emph{$(\gamma, F)$-absorbing set} (or simply \emph{$\gamma$-absorbing set}) for $W$ if for every $\mathcal{V}$-balanced set $S' \subseteq W$ with $|S'| \leq t \gamma n$, $G[S \cup S']$ have a $\mathcal{V}$-transversal $F$-factor.
    If $W = V(G)$, we omit $W$ and simply call $S$ a $\gamma$-absorbing set. 
    For a $\mathcal{V}$-balanced set $S \subseteq V(G)$ of size $t$, a set $S' \subseteq V(G) \setminus S$ is called an \emph{$(F, k)$-absorber} (of $S$) if $|S'| \leq kt$ and both of $G[S']$ and $G[S \cup S']$ have a $\mathcal{V}$-transversal $F$-factor. 
    For a vertex $v \in V(G)$, a collection $\mathcal{F}$ of $\mathcal{V}$-transversal copies of $F$ is called a \emph{$v$-fan} if the intersection of any two elements of $\mathcal{F}$ is exactly $\{v\}$.
\end{defn}

With this definition, we later need to find an absorbing set for appropriate $F$, where $F$ is either an edge or a cycle. For this pupose, the following `bipartite template lemma' by Montgomery~\cite{Montgomery2019spanning} is useful.
\begin{lemma}[\cite{Montgomery2019spanning}]\label{lem:bipartite_template}
    Let $0< 1/m \ll \beta< 1$.
    Then there exists a bipartite graph $H$ with bipartition $(X\cup Y, Z)$
    with the following properties.
    \begin{enumerate}
        \item $|X|=(1+\beta)m$, $|Y| = 2m$, $|Z| = 3m$ such that for every $X' \subseteq X$ with $|X'|=m$, $H[X' \cup Y, Z]$ has a perfect matching
        \item $\Delta(H) \leq 40$.
    \end{enumerate}
\end{lemma}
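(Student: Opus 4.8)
The plan is to follow Montgomery's original argument, which builds $H$ as the union of a fixed "skeleton" perfect matching and a bounded‑degree bipartite expander that supplies enough rerouting capacity. Fix once and for all an arbitrary $m$-subset $X_0\subseteq X$ and an arbitrary perfect matching $M_0$ between $X_0\cup Y$ and $Z$ (possible since $|X_0\cup Y|=3m=|Z|$); put the $3m$ edges of $M_0$ into $H$. Then add to $H$ a random bipartite graph $\Gamma$ between $X$ and $Z$ obtained as the union of a small constant number $d$ (say $d=20$) of random matchings each saturating $X$; this contributes degree exactly $d$ to every vertex of $X$ and at most $d$ to every vertex of $Z$, so that $\Delta(H)\le d+1<40$ after one checks the (routine) constant bookkeeping, and $M_0\cup\Gamma$ will be our $H$.

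The key probabilistic step is to show that with positive probability $\Gamma$ is a good expander on small sets: for every $S\subseteq X$ with $1\le |S|\le \beta m$ and every $Z'\subseteq Z$ with $|Z\setminus Z'|\le\beta m$ one has $|N_\Gamma(S)\cap Z'|\ge 3|S|$, and symmetrically with the roles of the two sides exchanged. This follows from a union bound: for a fixed $S$ of size $s$ and a fixed "forbidden" set $Z\setminus Z'$, the probability that $N_\Gamma(S)\cap Z'$ is small is at most roughly $\binom{3m}{3s}\big(\tfrac{3s+\beta m}{3m}\big)^{ds}\le (em/s)^{4s}(4\beta)^{ds}$ using $s\le\beta m$; summing over the $\binom{(1+\beta)m}{s}$ choices of $S$, the $2^{O(\beta m)}$ choices of $Z\setminus Z'$, and over $1\le s\le\beta m$ makes the total $o(1)$ provided $1/m\ll\beta\ll 1/d$. (For tiny $s$ the factor $(4\beta)^{ds}$ beats the $(em/s)^{s}$-type terms; for $s=\Theta(\beta m)$ one uses that $(4\beta)^{ds}=2^{-\Omega(\beta m\log(1/\beta))}$ dominates the $2^{O(\beta m)}$ binomial factors.) This is the only place randomness is needed; everything afterwards is deterministic.

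It remains to show this expansion forces the matching property. Fix $X'\subseteq X$ with $|X'|=m$, set $A=X'\setminus X_0$ and $B=X_0\setminus X'$ (so $|A|=|B|=k\le\beta m$), and let $Z_B=M_0(B)$. A perfect matching of $H[X'\cup Y,Z]$ is the same as a modification $M'$ of $M_0$ whose symmetric difference with $M_0$ is a family of $k$ vertex‑disjoint alternating paths, each running from a vertex of $A$ (matched by $M'$ via a $\Gamma$-edge) through alternating $M_0$- and $\Gamma$-edges inside $X\cup Z$ and terminating at a vertex of $B$. Such disjoint alternating paths are produced by an augmenting/alternating‑path search in the bounded‑degree graph $M_0\cup\Gamma$: because every set of $\le\beta m$ vertices of $X$ or $Z$ encountered during the search still expands by a factor $3$ into the not‑yet‑used part (which always misses at most $\beta m$ vertices), a defect‑Hall / Menger type argument on the appropriate auxiliary bipartite graph delivers the $k$ vertex‑disjoint $A$–$B$ alternating paths, hence the desired $M'$.

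The main obstacle is exactly the interplay between Steps 2–3 and Step 4: bounded degree makes global expansion impossible, so the expansion guarantee must be restricted to sets of size $\le\beta m$ (which is precisely where the union bound closes, since random bounded‑degree bipartite graphs expand very strongly on small sets), and one must verify that the rerouting in Step 4 never needs expansion of a larger set — this forces a bound on the lengths of the alternating paths and is the reason the lemma carries the slack parameter $\beta$ with $1/m\ll\beta$. The remaining ingredients (Hall's theorem, Chernoff‑type estimates, constant bookkeeping for $\Delta(H)\le 40$) are standard.
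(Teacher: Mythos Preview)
The paper does not give its own proof of this lemma; it is simply quoted from \cite{Montgomery2019spanning}. So the only question is whether your sketch is internally sound.

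It is not, because of a concrete structural flaw in the graph $H=M_0\cup\Gamma$ you build. Since $\Gamma$ runs only between $X$ and $Z$, every $y\in Y$ has degree exactly~$1$ in $H$, its unique neighbour being $M_0(y)$. Hence in \emph{any} perfect matching of $H[X'\cup Y, Z]$ the $Y$--$Z$ part is forced to coincide with $M_0|_Y$, and what remains is to match $X'$ perfectly into $Z_{X_0}:=M_0(X_0)$, a set of size only~$m$. In particular every vertex of $A=X'\setminus X_0$ must have a $\Gamma$-neighbour in $Z_{X_0}$. But for a fixed $x\in X$, each of its $d$ random matching-edges lands in $Z_{X_0}$ with probability $m/3m=1/3$, so with probability $(2/3)^d$ all of them miss $Z_{X_0}$; since $d$ is constant and $|X|=(1+\beta)m$, w.h.p.\ there are $\Theta(m)$ such ``bad'' vertices, and placing any one of them in $X'$ makes the perfect matching impossible. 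Your stated expansion hypothesis (into every $Z'$ with $|Z\setminus Z'|\le\beta m$) is simply the wrong target: the alternating-path search you describe has to stay inside $Z_{X_0}$, which misses $2m$ vertices of $Z$, not $\beta m$, and your union bound gives no control there.

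The skeleton-plus-expander philosophy is right, but the expander must be placed so that it interacts correctly with the forced part of the matching. Montgomery's actual construction achieves this by assembling the template out of smaller balanced pieces (bounded-degree bipartite expanders between parts of equal size, glued by perfect matchings) rather than by throwing a single random bipartite graph across parts of sizes $(1+\beta)m$ and $3m$. A quick local repair of your approach---putting the random edges between $X$ and $Z_{X_0}$ instead of all of $Z$---already changes the degree bookkeeping and the union bound, so the argument needs to be redone rather than patched.
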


The following lemma provides criteria for finding a desired absorbing set. 

\begin{lemma}\label{lem:absorbing_using_template}
Let $0<1/n \ll \gamma \ll \eta, 1/r, 1/t, 1/k \leq 1$ and let $F$ be an  $r$-graph on the vertex set $[t]$.
    Let $G$ be an $r$-graph that admits a partition $(\mathcal{V},F)$ where $\mathcal{V}=(V_1, \ldots, V_t)$.
    Suppose that every $\mathcal{V}$-balanced set $S \subseteq V(G)$ of size $t$ has at least $\eta n$ pairwise disjoint $(F, k)$-absorbers of $S$.
    Then $G$ has a $\gamma$-absorbing set of size at most $\eta n$.
\end{lemma}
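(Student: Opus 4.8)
The plan is to follow the template method of Montgomery, using \Cref{lem:bipartite_template} to produce a flexible absorbing skeleton and the hypothesis on $\eta n$-many disjoint absorbers to realise it greedily inside $G$.

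First I set the parameters. Fix $\beta=1/2$ and put $m=\lceil 2\gamma n\rceil$. Since $0<1/n\ll\gamma\ll\eta,1/r,1/t,1/k$, the integer $m$ is on one hand large enough that $1/m\ll\beta$, so \Cref{lem:bipartite_template} applies and yields a bipartite graph $H$ with bipartition $(X\cup Y,Z)$ where $|X|=(1+\beta)m$, $|Y|=2m$, $|Z|=3m$, $\Delta(H)\le 40$, and such that $H[X'\cup Y,Z]$ has a perfect matching for every $X'\subseteq X$ with $|X'|=m$; on the other hand $m$ is small enough that $Cmkt<\eta n$ for a large absolute constant $C$ (absorbing the number $(3+\beta)m$ of vertices, the bound $|E(H)|\le 120m$ on the number of edges, and the factor $40$). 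Note also that the hypothesis forces $|V_i|\ge\eta n\gg(3+\beta)m$ for every $i\in[t]$: the $\eta n$ absorbers of a fixed $\mathcal V$-balanced $t$-set are pairwise disjoint and each is $\mathcal V$-balanced, hence meets every $V_i$ (the degenerate case in which an absorber is empty, i.e.\ a $\mathcal V$-balanced $t$-set already spans a copy of $F$, is dealt with directly).

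Next I would build the absorbing structure. Process the vertices of $H$ one at a time, assigning to each vertex $v$ a $\mathcal V$-transversal $t$-set $A_v\subseteq V(G)$, all pairwise disjoint; this is possible as each $|V_i|\gg(3+\beta)m$. Then process the edges of $H$ one at a time: for each $e=\{i,j\}\in E(H)$ use the hypothesis to pick disjoint $(F,k)$-absorbers $B_e^{(i)}$ of $A_i$ and $B_e^{(j)}$ of $A_j$, both disjoint from all vertices chosen so far, and set $B_e=B_e^{(i)}\cup B_e^{(j)}$; concatenating $F$-factors then shows each of $G[B_e]$, $G[A_i\cup B_e]$, $G[A_j\cup B_e]$, $G[A_i\cup A_j\cup B_e]$ has a $\mathcal V$-transversal $F$-factor, and $|B_e|\le 2kt$. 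Also pick, for each $x\in X$, a further disjoint $(F,k)$-absorber $B_x^*$ of $A_x$. The greedy choice never fails: at each step the set $U$ of already-used vertices satisfies $|U|\le(3+\beta)mt+2|E(H)|kt+(1+\beta)mkt<\eta n$, and since the $\eta n$ available absorbers of any fixed set are pairwise disjoint, $U$ blocks at most $|U|<\eta n$ of them, so one is free. Let $A$ be the union of all chosen sets; then $|A|<\eta n$ as required.

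It remains to check that $A$ is a $\gamma$-absorbing set. For $S'=\emptyset$, a $\mathcal V$-transversal $F$-factor of $G[A]$ is obtained by fixing any $X_0\in\binom{X}{m}$, taking the perfect matching $M_0$ of $H[X_0\cup Y,Z]$ from the template, and using the $F$-factor of $G[A_i\cup A_j\cup B_{ij}]$ for each $\{i,j\}\in M_0$, of $G[B_e]$ for each $e\in E(H)\setminus M_0$, of $G[A_x\cup B_x^*]$ for each $x\in X\setminus X_0$, and of $G[B_x^*]$ for each $x\in X_0$; these copies are pairwise disjoint and cover $A$ exactly. For a general $\mathcal V$-balanced $S'$ with $|S'|\le t\gamma n$, write $S'$ as a disjoint union of $q\le\gamma n\le\beta m$ $\mathcal V$-transversal sets $S'_1,\dots,S'_q$ (possible as $S'$ is balanced), choose distinct $x_1,\dots,x_q\in X$ and a set $X_1\in\binom{X}{m}$ disjoint from $\{x_1,\dots,x_q\}$ (possible since $(1+\beta)m-m=\beta m\ge q$), and take the perfect matching $M_1$ of $H[X_1\cup Y,Z]$; then one builds the $F$-factor of $G[A\cup S']$ along $M_1$ exactly as before, except that the slots $x_1,\dots,x_q$ are now reconfigured so that their gadgets also swallow the pieces $S'_1,\dots,S'_q$, while every other part of $A$ is covered verbatim. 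The main obstacle — and the technical heart of the proof — is precisely this last point: designing the transversal sets and absorbers attached to the vertices and edges of $H$ so that the matching-robustness of $H$ translates cleanly into a re-routing of the default $F$-factor that absorbs the bounded-size leftover pieces, i.e.\ so that the ``default'' and ``absorbing'' configurations differ only on controlled pieces and each $S'_i$ is genuinely covered by the reconfigured gadget at its slot. The remaining ingredients, namely the greedy selection and the size bookkeeping, are routine given the hypothesis that every $\mathcal V$-balanced $t$-set has $\eta n$ pairwise disjoint $(F,k)$-absorbers.
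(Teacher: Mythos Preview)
Your construction builds a template-based skeleton whose edge-gadgets $B_e$ and $B_x^*$ are $(F,k)$-absorbers \emph{of the specific transversal sets $A_v$ you chose in advance}. That is enough for your ``$S'=\emptyset$'' check, but it is not enough for a general balanced leftover $S'$: the pieces $S'_1,\dots,S'_q$ are arbitrary transversal $t$-sets, and nothing in your structure is an absorber of $S'_i$. You correctly flag this (``the main obstacle --- and the technical heart of the proof --- is precisely this last point'') but do not resolve it; as written, the ``reconfiguration'' step is a wish, not an argument. There is no way to route $S'_i$ through the gadget at slot $x_i$, because $B_{x_i}^*$ only guarantees $F$-factors in $G[B_{x_i}^*]$ and $G[A_{x_i}\cup B_{x_i}^*]$, not in $G[S'_i\cup B_{x_i}^*]$.

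The paper closes exactly this gap with an extra layer you are missing: it first shows from the absorber hypothesis that every vertex $v$ has a large $v$-fan (a family of pairwise intersecting-only-in-$v$ transversal copies of $F$), then chooses the flexible set $X=\bigcup_i X_i$ \emph{randomly} so that each vertex retains a linear-size fan into $X$. At absorption time, each leftover vertex $v\in S'$ is covered by one copy $F_v$ from its fan; this uses up $v$ together with $t-1$ vertices of $X$, thereby converting the arbitrary leftover into a prescribed pattern of ``released'' vertices inside $X$, which the template can then handle. In short, the missing idea is this fan-based conversion from ``arbitrary balanced leftover'' to ``balanced deficit inside the designated flexible set''; once you add that, the template argument you sketched goes through essentially as in the paper (with a separate template for each coordinate $i\in[t]$ rather than a single one).
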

\begin{proof}[Proof of \Cref{lem:absorbing_using_template}]
Choose another constant $\delta$ such that
$$0<1/n \ll \gamma \ll \delta \ll \eta, 1/r, 1/t, 1/k\leq 1.$$

We first claim that for every vertex $v \in V(G)$, there is $v$-fan $\mathcal{F}_v$ of size at least $\eta n/t$, where each copy of $F \in \mathcal{F}_v$ is crossing $\mathcal{V}$. Indeed, if a maximum $v$-fan $\mathcal{F}_v$ has size smaller than $\eta n/t$, one may choose a balanced set $S$ of size $t$ containing $v$ disjoint from $\mathcal{F}_v\setminus\{v\}$. By the hypothesis, there are $\eta n$ pairwise disjoint $(F,k)$-absorbers $S'_1,\dots, S'_{\eta n}$, at least one $S'_{i}$ of them is disjoint from $\mathcal{F}_v$. As $S\cup S'_{i}$ contains a $\mathcal{V}$-transversal $F$-factor, we may choose a copy of $F$ containing $v$. Adding this to $\mathcal{F}_v$ contradicts the maximality of $\mathcal{F}_v$, hence there is $v$-fan $\mathcal{F}_v$ of size at least $\eta n$ for every $v\in V(G)$.

Let $m:=\delta n$.
For each $i\in [t]$, we choose a vertex set $X_i\subseteq V_i$ of size $(1+\delta)m$ such that the following holds where $X=\bigcup_{i\in [t]} X_i$.
\begin{proplist}
    \item For every $v\in V(G)$, $|\mathcal{F}'_v| \geq t^2 \delta m$. \label{F'v}
\end{proplist}
 where $\mathcal{F}'_v = \mathcal{F}_v[X\cup\{v\}]$ is the subgraph of $\mathcal{F}_v$ whose edges all lie in $X\cup \{v\}$. 
Indeed, if we choose a vertex set $X_i\subseteq V_i$ of size $(1+\delta)m$ uniformly at random, then Chernoff inequality (\Cref{lem:chernoff}) yields that \ref{F'v} holds with probability $1-o(1/n)$ for each $v \in V(G)$. Hence a union bound yields that a desired choice of $X_i$'s exist.
We fix such a choice of $X_1,\dots, X_t$.
Now we choose pairwise disjoint sets $Y_i, Z_i$ in $V(G)\setminus X$ in an arbitrary way that satisfy the following.
$$ Y_i \subseteq V_i\setminus X_i \text{ with } |Y_i|=2m \text{ and } Z_i\subseteq \bigcup_{j\in [t]\setminus \{i\} } (V_j\setminus (X_j\cup Y_j)) \text{ with }|Z_i\cap V_j|=3m \text{ for each } j\in [t]\setminus \{i\}. $$
As each $Z_i$ contains exactly $3m$ vertices from each $V_j$ with $j\neq i$, we can partition $Z_i$ into $3m$ sets $I_{i,1},\dots, I_{i,3m}$ of size $t-1$ each crosses $(V_1,\dots, V_{i-1}, V_{i+1},\dots, V_t)$. Let $\mathcal{Z}_i = \{ I_{i,1},\dots, I_{i,3m}\}$.

By applying \Cref{lem:bipartite_template} with $m$ and $\delta$ playing the roles of $n$ and $\beta$, respectively, we obtain a biparite graph $B$ satisfying the following.
\begin{proplist}
    \item $B$ has bipartition $(X'\cup Y', Z')$ such that $|X'|=(1+\delta)m$, $|Y'|=2m$ and $|Z'|=3m$.
    \item $|E(B)|\leq 300 m$.
    \item For any subset $X''\subseteq X'$ with $|X''|=m$, $B[X''\cup Y', Z']$ contains a perfect matching. \label{eq: pm in template}
\end{proplist}
For each $i\in [t]$, we consider a bijection $\varphi_i$ from $X'\cup Y'\cup Z'$ to $X_i\cup Y_i\cup \mathcal{Z}_i$, where $\varphi_i(X')=X_i$, $\varphi_i(Y')=Y_i$ and $\varphi_i(Z')=\mathcal{Z}_i$.
For each edge $e=ab\in B$, the set $\varphi_i(e)=\varphi_i(a)\cup \varphi_i(b)$ is a $\mathcal{V}$-balanced set of size $t$, hence it has at least $\eta n$ disjoint $(F,k)$-absorber. We choose one of them to be $F_{i,e}$. Furthermore, as $t \cdot |F|\cdot |E(B)|\leq 1000 t^2 m < \eta n/t$, we can make such choices so that $V(F_{i,e})\cap V(F_{i',e'})  = \varphi_i(e)\cap \varphi_{i'}(e')$ for all $i\in [t]$ and $e,e'\in E(B)$. 
Let $A$ be the subgraph of $G$ with $A=\bigcup_{i\in [t]} \bigcup_{e\in E(B)} F_{i,e}$.
Then $A$ satisfies the following.
    \begin{claim}\label{claim:absorbing_using_template}
        For any choice of sets $X^*_i \subseteq X_i$ with $|X^*_i| = \delta m$ for each $i \in [t]$, the graph $G[A \setminus \bigcup_{i \in [t]} X^*_i]$ has a transversal $F$-factor.
    \end{claim}    
    \begin{proof}
    For each $i$, consider $\varphi_i^{-1}(X^*_i)\subseteq X'$.
    By \ref{eq: pm in template}, $B\setminus \varphi_{i}^{-1}(X^*_i)$ contains a perfect matching $M_i$ for each $i\in [t]$.
 As each $F_{i,e}$ is an $(F,k)$-absorber of $\varphi_i(e)$, the subgraph 
    $$L_i:=\bigcup_{e\in M_i} \left(F_{i,e}\cup \varphi_i(e)\right) \cup \bigcup_{e\in E(B)\setminus M_i} F_{i,e}$$
    forms a vertex-disjoint collection of transversal copies of $F$ covering 
    $(X_i\setminus X_{i}^*)\cup Y_i\cup Z_i \cup \bigcup_{e\in E(B)} F_{i,e}.$
    Thus, taking the union of $L_1,\dots, L_{t}$, we obtain a transversal $F$-factor in $G[A \setminus \bigcup_{i \in [t]} X^*_i]$. This proves the claim.
    \end{proof}

    We now complete the proof by showing that $A$ is a desired $\gamma$-absorbing set.
    Consider a $\mathcal{V}$-balanced set $U\subseteq V(G) \setminus A$ with $|U| = t m'$ with $m'\leq \gamma n$.
    As $m' \leq  \gamma n < \delta m/t$, we can take an arbitrary set $U'\subseteq X_1$ of size $\delta m - tm'$.
    
    By \ref{F'v}, for each $v\in U\cup U'$, we have a $v$-fan $\mathcal{F}'_v$ in $G[ \{v\}\cup X]$ of size at least $t^2 \delta m$. Hence, for each $v\in U\cup U'$, we may choose a $\mathcal{V}$-balanced copy $F_v$ in $\mathcal{F}'_v$ so that all of them are pairwise vertex-disjoint. Indeed, this is possible because $\gamma \ll \delta$ implies $t|U\cup U'| \leq  t^2\delta m = t^2\delta^2 n$. 
    Recall that if $v\in U\cap V_i$, then $F_v$ crosses $(X_1,\dots,X_{i-1},\{v\}, X_{i+1},\dots,X_t)$ and $v\notin X_i$,
    and if $v\in U'$, then $F_v$ crosses $(\{v\},X_2,\dots, X_t)$.
    Hence, as $U$ is a $\mathcal{V}$-balanced set, for each $i\in [t]$, $X^{*}_i= X_i\cap \bigcup_{v\in U\cup U'} F_v$ has size exactly $|U| + |U'| = \delta m$. Hence \Cref{claim:absorbing_using_template} implies that the graph $G[A \setminus \bigcup_{i \in [t]} X^*_i]$ has a prefect $\mathcal{V}$-transversal $F$-factor. This together with the graphs $F_v$ for each $v\in U\cup U'$ yields a transversal $F$-factor in $G[A\cup U]$, showing that $A$ is a desired $\gamma$-absorbing set.
\end{proof}

Using this lemma, the following corollary is immediate. 
\begin{cor}\label{cor:perfect_transversal_factor}
    Let $0<1/n \ll \delta \ll \eta, 1/r, 1/t, 1/k\leq 1$.
    Let $F$ be a $t$-vertex $r$-graph and an $r$-graph $G$ admits a partition $(\mathcal{V},F)$ where $\mathcal{V}=(V_1,\dots, V_t)$ with $|V_i|=n$ for each $i\in [t]$.
    Suppose that $G$ satisfies the following two properties.
    \begin{enumerate}
        \item For every transversal set $S \subseteq V(G)$, there are at least $\eta n$ pairwise disjoint $(F, k)$-absorbers of $S$.\label{cond:perfect_matching_cond_1}
        \item For every balanced set $U \subseteq V(G)$ with $|U| \leq t \eta n$, $G \setminus U$ has a transversal $F$-factor that covers all but at most $t \delta n$ vertices.\label{cond:perfect_matching_cond_2}
    \end{enumerate}
    Then $G$ has a transversal $F$-factor.
\end{cor}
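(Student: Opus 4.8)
The plan is to run the standard absorption argument: extract an absorbing set out of condition~\ref{cond:perfect_matching_cond_1} using \Cref{lem:absorbing_using_template}, cover almost all of the remaining vertices by an almost-perfect $\mathcal{V}$-transversal $F$-factor via condition~\ref{cond:perfect_matching_cond_2}, and then let the absorbing set swallow the small leftover.

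First I would apply \Cref{lem:absorbing_using_template} with $\delta$ playing the role of its parameter $\gamma$. This is legitimate: the hierarchy required there is $0<1/n\ll\gamma\ll\eta,1/r,1/t,1/k\leq 1$, which is satisfied with $\gamma=\delta$ by our hypothesis $0<1/n\ll\delta\ll\eta,1/r,1/t,1/k\leq 1$, the partition $(\mathcal{V},F)$ is given, and the hypothesis of \Cref{lem:absorbing_using_template} — that every $\mathcal{V}$-balanced set of size $t$, equivalently every $\mathcal{V}$-transversal set, has at least $\eta n$ pairwise disjoint $(F,k)$-absorbers — is precisely condition~\ref{cond:perfect_matching_cond_1}. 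We thus obtain a $\delta$-absorbing set $A\subseteq V(G)$ with $|A|\leq \eta n$. Applying the defining property of a $\delta$-absorbing set with $S'=\emptyset$ shows that $G[A]$ has a $\mathcal{V}$-transversal $F$-factor, so $A$ is $\mathcal{V}$-balanced; in particular $|A\cap V_i|=|A|/t$ for every $i\in[t]$.

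Next I would apply condition~\ref{cond:perfect_matching_cond_2} to the balanced set $U=A$, which is permitted since $|A|\leq \eta n\leq t\eta n$. This gives a collection $\mathcal{M}$ of pairwise vertex-disjoint $\mathcal{V}$-transversal copies of $F$ inside $G\setminus A$ covering all but at most $t\delta n$ vertices of $G\setminus A$. Set $L:=V(G)\setminus\bigl(A\cup V(\mathcal{M})\bigr)$. Since $V(G)$, $A$ and $V(\mathcal{M})$ are each $\mathcal{V}$-balanced, with $A$ and $V(\mathcal{M})$ disjoint subsets of $V(G)$, the set $L$ is $\mathcal{V}$-balanced, disjoint from $A$, and satisfies $|L|\leq t\delta n$. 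Hence the $\delta$-absorbing property of $A$ applies with $S'=L$, yielding a $\mathcal{V}$-transversal $F$-factor $\mathcal{M}'$ of $G[A\cup L]$. Then $\mathcal{M}\cup\mathcal{M}'$ is a family of pairwise vertex-disjoint $\mathcal{V}$-transversal copies of $F$ with vertex set $V(\mathcal{M})\cup A\cup L=V(G)$, i.e.\ a $\mathcal{V}$-transversal $F$-factor of $G$, as desired.

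The argument is essentially bookkeeping once \Cref{lem:absorbing_using_template} is available, so I do not expect a real obstacle; the two points needing attention are (i) that the absorbing parameter may be taken as large as $\delta$ — this is exactly why the hierarchy is arranged with $\delta\ll\eta$ rather than $\delta$ much smaller, so that the size-$t\delta n$ remainder $L$ still lies within the range $t\delta n$ that $A$ can absorb — and (ii) tracking $\mathcal{V}$-balancedness of $A$, $V(\mathcal{M})$ and $L$ at each step, since it is balancedness that makes $\mathcal{V}$-transversal $F$-factors and absorption compatible (a $\mathcal{V}$-transversal $F$-factor of a vertex set forces that set to be $\mathcal{V}$-balanced).
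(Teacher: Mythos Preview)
Your proposal is correct and is exactly the argument the paper has in mind: the paper states that the corollary is immediate from \Cref{lem:absorbing_using_template} without spelling out the details, and what you wrote is the standard absorption bookkeeping that makes this precise. Your handling of the hierarchy (taking $\gamma=\delta$), of the balancedness of $A$ via $S'=\emptyset$, and of the leftover $L$ is all fine.
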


Later in the proof, we plan to use \Cref{lem:absorbing_using_template} to obtain an absorber. For this, we need to show that every given $\mathcal{V}$-balanced set $S$ has many disjoint $(F,k)$-absorber, but it is sometimes a non-trivial task. The following concept is useful for proving the existence of many $(F,k)$-absorbers.
\begin{defn}
    Let $F$ be an $r$-graph on vertex set $[t]$ and $G$ be an $r$-graph admitting $(\mathcal{V},F)$-partition where $\mathcal{V}=(V_1,\dots, V_t)$.
    For $i\in [t]$ and a pair of vertices $u, v \in V_i$, we say that $u$ and $v$ are \emph{$(F, m, k)$-reachable} if for every set $W \subseteq V(G)\setminus \{u, v\}$ satisfying $|W \cap V_j| \leq m$ for each $j \in [t]$, there exists a set $S \subseteq V(G)$ of size at most $kt-1$ such that both $G[S \cup \{u\}]$ and $G[S \cup \{v\}]$ have a transversal $F$-factor.
    A vertex set $U \subseteq V_i$ for some $i \in [t]$ is called \emph{$(F, m, k)$-closed} if any pair of two vertices in $U$ are $(F, m, k)$-reachable.
\end{defn}
We introduce another useful definition as follows.

\begin{defn}
    We say that a vertex $v$ is \emph{weakly $(F,m,k)$-reachable} to a vertex set $U$ if for every set $W \subseteq U \setminus \{v\}$ satisfying $|W\cap V_J|\leq m$ for each $j\in [t]$, there exist a vertex $u\in U\setminus W$ and a set $S\subseteq V(G)$ of size at most $kt-1$ such that $G[S\cup \{u\}]$ and $G[S\cup \{v\}]$ has a transversal $F$-factor.
\end{defn}
Note that there are subtle difference here that the vertex $u$ is chosen after fixing $W$ in this definition of weak reachablility.
This notion of weak reachability is useful for us to prove the following `transitivity' for reachability.
\begin{prop}\label{prop: transitivity}
Let $F$ be an $r$-graph on vertex set $[t]$ and $G$ is an $r$-graph admitting $(\mathcal{V},F)$-partition where $\mathcal{V}=(V_1,\dots, V_t)$.
Let $v,w$ be two vertices in $V_i$ and $U$ be a subset of $V_i$ and assume that the following holds.
\begin{proplist}
    \item $v$ and $u$ are $(F,m,k_1)$-reachable for all $u\in U$.\label{eq: reachable}
    \item $w$ is weakly $(F,m,k_2)$-reachable to $U$.\label{eq: weakreachable}
\end{proplist}
 Then $v,w$ are $(F,m-k_2,k_1+k_2 )$-reachable.
\end{prop}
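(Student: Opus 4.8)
The goal is to show that if $v$ is $(F,m,k_1)$-reachable to every $u \in U$ and $w$ is weakly $(F,m,k_2)$-reachable to $U$, then $v$ and $w$ are $(F, m-k_2, k_1+k_2)$-reachable. So fix a ``forbidden'' set $W \subseteq V(G)\setminus\{v,w\}$ with $|W\cap V_j| \le m-k_2$ for every $j \in [t]$; I must produce a set $S \subseteq V(G)$ with $|S| \le (k_1+k_2)t - 1$ such that both $G[S\cup\{v\}]$ and $G[S\cup\{w\}]$ admit a $\mathcal{V}$-transversal $F$-factor.

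\textbf{First step: use weak reachability of $w$.} Apply the weak $(F,m,k_2)$-reachability of $w$ to $U$ with the set $W$ (note $|W\cap V_j| \le m-k_2 \le m$, so it is admissible). This yields a vertex $u \in U\setminus W$ together with a set $S_2 \subseteq V(G)$ of size at most $k_2 t - 1$ such that both $G[S_2 \cup \{u\}]$ and $G[S_2 \cup \{w\}]$ have a $\mathcal{V}$-transversal $F$-factor. Here I will want to be slightly careful that $S_2$ can be taken disjoint from $\{v\}$ (and, if needed, I will simply absorb that into the size bound); since the weak-reachability definition only forbids intersection with $\{v\}$ implicitly through the later application, I should note that we may also assume $S_2 \cap \{v\} = \emptyset$ by the usual argument, or handle $v \in S_2$ as a degenerate case.

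\textbf{Second step: use reachability of $v$ to $u$, avoiding $S_2$.} Now I want to connect $v$ to the same $u$, but I must do so while avoiding the vertices already used in $S_2 \cup \{w\}$. To that end, enlarge the forbidden set: let $W' := W \cup S_2 \cup \{w\}$. The only property I need is that $|W' \cap V_j| \le m$ for every $j$; since $|W\cap V_j| \le m - k_2$ and $|(S_2\cup\{w\})\cap V_j| \le |S_2| + 1 \le k_2 t$ — wait, that bound is too weak. The correct accounting is $|S_2 \cap V_j| \le k_2 - $ (something); more carefully, a transversal $F$-factor on $S_2 \cup \{u\}$ uses $|S_2 \cup \{u\}|/t$ copies of $F$, each hitting $V_j$ once, so $|S_2 \cap V_j| \le |S_2|/t + 1 \le k_2$, and hence $|W' \cap V_j| \le (m-k_2) + k_2 = m$ (absorbing the extra $+1$ from $w$ into the constants, or simply using the slack — this is exactly why the statement loses $k_2$ in the $m$-parameter). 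With $W'$ admissible for the $(F,m,k_1)$-reachability of $v$ and $u$, I obtain a set $S_1 \subseteq V(G)$ of size at most $k_1 t - 1$, disjoint from $W'$, such that both $G[S_1 \cup \{v\}]$ and $G[S_1 \cup \{u\}]$ have a $\mathcal{V}$-transversal $F$-factor.

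\textbf{Third step: glue.} Set $S := S_1 \cup S_2 \cup \{u\}$. Its size is at most $(k_1 t - 1) + (k_2 t - 1) + 1 = (k_1+k_2)t - 1$. Since $S_1$ is disjoint from $S_2 \cup \{w\}$ and from $W$, and $u \notin W \cup S_1 \cup S_2$, the union $S \cup \{v\}$ decomposes as $(S_1 \cup \{v\}) \sqcup (S_2 \cup \{u\})$, each of which carries a transversal $F$-factor, so $G[S\cup\{v\}]$ has one too. Likewise $S \cup \{w\}$ decomposes as $(S_1 \cup \{u\}) \sqcup (S_2 \cup \{w\})$, each with a transversal $F$-factor, giving a transversal $F$-factor of $G[S\cup\{w\}]$. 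This establishes $(F, m-k_2, k_1+k_2)$-reachability of $v$ and $w$.

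\textbf{Main obstacle.} The only delicate point is the bookkeeping of the parameter $m$: I must make sure that when I feed the already-constructed pieces (the set $S_2$, the vertex $w$, possibly $v$) back into the definition as part of the forbidden set, the per-part size bound $|\cdot \cap V_j| \le m$ still holds. This is precisely where the ``$-k_2$'' loss in $m$ comes from, and getting the constant right (whether it should be $k_2$, $k_2 t$, or $k_2+1$) requires tracking that a transversal $F$-factor on a set of size $N$ uses $N/t$ copies of $F$ and hence meets each $V_j$ in exactly $N/t$ vertices. Everything else is a routine disjoint-gluing argument. I would also remark at the end that $v$ and $w$ need not be distinct from the endpoints in any problematic way, and that the degenerate cases ($v$ or $w$ already lying in one of the auxiliary sets) are handled trivially by shrinking $S$.
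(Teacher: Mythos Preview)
Your proposal is correct and follows essentially the same argument as the paper: first invoke weak reachability of $w$ on the given $W$ to obtain $u\in U$ and a set (your $S_2$), then enlarge the forbidden set by $S_2\cup\{w\}$ and invoke the $(F,m,k_1)$-reachability of $v$ and $u$ to obtain your $S_1$, and finally take $S=S_1\cup S_2\cup\{u\}$. The paper's proof is terser and does not spell out the per-part bound $|S_2\cap V_j|\le k_2$ that you correctly extract from the transversal $F$-factor structure, so your write-up is if anything a bit more careful on the bookkeeping that justifies the loss of $k_2$ in the $m$-parameter.
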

\begin{proof}
    Let $W$ be a set satisfying $|W\cap V_j|\leq m-k_2$ for all $j\in [t]$.
    Using \ref{eq: weakreachable}, we can find a vertex $u\in U$ and
    set $S_1\subseteq V(G)\setminus W$ such that both $G[S_1\cup \{u\}]$ and $G[S_1\cup \{w\}]$ contain transversal $F$-factor on $k_2t$ vertices.

    Let $W'=W\cup S_1 \cup \{w\}$. Using \ref{eq: reachable}, we can find a set $S_2$ disjoint from $W'$ such that both $G[S_2\cup \{u\}]$ and $G[S_2\cup \{v\}]$ contain transversal $F$-factors on $k_1t$ vertices.
    Then for $S=S_1\cup S_2\cup \{u\}$, which is disjoint from $W$, both $G[S\cup \{w\}]$ and $G[S\cup \{v\}]$ contain transversal $F$-factors on $(k_1+k_2)t$ vertices. This shows that $v,w$ are $(F,m-k_2,k_1+k_2)$-reachable.
\end{proof}

Note that $w$ is weakly $(F,m,k_2)$-reachable to $U$ only if $U$ has size at least $m+1$. Also if $U\cup \{v\}$ is $(F,m,k)$-closed and $|U|\geq m+1$, then $v$ is weakly $(F,m,k)$-reachable to $U$. Later we will prove that weak reachability is a useful way to extend a closed set into a bigger closed set (see Lemma~\ref{lem:merge_closed_and_nonclosed_sets}).
Moreover, this notion of closedness is useful for us to obtain absorbing sets as follows.

\begin{lemma}\label{lem:closed_to_absorbing}
    Let $0<1/n \ll \gamma \ll \eta, 1/r, 1/t, 1/k$ and $F$ be an $r$-graph on the vertex set $[t]$ and $G$ be an $r$-graph admitting partition $(\mathcal{V},F)$ for $\mathcal{V}=(V_1,\dots, V_t)$.
    Suppose that $G$ satisfies the following properties.
    \begin{proplist}
        \item For every $i \in [t]$, $V_i$ is $(F, 2kt\eta n, k)$-closed. \label{closed 1}
        \item For every set $W$ with $|W| \leq 2kt^2\eta n$, $V(G) \setminus W$ contains at least one transversal copy of $F$. \label{closed 2}
    \end{proplist}
    Then $G$ has a $\gamma$-absorbing set of size at most $\eta n$.
\end{lemma}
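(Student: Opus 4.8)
The statement to prove is Lemma~\ref{lem:closed_to_absorbing}, which should be derived from the already-established Lemma~\ref{lem:absorbing_using_template}. The strategy is to verify the hypothesis of Lemma~\ref{lem:absorbing_using_template}: namely, that \emph{every} $\mathcal{V}$-balanced set $S\subseteq V(G)$ of size $t$ has at least $\eta' n$ pairwise disjoint $(F,k')$-absorbers, for suitable $\eta'$ and $k'$ (with $\eta'$ comparable to $\eta$ up to constants depending only on $t,k,r$, which is fine since the output absorbing set in Lemma~\ref{lem:absorbing_using_template} has size at most $\eta' n$ and we want it $\le \eta n$). Thus I would first fix a hierarchy $0<1/n\ll\gamma\ll\eta'\ll\eta,1/r,1/t,1/k$ and aim to produce $\eta' n$ disjoint $(F,k')$-absorbers of any given $\mathcal{V}$-balanced $S=\{s_1,\dots,s_t\}$ with $s_i\in V_i$.

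\textbf{Main step: building one absorber from closedness.} Given $S=\{s_1,\dots,s_t\}$ with $s_i\in V_i$, and a set $W$ of already-used vertices that is not too large (say $|W\cap V_j|\le 2kt\eta n$ for each $j$), I would build an $(F,k)$-absorber of $S$ avoiding $W$ as follows. First use property \ref{closed 2}: since $|W|$ plus the vertices of $S$ is at most $2kt^2\eta n$, there is a transversal copy $F_0$ of $F$ in $V(G)\setminus(W\cup S)$; write its vertices as $f_1,\dots,f_t$ with $f_i\in V_i$. Now for each $i\in[t]$, the vertices $s_i$ and $f_i$ lie in the same part $V_i$, which is $(F,2kt\eta n,k)$-closed by \ref{closed 1}, hence $(F,m,k)$-reachable for $m=2kt\eta n$; applying the definition of reachability with the forbidden set being $W$ together with everything built so far, I obtain a set $S_i$ of size at most $kt-1$ such that both $G[S_i\cup\{s_i\}]$ and $G[S_i\cup\{f_i\}]$ have a transversal $F$-factor. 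Doing this greedily for $i=1,\dots,t$, keeping all $S_i$ disjoint and disjoint from $W\cup S\cup F_0$ (which is possible because at each stage the accumulated forbidden set has at most $(kt)t + t + 2kt\eta n \le 2kt\eta n + O_{t,k}(1)$ vertices in each part, still within the reachability budget for $n$ large), I set $S':=F_0\cup S_1\cup\dots\cup S_t$. Then $|S'|\le t + t(kt-1) = kt^2 =: k't$ with $k'=kt$, and $G[S']$ has a transversal $F$-factor (namely $F_0$ together with a transversal $F$-factor of each $G[S_i\cup\{f_i\}]$), while $G[S\cup S']$ has a transversal $F$-factor (namely $F_0$ together with a transversal $F$-factor of each $G[S_i\cup\{s_i\}]$); removing the vertex $f_i$ from the latter and pairing with $s_i$ — more precisely, $G[S\cup S']$ decomposes as $\bigcup_i G[S_i\cup\{s_i\}]$ which covers $S\cup S_1\cup\dots\cup S_t$, leaving $F_0$ to cover $F_0$. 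Hence $S'$ is an $(F,k')$-absorber of $S$.

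\textbf{Iterating to get many disjoint absorbers.} To obtain $\eta' n$ pairwise disjoint absorbers of a fixed $S$, repeat the above construction $\eta' n$ times, at each round letting $W$ be the union of the previously built absorbers. After $j<\eta' n$ rounds, $|W|\le j k't \le \eta' n \cdot kt^2 \le 2kt\eta n$ (using $\eta'\ll\eta$ and $n$ large), so both \ref{closed 1} (the reachability budget $m=2kt\eta n$) and \ref{closed 2} remain applicable at every round. This yields $\eta' n$ pairwise disjoint $(F,k')$-absorbers of $S$, for every $\mathcal{V}$-balanced $S$ of size $t$.

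\textbf{Conclusion and the delicate point.} Now apply Lemma~\ref{lem:absorbing_using_template} with parameters $\gamma,\eta',k'$ in place of $\gamma,\eta,k$; it produces a $\gamma$-absorbing set of size at most $\eta' n\le\eta n$, as desired. The main obstacle — really a bookkeeping obstacle rather than a conceptual one — is keeping the constants consistent: the reachability parameter $m$ that \ref{closed 1} provides is $2kt\eta n$, and one must check that throughout both the single-absorber construction (where $O_{t,k}(1)$ extra vertices get frozen) and the outer loop of $\eta' n$ rounds (contributing up to $\eta' n k t^2$ vertices), the total number of forbidden vertices in each part never exceeds $2kt\eta n$; choosing $\eta'$ small enough relative to $\eta,1/t,1/k$ and $n$ large makes this automatic. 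One should also double-check that the "$k$" in the output ($k'=kt$) is allowed to differ from the input $k$ of Lemma~\ref{lem:closed_to_absorbing} — indeed the conclusion of Lemma~\ref{lem:closed_to_absorbing} only asserts the existence of a $\gamma$-absorbing set and makes no reference to $k$ in that conclusion, so this is harmless.
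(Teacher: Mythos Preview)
Your proposal is correct and follows essentially the same approach as the paper: use \ref{closed 2} to find a transversal copy $F_0$ disjoint from $S$ and the already-used vertices, then use the closedness of each $V_i$ to find the switching sets $S_i$, so that $F_0\cup S_1\cup\dots\cup S_t$ is an $(F,kt)$-absorber of $S$; iterate (the paper phrases this as a maximal-collection-plus-contradiction argument, but it is the same) and feed the resulting supply of absorbers into Lemma~\ref{lem:absorbing_using_template}. One cosmetic slip: in your description of the $F$-factor on $G[S']$ you wrote ``$F_0$ together with a transversal $F$-factor of each $G[S_i\cup\{f_i\}]$,'' but it is just the union of the latter (the $f_i$ are already in those factors); you correct this in your ``more precisely'' clause for $G[S\cup S']$, where indeed $F_0$ plus the factors of $G[S_i\cup\{s_i\}]$ is the right decomposition.
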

\begin{proof}
Choose a constant $\beta$ so that 
$$0<\gamma \ll \beta \ll \eta, 1/r, 1/t, 1/k \leq 1.$$
By \Cref{lem:absorbing_using_template}, it suffices to show that every $\mathcal{V}$-balanced set of size $t$ has at least $\eta n$ disjoint $(F, kt)$-absorbers.
    Let $S$ be a transversal set and we choose a maximal collection of vertex-disjoint $(F, kt)$-absorbers $A_1, \ldots, A_m$ of $S$.
    Suppose that $m < \eta n$ and let $W = \bigcup_{i \in [m]} A_i \cup S$.
    Then $|W| \leq kt^2 \eta n$ and thus by \ref{closed 2}, $V(G) \setminus W$ contains a transversal copy of $F$.
    Let $S'$ be the vertex set of such a copy of $F$.
    Let $v_i$ be the vertex in $S \cap V_i$ and $u_i$ be the vertex in $S' \cap V_i$ for each $i \in [t]$.
    Then $u_1$ and $v_1$ are $(F, 2kt\eta n, k)$-reachable so there exists a set $S_1 \subseteq V(G) \setminus (W \cup S')$ of size at most $kt-1$ such that $G[S_1 \cup \{u_1\}]$ and $G[S_1 \cup \{v_1\}]$ have a transversal $F$-factor.
    By applying the same process for each $i \in [t]$, we can find a set $S_1, \ldots, S_t$ of size at most $kt-1$ such that $G[S_i \cup \{u_i\}]$ and $G[S_i \cup \{v_i\}]$ have a transversal $F$-factor for each $i \in [t]$, they are pairwise disjoint and disjoint from $W \cup S'$. Indeed, this is possible as $W\cup S'\cup S_1\cup\dots \cup S_{i-1}$ has size at most $2kt^2\eta n$ for $i<t$.
Then both $G[S'\cup \bigcup_{i \in [t]} S_i ]$ and $G[S'\cup \bigcup_{i \in [t]} S_i \cup S]$ contain $\mathcal{V}$-balanced $F$-factors.
Thus, $S'\cup \bigcup_{i \in [t]} S_i$ form an $(F, kt)$-absorber of $S$ so it contradicts the maximality of $A_1, \ldots, A_m$.
\end{proof}

Note that \ref{closed 2} is guaranteed by the minimum degree condition when $\eta$ is small. Hence, we need to show closedness to obtain a desired absorber. For this purpose, we collect several lemmas useful for showing the closedness. The following lemma is a slight variation of Lemma 5.4 in \cite{Han2021nonlinear}. As this can be proved using a proof almost the same to the one in \cite{Han2021nonlinear} utilizing Lemma~6.3 of Han and Treglown~\cite{Han2019complex}, we omit the proof here.
\begin{lemma}\cite{Han2021nonlinear}\label{lem:partition_into_closed}
    Let $0<1/n \ll \beta \ll \beta' \ll \delta \ll 1/c, 1/r, 1/t$.
    Let $F$ be an $r$-graph on the vertex set $[t]$ and $G$ be an $r$-graph admitting a partition $(\mathcal{V},F)$ where $\mathcal{V}=(V_1,\dots, V_t)$. 
    Suppose that for any $c+1$ vertices in each $V_i$, there exist two vertices that are $(F,\beta'n,1)$-reachable. 
    Then for every $i \in [t]$, $V_i$ can be partitioned into $V_1^i, V_2^i, \ldots, V_\ell^i, U^i$ for some $\ell \leq c$ such that $V_1^i, \ldots, V_\ell^i$ are $(F, \beta n, 2^{c-1})$-closed, $|V_j^i| \geq \frac{1}{2c}\delta n$ for each $j \in [r]$, and $0 \leq |U^i| \leq \delta n$.
\end{lemma}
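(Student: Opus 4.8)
The plan is to repackage the hypothesis as information about a single reachability graph on each $V_i$ and then peel off a bounded number of ``closed clusters''. Fix $i\in[t]$ and let $R_i$ be the graph on vertex set $V_i$ in which $u$ and $u'$ are adjacent precisely when they are $(F,\beta'n,1)$-reachable. The hypothesis of the lemma says exactly that $R_i$ has no independent set of size $c+1$, i.e.\ $\alpha(R_i)\le c$, and this single fact is all we use. (For $c=1$ it already gives that every pair in $V_i$ is $(F,\beta'n,1)$-reachable, so $V_i$ is itself $(F,\beta n,1)$-closed and we take $\ell=1$, $V_1^i=V_i$, $U^i=\emptyset$; assume from now on $c\ge 2$.) From $\alpha(R_i)\le c$ I want a partition $V_i=V_1^i\sqcup\cdots\sqcup V_\ell^i\sqcup U^i$ with $\ell\le c$, each $V_j^i$ being $(F,\beta n,2^{c-1})$-closed and of size at least $\tfrac{1}{2c}\delta n$, and $|U^i|\le\delta n$.

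Carving out the clusters is routine. First I would move into $U^i$ every \emph{poor} vertex of $V_i$, meaning one with fewer than $2\beta'n$ neighbours in $R_i$: iteratively choosing a poor vertex and deleting it together with its $R_i$-neighbourhood produces a set of pairwise $R_i$-non-adjacent poor vertices, of size at most $c$ by $\alpha(R_i)\le c$, so at most $2c\beta'n\le\tfrac12\delta n$ vertices are discarded here and every surviving vertex has $R_i$-degree at least $2\beta'n\gg\beta n$. Among the survivors I would then fix a \emph{maximal} $R_i$-independent set $\{v_1,\dots,v_d\}$; by the hypothesis $d\le c$, and by maximality every surviving vertex lies in $N_{R_i}[v_j]$ for at least one $j$, so assigning each survivor to one such index partitions the survivors into $P_1,\dots,P_d$ with $P_j\subseteq N_{R_i}[v_j]$. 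Finally I would move into $U^i$ every $P_j$ with $|P_j|<\tfrac{1}{2c}\delta n$ and relabel the rest as $V_1^i,\dots,V_\ell^i$; this discards at most $c\cdot\tfrac{1}{2c}\delta n=\tfrac12\delta n$ further vertices, so $|U^i|\le\delta n$, while $\ell\le d\le c$ and $|V_j^i|\ge\tfrac{1}{2c}\delta n$ for every retained $j$.

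The substantial point is that each retained cluster $V_j^i=P_j$ is $(F,\beta n,2^{c-1})$-closed. Given $u,u'\in P_j$ and a forbidden set $W$ with $|W\cap V_\ell|\le\beta n$ for every $\ell$, the naive idea is to chain the two reachabilities $u\sim v_j$ and $u'\sim v_j$ through $v_j$ via the two-step transitivity of \Cref{prop: transitivity}: this produces an $(F,\cdot,2)$-witness disjoint from $W$ whenever $v_j\notin W$, but when $v_j\in W$ the witness set one obtains contains $v_j$ and is useless. The remedy — and the reason the closedness parameter inflates from $1$ toward $2^{c-1}$ rather than staying at $2$ — is an iterative ``merging of closed sets'': starting from trivially closed singletons and using $\alpha(R_i)\le c$ together with the large $R_i$-degrees guaranteed by the cleanup, one repeatedly merges two closed sets into a single closed set whenever a reachable pair between them can be chosen avoiding the current forbidden set, and each of the at most $c-1$ merges that can occur before $\alpha(R_i)\le c$ forces a stop roughly doubles the reachability parameter while spending only $O(t)$ vertices of the $W$-budget (harmless since $\beta\ll\beta'\ll\delta$). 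Keeping the number of merges at $c-1$, so that the final parameter is bounded by $2^{c-1}$ and, crucially, is independent of $\delta$, is the delicate part; it is exactly the bookkeeping of \cite[Lemma~6.3]{Han2019complex} as used in \cite[Lemma~5.4]{Han2021nonlinear}, which I would reproduce essentially verbatim. I expect this parameter control to be the main obstacle; the cleanup and the cluster extraction above are immediate consequences of $\alpha(R_i)\le c$.
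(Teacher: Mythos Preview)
The paper gives no proof of this lemma at all; it simply says the argument is ``almost the same'' as \cite[Lemma~5.4]{Han2021nonlinear} via \cite[Lemma~6.3]{Han2019complex}. You cite the same sources, so at the level of references your proposal matches the paper exactly.

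That said, your sketch conflates two different constructions. The partition you actually write down --- take a maximal $R_i$-independent set $\{v_1,\dots,v_d\}$ and set $P_j\subseteq N_{R_i}[v_j]$ --- is \emph{not} the partition that the Han--Treglown merging produces, and it is not clear that your $P_j$ is $(F,\beta n,2^{c-1})$-closed. The only link you have between $u,u'\in P_j$ is the single pivot $v_j$; when $v_j\in W$ you cannot reroute, and your cleanup does not rescue this: with threshold $2\beta' n$ the discard set can have size up to roughly $2c\beta' n>2\beta' n$, so a surviving vertex may lose \emph{all} of its $R_i$-neighbours and there is no pool of alternative pivots inside $P_j$. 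The ``iterative merging'' you then invoke does not certify closedness of a pre-chosen partition; it \emph{builds} its own partition, and it does not ``start from singletons with at most $c-1$ merges'' (from $|V_i|$ singletons you would need $|V_i|-c$ merges, not $c-1$). In the actual argument the bound $2^{c-1}$ comes from iterating a transitivity step at most $c-1$ times at the level of a bounded family of reference vertices whose $(F,\cdot,\cdot)$-reachability neighbourhoods are large and pairwise nearly disjoint, not from merging singletons.

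So: right references, right overall philosophy (reachability graph, $\alpha(R_i)\le c$, a small garbage set, bounded iteration giving $2^{c-1}$), but the concrete partition you propose does not do the job, and your description of the merging that would replace it is not accurate. If you intend to ``reproduce \cite[Lemma~6.3]{Han2019complex} essentially verbatim'', drop the maximal-independent-set step entirely and let that lemma produce the partition.
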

The next lemma provides a way to show the closedness of each $V_i$.

\begin{lemma}\label{lem:merge_closed_and_nonclosed_sets}
Let $m,k,t$ be integers such that $m>10kt$.
    Let $F$ be an $r$-graph on the vertex set $[t]$ and $G$ be an $r$-graph admitting a partition $(\mathcal{V},F)$ where $\mathcal{V}=(V_1,\dots, V_t)$. 
    Fix $i\in [t]$ and let $U_1 \subseteq V_i$ be an $(F, m, k)$-closed set of size at least $m+10kt$ and $U_2 \subseteq V_i$ be a set disjoint from $U_1$.
    Suppose that for any $W \subseteq V(G)$ with $|W \cap V_j| \leq m - kt$ for all $j \in [t]$ and for any $v\in U_2$, there exist two vertex-disjoint transversal copies $F_1, F_2$ of $F$ in $G \setminus W$ such that 
\begin{proplist}
       \item $F_1 \cap V_i \subseteq U_1$ and $F_2 \cap V_i =\{v\}$. \label{two closed 1}
       \item For every $j \in [t] \setminus \{i\}$, two vertices $F_1 \cap V_j$ and $F_2 \cap V_j$ are $(F, m, k)$-reachable.  \label{two closed 2}
\end{proplist}
    Then $U_1 \cup U_2$ is $(F, m-10kt, 6kt)$-closed.
\end{lemma}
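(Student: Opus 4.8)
The plan is to prove this by a reachability-transitivity argument, using Proposition~\ref{prop: transitivity} as the engine and bootstrapping it so that the new closed set $U_1\cup U_2$ gets a uniform reachability constant. The key observation is that hypotheses \ref{two closed 1}--\ref{two closed 2} are precisely designed so that each $v\in U_2$ is weakly reachable to $U_1$ with a controlled cost. So I would first establish the following claim: for every $v\in U_2$, $v$ is weakly $(F,m-kt,2kt)$-reachable to $U_1$. Given $W\subseteq V(G)$ with $|W\cap V_j|\le m-kt$ for all $j$, apply the hypothesis to get vertex-disjoint transversal copies $F_1,F_2$ of $F$ in $G\setminus W$ with $F_1\cap V_i=:\{u\}\subseteq U_1$, $F_2\cap V_i=\{v\}$, and $F_1\cap V_j$, $F_2\cap V_j$ being $(F,m,k)$-reachable for $j\neq i$. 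Now I want a set $S$ with $G[S\cup\{u\}]$ and $G[S\cup\{v\}]$ both having transversal $F$-factors. Take $S$ to be the union of $(F_1\setminus\{u\})$, $(F_2\setminus\{v\})$, and, for each $j\in[t]\setminus\{i\}$, a small set $T_j$ (of size $\le kt-1$) witnessing that $F_1\cap V_j$ and $F_2\cap V_j$ are $(F,m,k)$-reachable, chosen greedily disjoint from $W$, $F_1$, $F_2$, and the earlier $T_{j'}$'s. This greedy choice is fine because at each step the forbidden set has size $O(kt^2)\le m-kt\le m$ when $m>10kt$ and $t$ is treated as constant — here I should double-check the exact bound: the union $W\cup F_1\cup F_2\cup\bigcup_{j'<j}T_{j'}$ restricted to each $V_\ell$ has size at most $(m-kt)+2+(t-1)(kt-1)$, and I need this $\le m$ for reachability of $(F,m,k)$-reachable pairs to apply; this forces a slightly more careful accounting, likely replacing $kt$ by a larger multiple — I would track constants so that $2kt$ absorbs the overhead. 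Then $G[S\cup\{u\}]$ has the factor $(F_1)\cup\bigcup_{j\ne i}(T_j\cup\{F_1\cap V_j\})$-style factor, and symmetrically for $v$; the size of $S$ is at most $2t+(t-1)(kt-1)\le 2kt\cdot t$ which I fold into the constant $2kt$ in the weak-reachability statement.

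With the weak-reachability claim in hand, I would finish via Proposition~\ref{prop: transitivity}. Fix any two vertices $x,y\in U_1\cup U_2$. If both lie in $U_1$, they are already $(F,m,k)$-reachable, hence $(F,m-10kt,6kt)$-reachable since $m-10kt\le m$ and $k\le 6kt$. If $x\in U_1$ and $y\in U_2$: since $U_1$ is $(F,m,k)$-closed and $|U_1|\ge m+10kt\ge m+1$, the vertex $x$ is $(F,m,k)$-reachable to every $u\in U_1$; and $y$ is weakly $(F,m-kt,2kt)$-reachable to $U_1$ by the claim. Applying Proposition~\ref{prop: transitivity} with $v=x$, $w=y$, $U=U_1$, $k_1=k$, $k_2=2kt$, and the reachability parameter $m$ replaced by $m-kt$, we get that $x,y$ are $(F,(m-kt)-2kt,k+2kt)$-reachable, which is stronger than $(F,m-10kt,6kt)$-reachable. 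If both $x,y\in U_2$: apply the previous case to get, say, $x$ and some fixed $u_0\in U_1$ reachable, and $y$ and $u_0$ reachable — but I need a common chaining. Cleaner: since $x$ is weakly $(F,m-kt,2kt)$-reachable to $U_1$ and every vertex of $U_1$ is $(F,m,k)$-reachable to $x$ as well, run Proposition~\ref{prop: transitivity} once to see $x$ is $(F,m-3kt,3kt)$-reachable to, effectively, all of $U_1$ (i.e., $U_1\cup\{x\}$ is closed with parameters $(m-3kt,3kt)$, after noting $|U_1|$ is still large enough), then repeat with $y$; composing two such transitivity steps through $U_1$ gives $x,y$ $(F,m-10kt,6kt)$-reachable, with the $10kt$ slack exactly covering the two rounds of overhead ($3kt$ each plus the $kt$ from passing into the claim, plus a safety margin).

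The main obstacle I anticipate is the bookkeeping of the three parameters $m$, $k$, $t$ through the two (or three) applications of transitivity and the greedy disjointness arguments: the statement commits to the clean bound $(F,m-10kt,6kt)$, so every greedy choice and every invocation of Proposition~\ref{prop: transitivity} must be budgeted so the losses sum to at most $10kt$ in the $m$-slot and $6kt$ in the $k$-slot. The hypothesis $m>10kt$ is clearly tuned to make this work with a little room to spare, and the two-disjoint-copies condition \ref{two closed 1}--\ref{two closed 2} is exactly the "one step of weak reachability plus $t-1$ side reachabilities" that Proposition~\ref{prop: transitivity} is built to chain — so conceptually the proof is short, but I would write the constant-tracking out carefully rather than hand-wave it, since that is where an off-by-a-factor-of-$t$ error would hide. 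A secondary point to be careful about: in the both-in-$U_2$ case I must verify $|U_1|$ remains $\ge (m-3kt)+1$ after the first pass so that the second application of weak reachability (which requires the target set to have more than $m-3kt$ vertices) is legitimate; this follows from $|U_1|\ge m+10kt$.
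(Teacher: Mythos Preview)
Your approach is essentially the same as the paper's: first show that every $v\in U_2$ is weakly $(F,m',2kt)$-reachable to $U_1$ by taking $S=(F_1\setminus\{u\})\cup(F_2\setminus\{v\})\cup\bigcup_{j\ne i}T_j$, then apply Proposition~\ref{prop: transitivity} once to merge a single $v$ into $U_1$, and once more to handle pairs $x,y\in U_2$. One small slip: in the factor for $G[S\cup\{u\}]$ you want $F_1$ together with $\bigcup_{j\ne i}(T_j\cup\{F_2\cap V_j\})$, not $F_1\cap V_j$ (the point is that $u$ completes $F_1$, leaving the $v_j$'s to be absorbed by the $T_j$'s); and your weak-reachability parameter $m-kt$ is slightly too tight for $k=1$, which is why the paper uses $m-2kt$ --- your own parenthetical catches exactly this, so just follow through on it.
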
 
\begin{proof}
To show the $(F, m-10kt, 6kt)$-closednes of $U_1\cup U_2$, it is sufficient to show that $v$ is weakly $(F,m-2kt,2kt)$-reachable to $U_1$.
Indeed, if this is true, then Proposition~\ref{prop: transitivity} and the closedness of $U_1$ implies that $\{v\}\cup U_1$ is $(F,m-4kt,(2t+1)k)$-closed set. Thus for any vertices $v,v'\in U_2$, $v$ is weakly $(F,m-4kt,(2t+1)k)$-reachable to $U_1$ and $v'$ and $u$ are $(F,m-4kt,(2t+1)k)$-reachable for all $u\in U_1$, Proposition~\ref{prop: transitivity} again implies that $v,v'$ are $(F,m-10kt, 6kt)$-reachable.
Hence it is enough to show that $v$ is weakly $(F,m-2kt,2kt)$-reachable to $U_1$.

    Let $W \subseteq V(G)$ be a set with $|W \cap V_j| \leq m-2kt$ for all $j \in [t]$.
    Then there exist two vertex-disjoint transversal copies $F_1, F_2$ of $F$ in $G \setminus W$ that satisfies \ref{two closed 1}--\ref{two closed 2}.
    For each $j\in [t]$, we let $u_j$ and $v_j$ be the unique vertex in  $V(F_1)\cap V_j$ and $V(F_2)\cap V_j$, respectively. In particular, $v_i=v$ and $u:=u_i$ is some vertex in $U_1$.
    Hence we aim to find a set $S$ of size at most $kt(t-1)-1$ such that both $G[S\cup \{u\}]$ and $G[S\cup \{v\}]$ contains transversal $F$-factors.

Assume that we have pairwise disjoint sets $S_1,\dots, S_{j-1}$ of size $kt-1$ in $V(G)\setminus (W\cup \{u,v\})$ for some $j\in [t]\setminus\{i\}$, such that both $G[S_{j'}\cup \{u_{j'}\}]$ and $G[S_{j'}\cup v_{j'}]$ contains a $\mathcal{V}$-transversal $F$-factor. Hence each $S_{j'}$ intersect $V_{j''}$ at $k$ vertices if $j''\neq j'$ and at $k-1$ vertices if $j''= j'$.
Letting $W' = W\cup \{u,v\}\cup\bigcup_{j'\in [j-1]} S_{j'}$, we have $|W'\cup \{u,v\} \cap V_{j''}|\leq m - 2kt +2 + \sum_{j'\in [j']\setminus \{i\} } k \leq m$. By \ref{two closed 2}, we can find a set $S_j$ of size $kt-1$ such that both $G[S_j\cup \{u_j\}]$ and $G[S_j\cup \{v_j\}]$ contain a transversal $F$-factors and $S_1,\dots, S_j, W, \{u,v\}$ are all disjoint.
By repeating this for $j\in [t]\setminus\{i\}$, we obtain pairwise disjoint sets $S_1,\dots, S_t$ of each with size at most $kt-1$ such that $G[S_j \cup \{u_j\}]$ and $G[S_j \cup \{v_j\}]$ have a transversal $F$-factor for each $j \in [t] \setminus \{i\}$.

Let $S := \bigcup_{j \in [t] \setminus \{i\}} S_i \cup \{u_j, v_j \mid j \in [t] \setminus \{i\} \}$. Then it has at most $kt(t-1)$ vertices.
Moreover both $G[S\cup \{u\}]$ and $G[S\cup \{v\}]$ contain a transversal $F$-factors. The first one has a transversal $F$-factor $F_2 \cup \bigcup_{j\in [t]\setminus \{i\}} G[S_j\cup \{u_j\}]$ and the second one has a transversal $F$-factor $F_1\cup \bigcup_{j\in [t]\setminus \{i\}} G[S_j\cup \{v_j\}]$. This proves that $v$ is weakly $(F,m-2kt,k(t-1))$-reachable to $U_1$ as desired.
\end{proof}

\section{A proof of the main theorem}\label{sec:proof_of_main_2}

In order to prove \Cref{thm:main_theorem}, we collect some lemmas to carry out the three steps \ref{A1}--\ref{A3}.
For the first one \ref{A1}, the following lemma will be sufficient.
\begin{lemma}\label{lem:almost_spanning}
   Suppose $0< \alpha \ll 1/r, 1/\Delta, \varepsilon, \eta\leq 1$. 
    If 
   $G$ is an $r$-graph with $\delta_1(G) \geq \varepsilon n^{r-1}$ and  $\alpha^*(G) < \alpha n$, then it contains every $r$-forest $T$ with $\Delta(T) \leq \Delta$ and $|V(T)| \leq (1-\eta) n$.
\end{lemma}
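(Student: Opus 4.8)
The plan is to embed $T$ greedily by exploiting the decomposition of hypertrees given by \Cref{lem:tree_split}, so that the forest is built up by successively adding matchings (and, once, a bounded collection of bare paths of length $3$). First I would set up the constant hierarchy $0 < \alpha \ll \mu \ll 1/r, 1/\Delta, \varepsilon, \eta$, where $\mu$ is chosen small enough that the error terms in \Cref{lem:tree_split} are negligible compared with $\eta n$. Reserve a small random ``buffer'' set $R \subseteq V(G)$ of size about $\tfrac{\eta}{2}n$ at the outset; by \Cref{lem:general_concent} (applied to the partition $(R, V(G)\setminus R)$) we may assume that every vertex $v$ has codegree at least $\tfrac{\varepsilon}{2}|R|^{r-1}$ into $R$, and similarly high codegree towards $V(G)\setminus R$. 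Since $T$ is a forest with $\le (1-\eta)n$ vertices and bounded degree, it suffices to embed each component tree separately, and we may add isolated dummy vertices/edges to reduce to the case of a single hypertree $T$ with $e(T) \le (1-\eta)n/(r-1)$.

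Next I would apply \Cref{lem:tree_split} with parameter $\mu$ to get $T_0 \subseteq T_1 \subseteq \cdots \subseteq T_\ell = T$ with $\ell = O(1)$, where $e(T_0) \le \mu n$, each step except step $s$ adds a matching whose new vertices form a matching leaf set, and step $s$ adds at most $\mu n$ vertex-disjoint bare paths of length $3$ between old vertices. The embedding proceeds in stages. \textbf{Stage 0:} embed the tiny tree $T_0$ greedily one edge at a time; at each point the embedded set has size $O(\mu n) = o(n)$, so the minimum degree condition $\delta_1(G) \ge \varepsilon n^{r-1}$ alone lets us extend each new edge (we always have $\Omega(n^{r-1})$ choices for the $r-1$ new vertices of a leaf-edge avoiding the $o(n)$ used vertices). \textbf{Stage $i \to i+1$ (matching step):} suppose $T_i$ is embedded via $\varphi$. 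The new vertices of $T_{i+1}$ form a matching $M$ of leaf-edges hanging off distinct already-embedded roots $\varphi(x_1),\dots,\varphi(x_{|M|})$. Think of choosing, for each such root, $r-1$ fresh vertices forming an edge of $G$ with $\varphi(x_j)$, all disjoint. This is precisely a problem of the type handled by \Cref{lem: find matching}: let $V_0$ be (a copy of) the set of roots and $V_1,\dots,V_{r-1}$ be disjoint copies of the current unused vertex set minus $R$ — here the bipartite-hole condition $\alpha^*(G) < \alpha n$ bounds $\alpha^*_{\mathcal V}$ by $\alpha n$, yielding a matching covering all but $\le \alpha n$ of the roots, and then the codegree-into-$R$ bound (which gives $d_G(v,\mathcal U) \ge (r-1)^2|R|^{r-2}\cdot \alpha n$ for $\mathcal U$ a tuple of disjoint subsets of $R$, since $|R| = \Omega(n)$) lets the ``furthermore'' part of \Cref{lem: find matching} finish the remaining $\le \alpha n$ roots using edges inside $R$. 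A subtlety is that edges of $G$ are unordered while here the $r-1$ new leaf-vertices of a leaf-edge may themselves be non-leaves of $T$ after later steps — but since later steps only attach further leaf-edges to them, any assignment of the $r-1$ vertices works, so no ordering issue arises. \textbf{Stage $s \to s+1$ (bare-path step):} we must connect $\le \mu n$ prescribed pairs $\varphi(u),\varphi(v)$ of embedded vertices by internally-disjoint linear paths of length $3$ through currently unused vertices. Process the pairs one at a time: with $o(n)$ vertices used so far in this stage, for each pair pick $2(r-1)$ disjoint sets of unused vertices of size, say, $\Theta(n)$ lying in $V(G)\setminus R$, and apply \Cref{lem: find path} (with $t=3$) — the hypothesis $|V_i| > (2(r-1)+1)\alpha^*(G)$ holds since $\alpha^*(G) < \alpha n \ll n$ — to obtain the required length-$3$ path; after at most $\mu n$ such steps only $O(\mu n) = o(\eta n)$ extra vertices are consumed, so room remains.

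The main obstacle — and the reason the random buffer $R$ is introduced — is that a naive greedy matching step can get stuck on a bounded-fraction set of roots whose fresh-vertex options have been exhausted by earlier choices; the bipartite-hole condition alone only guarantees a matching missing $\le \alpha n$ roots, and closing that gap needs a ``reservoir'' of vertices with uniformly high codegree, which is exactly what $R$ provides via \Cref{lem:general_concent} and the ``furthermore'' clause of \Cref{lem: find matching}. Two bookkeeping points must be watched: (i) across all $\ell = O(1)$ stages the total number of vertices dumped into $R$-repair and into the bare paths is $O(\ell \alpha n + \mu n) = o(\eta n)$, so we never run out of space, and we should choose $|R|$ slightly larger than this cumulative demand; (ii) the codegree guarantees from \Cref{lem:general_concent} are stated for a fixed random partition, so we fix $R$ once at the start and all later deterministic choices respect it. Putting the stages together yields an embedding of $T_\ell = T$ into $G$, which is the desired conclusion. $\qed$
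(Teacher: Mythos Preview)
Your overall strategy---decompose $T$ via \Cref{lem:tree_split}, reserve a random buffer $R$ using \Cref{lem:general_concent}, embed $T_0$ greedily, and extend through each matching layer via \Cref{lem: find matching} with $R$ as the repair tuple $\mathcal{U}$---is exactly the paper's approach, and those parts are fine.

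The genuine gap is in the bare-path step (Stage $s\to s+1$). You propose to connect each prescribed pair $\varphi(u),\varphi(v)$ by invoking \Cref{lem: find path} with $t=3$. But \Cref{lem: find path} only returns a path between \emph{some} vertex of $V_1$ and \emph{some} vertex of $V_{(r-1)t+1}$; it does not let you prescribe the endpoints. Forcing the endpoints would mean taking $V_1=\{\varphi(u)\}$ and $V_{3(r-1)+1}=\{\varphi(v)\}$, and then the size hypothesis $|V_i|>((r-1)t+1)\,\alpha^*_{\mathcal V}(G)$ fails for these singletons. (Incidentally, your count of ``$2(r-1)$ disjoint sets'' is also off: a linear $r$-path of length $3$ has $3(r-1)-1=3r-4$ internal vertices.) There is no codegree assumption available here to substitute, so a direct application of \Cref{lem: find path} cannot produce a path with both ends fixed.

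The paper closes this gap by a star-plus-matching argument carried out \emph{inside} $R$, not outside it. Using the guaranteed degree of every vertex into $R$, one first greedily builds vertex-disjoint stars $S_u,S_v$ centred at $\varphi(u),\varphi(v)$, each with $\Theta(\beta n)$ edges whose non-centre vertices lie in $R$. Picking one non-centre vertex from each star edge yields linear-size sets $W_u,W_v\subseteq R$; now \Cref{lem: find matching} applied to the tuple $(W_u,V_2,\dots,V_{r-1},W_v)$ (with $V_2,\dots,V_{r-1}$ arbitrary disjoint linear-size subsets of $R$) produces $\Theta(\beta n)$ middle edges, each of which combines with the corresponding star edges to form a length-$3$ path from $\varphi(u)$ to $\varphi(v)$. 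This yields $\beta n$ internally-disjoint such paths for \emph{every} pair in advance, so the at most $\mu n\ll\beta n$ bare paths can then be embedded greedily, consuming only $O(r\mu n)$ vertices of $R$. Your proposal already tracks this budget correctly; the missing ingredient is just the star step that converts the $1$-degree condition at $\varphi(u),\varphi(v)$ into linear-size candidate sets on which the hole condition can bite.
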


To prove the bipartite version \Cref{thm:main_bipartite_version} we need the following strengthening.
\begin{lemma}\label{lem:almost_spanning_for_rainbow}
    Suppose $0< \alpha \ll 1/r, 1/\Delta, \varepsilon, \eta\leq 1$. 
    Let $G$ be an $n$-vertex $(r,1)$-graph with partition $X, Y$ where $|X| = \frac{(r-1)n+1}{r}$ and $|Y| = \frac{n-1}{r}$ and $\delta_1(G) \geq \varepsilon n^r$ and $\alpha^*_{\mathcal{X}}(G) < \alpha n$ where $\mathcal{X}=(X,X,\dots, X,Y)$ is an $(r+1)$-tuple.
  Suppose that $T$ is an $(r,1)$-hypertree on at most $(1-\eta)n$ vertices with $\Delta(T)\leq \Delta$ admitting a partition $A, B$ such that all the vertices in $B$ have degree $1$.
Then there exists an embedding $\varphi: T\rightarrow G$ such that $\varphi(A)\subseteq X$ and $\varphi(B)\subseteq Y$.
\end{lemma}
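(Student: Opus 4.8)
The plan is to embed $T$ greedily one edge at a time, processing the tree in a breadth-first manner, but to set aside a random reservoir of vertices at the start so that the final, hard-to-place vertices can always be accommodated. Since $T$ is an $(r,1)$-hypertree with $A$-vertices forming the "spine" and $B$-vertices being leaves (degree $1$), each edge of $T$ meets $A$ in exactly $r$ vertices and $B$ in exactly one vertex. So embedding $T$ amounts to: (i) embedding the $r$-uniform "core" $T_A := T[A]$ (the hypergraph on $A$ whose edges are the $A$-parts of edges of $T$) into $G[X,\dots,X]$, where $G[X,\dots,X]$ is the $r$-graph on $X$ obtained by projecting edges of $G$ to their $X$-part; and (ii) for each edge of $T$, picking a distinct vertex of $Y$ completing it to a genuine edge of $G$. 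Note $T_A$ is again a bounded-degree linear $r$-hypertree (or hyperforest) on $\frac{(r-1)|E(T)|+ (\text{number of components})}{\,\cdot\,}$-many vertices, roughly $\tfrac{r-1}{r}|V(T)|$ vertices, which is at most $(1-\eta)|X|\cdot\frac{r}{r-1}$-ish — in any case at most $(1-\eta')|X|$ for a suitable $\eta'$.

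First I would reserve, uniformly at random, a set $R_X \subseteq X$ and $R_Y\subseteq Y$, each of density a small constant $\rho$ depending on $\eta$; by Lemma~\ref{lem:general_concent} (applied to the auxiliary uniformity $r+1$) and Lemma~\ref{lem:chernoff_hypergeometric}, with high probability every vertex still has codegree $\Omega(\cdot)$ towards the reservoir and towards any linear-sized sets, and crucially $\alpha^*_{\mathcal{X}}$ restricted to the reservoir is still $O(\alpha n)$. Next, order the edges of $T$ as $f_1,\dots, f_m$ ($m=|E(T)|$) so that each $f_j$ shares exactly one $A$-vertex (its "parent root") with $\bigcup_{j'<j} f_{j'}$ for $j\geq 2$ — this is possible because $T$ is connected-ish (work component by component if $T$ is a forest). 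Embed $f_1$ arbitrarily using the minimum-degree condition. Then for $j = 2,\dots, m$: the root vertex $x^* \in X$ of $f_j$ is already placed; we must choose $r-1$ new vertices of $X\setminus R_X$ and one new vertex of $Y\setminus R_Y$ forming, together with the image of $x^*$, an edge of $G$, avoiding all previously used vertices. The key point is that we do this only while the number of used vertices in $X$ (resp. $Y$) is at most $(1-\eta/2)|X|$ (resp. an analogous bound for $Y$); as long as that holds, the available vertices $X'\subseteq X\setminus R_X$, $Y'\subseteq Y\setminus R_Y$ still have linear size, so by the no-$r$-partite-hole condition $\alpha^*_{\mathcal{X}}(G)<\alpha n$ together with the degree condition $d_G(x^*, \cdot)\geq \varepsilon n^r$, there is an edge of $G$ through $x^*$ crossing $(X',\dots,X',Y')$ — here I would invoke (the argument behind) Lemma~\ref{lem: find matching}/Lemma~\ref{lem: find a set}, or simply a direct counting argument: the number of edges through $x^*$ missing the used set is at least $\varepsilon n^r - (\text{used})\cdot n^{r-1} > 0$, and then the no-hole condition ensures we can in fact choose the new vertices inside the available parts.

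The main obstacle — and where the reservoir is essential — is the endgame: once we have placed all but $O(\eta n)$ vertices of $T_A$, the remaining vertices of $X$ outside $R_X$ that are still unused might have no suitable edges, or the branches of $T$ we still need to attach might be forced onto a sparse corner of $G$. To handle this I would front-load: embed the "bulk" of $T$ (all but a carefully chosen small subtree/subforest $T^{\mathrm{end}}$ with $O(\eta n)$ vertices consisting of pendant pieces hanging off already-embedded roots) using the greedy scheme above into $X\setminus R_X$ and $Y\setminus R_Y$; this is fine because at every step a positive-density fraction of vertices remains. Then embed $T^{\mathrm{end}}$ into the reservoir $R_X\cup R_Y$ (plus leftover vertices), again greedily but now the reservoir was chosen precisely so that (a) it has $\alpha^*_{\mathcal{X}} = O(\alpha n)$ and (b) every already-embedded root vertex has $\Omega(|R_X|^{r-1}|R_Y|)$ edges into the reservoir — this follows from Lemma~\ref{lem:general_concent}. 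Since $T^{\mathrm{end}}$ is small ($O(\eta n)$ vertices) and the reservoir has linear size, the greedy argument goes through with room to spare. Throughout I would keep $1/n \ll \alpha \ll \rho \ll \eta, 1/r, 1/\Delta, \varepsilon$, so that all error terms of the form $(\#\text{used})\cdot n^{r-1}$ and $\alpha n \cdot n^{r-1}$ are dominated by $\varepsilon n^r$, and so that the hole condition can always be applied to the available sub-parts. Finally, distinctness of the chosen $Y$-vertices is automatic since we always pick from the unused part of $Y$.
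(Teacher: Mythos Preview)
Your greedy one-edge-at-a-time scheme has a real gap at the anchoring step. When you come to embed $f_j$ with root already placed at $x^*\in X$, you need an edge of $G$ through $x^*$ whose remaining $r$ vertices lie in the currently unused sets $X',Y'$. Your two proposed justifications both fail. The direct count $\varepsilon n^r-(\text{used})\cdot n^{r-1}>0$ breaks down as soon as more than $\varepsilon n$ vertices have been used, which happens long before you have embedded any constant fraction of $T$ (recall $\varepsilon$ is fixed and small while $T$ has $(1-\eta)n$ vertices). And the hole condition $\alpha^*_{\mathcal{X}}(G)<\alpha n$ only guarantees an edge crossing any tuple of \emph{linear-size} sets; it says nothing about edges through a \emph{single prescribed vertex} $x^*$. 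Concretely, all of $N_G(x^*)$ could be supported on a set of size $(r\varepsilon)^{1/(r-1)}n$ in $X$, and once the used set swallows that support (which it can, since $(r\varepsilon)^{1/(r-1)}\ll 1-\eta/2$), there is no edge through $x^*$ into $X'$ at all. Your reservoir only buys you $O(\rho n)$ extra edges before it is depleted, far fewer than the $\Theta(n)$ edges of $T$.

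The paper avoids this by not anchoring one edge at a time. It invokes the tree decomposition Lemma~\ref{lem:tree_split} to write $T_0\subseteq T_1\subseteq\cdots\subseteq T_\ell=T$ where each step (except one) adds a \emph{matching} hanging off a set $C_i\subseteq V(T_{i-1})$ of roots. Now the anchoring is at the whole set $V_0=\varphi_{i-1}(C_i)$ rather than a single vertex, and Lemma~\ref{lem: find matching} applies: the hole condition yields a matching in $G[V_0,V_1,\dots,V_r]$ covering all but $\le\alpha n$ vertices of $V_0$, and the degree-into-reservoir condition mops up those few exceptions. The single exceptional step adds $\le\mu n$ bare paths of length three between already-embedded endpoints, handled by pre-storing $\beta n$ internally disjoint length-$3$ paths through the reservoir for every pair (Claim~\ref{claim:random_set_in_embedding_almost_spanning}). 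The point is that batching the extensions into matchings is precisely what converts the hole condition into a usable anchoring tool; your edge-by-edge BFS loses this.
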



The following lemma encapsulates the step \ref{A2}.
\begin{lemma}\label{lem:embed_stars}
    Suppose $0<\alpha \ll \varepsilon, 1/r, 1/\Delta \leq 1$.
    Let $G$ be an $r$-partite $r$-graph with an $r$-partition $\mathcal{V}=(V_1, \ldots, V_r)$ with $|V_i|=n$ for each $2 \leq i \leq r$ and let $f:V_1 \rightarrow [\Delta]$ be a function such that $\sum_{v \in V_1} f(v) = n$.
Suppose that $G$ satisfies the following.
\begin{proplist}
\item $\alpha^*_{\mathcal{V}}(G) < \alpha n$.
\item $\delta_1(G[\mathcal{V}]) \geq \varepsilon n^{r-1}$.
\end{proplist}
    Then $G$ contains a collection of vertex-disjoint stars $\{S_v : v \in V_1\}$ such that $S_v$ is centered at $v$ and has $f(v)$ edges.
\end{lemma}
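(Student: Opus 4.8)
The plan is to embed the stars one root vertex at a time, greedily, while maintaining the invariant that enough vertices remain in each of $V_2,\dots,V_r$ to accommodate the remaining stars. Order $V_1=\{v_1,\dots,v_{|V_1|}\}$ arbitrarily, and suppose stars $S_{v_1},\dots,S_{v_{j-1}}$ have already been embedded, using up exactly $\sum_{i<j} f(v_i)$ vertices from each of $V_2,\dots,V_r$ (note each edge of $S_{v_i}$ uses one vertex from each of $V_2,\dots,V_r$ since the hypergraph is $r$-partite with parts $\mathcal V$). Let $U_\ell\subseteq V_\ell$ be the set of currently unused vertices for $\ell\in\{2,\dots,r\}$; then $|U_\ell| = n-\sum_{i<j}f(v_i) \ge \sum_{i\ge j} f(v_i) =: N_j$. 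We must find $f(v_j)$ pairwise disjoint edges through $v_j$, each crossing $(\{v_j\},U_2,\dots,U_r)$.

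The key tool is Lemma~\ref{lem: find matching} applied to the link of $v_j$. Consider the auxiliary $(r-1)$-partite $(r-1)$-graph $L$ on $U_2\cup\dots\cup U_r$ whose edges are $e\setminus\{v_j\}$ for $e\in E(G)$ with $v_j\in e$ crossing $(\{v_j\},U_2,\dots,U_r)$. Two observations drive the argument: first, $\alpha^*_{\mathcal V}(G)<\alpha n$ forces a bound on the $(r-1)$-partite hole number of $L$ of the form $\alpha^*_{\mathcal U'}(L)<\alpha n$ where $\mathcal U'=(U_2,\dots,U_r)$, since an empty crossing tuple in $L$ together with a single-vertex set $\{v_j\}$ would give an empty crossing $r$-tuple in $G$ (this needs a moment's thought about whether the sets must have equal size — one takes the minimum size and notes $\alpha n$ is an upper bound regardless). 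Second, $\delta_1(G[\mathcal V])\ge \varepsilon n^{r-1}$ means $v_j$ has at least $\varepsilon n^{r-1}$ edges crossing $\mathcal V$, but many of these may now hit used vertices; one must verify that restricting to $U_2,\dots,U_r$ still leaves $v_j$ with enough degree, which is where we need $N_j$ not too small — i.e., the greedy process works only while $\sum_{i<j}f(v_i)$ is not too close to $n$.

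Here is the main obstacle and how I expect to handle it: the naive greedy fails at the very end, because once $N_j$ becomes sublinear (say $N_j < \alpha' n$ for a suitable $\alpha'$), the degree of $v_j$ into the tiny residual sets $U_\ell$ can drop to zero, and the hole condition only rules out empty tuples of size $\ge \alpha n$, which is useless when $|U_\ell|<\alpha n$. To fix this I would split $V_1$ into a "bulk" part and a small "reserve" part: before running the greedy, set aside a random subset $R_\ell\subseteq V_\ell$ of size, say, $2\Delta\alpha n$ in each of $V_2,\dots,V_r$ (this is affordable since $\alpha\ll\varepsilon$), together with a matching strategy for the last $O(\alpha n)$ root vertices analogous to the "furthermore" clause of Lemma~\ref{lem: find matching}. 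Concretely: run the greedy embedding of $S_{v_j}$ using Lemma~\ref{lem: find matching} (with $V_0$ a suitable set and $\mathcal U$ built from the $R_\ell$) so long as $N_j$ is linear; the first part of that lemma produces a near-complete matching in the link, and the $d_G(v,\mathcal U)$-type hypothesis — guaranteed by $\delta_1(G[\mathcal V])\ge\varepsilon n^{r-1}$ together with Lemma~\ref{lem: find a set} to pass from a large-degree vertex to a large set of large-degree $(r-1)$-tuples — lets us finish off the few leftover leaves of each star through the reserve sets. When $N_j$ drops below $\alpha' n$, only $O(\alpha n)$ leaves remain to be placed in total, and these can all be absorbed by edges through $v_j$ into the reserve sets $R_\ell$, using that the total demand there, $O(\Delta\alpha n)$, is far below $|R_\ell|$ and that every relevant vertex still has $\ge\varepsilon n^{r-1}/2$-ish degree towards $R_\ell$ by the random choice of the reserve (Lemma~\ref{lem:general_concent} or a direct averaging argument). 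A clean way to package all of this is to apply Lemma~\ref{lem: find matching} once per root vertex with $V_0=\{v_j\}$ replaced by a slightly larger staging set so that both the "$M'$" and "$M''$" parts of its conclusion are invoked; the bookkeeping that the disjointness is maintained across all $|V_1|$ applications is routine but is the part demanding the most care.
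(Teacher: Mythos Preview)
Your plan has a genuine gap: it does not produce a \emph{perfect} covering of $V_2,\dots,V_r$. Since $\sum_{v\in V_1}f(v)=n=|V_\ell|$ for each $\ell\ge 2$, every vertex of $V_2,\dots,V_r$ must lie in exactly one star edge; after replacing each $v\in V_1$ by $f(v)$ clones this becomes a perfect matching problem in a balanced $r$-partite $r$-graph $G'$ (with $\alpha^*_{\mathcal V}(G')\le\Delta\alpha n$). Your greedy-with-reserve scheme handles the first $n-O(\alpha n)$ edges, but the finishing step is wrong: when you write that the last $O(\alpha n)$ leaves ``can all be absorbed by edges through $v_j$ into the reserve sets $R_\ell$'', you ignore that the reserve sets themselves, together with whatever greedy left uncovered in $V_\ell\setminus R_\ell$, must be covered \emph{exactly}. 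Routing the final edges into $R_\ell$ generically strands specific vertices of $V_\ell$, and once the uncovered sets have size below $\alpha n$, neither the hole condition nor the degree condition forces any edge through those particular vertices. A random reserve guarantees degree \emph{towards} itself, but it is not an absorbing structure capable of flexibly swallowing an arbitrary small balanced leftover; the ``furthermore'' clause of Lemma~\ref{lem: find matching} does not help here for the same reason.

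The paper's proof is structurally different. It performs the blow-up and then runs the absorption method: for every transversal $r$-set $S=\{v_1,\dots,v_r\}$ it builds an $(F,r{-}1)$-absorber (with $F$ a single edge) avoiding any prescribed small set $U$, via an inductive ``shifted'' construction of $i$-sets $e^i_1,\dots,e^i_r$ where $e^i_j$ crosses $(\{v_j\},V_{j+1},\dots,V_{j+i-1})$ and has large residual degree. At each step, Lemma~\ref{lem: find a set} produces linear-size sets $W_1,\dots,W_r$ of good one-vertex extensions, and the hole bound gives a single edge crossing $(W_1,\dots,W_r)$ that extends all $r$ partial sets simultaneously; at $i=r$ the two resulting edge families on $S'=\bigcup_j e^r_j\setminus S$ and on $S\cup S'$ witness that $S'$ is an absorber. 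This yields $\eta n$ disjoint absorbers per $S$, so Lemma~\ref{lem:absorbing_using_template} supplies a $\gamma$-absorbing set $A$; a maximum matching in $G'-A$ has size at least $n-\Delta\alpha n$ by the hole bound, and the balanced remainder is absorbed by $A$. The missing ingredient in your plan is precisely this absorber construction.
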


\begin{proof}[Proof of \Cref{lem:embed_stars}]
The lemma is clear if $G$ is a complete $r$-partite $r$-graph. Otherwise, we have $1\leq \alpha^*_{\mathcal{V}}(G) <\alpha n$, we have $n> 1/\alpha$, therefore we can choose constants $\gamma, \eta$ so that 
$$1/n<\alpha \ll \gamma \ll \eta \ll \varepsilon, 1/r\leq 1. $$
We replace each vertex $v \in V_1$ with $f(v)$ new vertices to obtain a new $r$-partite $r$-graph $G'$.
We abuse the notation to denote the set of new vertices as $V_1$ again. 
Then we have $\alpha^*_{\mathcal{V}}(G')\leq \Delta \alpha n$, $\delta_1(G'[\mathcal{V}]) \geq \varepsilon n^{r-1}$ and $|V_1|=\dots=|V_r|=n$.
It suffices to show that $G'$ contains a perfect matching which corresponds to a desired collection of vertex-disjoint stars. In doing this, we shall adopt the absorbing method.

We first construct many disjoint $F$-absorbers for every $\mathcal{V}$-balanced set of size $r$, where $F$ is a single edge. 
For this we prove the following claim.
\begin{claim}
    For any $\mathcal{V}$-balanced set $S= \{v_1,\dots, v_r\}$ and any set $U\subseteq V(G')\setminus S$ of size at most $r^2\eta n$ 
    and $i\in [r]$, there exist vertex-disjoint $i$-sets $e^{i}_1,\dots, e^{i}_r$ in $G'-U$ satisfying the following.
    \begin{proplist}
        \item  For each $j\in [r]$, $e^i_j$ crosses $(\{v_j\},V_{j+1},\dots, V_{j+i-1})$ where $V_{r+j'}= V_{j'}$ for each $j'\in [r]$. \label{eq: lattice 1}
        \item For each $i'\in [i]\setminus\{1\}$, $\bigcup_{j\in [r] }\left(e^i_j\cap V_{j+i'-1}\right)$ forms an edge in $G'-U$.\label{eq: lattice 2}
        \item For each $j\in [r]$, $d_{G'-U}(e^{i}_j)\geq \frac{1}{3^i}\varepsilon n^{r-i}$.\label{eq: lattice 3}
    \end{proplist}
\end{claim}
\begin{proof}
Let $G''=G'-U$.
For $i=1$, letting $e^1_j= \{v_j\}$ for each $j\in [r]$, the three properties are satisfied. Indeed, \ref{eq: lattice 1} is trivial and \ref{eq: lattice 2} is vacuous, and \ref{eq: lattice 3} holds as $\delta_1(G'')\geq \varepsilon n^{r-1} - |U|\cdot n^{r-2} \geq \frac{1}{3} \varepsilon n^{r-1}$.

Assuming the existence of $e^{i}_1,\dots, e^{i}_r$ for $i\geq 1$, we prove that we can obtain $e^{i+1}_1,\dots, e^{i+1}_r$.
For each $j\in [r]$, let $W_{j+i}\subseteq V_{j+i}$ be the set of verices $w$ such that $d_{G''}( e^i_j\cup \{w\}) \geq \frac{1}{3^{i+1}} \varepsilon n^{r-i-1}$. Then as $|V_i|= n$, Lemma~\ref{lem: find a set} implies that 
$$|W_{i+1}|\geq \frac{1}{n^{r-i-1}- \frac{1}{3^{i+1}} \varepsilon n^{r-i-1} +1 }(\tfrac{1}{3^i} \varepsilon n^{r-i} - (\tfrac{1}{3^{i+1} } \varepsilon n^{r-i-1}-1)n ) \geq \tfrac{1}{3^{i+1}} \varepsilon n.$$
As $i\leq r$, we have  $\alpha^*_{\mathcal{V}}(G'') < \Delta \alpha n <  \frac{1}{3^{i+1}} \varepsilon n$.
Thus, we can choose an edge $e'$ of $G''$ crossing $(W_1,\dots, W_r)$.
Let $e^{i+1}_j = e^{i}_j\cup (e'\cap W_{j+i})$ where $W_{r+j}=W_j$ for $j\in [r]$. Then \ref{eq: lattice 1} and \ref{eq: lattice 2} hold for $e^{i+1}_1,\dots, e^{i+1,r}$ by our choice of $e'$ and \ref{eq: lattice 3} holds by the definition of $W_j$. This proves the claim.
\end{proof}
We now show that for any $\mathcal{V}$-balanced set $S$ of size $r$, there are $\eta n$ pairwise disjoint $(F,r-1)$-absorbers for $S$. Take a maximum collection of pairwise disjoint $(F,r-1)$-absorbers for $S$ and 
let $U$ be the set of vertices in those absorbers. We may assume $|U|< r^2\eta n$, as otherwise we are done.

We apply the above claim to a given $\mathcal{V}$-balanced set $S$ for $i=r$, then \ref{eq: lattice 3} implies $d_{G'-U}(e^r_j)>0$ and it follows that each $e^r_j$ is an edge for each $j\in [r]$ as it is an $r$-set with positive degree.
Let $S'= \bigcup_{j\in [r]} e^r_j - S$. Then the edges coming from \ref{eq: lattice 2} form a perfect matching in $S'$ and the edges $e^r_1,\dots, e^r_r$ form a perfect matching in $S\cup S'$. Thus, $S'$ is an $(F,r-1)$-absorber of $S$. As one can find such an absorber disjoint from any set $U$ of size at most $r^2\eta n$, one can greedily take $\eta n$ pairwise disjoint $(F,r-1)$-absorbers of $S$. 
Hence, \Cref{lem:absorbing_using_template} implies that $G'$ contains a $\gamma$-absorbing set $A$ of size at most $\eta n$.

As $\alpha^*_{\mathcal{V}}(G'-A)\leq  \alpha^*_{\mathcal{V}}(G) \leq \Delta \alpha n$, we can greedily take pairwise disjoint edges to obtain a matching $M$ in $G'-A$ with $|V_i\setminus M| \leq \Delta \alpha n$.
As the set $U = V(G')\setminus (A\cup V(M))$ form a $\mathcal{V}$-balanced set of size at most $\Delta \alpha n< \gamma n$, $G'[A\cup U]$ contains a perfect matching $M'$. Then $M\cup M'$ is a perfect matching in $G'$, as desired.  

\end{proof}
Finally, the following lemma allows us to embed caterpillars. This lemma will be proved in Section~\ref{sec:cycle_factor}.
\begin{lemma}\label{lem:perfect_cycle_factor}
    Let $0<\alpha \ll \varepsilon, 1/r$ and $t \geq \frac{(100r)^r}{\varepsilon} $.
    Consider the linear cycle $C_{t}^{(r)}$ on the vertex set $[(r-1)t]$ where each edge is $[(r-1)(i-1),(r-1)i]$ for some $i\in [t]$ with $(r-1)t$ is also denoted by $0$.
    Suppose that an $r$-graph $G$ satisfies the following:
    \begin{proplist}
            \item  $G$ admits a partition $(\mathcal{V},C_{t}^{(r)})$ with $\mathcal{V}=(V_1,\dots, V_{(r-1)t})$ and $|V_i|=n$ for each $i\in [(r-1)t]$.
            \item For all $e\in E(C_{t}^{(r)})$ and $\mathcal{V}_{e}=(V_j: j\in e)$, we have $\alpha^*_{\mathcal{V}_{e}}(G)<\alpha n.$
        \item For every $e\in E(C_{t}^{(r)})$, we have $\delta(G[\mathcal{V}_{e}]) \geq \varepsilon n^{r-1}$ where $\mathcal{V}_{e} = (V_j : j\in e)$. \label{eq: M3}
    \end{proplist}
    Then $G$ contains a transversal $C_{t}^{(r)}$-factor.
\end{lemma}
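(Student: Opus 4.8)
The plan is to run the absorbing framework of \Cref{cor:perfect_transversal_factor} with $F = C_t^{(r)}$ and $t$ replaced by $(r-1)t$ clusters. To apply that corollary we must verify two things: (i) every $\mathcal{V}$-transversal set $S$ of size $(r-1)t$ has $\Omega(n)$ pairwise disjoint $(C_t^{(r)},k)$-absorbers for some bounded $k$, and (ii) after removing any balanced set of size $O(\eta n)$, the remaining hypergraph has a transversal $C_t^{(r)}$-factor covering all but $O(\delta n)$ vertices. For the almost-factor (ii), I would first delete a balanced ``reservoir'' and then repeatedly pull out copies of $C_t^{(r)}$: using that $\alpha^*_{\mathcal{V}_e}(G)<\alpha n$ on each edge-slot of the cycle together with \Cref{lem: find path} (or iterated applications of \Cref{lem: find matching}) one can build a transversal linear path through the clusters $V_1,\dots,V_{(r-1)t}$, and the minimum codegree condition \ref{eq: M3} on the last edge-slot lets one close the path into a transversal copy of $C_t^{(r)}$; greedily repeating while the clusters still have $\gg \alpha n$ vertices left yields a near-perfect transversal $C_t^{(r)}$-factor. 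This is the routine part.

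The substantive work is (i), the existence of many disjoint absorbers, and I would route it through the closedness machinery: \Cref{lem:partition_into_closed}, \Cref{lem:merge_closed_and_nonclosed_sets}, \Cref{lem:closed_to_absorbing}. Concretely, I would first show that for any $c+1$ vertices in a fixed $V_i$, two of them are $(C_t^{(r)},\beta' n,1)$-reachable — i.e. connectable by a short gadget built from two transversal $C_t^{(r)}$-copies sharing everything except the chosen vertex of $V_i$. Such a gadget is obtained by taking two linear paths that agree outside $V_i$, route through $V_i$ via $u$ resp.\ $v$, and close each up; the hole condition on each edge-slot supplies the internal vertices and the codegree condition \ref{eq: M3} closes them, exactly as in the almost-factor argument but now with the endpoints prescribed. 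This gives, via \Cref{lem:partition_into_closed}, a partition of each $V_i$ into boundedly many closed pieces plus a small leftover. Then I would use \Cref{lem:merge_closed_and_nonclosed_sets} to absorb the leftover and merge the closed pieces of $V_i$ into a single $(C_t^{(r)},2k(r-1)t\eta n,k)$-closed set: the required pair of disjoint copies $F_1,F_2$ with matching reachable coordinates is again produced by the same path-building-and-closing construction, choosing $F_1\cap V_i$ inside the larger closed piece and the remaining coordinates inside closed pieces of the other $V_j$'s. Once every $V_i$ is closed, \Cref{lem:closed_to_absorbing} (whose condition \ref{closed 2} is immediate from $\delta_1(G)\geq\varepsilon n^{r-1}$ when $\eta$ is small) yields the $\gamma$-absorbing set, which in particular provides the $\Omega(n)$ disjoint absorbers needed, and \Cref{cor:perfect_transversal_factor} completes the proof.

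The main obstacle I expect is the reachability/gadget construction that underlies all three closedness lemmas: one needs, for prescribed vertices $u,v\in V_i$ and an arbitrary small forbidden set $W$, two vertex-disjoint transversal copies of $C_t^{(r)}$ that coincide outside $V_i$ and hit $V_i$ at $u$ and $v$ respectively, with all the other intersection pairs themselves reachable. Managing disjointness from $W$ while keeping the path-pieces transversal — so that at every step the relevant cluster still has more than $(r-1)t\,\alpha n$ unused vertices, which is what \Cref{lem: find path} and \Cref{lem: find matching} require — forces the threshold $t \geq (100r)^r/\varepsilon$ and a careful bookkeeping of how many vertices each gadget consumes per cluster; this is where the constants in the hierarchy $\alpha\ll\varepsilon,1/r$ and the length condition on $t$ are actually used. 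Everything downstream (the two hypotheses of \Cref{cor:perfect_transversal_factor}) then follows mechanically from the cited lemmas.
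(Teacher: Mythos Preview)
Your absorber plan is right and matches the paper: show any $c+1$ vertices in a fixed $V_i$ contain an $(F,\Omega(n),1)$-reachable pair via a two-edge-path gadget, feed this into \Cref{lem:partition_into_closed}, then \Cref{lem:merge_closed_and_nonclosed_sets}, then \Cref{lem:closed_to_absorbing}. The gap is in step (ii), which you call routine. First, condition \ref{eq: M3} is a minimum \emph{vertex} degree bound $\delta_1(G[\mathcal{V}_e])\ge\varepsilon n^{r-1}$, not codegree. More importantly, it does not survive greedy extraction: after deleting $s$ transversal cycles a remaining vertex $v$ has degree at least only $\varepsilon n^{r-1}-(r-1)s\,n^{r-2}$ in each incident slot, so once $s\gtrsim\varepsilon n/(r-1)$ you cannot guarantee any edge through $v$ in the remnant. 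The hole condition via \Cref{lem: find path} gives a transversal \emph{path}, but closing it into a cycle forces an edge through two \emph{prescribed} link vertices, and that is exactly what fails without hereditary density. Your greedy scheme stalls with a $(1-O(\varepsilon))$-fraction of each cluster still uncovered, nowhere near the $O(\delta n)$ leftover that \Cref{cor:perfect_transversal_factor} requires.

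This is why \Cref{sec:weak_hypergraph_regularity} is in the paper. The actual proof applies \Cref{lem: reg matching} to non-adjacent slot pairs $e(4i'-2),e(4i')$ to refine each $V'_i=V_i\setminus A$ into equal blocks $U_{i,j}$ so that for every $j$ two designated slot-tuples are $(\gamma^3,\varepsilon/100+)$-regular. Regularity is inherited by linear-sized subsets, so even after stripping all but a $\gamma^2$-fraction from each $U_{i,j}$ one can still find a matching edge in the intermediate slot $e(4i'-1)$ whose two link vertices have large degree into the adjacent regular slots; from there \Cref{lem: find a set} produces large target sets $X^*,Y^*$ and \Cref{lem: find path} closes the rest of the cycle. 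This yields $(1-\gamma^2)|U_{1,j}|$ disjoint transversal cycles in each block, covering all but $\gamma n$ vertices of $G-A$, which the absorber then handles. The weak regularity lemma is the missing structural ingredient, not bookkeeping.
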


The proof of these lemmas will be provided in later sections. We first prove \Cref{thm:main_theorem} assuming these lemmas.

\subsection{Proof of the main theorem}\label{sec:proof_of_main}
\begin{proof}[Proof of \Cref{thm:main_theorem}]
If $G$ is a complete $r$-graph, then the statement is trivial. Otherwise, we have $1<\alpha^*(G)<\alpha n$, thus $n> 1/\alpha$.
For given integers $r,\Delta$ and a real $\varepsilon>0$, we choose constants $\alpha,\beta,\gamma$ satisfying
$$0< 1/n < \alpha \ll \beta \ll \gamma  \ll 1/r, 1/\Delta, \varepsilon \leq 1.$$
 Consider an $n$-vertex $r$-graph $G$ that satisfies $\delta_1(G)\geq \varepsilon n^{r-1}$ and $\alpha^*(G)<\alpha n$ and $r-1$ divides $n-1$. 
 Consider an $n$-vertex $r$-hypertree $T$ with $\Delta(T)\leq \Delta$. We aim to prove that $G$ contains $T$ as a subgraph.
 We may assume that $G$ is not a complete graph.
In order to prove that $G$ contains $T$ as a subgraph, we consider two cases depending on the structure of $T$. For this case distinction, we let $t = \lceil\frac{(100r)^r}{\varepsilon} \rceil$.

By \Cref{lem:pendant_and_caterpillars}, either $T$ contains $\frac{n}{4r^2(t+1)\Delta}$ vertex-disjoint pendant stars or $\frac{n}{12r^2(t+1)\Delta}$ vertex-disjoint caterpillars each of length $t$. Note that pendant stars or caterpillars are all hypertrees with at most $r^2t\Delta$ vertices. As there are at most $2^{\binom{r^2t\Delta}{r}}$ 
isomorphism classes of $r$-hypertrees on at most $r^2t\Delta$ vertices, we can choose $m=\gamma n \leq 2^{-\binom{r^2t\Delta}{r}} \frac{n}{12r^2(t+1)\Delta}$ vertext-disjoint isomorphic subgraphs $S_1,\dots, S_m$ all of which are either non-trivial pendant stars or caterpillars of length $t$. We say that we are in Case 1 if $S_i$'s are all isomorphic non-trivial pendant stars and we are in Case 2 if $S_i$'s are all isomorphic caterpillars of length $t$. We define $r^*$ as follows. 
$$r^*=\left\{\begin{array}{ll}  r-1 & \text{ if Case 1 applies.} \\ 
(r-1)t-1 & \text{ if Case 2 applies.}\end{array}\right.$$
For each $S_i$, let $L_i$ be the set of edges in $S_i$ which are the leaf-edges of $T$ and let $C_i= S_i\setminus L_i$. Note that $C_i$ is either the center of the pendant star $S_i$ or the central path of the caterpillar $S_i$. 
Let $T'$ be the hyperforest obtained from $T$ by removing all edges of $S_i$ and deleting isolated vertices. 

Let $C'_i\subseteq V(C_i)\setminus V(T')$ be the set of vertices in $C_i$ which are incident with at least one leaf-edge in $L_i$.
Let $\ell:=\sum_{i\in [m]} |L_i|$ be the number of all leaf-edges in $S_1,\dots,S_m$.
Note that $\ell$ can be $0$ in Case 2, but it is not $0$ in Case 1, as $S_i$'s are non-trivial pendant stars in Case 1.

We assume that the vertices in each $C_i\setminus V(T')$ are ordered into $z^i_1,\dots, z^i_{r^*}$, so that all graphs $S_i$ are isomorphic via an isomorphism which preserves this ordering.
If each of $S_i$ contains at least one leaf-edge, we choose $i_*$ so that $z^i_{i_*}$ is incident with at least one leaf. If all $S_i$'s are caterpillars without any leaf-edge (hence $\bigcup_{i\in [m]} L_i=\emptyset$), then we leave $i_*$ to be undefined.

We take a partition of $V(G)$ into vertex sets $V_1, V_{2,1},\dots, V_{2,r^*}, V_{3,1},\dots, V_{3,r-1}$
satisfying the following.
\begin{proplist}
        \item \label{V1} $|V_1|=n_1:= |V(T')|+ \beta n$, $|V_{3,i}|= \ell$ and $|V_{2,i'}| \in  \{\lfloor \frac{n-n_1-(r-1)\ell}{r^*} \rfloor, \lceil \frac{n-n_1-(r-1)\ell}{r^*} \rceil \}$ for all $i\in [r-1]$ and $i'\in [r^*]$. \label{V1 property}
    \item \label{V2} For any $v\in V(G)$ and $X_1,\dots, X_{r-1} \in \{V_1,V_{2,1},\dots, V_{2,r^*}, V_{3,1},\dots, V_{3,r-1}\}$ with repetitions allowed, we have 
    $d_{G}(v, X_1,\dots, X_{r-1})\geq \frac{1}{2}\varepsilon \prod_{i\in [r-1]} |X_i|$. \label{V2 property}
\end{proplist}
Indeed, if \ref{V1 property} holds, then each set above has size either $0$ (which can happen in Case $2$ if $\ell=0$) or at least $m/r^*-\beta n > \gamma n/(2r^*)$. Hence, if we take a random partition of $V(G)$ satisfying \ref{V1 property}, then it is straightforward to check that Lemma~\ref{lem:general_concent} implies that \ref{V2 property} holds with probability $1-o(1)$. 
This shows the existence of a desired partition $V_1, V_{2,1},\dots, V_{2,r^*}, V_{3,1},\dots, V_{3,r-1}$.

Note that \ref{V2 property} implies $\delta(G[V_1]) \geq \frac{1}{2 r!}\varepsilon |V_1|^{r-1}$ and $G[V_1]$ has no $r$-partite hole of size $\alpha n \leq 2\alpha |V_1|$.
We have $|V(T')|\leq |V_1| - \beta n \leq (1-\beta) |V_1|$.
Thus \Cref{lem:almost_spanning} yields an embedding $\varphi: V(T') \rightarrow V_1$ of $T'$ into $G[V_1]$. 

Note that, as $|V(T)|=n$, the number of vertices in $\bigcup_{i\in [m]} (C_i\setminus V(T'))$ is $n-|V(T')|-(r-1)\ell = r^*m$. Thus we can distribute the vertices in $V_1\setminus \varphi(V(T'))$ to $V_{2,1},\dots, V_{2,r^*}$ to obtain new vertex sets $\widehat{V}_{2,1},\dots, \widehat{V}_{2,r^*}$ satisfying the following.
\begin{proplist}
        \item \label{widehatV1} $|\widehat{V}_{2,i}|= \frac{n-|V(T')|-(r-1)\ell}{r^*} = m$.
    \item  \label{widehatV2} For any $v\in V(G)$ and $X_1,\dots, X_{r-1} \in \{V_1,\widehat{V}_{2,1},\dots, \widehat{V}_{2,r^*}, V_{3,1},\dots, V_{3,r-1}\}$ with repetitions allowed, we have 
    $d_{G}(v, X_1,\dots, X_{r-1})\geq \frac{1}{3}\varepsilon \prod_{i\in [r-1]} |X_i|$.
\end{proplist}
Indeed, if we arbitrarily distribute them to make \ref{widehatV1} holds, then it is straightforward to check that \ref{widehatV2} follows from \ref{V2 property} and the fact that there are at most $\beta n$ additional vertices from $V_1\setminus \varphi(V(T'))$, where $\beta \ll \gamma, \varepsilon, 1/r^*$ because $|X_i|\geq m/2 \geq \gamma n/2$ for each $X_i$ in \ref{widehatV2}.

For each $S_i$, we let the vertices it has in common with $T'$ be $u_i$ in Case 1 and $u_i$ and $v_i$ in Case 2. In Case 1, the center $C_i$ of the pendant star $S_i$ is incident with $u_i$ and 
in Case 2, the central path $C_i$ of the caterplillar $S_i$ connects $u_i$ and $v_i$ in $T$. For our convenience, we also define $v_i$ in Case 1 by letting $v_i=u_i$. 

Let $R_1= \varphi(\{u_1,\dots, u_m\})$ and $R_2=\varphi(\{v_1,\dots, v_m\})$.
For each $i\in [r-1]$, let 
$$W_i=\left\{\begin{array}{ll}
   V_{3,i}  & \text{ in Case 1}. \\
   \widehat{V}_{2, r-1+i}  & \text{ in Case 2.} 
\end{array}\right.$$
We apply \Cref{lem: find matching} to the partition $(R_1, \widehat{V}_{2, 1}, \widehat{V}_{2,2},\dots, \widehat{V}_{2,r-1})$ and $(W_1,\dots, W_{r-1})$ playing the roles of $\mathcal{V}$ and $\mathcal{U}$, respectively to obtain a matching $M_1= M_1'\cup M''_1$.
In Case 2, we again apply \Cref{lem: find matching} to the partition $(R_2,\widehat{V}_{2, r^*-r+2}, ,\dots, \widehat{V}_{2,r^*})$ and $(W_1\setminus V(M''_1),\dots, W_{r-1}\setminus V(M''_1))$ playing the roles of $\mathcal{V}$ and $\mathcal{U}$, respectively to obtain a matching $M_2= M'_2\cup M''_2$. Indeed, \ref{widehatV2} together with the fact $\alpha \ll \varepsilon$ ensures that these applications are possible.
Taking $M'=M'_1\cup M'_2$ and $M''= M''_1\cup M''_2$, we have a matching $M=M'\cup M''$ satisfying the following.
\begin{proplist}
    \item $|M|=|R_1\cup R_2|$ and $|M''|\leq 2\alpha n$. \label{M1}
    \item each edge $e \in M'$ either crosses $(R_1, \widehat{V}_{2, 1}, \widehat{V}_{2,2},\dots, \widehat{V}_{2,r-1})$ or $(R_2,\widehat{V}_{2, r^*-r+2}, ,\dots, \widehat{V}_{2,r^*})$. \label{M2}
    \item each edge $e \in M''$ crosses $(R_1\cup R_2, W_1,\dots, W_{r-1})$. \label{M3}
\end{proplist}

Let $e_{1,i}$ be the edge in $M$ containing the vertex $u_i$ and let $e_{2,i}$ be the edge in $M$ containing the vertex $v_i$. Note that in Case 1, $e_{1,i}=e_{2,i}$.

In Case 1, $\varphi(T')\cup M$ yields a copy of the hypertree $T\setminus\bigcup_{i\in [m]}L_i$. Let $\varphi'$ be the embedding of  $T\setminus\bigcup_{i\in [m]}L_i$ into $\varphi(T')\cup M$ so that each vertex $z^i_j$ in $C_i$ is mapped to the vertex $e_{1,i}\cap \widehat{V}_{2, j}$. 

On the other hand, we need one more step in Case 2 before obtaining the desired partial embedding $\varphi'$ of $T\setminus\bigcup_{i\in [m]}L_i$. Assume we are in Case 2.
In Case 2, the edges $e_{1,i}$ and $e_{2,i}$ in $M$ correspond to the first and the last edges in the central paths in $S_i$'s. 
We wish to apply \Cref{lem:perfect_cycle_factor} to connect the edges in $M$ to obtain the tree $T\setminus\bigcup_{i\in [m]}L_i$. For this application of \Cref{lem:perfect_cycle_factor}, we need to modify the vertex partition. As there are some vertices in $\bigcup_{i\in [r-1] } (\widehat{V}_{2,i}\cup \widehat{V}_{2,r^*+1-i})$ not covered by $M$, we need to redistribute these vertices to other parts to obtain new pairwise disjoint vertex sets $V'_{r-1},\dots, V'_{r^*-r+2}$ which is compatible with \Cref{lem:perfect_cycle_factor}. Recall that each $W_i$ is $V_{2,r-1+i}$ in Case 2, hence some vertices in $V_{2,r-1+i}$ are covered by $M''$.
For each $i\in [r-1]$ and $2r-1\leq j\leq r^*+1-r$, let 
$$V'_i = (\widehat{V}_{2,i}\cup W_i)\cap V(M), \enspace V'_{r^*+1-i} = (\widehat{V}_{2,r^*+1-i}\cup W_i)\cap V(M) \text{ and } V'_{j} = \widehat{V}_{2,j}.$$
For each $i\in [r-1]$, let
$$V'_{r-1+ i} = (\widehat{V}_{2,r-1+i}\cup \widehat{V}_{2,i} \cup \widehat{V}_{2,r^*+1-i} )  \setminus V(M).$$
Then for each $i\in[r,2r-2]$, $V'_i$ is disjoint from $\varphi(T')\cup V(M)$ and $|V'_i|=m$ for all $i\in [r-1,r^*-r+2]$.

Let $x_i$ be the vertex in $e_{1,i}\cap V'_{r-1}$ and $y_i$ be the vertex in $e_{2,i}\cap V'_{r^*-r+2}$. We now identify $x_i$ and $y_i$ to obtain a new vertex $z_i$ and let $V''_{r-1}$ be the set of those vertices. For each $r\leq i\leq r^*-r+1$, let $V''_{i}=V'_i$.
Let $E(G')$ consist of the edges of $G- (V'_{r-1}\cup V'_{r^*-r+2})$ and 
the edges $(e\setminus x_i)\cup \{z_i\}$ for edges $e$ in $G[V'_{r-1},\dots, V'_{2r-2}]$ containing $x_i$ and the edges $(e'\setminus y_i)\cup \{z_i\}$ for edges $e'$ in $G[V'_{r^*-2r+3},\dots, V'_{r^*-r+2}]$ containing $y_i$.

Now we can check that this partition $V''_{r-1},V''_r,\dots, V''_{r^*-r+1}$  satisfies the conditions in \Cref{lem:perfect_cycle_factor}. To show this, consider $C_{r^*-2r+3}^{(r)}$ on the vertex set $[r^*-r+1]\setminus [r-2]$ with edges $[(r-1)i,(r-1)(i+1)]$ for each $i\in [r^*/(r-1)-2]$ where $(r-1)$ and $(r-1)(t-1)=r^*-r+1$ are identified.
For each $e\in C_{r^*-2r+3}^{(r)}$ and $i\in e$ and $v\in \mathcal{V}''_e$, \ref{V2} implies that $d(v,\widehat{\mathcal{V}}_{2,e-\{i\}})> \frac{1}{3}\varepsilon m^{r-1}$ where $\widehat{\mathcal{V}}_{2,e-\{i\}} = (V_{2,j}: j\in e-\{i\})$.
By \ref{M1} and the above definition, for each $V'_j$ with $r-1\leq j\leq r^*-r+2$, there exists $a_j$ such that $|V'_j\Delta \widehat{V}_{2,a_j}|\leq 2\alpha n$. 
Hence, for each $e\in C_{r^*-2r+3}^{(r)}$ and $i\in e$ and $v\in \mathcal{V}''_e$,
$d(v, \mathcal{V}''_{e-\{i\}}) 
\geq d(v, \widehat{\mathcal{V}}_{2,f }) - 2r \alpha n^{r-1} \geq \frac{1}{4}\varepsilon m^{r-1}$ where
$$f= \left\{ \begin{array}{ll}
\{a_j: j\in e-\{i\} \} & \text{ if } e\neq [(r-1)(t-2), (r-1)(t-1)]
 \\
\{a_j: j\in e-\{i\} - \{r^*-r+2\}\}\cup \{a_{r^*-r+2}\} & \text{ if } e= [(r-1)(t-2),(r-1)(t-1)].
\end{array} \right. $$
Here, $(r-1)(t-1) = r^*-r+2$.
This degree condition together with the fact $\alpha^*(G)< \alpha n \leq \alpha^{1/2} m$ implies that we can apply \Cref{lem:perfect_cycle_factor} to obtain a cycle factor.
Such a cycle factor corresponds to a desired linkage between $V'_{r-1}$ and $V'_{r^*-r+2}$ connecting each $x_i$ with $y_i$ using all vertices in $\bigcup_{r-1\leq i\leq r^*-r+2} V'_{i}$.
This together with $\varphi(T')\cup M$ yields a copy of $T\setminus\bigcup_{i\in [m]} L_i$ in $G[V_1\cup V_2]$. 

If $\bigcup_{i\in [m]} L_i =\emptyset$, this finishes the proof of the theorem. Otherwise, recall that the number $i_*$ is defined. Now, in both Case 1 and Case 2, we have an embedding $\varphi'$ of $T''= T\setminus\bigcup_{i\in [m]} L_i$.
In both cases, let $U_0$ be the image of the vertices in $\varphi'(\bigcup_{i\in [m]} C'_i)$.

In Case 1, let $U_i = (\widehat{V}_{2,i}\cup V_{3,i})\setminus V(M)$ for each $i\in [r-1]$. In Case 2, let $U_i = V_{3,i}$ for each $i\in [r-1]$. Then in either case, $|U_i\Delta V_{3,i}|\leq 2\alpha n$. Hence for all $v\in V(G)$,
$$d(v,U_1,\dots, U_{r-1}) \geq \frac{1}{2}\varepsilon \prod_{i\in [r-1]}|U_i| - r\alpha n^{r-1} \geq \frac{1}{3}\varepsilon \prod_{i\in [r-1]}|U_i|.$$
Also, by our definition of $i^*$ and $U_0$, we have $\widehat{V}_{2,i^*} \subseteq U_0$, thus for any vertex $v\in U_i$ with $i\in [r-1]$,
$$d(v, U_0,\dots ,U_{i-1}, U_{i+1},\dots, U_{r-1})\geq d(v, \widehat{V}_{2,i^*},V_{3,1},\dots, V_{3,i-1}, V_{3,i+1}\dots, V_{3,r-1}) - r\alpha n^{r-1} \geq \gamma |U_0| \ell^{r-1}.$$

Let $f$ be a function on $U_0$ where $f(u)$ is the number of leaf-edges incident with the corresponding vertex $\varphi'^{-1}(u)$ in $T$.
Then $\sum_{u\in U_0} f(u) =\ell = |U_i|$ for all $i\in [r-1]$.
Combined with the condition $\alpha^*(G)<\alpha n < \beta \ell$, \Cref{lem:embed_stars} yields the desired stars centered at each vertex in $U_0$, yielding a copy of $T$ in $G$. This proves the theorem.
\end{proof}
The proofs of \Cref{thm:main_theorem_cycle} and \Cref{thm:main_bipartite_version} are very similar to the proof of \Cref{thm:main_theorem}, so we just briefly mention differences without providing a formal proof. 

For \Cref{thm:main_theorem_cycle}, we can find many vertex-disjoint bare paths whose deletion from a loose Hamilton cycle leaves a linear forest. Hence, it is strightforward to follow the proof of \Cref{thm:main_theorem} to deduce \Cref{thm:main_theorem_cycle}. 
For \Cref{thm:main_bipartite_version}, we can replace the application of \Cref{lem:almost_spanning} by \Cref{lem:almost_spanning_for_rainbow} in the proof. Furthermore, when we take a random partition of vertices, we partition both $A$ and $B$ with appropriate sizes. With these differences in our mind, it is again straightforward to follow the proof of \Cref{thm:main_theorem} to deduce \Cref{thm:main_bipartite_version}.

\section{Finding almost spanning hypertrees}\label{sec:almost_spanning}

In this section, we prove \Cref{lem:almost_spanning} and \Cref{lem:almost_spanning_for_rainbow}.
As it is easy to see that \Cref{lem:almost_spanning_for_rainbow} implies \Cref{lem:almost_spanning}, we only need to prove \Cref{lem:almost_spanning_for_rainbow}.

\begin{proof}[Proof of \Cref{lem:almost_spanning_for_rainbow}]
As the lemma is clear if $G$ is a complete $(r,1)$-graph, we may assume $1<\alpha^{*}_{\mathcal{X}}(G)<\alpha n$. Hence we have $1/n < \alpha$.
Choose additional constants $\mu, \beta, \gamma$ so that
$$0< 1/n<\alpha \ll \mu \ll \beta \ll \gamma \ll \varepsilon, 1/r, 1/\Delta, \eta\leq 1.$$
By applying \Cref{lem:tree_split} with  constant $\mu>0$ as above, we obtain a sequence of hyperforests $T_0 \subseteq T_1 \subseteq \ldots \subseteq T_\ell = T$ and an exceptional index $s \in [\ell]$ satisfying \ref{it:T0}--\ref{it:path}.

Let $n':=|V(T)| \leq (1-\eta)n$. As it is an $(r+1)$-hypertree, it contains $\frac{n'-1}{r}$ edges.  Hence $|B|= \frac{n'-1}{r} \leq (1-\eta/2)\frac{n}{r}$ and 
$|A|= n'-|B| = \frac{(r-1)n'+1}{r} \leq (1-\eta/2) \frac{(r-1)n}{r}$.

We aim to iteratively embed $T$ into $G$. For this, we first take a subset of $X\cup Y$ satisfying the following properties.

\begin{claim}\label{claim:random_set_in_embedding_almost_spanning}
There exists a subset $R\subseteq X\cup Y$ satisfying the following, where $\mathcal{R}=(R,\dots, R)$ is an $r$-tuple.
\begin{proplist}
\item $|R| = 2\gamma n$ and $\min\{ |R\cap X|, |R\cap Y|\} \geq \gamma n/(2r)$. \label{Rprop1}
    \item For every $v\in X \cup Y$, $d_{G}(v,\mathcal{R}) \geq 10 r \beta n^r$. \label{Rprop2}
    \item For every $u,v\in X$, there exist at least $\beta n$ internally vertex-disjoint paths of length three between $u$ and $v$ whose internal vertices are all contained in $R$. \label{Rprop3}
\end{proplist}
\end{claim}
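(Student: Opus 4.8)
The plan is to produce $R$ by a single random experiment and check that \ref{Rprop1}--\ref{Rprop3} each fail with probability $o(1)$, so that a good $R$ exists. Concretely, set $s_X:=\lceil 2\gamma|X|\rceil$ and $s_Y:=2\gamma n-s_X$ (so $s_Y=2\gamma|Y|\pm 1$ because $|X|+|Y|=n$), and choose $R\cap X$ uniformly at random among the $s_X$-subsets of $X$ and $R\cap Y$ uniformly among the $s_Y$-subsets of $Y$, independently. Then $|R|=2\gamma n$ holds by construction, and since $|X|=\tfrac{(r-1)n+1}{r}\ge|Y|=\tfrac{n-1}{r}$ we get $\min\{|R\cap X|,|R\cap Y|\}=s_Y\ge 2\gamma|Y|-1\ge\gamma n/(2r)$ for large $n$, so \ref{Rprop1} is automatic.

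For \ref{Rprop2}, fix $v\in X\cup Y$. An edge of $G$ through $v$ contributes $r!$ to $d_G(v,\mathcal{R})$ exactly when the corresponding neighbourhood $r$-set $e$ lies inside $R$, and any such $e$ (which meets $X$ and $Y$ in $|e\cap X|+|e\cap Y|=r$ vertices) satisfies $\mathbb{P}[e\subseteq R]=(1-o(1))(2\gamma)^r$ by a routine falling-factorial estimate. Hence $\mathbb{E}[d_G(v,\mathcal{R})]\ge (1-o(1))\,r!(2\gamma)^r\,\delta_1(G)\ge 20r\beta n^r$, using $\delta_1(G)\ge\varepsilon n^r$ and $\beta\ll\gamma,\varepsilon,1/r$. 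Moving a single vertex into or out of $R$ changes $d_G(v,\mathcal{R})$ by at most $O_r(n^{r-1})$, so McDiarmid's bounded-difference inequality for sampling without replacement (equivalently, the martingale argument behind \Cref{lem:general_concent}) gives $\mathbb{P}[d_G(v,\mathcal{R})\le 10r\beta n^r]\le\mathbb{P}\big[|d_G(v,\mathcal{R})-\mathbb{E}[d_G(v,\mathcal{R})]|\ge 10r\beta n^r\big]\le 2\exp(-\Omega(n))=o(1/n)$, and a union bound over the at most $n$ vertices $v$ finishes \ref{Rprop2}.

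Property \ref{Rprop3} is the main obstacle, precisely because it asks for short paths between two \emph{prescribed} vertices $u,v\in X$ lying inside the sparse set $R$, and reaching a fixed vertex is exactly where the hole condition, not the degree condition, is needed. My plan is first to prove a deterministic statement about $G$ alone: for all distinct $u,v\in X$ there are at least $c\varepsilon n$ internally vertex-disjoint length-$3$ $u$--$v$ paths, where $c=c(r)>0$. To prove this, let $P$ (resp.\ $Q$) be the set of $p\in X\setminus\{u,v\}$ with codegree $d_G(\{u,p\})\ge\tfrac{\varepsilon}{2(r-1)!}n^{r-1}$ (resp.\ with $u$ replaced by $v$); averaging $\sum_{p}d_G(\{u,p\})=(r-1)d_G(u)\ge(r-1)\varepsilon n^r$ over $p\in X\setminus\{u\}$ shows $|P|,|Q|\ge\tfrac{\varepsilon}{3}n$. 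Since $|P|,|Q|,|X\setminus\{u,v\}|,|Y|$ all exceed $\alpha n$, repeatedly applying $\alpha^*_{\mathcal{X}}(G)<\alpha n$ to the $(r+1)$-tuple $(P,Q,X,\dots,X,Y)$ and deleting the vertices of each edge found yields $k:=\lfloor c\varepsilon n\rfloor$ pairwise vertex-disjoint edges $e_2^{(1)},\dots,e_2^{(k)}$, each containing a vertex $p_j\in P$ and a distinct vertex $q_j\in Q$. Finally, the large codegrees $d_G(\{u,p_j\}),d_G(\{v,q_j\})\ge\tfrac{\varepsilon}{2(r-1)!}n^{r-1}$ let me greedily pick edges $e_1^{(j)}\ni\{u,p_j\}$ and $e_3^{(j)}\ni\{v,q_j\}$ avoiding the $O_r(\varepsilon n)$ vertices used so far, producing the internally disjoint paths $P_j:=e_1^{(j)}\cup e_2^{(j)}\cup e_3^{(j)}$.

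It then remains to pass from $G$ to $R$. For every ordered pair of distinct $u,v\in X$, fix such a family $\mathcal{P}_{u,v}=\{P_1,\dots,P_k\}$ using $G$ only, hence independent of the random choice of $R$. Each $P_j$ has exactly $3r-1$ internal vertices, so $\mathbb{P}[\text{all internal vertices of }P_j\text{ lie in }R]=(1-o(1))(2\gamma)^{3r-1}$, whence the expected number of surviving indices $j$ is at least $\tfrac12(2\gamma)^{3r-1}k$. Crucially, internal-disjointness of the $P_j$ means each vertex of $G$ is interior to at most one of them, so changing the status of one vertex with respect to $R$ changes the number of surviving $P_j$ by at most $1$; McDiarmid's inequality then gives that at least $\tfrac14(2\gamma)^{3r-1}k$ paths survive except with probability $\exp(-\Omega(n))$. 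Taking a union bound over the at most $n^2$ pairs $(u,v)$ and using $\tfrac14(2\gamma)^{3r-1}c\varepsilon\ge\beta$ (valid since $\beta\ll\gamma,\varepsilon,1/r$) shows \ref{Rprop3} holds with probability $1-o(1)$. Intersecting the three high-probability events produces the desired $R$, completing the proof. I expect the deterministic length-$3$ connection count in the third paragraph to be the only non-routine ingredient; the rest is first-moment computation plus standard concentration.
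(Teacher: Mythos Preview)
Your proof is correct but differs from the paper's in how \ref{Rprop3} is obtained. The paper observes that \ref{Rprop3} is a \emph{deterministic} consequence of \ref{Rprop1} and \ref{Rprop2}: using \ref{Rprop2}, one greedily builds disjoint stars $S_u,S_v$ of size $4\beta n$ centred at $u,v$ with all non-centre vertices in $R$; picking one non-centre vertex from each star edge gives sets $V_1$ (from $S_u$) and $V_{r+1}$ (from $S_v$), and one fills in intermediate sets $V_2,\dots,V_r\subseteq R$ of the same size. Since all these sets have size $4\beta n>\alpha n$, \Cref{lem: find matching} applied with the hole hypothesis $\alpha^*_{\mathcal{X}}(G)<\alpha n$ yields a matching of size at least $4\beta n-\alpha n\ge\beta n$ crossing $(V_1,\dots,V_{r+1})$, and each matching edge completes a length-$3$ path from $u$ to $v$ through $R$. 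No second concentration step is needed.

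You instead first manufacture $\Theta(\varepsilon n)$ internally disjoint $u$--$v$ paths in all of $G$ (via codegree averaging and the hole condition) and then run a separate bounded-differences argument to show that a $\Theta(\gamma^{3r-1})$ fraction of them survive in $R$. This is sound, and your deterministic path-count in $G$ is essentially the same mechanism the paper runs directly inside $R$; the paper's version is shorter because \ref{Rprop2} already furnishes enough degree to anchor the stars inside $R$, so the survival-in-$R$ step is superfluous. Your decision to fix $|R\cap X|$ and $|R\cap Y|$ in advance is a harmless variation that makes \ref{Rprop1} automatic rather than a Chernoff consequence.
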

    \begin{proof}
    Consider a random set $R\subseteq V(G)$ of size $2\gamma n$ chosen uniformly at random. By Chernoof's bound (Lemma~\ref{lem:chernoff_hypergeometric}), \ref{Rprop1} holds with probability at least $1-o(1)$.
    
Apply \Cref{lem:general_concent} to $G$ with the random partition $(R,V(G)\setminus R)$, with the probability $1-o(1)$, every $v\in X$ satisfies 
$$d_G(v,\mathcal{R})\geq \frac{1}{2}\varepsilon |R|^r.$$
Hence, the union bound yields a choice of $R$ satisfying \ref{Rprop1} and \ref{Rprop2} as $|R|=2\gamma n$ implies $\frac{1}{2}\varepsilon |R|^r\geq 10r\beta n^{r}$. Take such a choice of $R$.

To show that $R$ satisfies \ref{Rprop3}, consider arbitrary two vertices $u,v \in X$.
We claim that there are at least $\beta n$ internally disjoint paths of length three in $G$ whose internal vertices all lie in $R$.
Greedily choose vertex-disjoint stars $S_u$ and $S_v$ of size $4 \beta n$ centered at $u$ and $v$, respectively, where each edge of them crosses $(\{u\}, R, \dots, R)$. 
Indeed, it is possible as \ref{Rprop2} implies that, for any such vertex-disjoint stars $S'_u$ and $S'_v$ of smaller sizes, the vertex $w\in \{u,v\}$ satisfies $d_{G-S'}(w, R, \dots, R) \geq 10r\beta n^{r} - |S'|n^{r-1} > 10 r \beta  n^{r} - 8r \beta n^{r-1} > 0$ where $S'= V(S'_u)\cup V(S'_v) - \{u,v\}$.
For each edge in $S_u$, we choose one vertex which is not $u$ to form a vertex set $V_1$. Similarly, choose one vertex from each $S_v$ to form a vertex set $V_r$.
Take arbitrary pairwise disjoint vertex sets $V_2,\dots, V_r$ of size $4 \beta n$ in $R$. We apply Lemma~\ref{lem: find matching} to find a matching of size $4\beta n - \alpha n  \geq \beta n$ where each edge crosses $(V_1,\dots, V_r)$. This provides at least $\beta n$ internally-disjoint paths of length $3$ between $u$ and $v$ whose internal vertices all belong to $R$.
    \end{proof}

Now we iteratively embed the hyperforests $T_0,T_1,\dots, T_{\ell}$ where each embedding extends the previous one. \newline 
    
\noindent   \textbf{Embedding $T_0$:} 
    We first embed $T_0$ into $G - R$. As $|R|\leq 2\gamma n$, we have $\delta_1(G-R) \geq \varepsilon n^{r}/2$. As $|V(T_0)| \leq r \mu  n$ and $\mu\ll \varepsilon, 1/r$, we can greedily embed $T_0$ into $G-R$. Furthermore, we can ensure that all the vertices in $B$ are embedded into the set $Y$ while all the vertices  in $A$ are embedded into the set $X$.
    Let $\varphi_0:V(T_0) \rightarrow V(G-R)$ be such an embedding. \newline 
    
\noindent    \textbf{Embedding $T_i$ for $1 \leq i \leq s$.} 
Assume that we have an embedding $\varphi_{i-1}$ of $T_{i-1}$ with $|\varphi_{i-1}(T_{i-1})\cap R| \leq (i-1)r\alpha n$ such that all the vertices in $B$ are embedded into the set $Y$ while all the vertices  in $A$ are embedded into the set $X$.

From \ref{it:matching}, $T_{i}-T_{i-1}$ consists of a matching $M$. Let $C_i$ be the set of vertices in $T_{i-1}$ which belong to $M$ and let $D_i$ be the vertex set $V(T_{i})\setminus V(T_{i-1})$. Then $|D_i|= r|C_i|$. Let $m:=|C_i|$.
Note that every vertex in $C_i$ has degree at least two, hence it belongs to $A$ while each edge in $M$ contains exactly one vertex in $B$.
As $\varphi_{i-1}$ embeds every vertex in $A\cap V(T_{i-1})$ to $X$ and every vertex in $B\cap V(T_{i-1})$ to $Y$, we have 
$$|X\setminus (\varphi_{i-1}(T_{i-1})\cup R)| \geq  |A\setminus V(T_{i-1})|  + |X|-|A| -|R|\geq |D_i\setminus B|+ \frac{(r-1) \eta n}{r} - 2\gamma n \geq (r-1) m + (r-1)\alpha n, \text{ and }$$
$$|Y\setminus (\varphi_{i-1}(T_{i-1})\cup R)| \geq |B\setminus V(T_{i-1})|+ |Y|-|B| -|R| \geq  |D_i\cap B| +  \frac{\eta n}{r} -2\gamma n \geq m + \alpha  n.$$
Hence, we may choose pairwise vertex-disjoint sets $V_1,\dots, V_{r-1} \subseteq X\setminus \varphi_{i-1}(T_{i-1})$ of size $m$ and a set $V_{r}\subseteq Y\setminus \varphi_{i-1}(T_{i-1})$ of size $m$.
Letting $V_0 = \varphi_{i-1}(C_i)$ and $\mathcal{V}=(V_0,\dots, V_r)$, the condition $\alpha^*_{\mathcal{X}}(G)<\alpha n$ implies $\alpha^*_{\mathcal{V}}(G)<\alpha n$.

As $|R\cap \varphi_{i-1}(T_{i-1})|\leq (i-1)r\alpha n$, \ref{Rprop2} implies that for any $v\in V_0$ and $R'= R\setminus \varphi_{i-1}(T_{i-1})$, we have 
$d_G(v,R',\dots, R') \geq 10r\beta n^{r} - (r-1)!\cdot (i-1)r\alpha n^{r} \geq \beta n^r$ as $i\leq \ell\le 10^5r\Delta(T)\mu^{-2}$ and $\alpha\ll \mu,\beta, \frac{1}{r}, \frac{1}{\Delta}$.
Hence, we can apply \Cref{lem: find matching} with the partitions $\mathcal{V}$ and $\mathcal{U}= (R',\dots, R')$ to obtain a matching $M'$ covering $V_0= \varphi_{i-1}(C_i)$ such that $M'-V_0$ lies outside $\varphi_{i-1}(T_{i-1})$ and
$M'$ intersects $R$ in at most $r \alpha n$ vertices.
By mapping the edges in $M$ to $M'$, we extend $\varphi_{i-1}$ to $\varphi_i$ which embeds $T_i$ to $G$ which uses at most $(i-1)r\alpha n + r\alpha n \leq ir\alpha n$ vertices in $R$. 

\noindent     \textbf{Embedding $T_{s+1}$.}
    For $i=s+1$, we need to embed paths instead of a matching.
    Assume that we have an embedding $\varphi_{s}$ of $T_{s}$ with $|\varphi_{s}(T_{s})\cap R| \leq sr\alpha n$ such that all the vertices in $B\cap V(T_s)$ are embedded into $Y$ while all the vertices  in $A\cap V(T_s)$ are embedded into $X$.

By \ref{it:path}, there are $u_1,\dots, u_m$ and $v_1,\dots, v_m$ with $m\leq \mu n$ where $T_{s+1}$ is obtained from $T_s$ by adding vertex-disjoint bare paths $P_1,\dots, P_m$ of length three where each $P_i$ connects $u_i$ and $v_i$.
By \ref{Rprop3}, for each $i\in [m]$, there are at least $\beta n$ internally vertex-disjoint paths of length three from $\varphi_s(u_i)$ to $\varphi_s(v_i)$ whose internal vertices are all contained in $R$.
We greedily choose vertex-disjoint paths $Q_i$ of length three connecting $\varphi_s(u_i)$ and $\varphi_s(v_i)$ through $R\setminus \varphi_s(T_s)$.
Indeed, for each $i\in [m]$, as $|R\cap \varphi_s(T_s)|\leq s r\alpha n$ and $|\bigcup_{j<i}V(Q_i)|<3r\mu n$ and $\alpha,\mu\ll \beta, 1/r$, at least one of those $\beta n$ internally-disjoint paths of length three from $\varphi_s(u_i)$ to $\varphi_s(v_i)$ through $R$ is disjoint from $Q_1,\dots, Q_{i-1}$ and $R\cap \varphi_s(T_s)$. 
By embedding each $P_i$ to $Q_i$ in such a way that the vertices in $X$ embeds into the vertices of $X$ (which is possible as every edge in $T$ and $G$ contains exactly one vertex in $B$ and $Y$, respectively), we obtain a desired embedding $\varphi_{s+1}$ of $T_{s+1}$ with $|\varphi_{s+1}(T_{s+1})\cap R| \leq sr\alpha n + 3r\mu n$ such that all the vertices in $B$ are embedded into the set $Y$ while all the vertices  in $A$ are embedded into the set $X$.

\noindent    \textbf{Embedding $T_i$ for $s+2\le i\le \ell$.} We follow the same arguments as in the cases $1\le i\le s$ to extend each embedding $\varphi_{i-1}$ of $T_{i-1}$ to $\varphi_i$ of $T_i$ by adding matchings. The only difference is that each $\varphi_i$ uses at most $(i-1)r\alpha n + 3r\mu n \leq \beta n/10$ vertices from $R$. By repeating the above process, we obtain a desired embedding $\varphi_{\ell}$ of $T$ into $G$.
\end{proof}

Note that the same argument still works for \Cref{lem:almost_spanning}. The lacks of the partitions $X\cup Y$ and $A\cup B$ only make the proof simpler.

\section{Finding a cycle factor}\label{sec:cycle_factor}

We now prove \Cref{lem:perfect_cycle_factor}. 
If every $G[\mathcal{V}_e]$ is a complete $r$-partite $r$-graph for $e\in E(C_{t}^{(r)})$, then the conclusion is trivial. Otherwise, we have $1\leq \alpha_{\mathcal{V}_e}(G)\leq \alpha n$, thus $n > 1/\alpha$. We choose additional constants $\gamma,\eta, \beta, \delta, T$ such that 
$$ 0< 1/n <  \alpha \ll 1/T \ll \gamma\ll \eta \ll \beta  \ll \delta \ll \varepsilon, 1/r, 1/t, 1/k \leq 1.$$
Let $F=C_{t}^{(r)}$ and recall that the edges of $F$ are $e(i):= [(r-1)(i-1),(r-1)i]$ for $i\in [t]$ where $(r-1)t$ and $0$ are identified.

We first prove the following claim, which is useful later for us to close the paths into cycles.
\begin{claim}\label{cl: short connection}
Let $i'\in [t]$ and  $i\in [(r-1)i', (r-1)(i'+1)-1]$. 
Let $x,y\in V_i$ be two (not necessarily distinct) vertices and let $H$ be a $(2r-2)$-graph where $E(H)$ consists of sets $A$ such that both $G[A\cup \{x\}]$ and $G[A\cup\{y\}]$ contains a $\mathcal{V}_{e(i')\cup e(i'+1)}$-transversal path of length two.
Let $U_i = \{x,y\}$ and for each $j\in e(i')\cup e(i'+1)-\{i\}$, let $U_j\subseteq V_j$ be a subset such that 
$H[\mathcal{U}_{e(i')\cup e(i'+1)-\{i\}}]$ contains at least $\delta^{2r} n^{2r-2}$ edges where $\mathcal{U}_{e(i')\cup e(i'+1)-\{i\}}  = (U_j: j\in e(i')\cup e(i'+1)-\{i\})$.
Then there exist subsets $X\subseteq U_{(r-1)(i'-1)}$ and $Y\subseteq U_{(r-1)(i'+1)}$ of size at least $\frac{1}{2}\delta^{2r} n$ such that the following holds.
\begin{proplist}
\item    For any $u\in X$ and $v\in Y$, there exist (not necessarily distinct) edges $e_1,e'_1 \in G[\mathcal{U}_{e(i')}]$ and $e_2,e'_2 \in G[\mathcal{U}_{e(i'+1)}]$ such that
    both $e_1\cup e_2$ and $e'_1\cup e'_2$ are $\mathcal{V}_{e(i')\cup e(i'+1)}$-transversal paths of length two connecting $u$ and $v$, and $(e_1\cup e_2) \triangle (e'_1\cup e'_2) = \{x\}\triangle\{y\}$.  \label{eq: short path}
\end{proplist}
\end{claim}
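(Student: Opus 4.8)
The plan is to reduce \ref{eq: short path} to a counting statement about an auxiliary bipartite graph between $U_{(r-1)(i'-1)}$ and $U_{(r-1)(i'+1)}$ which, after a single pigeonhole step, turns out to be a \emph{complete} bipartite graph with both parts of size at least $\delta^{2r}n$. Write $j_0:=(r-1)i'$ for the unique index common to $e(i')$ and $e(i'+1)$, and for a set $A$ crossing $\mathcal U_{e(i')\cup e(i'+1)\setminus\{i\}}$ write $A_J$ for the subset of vertices of $A$ whose index lies in $J$, $u_A:=A\cap U_{(r-1)(i'-1)}$, and $v_A:=A\cap U_{(r-1)(i'+1)}$. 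First I would record the anatomy of the relevant paths: a $\mathcal V_{e(i')\cup e(i'+1)}$-transversal path of length two is precisely a pair of an edge $f_1$ crossing $\mathcal U_{e(i')}$ and an edge $f_2$ crossing $\mathcal U_{e(i'+1)}$ with $f_1\cap f_2$ the single vertex at index $j_0$; and since $i\in[(r-1)i',(r-1)(i'+1)-1]$ the vertex at index $i$ is never an endpoint of such a path, so for every edge $A$ of $H[\mathcal U_{e(i')\cup e(i'+1)\setminus\{i\}}]$ the path obtained from $A\cup\{x\}$ and the path obtained from $A\cup\{y\}$ have the same two endpoints $u_A$ and $v_A$.

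Next I would observe that membership of $A$ in $H$ factorizes into a condition on the ``left'' coordinates (those indexed by $e(i')$) and one on the ``right'' coordinates (those indexed by $e(i'+1)$), once the centre vertex at index $j_0$ is fixed. If $i\ne j_0$, then $A$ carries a vertex $s_A\in U_{j_0}$, and $A\in E(H)$ holds exactly when $A_{e(i')}\in E(G)$, $A_{e(i'+1)\setminus\{i\}}\cup\{x\}\in E(G)$ and $A_{e(i'+1)\setminus\{i\}}\cup\{y\}\in E(G)$; the first condition involves only $u_A$, $s_A$ and the coordinates of $A$ indexed by $e(i')\setminus\{j_0,(r-1)(i'-1)\}$, while the remaining two involve only $v_A$, $s_A$ and the coordinates indexed by $e(i'+1)\setminus\{i,j_0,(r-1)(i'+1)\}$. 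If $i=j_0$, then $A$ avoids $V_{j_0}$ and $x$ (resp.\ $y$) plays the role of the centre of its own path; here $A\in E(H)$ holds exactly when $A_{e(i')}\cup\{x\},A_{e(i')}\cup\{y\}\in E(G)$ and $A_{e(i'+1)}\cup\{x\},A_{e(i'+1)}\cup\{y\}\in E(G)$, the first pair constraining only the left coordinates (those containing $u_A$) and the second only the right ones.

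Then I would do the counting. Suppose first $i\ne j_0$. As $|U_{j_0}|\le n$, averaging gives a vertex $s^*\in U_{j_0}$ that is the centre of at least $\delta^{2r}n^{2r-3}$ edges of $H[\mathcal U_{e(i')\cup e(i'+1)\setminus\{i\}}]$. Let $X$ be the set of $u\in U_{(r-1)(i'-1)}$ such that some choice of vertices in the $U_j$ with $j\in e(i')\setminus\{j_0,(r-1)(i'-1)\}$ completes $\{u,s^*\}$ to an edge of $G$, and $Y$ the set of $v\in U_{(r-1)(i'+1)}$ such that some choice of vertices in the $U_j$ with $j\in e(i'+1)\setminus\{i,j_0,(r-1)(i'+1)\}$ completes $\{v,s^*\}$ to an $(r-1)$-set $Q$ with $Q\cup\{x\},Q\cup\{y\}\in E(G)$. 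By the factorization, an edge of $H$ with centre $s^*$ and endpoints $(u,v)$ exists if and only if $u\in X$ and $v\in Y$; since each such pair is realized by at most $n^{2r-5}$ edges of $H$, we get $|X|\,|Y|\ge\delta^{2r}n^{2r-3}/n^{2r-5}=\delta^{2r}n^2$, so $|X|,|Y|\ge\delta^{2r}n\ge\tfrac12\delta^{2r}n$ since $|U_{(r-1)(i'-1)}|,|U_{(r-1)(i'+1)}|\le n$. For $i=j_0$ the same argument applies with no centre to fix, directly giving $|X|\,|Y|\ge\delta^{2r}n^{2r-2}/n^{2r-4}=\delta^{2r}n^2$.

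Finally, for any $u\in X$ and $v\in Y$ I would assemble the edges of \ref{eq: short path}: take the left configuration witnessing $u\in X$ and the right configuration witnessing $v\in Y$, which lie on disjoint coordinate sets and so combine into one transversal configuration. When $i\ne j_0$ this yields an edge $e_1=e_1'$ crossing $\mathcal U_{e(i')}$ and containing $u,s^*$, and an $(r-1)$-set $Q$ containing $v,s^*$ with $Q\cup\{x\},Q\cup\{y\}\in E(G)$; setting $e_2:=Q\cup\{x\}$ and $e_2':=Q\cup\{y\}$, both $e_1\cup e_2$ and $e_1'\cup e_2'$ are length-two $\mathcal V_{e(i')\cup e(i'+1)}$-transversal paths from $u$ to $v$ (meeting only at $s^*$) with $(e_1\cup e_2)\triangle(e_1'\cup e_2')=\{x\}\triangle\{y\}$, vacuous when $x=y$. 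When $i=j_0$ one instead takes $e_1:=A_{e(i')}\cup\{x\}$, $e_1':=A_{e(i')}\cup\{y\}$, $e_2:=A_{e(i'+1)}\cup\{x\}$, $e_2':=A_{e(i'+1)}\cup\{y\}$ from the combined configuration, which again satisfy all requirements. I expect no genuine obstacle here: the argument is just one averaging step together with the trivial bound $|X|\,|Y|\le n^2$, and the only thing requiring care is keeping track of the index sets $e(i')$, $e(i'+1)$ and their overlap at $j_0$ so that the factorization is manifestly correct and the exponents $2r-5$, $2r-4$ come out right.
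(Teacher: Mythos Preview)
Your proof is correct and follows essentially the same route as the paper's. Both arguments split on whether $i$ equals the shared index $j_0=(r-1)i'$; in the case $i\ne j_0$ both fix a single ``centre'' vertex in $U_{j_0}$ and then exploit that, once the centre is fixed, the constraint defining $H$ factorizes into independent left and right pieces, allowing one to mix a left witness for $u$ with a right witness for $v$. The only cosmetic difference is packaging: the paper invokes Lemma~\ref{lem: find a set} twice (once to locate a high-degree $z\in U_{j_0}$, once to extract $X$ and $Y$ from $N_H(z)$), whereas you average once to find $s^*$ and then use the product bound $|X|\,|Y|\ge \delta^{2r}n^{2r-3}/n^{2r-5}$ together with $|X|,|Y|\le n$. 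Your factorization viewpoint makes the independence of the two halves explicit and slightly sharpens the constants (you get $|X|,|Y|\ge \delta^{2r}n$ rather than $\tfrac12\delta^{2r}n$), but the underlying idea is identical.
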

\begin{proof}
If $i=(r-1)i'$, then it has degree two in $F$. \Cref{lem: find a set} implies that for $j\in \{(r-1)(i'-1), (r-1)(i'+1)\}$, the number of the vertices $w$ in $U_j$ which belong to at least one edge of $H$ is at least $\frac{1 }{n^{2r-3}}\cdot \delta^{2r} n^{2r-2} = \delta^{2r} n$.
Hence, we may choose two sets $X\subseteq U_{(r-1)(i'-1)}, Y\subseteq U_{(r-1)(i'+1)}$ of at least $\delta^{2r} n$ vertices each of which lies in an edge in $H$. We can check that \ref{eq: short path} holds with this choice. Indeed, if $u\in X$ and $v\in Y$, then the definition of $X$ ensures two edges $e_1, e'_1$ in $G[\mathcal{V}_{e(i')}]$ with $e_1\Delta e'_1 = \{x\}\triangle\{y\}$. Similarly, the definition of $Y$ ensures two edges $e_2, e'_2$ in $G[\mathcal{V}_{e(i'+1)}]$ with $e_2\triangle e'_2 =\{x\}\triangle\{y\}$ that yield \ref{eq: short path}.

If $i> (r-1)i'$, then it has degree one in $F$, and \Cref{lem: find a set} implies that
$U_{(r-1)i'}$ contains at least $ \frac{1}{n^{2r-3}} (\delta^{2r} n^{2r-2} -  \frac{1}{2}\delta^{2r} n^{2r-2} )$ vertices $z$ with $d_H(z)\geq \frac{1}{2}\delta^{2r} n^{2r-3}$.
Fix such a choice $z\in U_{(r-1)i'}$, and let $H_z$ be the $(2r-3)$-graph with the edge set $N_H(z)$.
Then \Cref{lem: find a set} implies that for each $j\in \{(r-1)(i'-1), (r-1)(i'+1)\}$, $U_j$ contains at least $\frac{1}{n^{2r-4}}\cdot \frac{1}{2}\delta^{2r} n^{2r-3} \geq \frac{1}{2}\delta^{2r} n$ vertices $w$ with $d_{H_z}(w)\geq 1$.
Thus, we choose a set $X\subseteq U_{(r-1)(i'-1)}$ and $Y\subseteq U_{(r-1)(i'+1)}$ of such vertices, each of size at least $\frac{1}{2}\delta^{2r} n$. We can check that \ref{eq: short path} holds with this choice. Indeed, if $u\in X$ and $v\in Y$, then the definition of $X$ ensures at least one edge $e_1$ containing $\{z,u\}$ in $G[\mathcal{U}_{e(i')}]$. Moreover, the definition of $Y$ ensures two edges $e_2, e'_2$ containing $z$ in $G[\mathcal{U}_{e(i'+1)}]$ with $e_2\triangle e'_2 = \{x, y\}$. Thus, taking $e'_1=e_1$, we have \ref{eq: short path}.
\end{proof}

In order to find a transversal $F$-factor, we first need to take an absorbing set.
For this purpose, we first prove the following claim.
\begin{claim}
For each $i\in [(r-1)t]$, there exist $k_i\leq \delta^{-1}$ and pairwise disjoint sets $U_i^1,\dots, U_i^{k_i}\subseteq V_i$ of size at least $\delta n$ each of which is $(F,\beta n,2^{10\varepsilon^{-4}} )$-closed and $|V_i\setminus \bigcup_{j\in [k_i]} U_i^j|\leq \delta n$. 
\end{claim}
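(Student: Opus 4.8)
The plan is to apply the machinery from Section~\ref{sec:absorbing} — specifically \Cref{lem:partition_into_closed} together with \Cref{lem:merge_closed_and_nonclosed_sets} — to the cycle $F=C_t^{(r)}$ and the partition $(\mathcal V,F)$. First I would verify the hypothesis of \Cref{lem:partition_into_closed}: that for any $c+1$ vertices in a fixed $V_i$ (for a suitable absolute constant $c=c(\varepsilon,r)$, say $c\approx 10\varepsilon^{-4}$) two of them are $(F,\beta'n,1)$-reachable. This is where the degree condition \ref{eq: M3} and the hole condition come in. For two vertices $x,y\in V_i$, being $(F,\beta'n,1)$-reachable means: avoiding any small set $W$, there is a set $S$ of size $\le (r-1)t-1$ such that both $G[S\cup\{x\}]$ and $G[S\cup\{y\}]$ have a transversal $F$-factor — i.e. both extend to a single transversal copy of the cycle $C_t^{(r)}$ sharing the same $S$. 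To produce such copies I would use \Cref{cl: short connection} to get, from $x$ and $y$, a common pair of endpoints $u\in U_{(r-1)(i'-1)}$, $v\in U_{(r-1)(i'+1)}$ reachable by length-two transversal paths whose symmetric difference is exactly $\{x\}\triangle\{y\}$; then \Cref{lem: find path} (using the hole condition on the relevant $\mathcal V_e$'s) closes $u$ to $v$ around the rest of the cycle through fresh vertices, giving the shared $S$. The pigeonhole step: among $c+1$ vertices of $V_i$, by \Cref{lem: find a set} applied to the link of each vertex in the appropriate auxiliary $(2r-2)$-graph $H$, a constant fraction of them have large degree into a common large set of ``good'' endpoints $X,Y$ as in \Cref{cl: short connection}; choosing $c$ large enough forces two of them to share such a configuration, hence be reachable.

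With that hypothesis in hand, \Cref{lem:partition_into_closed} gives, for each $i\in[(r-1)t]$, a partition $V_i = V_1^i\cup\dots\cup V_{\ell_i}^i\cup U^i$ with each $V_j^i$ being $(F,\beta n,2^{c-1})$-closed of size $\ge \frac{1}{2c}\delta' n$ and $|U^i|\le \delta' n$ for an auxiliary $\delta'$ with $\beta\ll\beta'\ll\delta'\ll\delta$. This already furnishes sets $U_i^1,\dots,U_i^{k_i}$ with $k_i=\ell_i\le c\le\delta^{-1}$ of the required size, closedness, and leftover bound $|V_i\setminus\bigcup U_i^j|=|U^i|\le\delta' n\le\delta n$, after possibly absorbing a few of the tiny closed parts into larger ones. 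The exponent $2^{10\varepsilon^{-4}}$ in the statement is exactly the bound $2^{c-1}$ coming out of \Cref{lem:partition_into_closed} with $c=10\varepsilon^{-4}$ (up to the constant factor hidden in $\ll$), so I would just set the constants that way and quote the lemma. If one wants strictly $k_i$ closed parts whose union leaves out only $\le\delta n$, one simply discards parts of size $<\delta n$ into the leftover when $\delta<\frac{1}{2c}\delta'$ fails — but with the constant hierarchy $\delta'\gg\delta$ chosen appropriately each $V_j^i$ has size $\ge\frac1{2c}\delta'n\ge\delta n$, so no discarding is needed.

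The main obstacle I anticipate is the reachability verification — producing, for two generic vertices $x,y\in V_i$, a single transversal $C_t^{(r)}$-copy structure with a common ``rest of the cycle'' $S$ avoiding a prescribed small set $W$. The delicate point is that the two length-two paths emanating from $x$ and from $y$ (or from $u$ and $v$) must not merely exist but must have the same endpoints $u,v$, which is precisely what \Cref{cl: short connection}\ref{eq: short path} is engineered to deliver; checking that the hypotheses of \Cref{cl: short connection} (namely that the auxiliary bipartite-type graph $H[\mathcal U_{e(i')\cup e(i'+1)-\{i\}}]$ has $\ge\delta^{2r}n^{2r-2}$ edges) hold for a positive-density set of vertices $x$, so that pigeonhole over $c+1$ vertices applies, is the technical heart. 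The rest — closing $u$ to $v$ around the cycle via \Cref{lem: find path}, and invoking \Cref{lem:partition_into_closed} as a black box — is routine given the constant hierarchy fixed at the start of the proof of \Cref{lem:perfect_cycle_factor}.
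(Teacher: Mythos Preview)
Your overall strategy matches the paper's: verify the hypothesis of \Cref{lem:partition_into_closed} (any $c+1$ vertices in $V_i$ contain an $(F,\beta'n,1)$-reachable pair) by building a single transversal cycle through each of $x,y$ via \Cref{cl: short connection} and then closing it up with \Cref{lem: find path}. Note that \Cref{lem:merge_closed_and_nonclosed_sets} is not used for this claim at all --- it enters only in the \emph{next} claim, where the partition is upgraded to full closedness of $V_i$; here \Cref{lem:partition_into_closed} alone suffices.

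There is, however, a genuine gap in your pigeonhole step. Forcing two vertices $x,y$ to ``share large endpoint sets $X,Y$'' is not enough: for $u\in X$ and $v\in Y$ you would get a transversal $2$-path from $u$ to $v$ through $x$ and another through $y$, but these two $2$-paths need not agree on their \emph{other} $2r-3$ internal vertices, so the two cycles you produce could differ on far more than $\{x,y\}$. What \Cref{cl: short connection} actually requires, when applied with distinct $x,y$, is that the $(2r-2)$-graph $H$ of sets $A$ for which both $A\cup\{x\}$ and $A\cup\{y\}$ are transversal $2$-paths have at least $\delta^{2r}n^{2r-2}$ edges --- i.e.\ $x$ and $y$ must share many full $(2r-2)$-neighbourhoods in the $2$-path hypergraph, not merely many common endpoints. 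The paper secures this by a one-line inclusion--exclusion: in the $(2r-1)$-graph of transversal $2$-paths every $z\in Z$ has degree at least $\varepsilon^2 n^{2r-2}$, so with $c=\lceil 2\varepsilon^{-2}\rceil$ the bound $n^{2r-2}\ge \sum_{z\in Z} d(z)-\sum_{x\neq y}|N(x)\cap N(y)|$ forces some pair $x,y\in Z$ with $|N(x)\cap N(y)|\ge \tfrac14\varepsilon^4 n^{2r-2}$. Once that pair is fixed, \Cref{cl: short connection} and \Cref{lem: find path} finish exactly as you outline.
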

\begin{proof}
Let $c= \lceil 2\varepsilon^{-2} \rceil$. By \Cref{lem:partition_into_closed}, it suffices to show that every set $Z$ of any $c+1$ vertices in $V_i$ contains two distinct vertices $x,y$ that are $(F,\varepsilon^4 n/(100r), 1)$-reachable.
When we deal with indices, we take those indices modulo $(r-1)t$ so that $V_0=V_{(r-1)t}$.

Let $i$ be an index with $(r-1)i'\leq i < (r-1)(i'+1)$ for some $i'\in[t]$ and consider two edges $e(i')$ and $e(i'+1)$ of $F$.
Take a set $Z$ of $c+1$ vertices in $V_i$.
First, we choose two distinct vertices $x,y\in Z$ such that they are $(F,\varepsilon^4 n/(100r), 1)$-reachable.

Let $H$ be the $(2r-1)$-graph whose edges are the vertex sets $A$ each of which forms a two-edge path in $G[\mathcal{V}_{e(i')\cup e(i'+1)}]$ and $|A\cap Z|=1$.
As the vertices in $Z$ have degree at least $\varepsilon n^{r-1}$ in $G[\mathcal{V}_{e(i')\cup e(i'+1)}]$, they have degree at least $\varepsilon^2 n^{2r-2}$ in $H$. Hence 
the inclusion-exclusion principle implies 
$$n^{2r-2} \geq  \sum_{x\in Z} d_{H}(x) - \sum_{x\neq y\in Z} |N_{H}(x)\cap N_{H}(y)| \geq (c+1) \varepsilon^2 n^{2r-2} -  \sum_{x\neq y\in Z} |N_{H}(x)\cap N_{H}(y)|,$$
hence there exist $x\neq y\in Z$ with $|N_H(x)\cap N_H(y)|\geq \binom{c+1}{2}^{-1} n^{2r-2} \geq \frac{1}{4}\varepsilon^4 n^{2r-2}$.

In order to show that $x,y$ are $(F,\varepsilon^4 n/(100r), 1)$-reachable, consider a set $W\subseteq V(G)$ with $|W\cap V_j|\leq \varepsilon^4 n/(100r)$ for each $j\in [r]$. Let $U_i= V_i\setminus W$. 
Let $H'$ be the $(2r-2)$-graph whose edges are the sets in $N_{H}(x)\cap N_{H}(y)$ which do not intersect $W$. 
Then $H'$ contains at least $\frac{1}{4}\varepsilon^4 n^{2r-2} - 2r\cdot \frac{1}{100r}\varepsilon^4 n^{2r-2} \geq \frac{1}{5}\varepsilon^4 n^{2r-2}$ edges.
Thus we may apply \Cref{cl: short connection} to obtain two sets $W_{(r-1)(i'-1)}\subseteq V_{(r-1)(i'-1)}\setminus W$ and $W_{(r-1)(i'+1)}\subseteq V_{(r-1)(i'+1)} \setminus W$ of size at least $\frac{1}{2}\delta^{2r} n$ satisfying \ref{eq: short path}. 
Now, for all $j < (r-1)(i'-1)$ or $j >(r-1)(i'+1)$ we take an arbitrary subset $W_{j}\subseteq (V_j \setminus W)$ such that $|W_j| =  \frac{1}{2}\delta^{2r} n$.
Let $\mathcal{W}$ be the tuple $(W_{(r-1)(i'+1)}, \dots , W_{(r-1)t}, W_1,\dots, W_{(r-1)(i'-1)})$. 
We apply \Cref{lem: find path} to $G$ and $\mathcal{W}$ to find a path $P$ connecting a vertex $u$ in $W_{(r-1)(i'-1)}$ and $v$ in $W_{(r-1)(i'+1)}$. 
For these choices of $u$ and $v$,  \ref{eq: short path} yields edges $e_1,e'_1, e_2$ and $e'_2$. The path $P$ together with $e_1,e_2$ gives a cycle $C$ and the path $P$ together with $e'_1,e'_2$ gives another cycle $C'$ such that both of them are transveral copies of $F$.
Moreover, $V(C)\triangle V(C') = \{x,y\}$. This shows that $x,y$ are $(F,\varepsilon^4 n/(100r), 1)$-reachable as desired.
    \end{proof}

We now prove the following claim.
  \begin{claim}
        For each $i \in [(r-1)t]$, the set $V_i$ is $(F, \beta n/2, \delta^{-1})$-closed.
    \end{claim}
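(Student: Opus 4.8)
The plan is to combine the previous two claims via the merging lemma, \Cref{lem:merge_closed_and_nonclosed_sets}. From the penultimate claim we have, for each $i\in[(r-1)t]$, a partition of $V_i$ into pairwise disjoint $(F,\beta n,2^{10\varepsilon^{-4}})$-closed sets $U_i^1,\dots,U_i^{k_i}$ together with a leftover $U_i^0:=V_i\setminus\bigcup_{j\in[k_i]}U_i^j$ of size at most $\delta n$, where $k_i\leq\delta^{-1}$. The strategy is first to merge the closed pieces $U_i^1,\dots,U_i^{k_i}$ into a single closed set $\widehat U_i$ of size at least $|V_i|-\delta n$, and then to absorb the at most $\delta n$ leftover vertices of $U_i^0$ one at a time into $\widehat U_i$, each time applying \Cref{lem:merge_closed_and_nonclosed_sets} with $U_2$ a single vertex. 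Since each merge step loses only $O(kt\cdot\delta^{-1})=O(1)$ in the reachability parameter and at most $k_i+\delta n\le 2\delta n$ steps are performed, the final closedness parameter stays above $\beta n/2$ (recall $\beta\ll\delta$, and more precisely the window $\beta n$ shrinks by at most $10\cdot\delta^{-1}\cdot(r-1)t\cdot 2\delta n = O(n)$ — here one must be careful that the loss is genuinely $o(\beta n)$; in fact each application of \Cref{lem:merge_closed_and_nonclosed_sets} with $k,t$ bounded in terms of $r,t,\varepsilon$ only costs $O(1)$ per step and there are $O(\delta n)$ steps, so the total loss is $O(\delta n)\ll\beta n$ provided the hierarchy is set so that $\delta$ is small relative to $\beta$; if instead $\beta\ll\delta$ one should merge first so that the base window is $\beta n$ and the leftover count $\delta n$ is what we can afford to lose, which forces the ordering $\beta\gg$ (number of steps)$\cdot$(cost), consistent with choosing the $U_i^j$'s with window parameter a fixed constant times $n$ rather than $\beta n$).

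To verify the hypotheses of \Cref{lem:merge_closed_and_nonclosed_sets} at each step, fix $i$ with $(r-1)i'\le i<(r-1)(i'+1)$, let $U_1$ be the closed set built so far (of size $\ge\delta n$, hence $\ge m+10kt$ for the relevant $m,k$) and let $v$ be a vertex of the next piece to be absorbed. Given $W\subseteq V(G)$ with $|W\cap V_j|\le m-kt$ for all $j$, I must produce two vertex-disjoint transversal copies $F_1,F_2$ of $F$ avoiding $W$ with $F_1\cap V_i\subseteq U_1$, $F_2\cap V_i=\{v\}$, and with $F_1\cap V_j$, $F_2\cap V_j$ being $(F,m,k)$-reachable for every $j\ne i$. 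This is exactly the kind of structure produced in the proof of the previous claim: using the two edges $e(i'),e(i'+1)$ of $F$ incident to coordinate $i$, the minimum-degree condition \ref{eq: M3} gives $\Omega(n^{2r-2})$ two-edge transversal paths through any fixed vertex (so through a vertex of $U_1$, and through $v$), and after deleting $W$ we still have $\ge\delta^{2r}n^{2r-2}/2$ of them, so \Cref{cl: short connection} applies. The resulting sets $W_{(r-1)(i'-1)},W_{(r-1)(i'+1)}$ of size $\ge\tfrac12\delta^{2r}n$, padded by arbitrary large subsets $W_j$ of the other $V_j\setminus W$, let us invoke \Cref{lem: find path} to close the path around the rest of the cycle, yielding the two cycles $F_1,F_2$ with $V(F_1)\cap V_i\subseteq U_1$ and $V(F_2)\cap V_i=\{v\}$. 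For the reachability requirement on the other coordinates $j\ne i$: the vertices $F_1\cap V_j$ and $F_2\cap V_j$ both lie in $V_j$, and since the previous claim is being proved simultaneously for all coordinates, each $V_j$ is already known to be a union of $(F,\beta n,2^{10\varepsilon^{-4}})$-closed sets plus a small leftover — but at the merging stage we may additionally choose, when selecting the padding sets $W_j$, to place them inside a single closed piece $U_j^{\ell(j)}$, so that $F_1\cap V_j$ and $F_2\cap V_j$ land in the same closed set and are therefore $(F,\beta n,2^{10\varepsilon^{-4}})$-reachable, which is stronger than needed.

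The main obstacle, and the step needing the most care, is bookkeeping the reachability parameters across the roughly $\delta n$ merge steps so that the final parameter is $\ge\beta n/2$ while the "jump" constant stays $\le\delta^{-1}$. The cleanest route is: first do the $k_i-1$ merges among the big closed pieces (each of window $\gg\beta n$, losing only $O(1)$ per merge, total loss $O(k_i)=O(\delta^{-1})$ which is negligible), obtaining $\widehat U_i$ that is $(F,\beta n,\delta^{-1}/2)$-closed say; then absorb the $\le\delta n$ leftover vertices, but do them in a single batch by iterating the weak-reachability argument of \Cref{prop: transitivity}, observing that absorbing one vertex only shrinks the window by $O(kt)$ and the number of leftover vertices $\delta n$ times $O(kt)=o(\beta n)$. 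One must double-check that the size hypothesis $|U_1|\ge m+10kt$ and the degree hypothesis for \Cref{cl: short connection} survive throughout — both do, since $|U_1|\ge\delta n$ always and the window $m\ge\beta n/2$ keeps $|W\cap V_j|$ small enough that the $\delta^{2r}n^{2r-2}/2$ lower bound on surviving two-edge paths holds. Finally, unwinding \Cref{lem:merge_closed_and_nonclosed_sets}'s conclusion gives $(F,\beta n/2,\delta^{-1})$-closedness of $V_i=\widehat U_i\cup U_i^0$ after adjusting constants, which is the claim.
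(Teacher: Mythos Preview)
Your construction of the two transversal cycles $F_1,F_2$---using the minimum-degree condition at $v$, pigeonholing into a fixed collection of closed pieces $(U_j^{\ell(j)})_j$, applying \Cref{cl: short connection}, and then closing up with \Cref{lem: find path}---is essentially what the paper does, and it is the heart of the argument. The gap is in your iteration scheme.

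You propose to apply \Cref{lem:merge_closed_and_nonclosed_sets} once per piece $U_i^2,\dots,U_i^{k_i}$ and then once per leftover vertex in $U_i^0$. But each application of that lemma replaces the jump constant $k$ by $6kt$ (here $t$ is the number of parts, i.e.\ $(r-1)t$). After $\ell$ iterations the jump constant is of order $(6(r-1)t)^{\ell}\cdot 2^{10\varepsilon^{-4}}$. Even in your phase~1, with $\ell=k_i-1\le \delta^{-1}-1$, this is $(6(r-1)t)^{\delta^{-1}}$, which is \emph{not} bounded by $\delta^{-1}$: taking logarithms one would need $\delta^{-1}\log(6(r-1)t)\le \log(\delta^{-1})$, impossible for small $\delta$. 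Your phase~2, with $\delta n$ further iterations, would make the jump constant super-exponential in $n$. You flag the window-parameter bookkeeping as the ``main obstacle,'' but the jump-constant blow-up is the real obstruction, and your hedges (``single batch by iterating,'' or swapping the order of $\beta$ and $\delta$) do not address it.

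The fix is simply not to iterate. The hypothesis of \Cref{lem:merge_closed_and_nonclosed_sets} places no restriction on $U_2$: it can be all of $V_i\setminus U_i^1$ at once. The paper does exactly this---one application with $U_1=U_i^1$ and $U_2=V_i\setminus U_i^1$. Your verification of the hypotheses (for arbitrary $v\in U_2$ and $W$, build $F_1$ crossing $(U_j^1)_j$ and $F_2$ through $v$ crossing $(U_j^1)_{j\ne i}$, so that on every coordinate $j\ne i$ both cycles land in the same closed piece $U_j^1$ and are therefore $(F,\beta n,2^{10\varepsilon^{-4}})$-reachable) is the right idea and survives intact. A single application then gives that $V_i$ is $(F,\ \beta n - 10\cdot 2^{10\varepsilon^{-4}}(r-1)t,\ 6\cdot 2^{10\varepsilon^{-4}}(r-1)t)$-closed; since the subtracted term is $O(1)$ and $\delta\ll \varepsilon,1/r,1/t$ forces $6\cdot 2^{10\varepsilon^{-4}}(r-1)t\le \delta^{-1}$, this yields the claimed $(F,\beta n/2,\delta^{-1})$-closedness.
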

    \begin{proof}
We shall apply \Cref{lem:merge_closed_and_nonclosed_sets} to obtain the desired closedness.
For this, fix an index $i\in [(r-1)t]$, a vertex $v\in V_i\setminus U_{i}^{1}$ and a vertex set $W\subseteq V(G)\setminus\{v\}$ such that $|W\cap V_j|\leq \beta n/2$ for all $j\in [(r-1)t]$. We aim to find two vertex-disjoint copies $F_1$ and $F_2$ of $F$ satisfying \ref{two closed 1} and \ref{two closed 2}.
Let $i'\in [t]$ be such that $(r-1)i'\leq i < (r-1)(i'+1)$.
Consider edges $e(i'-1), e(i'), e(i'+1), e(i'+2) \in E(F)$.

As $\delta_1(G)\geq \varepsilon n^{r-1}$, the vertex $v$ belongs to at least $\varepsilon^2 n^{2r-2}-2r\cdot \tfrac{\beta}{2} n^{2r-2}\ge \tfrac{1}{2}\varepsilon^2 n^{2r-2}$ $\mathcal{V}_{e(i')\cup e(i'+1)}$-transversal paths of length two in $G[\mathcal{V}_{e(i')\cup e(i'+1)}]-W$. 
By the pigeonhole principle, without loss of generality, we may assume that at least $(\prod_{j\in e(i')\cup e(i'+1)} k_j)^{-1} \tfrac{1}{2}\varepsilon^2 n^{2r-2} \geq \delta^{2r} n^{2r-2}$ such paths are crossing $\mathcal{U}^{1}_v:= ( U^{1}_{(r-1)(i'-1)}\setminus W,\dots, U^{1}_{i-1}\setminus W, \{v\}, U^{1}_{i+1}\setminus W,\dots, U^{1}_{(r-1)(i'+1)}\setminus W)$. 
Let $H$ be a $(2r-2)$-graph on vertex partition $(U^{1}_{(r-1)(i'-1)}\setminus W,\dots, U^{1}_{i-1}\setminus W, U^{1}_{i+1}\setminus W,\dots, U^{1}_{(r-1)(i'+1)}\setminus W)$ where $E(H)$ consists of all sets $A$ such that $G[A\cup \{v\}]$ contains such a path crossing $\mathcal{U}^{1}_v$. Then $H$ contains at least $\delta^{2r} n^{2r-2}$ edges.
By \Cref{cl: short connection}, there exist $X\subseteq U^1_{(r-1)(i'-1)}\setminus W$ and $Y\subseteq U^{1}_{(r-1)(i'+1)}\setminus W$ such that \ref{eq: short path} holds with $v$ playing the role of $x=y$ and $U^{1}_j\setminus W$ playing the role of $U_j$ for every $j\in e(i')\cup e(i'+1)\setminus\{i\}$.

Now, every vertex in $X$ and $Y$ has degree at least $\varepsilon n^{r-1} - |W|\cdot rn^{r-2} \geq \frac{1}{2}\varepsilon n^{r-1}$ in $G[\mathcal{V}_{e(i'-1)}]-W$ and $G[\mathcal{V}_{e(i'+2)}]- W$, respectively. 
Hence, by the pigeonhole principle, without loss of generality, we assume that at least $(\prod_{j\in e(i'-1)} k_j)^{-1} \varepsilon^2 n^{r-1}|X| \geq \delta^{4r} n^{r}$ edges of $G-W$ are crossing $\mathcal{U}^{1}_{e(i'-1)}:= ( U^{1}_{(r-1)(i'-2)},\dots, U^{1}_{(r-1)(i'-1)-1}, X)$ and 
at least $\delta^{4r} n^{r}$ edges of $G-W$ are crossing $\mathcal{U}^{1}_{e(i'+2)}:= (Y, U^{1}_{(r-1)(i'+1)+1},\dots, U^{1}_{(r-1)(i'+2)})$.
Using \Cref{lem: find a set} with $\emptyset$ playing the role of $U$, we can find subsets $X'\subseteq X$ and $Y'\subseteq Y$ of size at least $\frac{1}{ n^{r-1}}\cdot \frac{1}{2}\delta^{4r} n^{r} \geq \frac{1}{4} \delta^{4r} n$ which consist of all vertices having degree at least $\frac{1}{2}\delta^{4r} n^{r-1}$ in $G[\mathcal{U}^{1}_{e(i'-1)}]-W$ and $G[\mathcal{U}^{1}_{e(i'+2)}]-W$, respectively.

\noindent \textbf{Find a copy $F_2$ of $F$ containing $v$.}  
We now apply \Cref{lem: find path} to find a path $P$ in $G- W$ connecting a vertex $y\in Y$ and a vertex $x\in X$ which crosses $(Y, U^{1}_{(r-1)(i'+1)+1\setminus W},\dots, U^{1}_{(r-1)(i'-1)-1}\setminus W, X)$. This is possible as each set in the tuple has size at least $\frac{1}{2}\delta^{2r}n-\beta n > (r-1)t \alpha n$.
As $X,Y$ satisfies \ref{eq: short path}, there exists a two-edge path from some $x\in X$ to some $y\in Y$ containing $v$, crossing $(X,U^{1}_{(r-1)(i'-1)+1}\setminus W,\dots, U^{1}_{i-1},\{v\},U^{1}_{i+1}\setminus W,\dots, Y)$. This together with $P$ yields a desired copy $F_2$ of $F$ that crosses $(U^1_1\setminus W,\dots,U^1_{i-1}, \{v\}, U^1_{i+1}, \dots, U^1_{(r-1)t}\setminus W)$.

\noindent \textbf{Find a desired copy $F_1$ of $F$ intersecting $U^1_{i}$.} 
We similarly apply \Cref{lem: find path} to find a path $P'$ from a vertex $x'\in X'$ to $y'\in Y'$ that crosses $(X', U^{1}_{(r-1)(i'-1)+1}\setminus W,\dots, U^{1}_{(r-1)(i'+1)-1}\setminus W, Y')$ and is disjoint from $V(F_2)$. This is possible as each set in the tuple has size at least $\frac{1}{4}\delta^{4r}n - \beta n >  (r-1)t \alpha n$.
Recall that $x'\in X'$ and $y'\in Y'$ have degree at least $\frac{1}{2}\delta^{4r} n^{r-1}$ in $G[\mathcal{U}^{1}_{e(i'-1)}]-W$ and $G[\mathcal{U}^{1}_{e(i'+2)}]-W$, respectively. Applying \Cref{lem: find a set} to $N_{G[\mathcal{U}^1_{e(i'-1)}]-W}(x')$ and $N_{G[\mathcal{U}^1_e(i'+2)]-W}(y')$, we can find two sets $X''\subseteq U^{1}_{(r-1)(i'-2)}\setminus (W\cup V(F_2))$ and $Y''\subseteq U^{1}_{(r-1)(i'+2))}\setminus (W\cup V(F_2))$ of at least $\tfrac{1}{2}\delta^{4r}n$ vertices that belong to at least one edge in $N_{G[\mathcal{U}^1_{e(i'-1)}]}(x')$ and $N_{G[\mathcal{U}^1_{e(i'+2)}]}(y')$, respectively.
Again apply \Cref{lem: find path} to find a path $P''$ connecting a vertex $y''\in Y''$ and a vertex $x''\in X''$ which crosses $(Y'', U^{1}_{(r-1)(i'+2)+1}\setminus~W,\dots,U^{1}_{(r-1)t}, U^1_1,\dots, U^{1}_{(r-1)(i'-2)-1}\setminus~W, X'')$ and is disjoint from $V(F_2)$. This similarly follows as every set in the tuple has size at least $\frac{1}{4}\delta^{4r}n$. By the definitions of $X''$ and $Y''$, there is an edge of $G[\mathcal{U}^1_{e(i'-1)}]-(W\cup V(F_2))$ containing both $x'$ and $x''$, and an edge of $G[\mathcal{U}^{1}_{e(i'+2)}]-(W\cup V(F_2))$ containing both $y'$ and $y''$. These two edges together with $P'$ and $P''$ yield a copy $F_1$ of $F$ that crosses $\mathcal{U}^{1}=(U^1_1\setminus W,\dots, U^1_{(r-1)t}\setminus W)$ and is disjoint from $V(F_2)$.

As each $U^1_j$ is an $(F,\beta n,2^{10\varepsilon^{-2}} )$-closed set, 
\Cref{lem:merge_closed_and_nonclosed_sets} implies that $V_1$ is $(F,\beta n - 10\cdot 2^{10\varepsilon^{-2} }t , 6\cdot 2^{10\varepsilon^{-2}+1 }t)$-closed.
As $1/n \ll \beta \ll  \delta \ll 1/t, \varepsilon \leq 1$, this proves the claim.
\end{proof}

Now we apply \Cref{lem:closed_to_absorbing} to find an absorbing set. One can observe that the condition \ref{closed 2} is easy to deduce from the minimum degree condition and \Cref{lem: find path}, and thus by the choice $\gamma\ll \eta \ll \beta, \delta\leq 1$, $G$ has a $\gamma$-absorbing set $A$ of size at most $\eta n$.
Then the $r$-graph $G'=G-A$ admits a balanced partition $\mathcal{V}'=(V'_1,\dots, V'_{(r-1)t})$ where $V'_i= V_i\setminus A$. 
Assume each $V'_i$ has size $n' \geq (1-\delta)n$.
As $|A|\leq \eta n$ and $\eta \ll \varepsilon$, \ref{eq: M3} implies $\delta(G[\mathcal{V}'_{e}])\geq \tfrac{1}{2}\varepsilon (n')^{r-1}$ for each $e\in E(F)$.
Now we prove the following claim.

\begin{claim}
 There exists a partition $U_{i,1},\dots, U_{i,q}$ of $V'_i$ for each $i\in [(r-1)t]$
 such that the following holds.
\begin{proplist}
\item For each $i\in [(r-1)t]$ and $j\in [q]$, we have $|U_{1,j}|=\dots = |U_{(r-1)t,j}| \geq n'/T^2$. \label{V'U1}
\item  For each $j\in [q]$, there exists $i'=i'(j)\in [t]$ such that the tuples $(U_{i,j}: i\in e(4i'-2))$ and $(U_{i,j}: i\in e(4i'))$ are both $(\gamma^3,\eps/100+)$-regular. \label{V'U2}
\end{proplist}
\end{claim}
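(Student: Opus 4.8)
The claim asks for a partition of each $V_i'$ into clusters $U_{i,1},\dots,U_{i,q}$ that are synchronized across all $i\in[(r-1)t]$ (same sizes within each index $j$), such that for each $j$ there is a single edge-index $i'(j)\in[t]$ for which two *specified* edges of the cycle — namely $e(4i'-2)$ and $e(4i')$ — have their corresponding clusters forming regular tuples. The reason for the spacing $e(4i'-2)$ and $e(4i')$ (rather than consecutive edges) is presumably that later, in the absorbing-path argument, one needs the two regular ``anchor'' edges of each copy of $F=C_t^{(r)}$ to be far apart on the cycle so the connecting paths don't interfere; here we just need to produce them.

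**Plan.** The plan is to apply the weak hypergraph regularity lemma (\Cref{thm:weak_hypergraph_regularity}), or rather the packaged version \Cref{lem: reg matching}, once for each ``window'' of the cycle. Concretely, group the $t$ edges of $F$ into blocks of four consecutive edges: for each $i'\in[t/4]$ (assuming $4\mid t$, which we may arrange by the freedom in choosing $t$ in \Cref{lem:perfect_cycle_factor}), consider the pair of edge-slots $e(4i'-2)$ and $e(4i')$, which involve two disjoint sets of $r$ vertex-classes each. First I would, for each such $i'$, restrict $G'$ to the $2r$ parts $\{V_j' : j\in e(4i'-2)\cup e(4i')\}$ and note that \ref{eq: M3} together with $|A|\le \eta n$ gives $\delta(G'[\mathcal{V}'_{e(4i'-2)}]),\delta(G'[\mathcal{V}'_{e(4i')}])\ge \tfrac12\varepsilon (n')^{r-1}$, hence each of these two $r$-partite subgraphs has at least $\tfrac12\varepsilon (n')^{r-1}\cdot n' /r! \ge \varepsilon' (n')^r$ edges for a suitable $\varepsilon'$ comparable to $\varepsilon$. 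Then I would invoke \Cref{lem: reg matching} with $\gamma^3$ in the role of $\tau$, $\varepsilon'$ in the role of $\varepsilon$, and a target size $n^*$ to be specified, obtaining subsets $V_{j,\ell}'\subseteq V_j'$ for $j\in e(4i'-2)\cup e(4i')$ and $\ell\in[p_{i'}]$ with all the listed properties: equal sizes at least $\tau n'/T \ge (n')/T^2$, each pair of tuples $(\gamma^3, \varepsilon'/10+)$-regular (so in particular $(\gamma^3,\varepsilon/100+)$-regular after adjusting constants), and total size $n^*$.

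**Synchronizing across windows.** The subtlety is that \Cref{lem: reg matching} is applied $t/4$ times, once per window, and each application only touches $2r$ of the $(r-1)t$ vertex classes; moreover the leftover vertex classes $V_j'$ with $j$ not of the form $e(4i'-2)\cup e(4i')$ for any $i'$ need to be partitioned too, and everything must line up into a global index set $[q]$. I would handle this by making all $t/4$ applications of \Cref{lem: reg matching} use the \emph{same} target total size, so they all produce the same number of clusters $p$ (pass to a common $p$ by discarding clusters and shrinking, using $\big||V_{1,j}|-|V_{1,j'}|\big|\le 1$ to equalize); then set $q:=p$ and, for the classes $V_j'$ not covered by any window's regularity requirement, simply partition them arbitrarily into $q$ parts of nearly equal size and then trim all parts (across all classes and all $j\in[q]$) to a common size — this is possible since all cluster sizes are $\Theta(n'/T^2)$ up to $\pm 1$, and trimming a $\tau$-regular tuple by a bounded proportion keeps it $(\gamma^3,\varepsilon/100+)$-regular by the same computation as at the end of the proof of \Cref{lem: reg matching}. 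Finally, for a given global index $j\in[q]$, the requirement \ref{V'U2} asks only for the \emph{existence} of one $i'(j)$ with the two regular tuples; since \emph{every} window $i'$ contributes regular tuples at index $j$, we may take, say, $i'(j)=1$ for all $j$ — or more honestly, any fixed $i'$, since all of them work. (If one instead wants the $i'(j)$ to vary, one can permute the cluster labels of different windows independently before merging; but a constant choice suffices for \ref{V'U2} as stated.)

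**Main obstacle.** The genuinely fiddly part is the bookkeeping of sizes: \Cref{lem: reg matching} guarantees cluster size at least $\tau n/T$ but does not pin it down exactly, and different windows will a priori produce slightly different sizes and counts. The plan is to absorb all this slack by (i) fixing once and for all a target $n^* = \lceil n'/2\rceil$ (well inside the allowed range $\tau n' < n^* < \varepsilon' n'/(8r)^r$ after checking the hierarchy $\tau\ll\varepsilon$, $1/T\ll\varepsilon$), so each window yields $p = \lceil 2n^*/m_{i'}\rceil$ clusters with $m_{i'}\in[\tau n'/T, n']$; (ii) discarding down to $q := \min_{i'} p_{i'}$ clusters per window; and (iii) a final uniform trimming to the common size $\min$ over all classes and cluster indices, proving this common size is still $\ge n'/T^2$ since $\tau = \gamma^3$ and $1/T\ll\gamma$. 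I do not expect any conceptual difficulty beyond this; the regularity input is entirely black-boxed through \Cref{lem: reg matching}, and the claim's conclusion is purely about the existence of a synchronized regular partition, which the lemma delivers window by window. The one point to state carefully is that $4\mid t$ may be assumed: since \Cref{lem:perfect_cycle_factor} only requires $t\ge (100r)^r/\varepsilon$, one is free to assume $t$ is divisible by $4$ (or handle the at-most-three leftover edges by putting their vertex classes in with the arbitrarily-partitioned leftover classes), and this causes no loss.
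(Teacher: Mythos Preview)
Your plan contains a genuine gap in the size bookkeeping. Look again at the hypothesis of \Cref{lem: reg matching}: it requires $\tau n < n^* < \varepsilon n/(8r)^r$, so $n^*$ must be a \emph{small} fraction of $n'$. Your choice $n^* = \lceil n'/2\rceil$ is far outside this range (you would need $\varepsilon > (8r)^r$, which is absurd). And you cannot simply take $n^*$ smaller, because the subsets $V_{j,\ell}'$ that \Cref{lem: reg matching} produces only cover $n^*$ vertices of each $V_j'$ in total (condition \ref{reg matching 3}); they do not partition $V_j'$. Your discussion of ``trimming'' goes the wrong direction: you need to \emph{fill up} the remaining $n'-n^*$ vertices in each touched class, not shrink clusters. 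As written, the $U_{i,j}$ you construct cover only a $\Theta(\varepsilon/(8r)^r)$-fraction of each class, so you do not obtain a partition.

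The paper's argument is organised differently: instead of trying to make every window share the \emph{same} index set $[q]$, it \emph{concatenates} the windows. One applies \Cref{lem: reg matching} for each $i'\in[t/10]$ with $n^* = 10n'/t$ (which is indeed below the upper bound precisely because $t\ge (100r)^r/\varepsilon$), obtaining $p_{i'}$ clusters in each class of that window. One then sets $q=\sum_{i'} p_{i'}$ and lets the global size list be the concatenation $a_{1,1},\dots,a_{1,p_1},a_{2,1},\dots$; these sizes sum to $(t/10)\cdot (10n'/t)=n'$, so arbitrarily partitioning \emph{every} class $V_i'$ into pieces of those sizes gives a genuine partition. For $j$ lying in the $i'$-th block one takes $i'(j)=i'$, and in the classes $V_k'$ with $k\in e(4i'-2)\cup e(4i')$ the parts in that block are taken to be exactly the regular clusters output by \Cref{lem: reg matching}; in all other classes the parts of that block are arbitrary. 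So the regularity for different $j$'s is supplied by \emph{different} windows, and there is no need to synchronise $p$ across windows or to cover each class multiple times.
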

\begin{proof}
For each $i'\in [t/10]$, we apply \Cref{lem: reg matching} to $G[\mathcal{V}'_{e(4i'-2)}\cup \mathcal{V}'_{e(4i')}]$ where $(V'_{i'}: i'\in e(4i-2))$, $(V'_{i'}: i'\in e(4i))$, $\eps/10$, $T$, $\gamma^3$, $10n'/t$ playing the roles of $(V_1,\dots,V_r), (V_{r+1},\dots, V_{2r})$, $\varepsilon$, $T$, $\tau$, $n^*$ respectively.
Then we obtain subsets 
$V_{i,1},\dots, V_{i,p_{i'}}$ of $V'_{i}$ for each $i\in e(4i'-2)\cup e(4i')$ satisfying \ref{reg matching 1}--\ref{reg matching 3}.
For each $i'\in [t/10]$ and $j\in [p_{i'}]$, the size of $|V_{i,j}|$ is the same for all $i\in e(4i'-2)\cup e(4i')$, and let $a_{i',j}$ be the common size. Then \ref{reg matching 3} implies 
$$\sum_{i'\in [t/10]}\sum_{j\in [p_{i'}] } a_{i',j} = \sum_{i'\in [t/10]} 10n'/t = n'.$$

Let $q= \sum_{i\in [t/10]} p_i$.
Now for each $i\in [(r-1)t]$, we arbitrarily partition each $V'_{i}$ into sets $U_{i,1},\dots, U_{i,q}$ of size $a_{1,1},\dots, a_{1,p_1}, a_{2,1},\dots, a_{t/10,p_{t/10}}$ in such a way that 
for $i'\in [t/10]$, $i\in e(4i'-2)\cup e(4i')$ and $j\in [p_i]$, we have 
$$ U_{i,q'+j}= V_{i,j} \text{ where } q'= \sum_{i^*< i}p_{i^*}.$$
Then it is straightforward to check that this partition satisfies \ref{V'U1} and \ref{V'U2}.
\end{proof}

Consider a partition as in the above claim. We now fix $j\in [q]$ and the tuple $\mathcal{U}_j = (U_{1,j},\dots, U_{(r-1)t,j})$ and show that $G[\mathcal{U}_j]$ contains $(1-\gamma^2) |U_{1,j}|$ vertex-disjoint $\mathcal{U}_j$-transversal copies of $F$.  Let $i'=i'(j)$ as in \ref{V'U2} and let $n_j:= |U_{1,j}|=\dots = |U_{(r-1)t,j}|$.
To show this, assume that we have chosen
 $s < (1-\gamma^2)|U_{1,j}|$ vertex-disjoint $\mathcal{U}_j$-transversal copies of $F$ in $G[\mathcal{U}_j]$. Delete the vertices in those copies of $F$, then for each $j\in [(r-1)t]$, we are left with vertex sets $U'_i\subseteq U_{i,j}$ of the size at least $n_j- s \geq \gamma^2 n_j$. 
By \ref{V'U1}, we have $\gamma^2 n_j > n'/T^3 > \alpha n> \alpha^*_{\mathcal{U}_{e(4i'-1),j}}(G)$ where $\mathcal{U}'_{e(4i'-1)}= (U'_{i}: i\in e(4i'-1))$, there exists a matching $M$ in $G[\mathcal{U}'_{e(4i'-1)}]$ of size at least $ n_j -s - \alpha n \geq \frac{1}{2}\gamma^2 n_j$.

We claim that $M$ contains an edge $f$ that connects a vertex $x\in U'_{(r-1)(4i'-2)}$ and a vertex $y\in U'_{(r-1)(4i'-1)}$
such that each of $x,y$ has degree at least $ \frac{1}{2}\delta (n_j-s)^{r-1}$ in $G[\mathcal{U}'_{e(4i'-2)}]$ and $G[\mathcal{U}'_{e(4i')}]$, respectively.
Indeed, if it is not true, at least half of the vertices in $V(M)\cap U'_{(r-1)j}$ have degree less than $ \frac{1}{2}\delta (n_j-s)^{r-1}$ in $G[\mathcal{U}'_{e(k)}]$ for some $k\in \{4i'-2, 4i'\}$, then the set of these vertices together with the rest of the sets $U'_{i}$ with $i\in e(k)\setminus \{k(r-1)\}$ form a tuple with density less than $\frac{1}{2}\delta < \eps/100-\gamma^3$, a contradiction to the $(\gamma^3,\eps/100+)$-regularity of the tuple $\mathcal{U}_{e(k),j}$.

Thus, for such choice of $f$ and $x,y$, we can apply \Cref{lem: find a set} to find subsets $X^*\subseteq U'_{(r-1)(4i'-3)}$ and $Y^*\subseteq U'_{(r-1)4i'}$ such that every vertex $x^*\in X^*$ is in some edge of $G[\mathcal{U}'_{e(4i'-2)}]$  containing $x$ and every vertex $y^*\in Y^*$ is in some edge of $G[\mathcal{U}'_{e(4i')}]$ containing $y$. Indeed, \Cref{lem: find a set} implies that $|X^*|,|Y^*|\ge \delta\gamma^2|U'_{(r-1)(4i'-2)}| > n/T^3$.
As $n/T^3 > 2(r-1)t \alpha n$, we can apply \Cref{lem: find path} to find a path connecting some vertex $x^*\in X^*$ and some vertex $y^*\in Y^*$ that crosses $(Y^*, U'_{(r-1)(4i')+1},\dots,U'_{(r-1)t}, U'_{1}, \dots, U'_{(r-1)(4i'-3)-1}, X^*)$. This together with the edge $f$ and the aforementioned edge containing both $x,x^*$ and the edge containing both $y,y^*$ altogether form a cycle in $G[\mathcal{U}'_j]$ that is disjoint from the previous $s$ copies. This shows that we can find $(1-\gamma^2)n_j$ vertex-disjoint $\mathcal{U}_j$-transversal copies of $F$ in $G[\mathcal{U}_j]$.
Taking the union of those copies for all $j\in [q]$, we obtain a family $\mathcal{F}_1$ of vertex-disjoint $\mathcal{V}$-transversal copies of $F$ in $G'$ which covers all but less than $\gamma^2 n' \leq \gamma n$ vertices. Let $U$ be the set of the uncovered vertices. Then, as $A$ is a $\gamma$-absorbing set, $A\cup U$ admits an $F$-factor, and this together with $\mathcal{F}_1$ yields a desired $F$-factor.

\section*{Acknowledgement}
We would like to thank Hyunwoo Lee for the valuable discussions.

\appendix

\appendix

\section{Proof of \Cref{lem:general_concent}}
In this section, we consider weighted graphs. A weighted $r$-graph $G$ is an $r$-uniform hypergraph equipped with a weight function $w:E(G)\rightarrow \mathbb{R}_{\geq 0}$. We say $w(e)$ is the weight of the edge $e$ of $G$. We sometimes consider non-edges as edges with weight zero. 
Many concepts in hypergraphs naturally extend to weighted hypergraphs. For example, the degree $d_G(U)$ of a vertex set $U$ is the weight sum of all edges containing $U$. Other notations like $d_{G}(v,\mathcal{U})$ also naturally extend by replacing the number of edges with weight sum.
The weight of the graph, denoted by $w(G)$, is the sum of weights over all edges in $G$. Also denote by $w_G(Y_1, \ldots, Y_r)$ the total weight of edges crossing $(Y_1,\cdots, Y_r)$.
Our goal in this section is to prove the following lemma which immediately implies \Cref{lem:general_concent}.
\begin{lemma}\label{lem:general_concent_weighted}
    Suppose $0<1/n \ll 1/t, 1/r, p_0, \eta \leq 1$.
    Let $G$ be an $n$-vertex weighted $r$-graph with $w(G) \geq \varepsilon n^r$ edges and all the edges $e$ have weight $w(e)\in[0,1]$.
    Let $p_1,\dots, p_t$ be positive numbers with $p_i\geq p_0$ for all $i\in [t]$ and $\sum_{i\in [t]} p_i=1$.
        Let $\{X_1, \ldots, X_t\}$ be a random partition of $V(G)$, each of size $p_in$, chosen uniformly at random.
    Then with probability $1-o(n^{-1})$, the following holds.
    Let $Y_1,\dots, Y_{r} \in \{X_1,\dots, X_t\}$ with possible repetitions. 
    Then  we have
    $$w_G(Y_1,\dots, Y_{r}) = (1 \pm \eta) r! w(G) \prod_{i\in [r]} \frac{|Y_i|}{n}.$$
\end{lemma}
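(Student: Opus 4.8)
## Proof strategy for Lemma~\ref{lem:general_concent_weighted}

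The plan is to fix one choice of $Y_1,\dots,Y_r\in\{X_1,\dots,X_t\}$ and prove the concentration statement for this choice with probability $1-o(n^{-(r+1)})$; since there are only $t^r=O(1)$ such choices, a union bound then gives the result with probability $1-o(n^{-1})$ (in fact $1-o(n^{-r})$, which is more than enough). So from now on fix a pattern, say $Y_j=X_{c(j)}$ for a function $c:[r]\to[t]$, and let $S=\{c(1),\dots,c(r)\}$ be the set of distinct indices used, with multiplicities $m_k=|\{j:c(j)=k\}|$ for $k\in S$. The target quantity $W:=w_G(Y_1,\dots,Y_r)$ counts, with weights, ordered $r$-tuples $(v_1,\dots,v_r)$ with $v_j\in X_{c(j)}$ and $\{v_1,\dots,v_r\}\in E(G)$.

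First I would compute $\mathbb{E}[W]$. For a fixed edge $e=\{u_1,\dots,u_r\}$ of $G$, the expected contribution of $e$ to $W$ is $w(e)$ times the number of orderings $(v_1,\dots,v_r)$ of $e$ times the probability that a fixed ordering lands in the prescribed parts. Since the parts are chosen uniformly with $|X_k|=p_kn$, the probability that $r$ specified distinct vertices fall into parts indexed by $c(1),\dots,c(r)$ respectively is $\prod_{k\in S}\frac{(p_kn)(p_kn-1)\cdots(p_kn-m_k+1)}{n(n-1)\cdots} $, which equals $\big(1+O(1/n)\big)\prod_{j\in[r]} p_{c(j)} = \big(1+O(1/n)\big)\prod_{j\in[r]}\frac{|Y_j|}{n}$. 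Summing over all $r!$ orderings of each edge and over all edges gives $\mathbb{E}[W]=\big(1\pm \tfrac{\eta}{2}\big)\,r!\,w(G)\prod_{j\in[r]}\frac{|Y_j|}{n}$, using $1/n\ll \eta$. (One must note the edge cases where $c$ is not injective: if two vertices of $e$ must go to the same part $X_k$ there is no contribution from tuples repeating a vertex, but those are not valid $r$-sets anyway, so this is automatically handled.)

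Second, the concentration. I would view $W$ as a function of the random partition and apply a bounded-differences / Azuma-type martingale argument, or equivalently McDiarmid's inequality on the "assignment" formulation: reveal the part-label of the vertices $v_1,\dots,v_n$ one at a time (this is sampling without replacement, so the Azuma inequality for the vertex-exposure martingale applies, as in the standard treatment for random partitions). Changing the label of a single vertex $v$ changes $W$ by at most $O(n^{r-1})$, since $v$ lies in at most $n^{r-1}$ edges and each contributes a bounded (at most $r!$) weighted amount. With $n$ vertices and per-step bounded difference $C n^{r-1}$, Azuma gives
$$\mathbb{P}\big(|W-\mathbb{E}[W]|\ge \lambda\big)\le 2\exp\!\Big(-\frac{\lambda^2}{2nC^2n^{2r-2}}\Big)=2\exp\!\Big(-\frac{\lambda^2}{2C^2n^{2r-1}}\Big).$$
Taking $\lambda=\tfrac{\eta}{2}r!\,w(G)\prod_j\frac{|Y_j|}{n}$, which is $\Theta(n^r)$ because $w(G)\ge\varepsilon n^r$ and each $|Y_j|\ge p_0 n$, the exponent is $-\Theta(n^{2r}/n^{2r-1})=-\Theta(n)$, so this probability is $e^{-\Omega(n)}=o(n^{-(r+1)})$. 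Combined with the mean estimate this yields $W=(1\pm\eta)r!\,w(G)\prod_{j\in[r]}\frac{|Y_j|}{n}$ for the fixed pattern with the required probability, and the union bound over the $O(1)$ patterns finishes the proof.

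The main obstacle, though a mild one, is handling the "sampling without replacement" dependence cleanly: the part-sizes $|X_k|=p_kn$ are fixed in advance, so the labels of the vertices are not independent. I would address this by using the vertex-exposure martingale $W_i=\mathbb{E}[W\mid \text{labels of }v_1,\dots,v_i]$, which has bounded differences $|W_i-W_{i-1}|=O(n^{r-1})$ regardless of the conditioning (the sampling-without-replacement coupling only helps — swapping two vertices' labels, the natural move in this setting, still changes $W$ by $O(n^{r-1})$), and then apply the Azuma–Hoeffding inequality. An alternative, if one prefers to avoid martingales, is to first prove the statement for the product (independent) model where each vertex independently gets label $k$ with probability $p_k$, then transfer via the standard fact that conditioning on the (high-probability) event that all part sizes are within $\sqrt{n\log n}$ of $p_kn$ changes probabilities by at most a polynomial factor; but the martingale route is cleaner and I would take it.
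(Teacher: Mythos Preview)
Your proposal is correct and takes a genuinely different, and in fact simpler, route than the paper. The paper proceeds via Kim--Vu polynomial concentration: it first proves (Lemma~\ref{lem:number_of_edge_concentration}) that $w(G[X])$ is concentrated for a single random part $X$ by transferring to the binomial model and applying Kim--Vu, then bootstraps this (Lemma~\ref{lem:two_sets_concentration}) to control the weight of edges meeting $X$ in exactly $\ell$ vertices for every $\ell$, and finally inducts on $t$ by peeling off one part at a time and passing to an auxiliary weighted $(r-\ell)$-graph.

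Your argument bypasses all of this. Because edge weights lie in $[0,1]$, the weighted degree of any vertex is at most $\binom{n-1}{r-1}\le n^{r-1}$, so swapping the part-labels of two vertices changes $W$ by $O(n^{r-1})$; the Azuma inequality for random permutations (equivalently, the bounded-differences inequality for uniformly random balanced colourings) then gives an $e^{-\Omega(n)}$ tail directly, with no induction and no auxiliary hypergraphs. The mean computation you sketch is straightforward and accurate. The only point to state a little more carefully is the martingale you use: the cleanest formulation is to realise the random partition as a uniformly random bijection $\pi:[n]\to[n]$ (the first $p_1n$ positions being ``colour~$1$'', etc.) and invoke the standard concentration inequality for functions of random permutations that are $c$-Lipschitz under transpositions; this handles the fixed-part-size dependence without any transfer to the binomial model. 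Your approach is shorter and yields a much stronger tail bound; the paper's route, while heavier, has the minor advantage that its intermediate lemmas (concentration of $w(G[X])$ and of the weight stratified by $|e\cap X|$) are stated as standalone results.
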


Indeed, \Cref{lem:general_concent} follows by applying \Cref{lem:general_concent_weighted} to the hypergraph $H$ with the edge set $N_G(v)$ where $\delta_1(G)\geq \varepsilon n^{r-1}$ and taking a union bound over all $v\in V(G)$.

The following well-known equation is also useful for us later.
\begin{equation}\label{eq: binomial}
    \sum_{\ell' \geq \ell} {r \choose \ell'}{\ell' \choose \ell} p^{\ell'}(1-p)^{r-\ell'} = {r \choose \ell} p^{\ell} \text{ for any } \ell\in [r] \text{ and } p\in [0,1]. 
\end{equation}


We also need the following Kim--Vu polynomial concentration.
\begin{lemma}[Kim--Vu polynomial concentration~\cite{Kim2000concentration}]\label{lem:KimVu}
    Let $\mathcal{G}$ be an $n$-vertex hypergraph such that all edges have size at most $r$ and $(X_v)_{v \in V(\mathcal{G})}$ be a set of mutually independent Bernoulli random variables.
    Let $Y_{\mathcal{G}} = \sum_{e \in \mathcal{G}} w(e)\prod_{v \in e} X_v$ and for each $A \subseteq V(\mathcal{G})$, let $Y_{\mathcal{G}_A} = \sum_{e \in \mathcal{G}, A \subseteq e} w(e)\prod_{v \in e \setminus A} X_v$.
    Let $\mathcal{E}_i = \max_{|A|=i} \mathbb{E}[Y_{\mathcal{G}_A}]$ for each $0 \leq i \leq r$ and write $\mathcal{E} = \max_{0 \leq i \leq r} \mathcal{E}_i$ and $\mathcal{E}' = \max_{1 \leq i \leq r} \mathcal{E}_i$.
    Then for any $\lambda>1$, 
    $$\mathbb{P}\left[ |Y_{\mathcal{G}} - \mathbb{E}[Y_\mathcal{G}]| > 8^r  \sqrt{r!\mathcal{E}\mathcal{E'}} \lambda^r \right] \leq 2e^{2-\lambda}n^{r-1}.$$
\end{lemma}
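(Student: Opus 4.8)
The plan is to prove the lemma by induction on $r$, the maximum edge size of $\mathcal{G}$, following the original argument of Kim and Vu~\cite{Kim2000concentration}. Within the proof it is convenient to write $\partial_A Y:=Y_{\mathcal{G}_A}$ for $A\subseteq V(\mathcal{G})$; this is again a multilinear polynomial with non-negative coefficients, of degree at most $r-|A|$ in the variables $(X_v)_{v\notin A}$, it obeys the chain rule $\partial_{A'}(\partial_A Y)=\partial_{A\cup A'}Y$ for disjoint $A,A'$, and $\mathbb{E}[\partial_A Y]\le\mathcal{E}_{|A|}\le\mathcal{E}$. For the base case $r=1$ one has $Y=\mathbb{E}[Y]+\sum_v c_v(X_v-\mathbb{E}X_v)$ with $c_v=w(\{v\})\ge 0$, $c_v\le\mathcal{E}_1=\mathcal{E}'$, and $\mathrm{Var}(Y)\le\sum_v c_v^2\,\mathbb{E}X_v\le(\max_v c_v)\bigl(\textstyle\sum_v c_v\,\mathbb{E}X_v\bigr)\le\mathcal{E}'\mathcal{E}_0\le\mathcal{E}\mathcal{E}'$; Bernstein's inequality for a sum of independent bounded random variables then gives $\mathbb{P}\bigl(|Y-\mathbb{E}Y|>8\sqrt{\mathcal{E}\mathcal{E}'}\,\lambda\bigr)\le 2e^{-\lambda}$ for every $\lambda>1$ — in the sub-Gaussian regime the exponent is at least $16\lambda^2$, in the sub-exponential regime at least $6\lambda\sqrt{\mathcal{E}/\mathcal{E}'}\ge 6\lambda$ — which is stronger than claimed since here $n^{r-1}=1$ and $8^1\sqrt{1!}=8$.

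For the inductive step, assume $r\ge 2$ and fix $\lambda>1$. Each first partial $\partial_v Y$ is a polynomial of degree at most $r-1$, and since every derivative of $\partial_v Y$ of order $j$ is a derivative of $Y$ of order $j+1\ge 1$, the quantities that play the roles of $\mathcal{E}$ and $\mathcal{E}'$ for $\partial_v Y$ are both at most $\mathcal{E}'$; likewise the auxiliary degree-$(r-1)$ polynomial $\sum_v(\mathbb{E}X_v)\,\partial_v Y$ has mean $\sum_e |e|\,w(e)\prod_{u\in e}\mathbb{E}X_u\le r\,\mathbb{E}[Y]\le r\mathcal{E}$. Applying the inductive hypothesis \emph{with the same parameter $\lambda$} to all of these degree-$(r-1)$ polynomials and union-bounding over $v\in V(\mathcal{G})$, we may condition on an event $\mathcal{B}$ of probability at least $1-2e^{2-\lambda}n^{r-1}$ on which every $\partial_v Y$ lies within $D:=8^{r-1}\sqrt{(r-1)!}\,\mathcal{E}'\lambda^{r-1}$ of its mean and $\sum_v(\mathbb{E}X_v)\partial_v Y\le 2r\mathcal{E}$; by conditional Markov applied to the (integrable) $L^1$-deviations $\mathbb{E}\,|\partial_v Y-\mathbb{E}\partial_v Y|$ — which are small by the inductive tail bound — we may further enlarge $\mathcal{B}$ at negligible probabilistic cost so that, writing $\mathcal{F}_j=\sigma(X_1,\dots,X_j)$, the $\mathcal{F}_{j-1}$-measurable quantities $\mathbb{E}[\partial_j Y\mid\mathcal{F}_{j-1}]$ are also suitably close to $\mathbb{E}[\partial_j Y]$. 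We then reveal the $X_v$ one at a time and consider the Doob martingale $Z_j=\mathbb{E}[Y\mid\mathcal{F}_j]$, whose increments are $Z_j-Z_{j-1}=(X_j-\mathbb{E}X_j)\,\mathbb{E}[\partial_j Y\mid\mathcal{F}_{j-1}]$; on $\mathcal{B}$ these are bounded in absolute value by $R:=\mathcal{E}'+D$, and the predictable quadratic variation $\sum_j\mathrm{Var}(Z_j-Z_{j-1}\mid\mathcal{F}_{j-1})$ is bounded by some $\sigma^2=O(r\mathcal{E}\,R)$ via a Cauchy--Schwarz split together with the control on $\sum_v(\mathbb{E}X_v)\partial_v Y$. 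Freedman's martingale Bernstein inequality then gives $\mathbb{P}\bigl(|Y-\mathbb{E}Y|>s,\ \mathcal{B}\bigr)\le 2\exp\bigl(-\tfrac{s^2/2}{\sigma^2+Rs/3}\bigr)$; taking $s=8^r\sqrt{r!}\,\sqrt{\mathcal{E}\mathcal{E}'}\,\lambda^r$, the exponent works out to be of order $\lambda^{r+1}/\mathrm{poly}(r)$, which exceeds $\lambda+(r-1)\log n$ throughout the range $\lambda\gtrsim(r-1)\log n$ in which the asserted bound is nontrivial, provided the constant recursion — satisfied by $a_k=8^k\sqrt{k!}$ — holds. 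Combining this with the $2e^{2-\lambda}n^{r-1}$ failure probability of $\mathcal{B}$ (and, if needed, a harmless adjustment of absolute constants) closes the induction.

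The main obstacle is exactly the control of the martingale increments and of its quadratic variation: the bound $R$ and the variance proxy $\sigma^2$ are expressed through the \emph{conditional} expectations $\mathbb{E}[\partial_j Y\mid\mathcal{F}_{j-1}]$, whereas the inductive hypothesis bounds only the fully revealed polynomials $\partial_j Y$, so an unconditional estimate does not transfer verbatim. The remedy I would implement is to pass from each $\partial_j Y$ to its conditional expectations by conditional Markov on the $L^1$-deviation $\mathbb{E}\,|\partial_j Y-\mathbb{E}\partial_j Y|$ (finite and small because the induction supplies an integrable tail), so that the ``bad'' event on which some $\mathbb{E}[\partial_j Y\mid\mathcal{F}_{j-1}]$ is large is genuinely adapted to the filtration $(\mathcal{F}_j)$ and its probability is absorbed into the union bound already taken; equivalently, one stops the martingale at the first index $j$ at which some $\mathbb{E}[\partial_j Y\mid\mathcal{F}_{j-1}]$, or the running variance proxy, exceeds its prescribed value, and applies Freedman to the stopped martingale with deterministic bounds. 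Everything else — the calibration of the single parameter $\lambda$ through the recursion, the verification that the quadratic variation is indeed $O(r\mathcal{E}R)$, and the bookkeeping showing the leading constant stays below $8^r\sqrt{r!}$ — is routine computation of the kind the statement's explicit form already anticipates.
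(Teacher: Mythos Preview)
The paper does not prove this lemma at all: it is quoted verbatim from Kim and Vu~\cite{Kim2000concentration} and used as a black box in the appendix. So there is no ``paper's own proof'' to compare against; any comparison has to be with the original Kim--Vu argument.

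Your sketch is indeed in the spirit of that original proof --- induction on the degree $r$, control of first partials by the inductive hypothesis, and a Bernstein/Freedman-type martingale inequality for the Doob exposure martingale --- so the overall architecture is right. That said, the write-up is a proof outline rather than a proof, and you yourself flag the genuine difficulty: the inductive hypothesis controls the \emph{revealed} polynomials $\partial_v Y$, not the \emph{conditional expectations} $\mathbb{E}[\partial_j Y\mid\mathcal{F}_{j-1}]$ that actually govern the martingale increments, and bridging this gap is where the real work lies. Your proposed remedy (conditional Markov on $L^1$-deviations, or equivalently stopping the martingale) is the right idea, but the phrase ``at negligible probabilistic cost'' and the claim that ``the exponent works out to be of order $\lambda^{r+1}/\mathrm{poly}(r)$'' both hide nontrivial bookkeeping; in particular the constant recursion you allude to (that $a_k=8^k\sqrt{k!}$ suffices) is asserted rather than verified, and in Kim--Vu this calibration is delicate. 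If you want a self-contained proof you should either carry out those computations explicitly or, as the paper does, simply cite~\cite{Kim2000concentration}.
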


Note that \Cref{lem:KimVu} is regarding a random vertex set where each vertex is chosen independently at random. On the other hand, \Cref{lem:general_concent_weighted} concerns the case where a set of fixed size is chosen uniformly at random. 
As one may expect, there is a close relationship between these two models and here we start with a observation as follows.

\begin{lemma}\label{lem:hypergeometric}
    Let $0<1/n \ll p_0$, $p \in [p_0, 1-p_0]$ be a constant,  and $m=\lceil pn \rceil$. 
    For an $n$-vertex hypergraph $G$, let $\mathcal{X}$ be a family of subsets of $V(G)$ that is closed under taking subsets (supersets, respectively).
    Let $V(p)$ be a random subset of $V(G)$ chosen by selecting each vertex with probability $p$ independently at random and $V(m)$ be a uniform randomly chosen $m$-element subset of $V(G)$.
    Then we have $$ \mathbb{P}(V(m) \in \mathcal{X}) \leq 3\mathbb{P}(V(p) \in \mathcal{X}).$$
\end{lemma}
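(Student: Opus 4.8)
\textbf{Proof proposal for Lemma~\ref{lem:hypergeometric}.}

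The plan is to compare the two random models by conditioning $V(p)$ on its size. First I would write
\[
\mathbb{P}(V(p)\in\mathcal{X}) = \sum_{k=0}^{n} \mathbb{P}(|V(p)|=k)\,\mathbb{P}(V(p)\in\mathcal{X}\mid |V(p)|=k),
\]
and observe that, conditioned on $|V(p)|=k$, the set $V(p)$ is a uniformly random $k$-element subset of $V(G)$, i.e.\ it has the same distribution as $V(k)$. So the right-hand side equals $\sum_{k}\mathbb{P}(|V(p)|=k)\,\mathbb{P}(V(k)\in\mathcal{X})$. The key monotonicity input is this: because $\mathcal{X}$ is closed under taking subsets (the superset case is symmetric, after replacing each set by its complement), the function $k\mapsto \mathbb{P}(V(k)\in\mathcal{X})$ is \emph{non-increasing} in $k$. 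This is a standard coupling argument: a uniformly random $k$-subset can be obtained from a uniformly random $(k+1)$-subset by deleting one uniformly random element, and deleting an element can only turn a non-member of $\mathcal{X}$ into a member, never the reverse. Hence $\mathbb{P}(V(k+1)\in\mathcal{X})\le\mathbb{P}(V(k)\in\mathcal{X})$.

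With monotonicity in hand, the strategy is to lower-bound $\mathbb{P}(V(p)\in\mathcal{X})$ by restricting the sum to $k\le m$, where $m=\lceil pn\rceil$:
\[
\mathbb{P}(V(p)\in\mathcal{X}) \ge \sum_{k\le m}\mathbb{P}(|V(p)|=k)\,\mathbb{P}(V(k)\in\mathcal{X}) \ge \mathbb{P}(V(m)\in\mathcal{X})\cdot\mathbb{P}(|V(p)|\le m),
\]
using that $\mathbb{P}(V(k)\in\mathcal{X})\ge\mathbb{P}(V(m)\in\mathcal{X})$ for all $k\le m$. Since $|V(p)|$ is a $\mathrm{Bin}(n,p)$ random variable with mean $pn\le m$, the median of $|V(p)|$ is at most $\lceil pn\rceil=m$, so $\mathbb{P}(|V(p)|\le m)\ge 1/2 > 1/3$. (One can also invoke Chernoff, \Cref{lem:chernoff}, together with $1/n\ll p_0$ to get $\mathbb{P}(|V(p)|\le m)\ge 1-o(1)$, which is even stronger, but the crude median bound already suffices.) Rearranging gives $\mathbb{P}(V(m)\in\mathcal{X})\le 3\,\mathbb{P}(V(p)\in\mathcal{X})$, as claimed.

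The only mildly delicate point — and the place I would be most careful — is the superset case: if $\mathcal{X}$ is closed under taking supersets rather than subsets, then $k\mapsto\mathbb{P}(V(k)\in\mathcal{X})$ is non-\emph{decreasing}, so the inequality above would go the wrong way if used verbatim. The clean fix is to pass to complements: the family $\overline{\mathcal{X}}=\{V(G)\setminus S: S\in\mathcal{X}\}$ is closed under taking subsets, $V(p)$ and its complement $V(G)\setminus V(p)$ are both Bernoulli($1-p$) (with $1-p\in[p_0,1-p_0]$ still), and $\{V(p)\in\mathcal{X}\}=\{V(G)\setminus V(p)\in\overline{\mathcal{X}}\}$; similarly a uniform $m$-subset has uniform $(n-m)$-subset complement. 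Applying the already-proved subset version of the lemma to $\overline{\mathcal{X}}$ with parameter $1-p$ and checking that $\lceil (1-p)n\rceil$ and $n-m$ differ by at most one (which is absorbed by the slack between $1/2$ and $1/3$) yields the bound. I do not expect any serious obstacle here; the whole argument is a conditioning-plus-coupling exercise, and the constant $3$ is deliberately loose enough to swallow rounding and the $n\to\infty$ corrections.
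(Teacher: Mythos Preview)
Your proof is correct and follows essentially the same approach as the paper: condition $V(p)$ on its size, use that $V(p)$ given $|V(p)|=k$ is distributed as $V(k)$, exploit the monotonicity of $k\mapsto\mathbb{P}(V(k)\in\mathcal{X})$, and bound $\mathbb{P}(\mathrm{Bin}(n,p)\le m)\ge 1/3$. The paper handles the superset case by simply saying ``the other case is similar'' (restricting instead to $k\ge m$), whereas you spell out a complementation reduction; both are fine, and your extra care with the coupling justification and the rounding in the complement is a nice touch but not strictly needed.
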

\begin{proof}
    We only prove when $\mathcal{X}$ is closed under subset relation. 
    The other case is similar.
    By the monotonicity of $\mathcal{X}$, we have $\mathbb{P}(V(m) \in \mathcal{X}) \geq \mathbb{P}(V(m') \in \mathcal{X})$ when $m' \geq m$. 
    Thus, 
    \begin{align*}
        \mathbb{P}(V(p) \in \mathcal{X}) & = \sum_{i=0}^{n} \mathbb{P}(V(p) \in \mathcal{X} \mid |V(p)|=i)\mathbb{P}(|V(p)|=i) \\
        & =  \sum_{i=0}^{n} \mathbb{P}(V(i) \in \mathcal{X})\mathbb{P}(|V(p)|=i) \\
        & \geq \mathbb{P}(V(m) \in \mathcal{X})\sum_{i=0}^{m} \mathbb{P}(|V(p)|=i)\\
        & = \mathbb{P}(V(m) \in \mathcal{X}) \mathbb{P}(\mathrm{bin}(n, p) \leq m)  \\
        & \geq \frac{1}{3} \mathbb{P}(V(m) \in \mathcal{X})
    \end{align*}
    as $n$ is large enough.
\end{proof}

The first step for proving \Cref{lem:general_concent_weighted} is to prove the case when all the $Y_i$ are the same part.
We follow the proof in Appendix A of~\cite{Kang2024perfect} to prove the following concentration result using \Cref{lem:KimVu}.
\begin{lemma}\label{lem:number_of_edge_concentration}
    Let $p, \varepsilon, \eta \in (0, 1)$ be fixed constants and $0<\frac{1}{n} \ll 1/r, p, \varepsilon, \eta$.
    Let $G$ be a weighted $n$-vertex $r$-graph with $w(G) \geq \varepsilon n^r$ and all the edges have weight at most $1$.
    Let $X \subseteq V(G)$ be a random subset of $V(G)$ of size $pn$ chosen uniformly at random. 
    Then with probability at least $1- n^{-3}$, the total weight of edges in $G[X]$ is $(1 \pm \eta)p^r w(G)$.
\end{lemma}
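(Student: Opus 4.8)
The plan is to apply the Kim--Vu polynomial concentration inequality (\Cref{lem:KimVu}) in the independent-sampling model and then transfer the result to the fixed-size model via \Cref{lem:hypergeometric}. First I would set $V(p)$ to be the random subset of $V(G)$ obtained by including each vertex independently with probability $p$, and let $Y = Y_{\mathcal G} = \sum_{e \in E(G)} w(e)\prod_{v\in e} X_v$, where $X_v$ is the indicator that $v\in V(p)$ and $\mathcal G$ is $G$ viewed as a weighted hypergraph with all edges of size $r$. Then $\mathbb E[Y] = p^r w(G)$, which is $\Theta(n^r)$ since $w(G)\ge \varepsilon n^r$. For the codegree quantities, $\mathcal E_i = \max_{|A|=i}\mathbb E[Y_{\mathcal G_A}] = p^{r-i}\max_{|A|=i} d_G(A)$, and since every $i$-set $A$ satisfies $d_G(A)\le n^{r-i}$ (as weights are at most $1$), we get $\mathcal E_i \le p^{r-i} n^{r-i} \le n^{r-i}$; in particular $\mathcal E = \mathcal E_0 = p^r w(G) = \Theta(n^r)$ and $\mathcal E' = \max_{1\le i\le r}\mathcal E_i \le n^{r-1}$.

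Plugging these into \Cref{lem:KimVu}, the deviation term is $8^r\sqrt{r!\,\mathcal E\mathcal E'}\,\lambda^r \le 8^r\sqrt{r!}\, n^{r-1/2}\lambda^r$. Choosing $\lambda = \log n$ (say), this is $O(n^{r-1/2}\,\mathrm{polylog}\,n) = o(\eta p^r w(G))$ since $w(G) = \Theta(n^r)$ and $n$ is large, while the failure probability is $2e^{2-\log n} n^{r-1} = O(n^{r-2}\log\text{-factors})$ — which is not yet small enough. To fix this I would instead take $\lambda = n^{1/(2r)}$ (or more simply $\lambda$ a small power of $n$), so that $e^{-\lambda}$ beats any polynomial in $n$ and the failure probability becomes $o(n^{-3})$, while the deviation term $8^r\sqrt{r!}\,n^{r-1/2}\lambda^r = 8^r\sqrt{r!}\,n^{r-1/2+1/2} \cdot n^{-?}$ — here I need to be a little careful: with $\lambda = n^{1/(4r)}$ the deviation is $O(n^{r-1/2+1/4}) = O(n^{r-1/4})$, still $o(\eta p^r w(G)) = o(n^r)$, and $e^{2-n^{1/(4r)}} n^{r-1} = o(n^{-3})$. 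So with probability at least $1 - n^{-3}$, $|Y - p^r w(G)| \le \eta p^r w(G)/2$, i.e. $Y = (1\pm \eta/2) p^r w(G)$ in the independent model.

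Finally I would transfer to the fixed-size model. The event ``the total weight of edges in $G[V(p)]$ is less than $(1-\eta)p^r w(G)$'' is not monotone, so I would split it: let $\mathcal X_{\mathrm{low}} = \{S : w(G[S]) < (1-\eta)p^r w(G)\}$ — this is \emph{not} downward closed either, so the direct application of \Cref{lem:hypergeometric} needs a small workaround. The cleanest route is to bound $w(G[S])$ above and below by monotone events: note $w(G[S]) \le \sum_{e\subseteq S} w(e)$, and the family $\{S : \sum_{e\subseteq S} w(e) \ge (1+\eta)p^r w(G)\}$ is upward closed, while $\{S : \sum_{e\subseteq S, e\cap T=\emptyset} w(e) \le \ldots\}$-type lower bounds require more thought. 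The main obstacle is precisely this: \Cref{lem:hypergeometric} only applies to monotone families, whereas ``$w(G[X]) = (1\pm\eta)p^r w(G)$'' is a two-sided, non-monotone event. I expect to resolve it by writing the bad event as a union of the upward-closed event $\{w(G[X]) \ge (1+\eta)p^r w(G)\}$ and the downward-closed-after-complementation event $\{w(G[X]) \le (1-\eta)p^r w(G)\}$, applying \Cref{lem:hypergeometric} (and its stated superset-closed variant) to each of these separately — since $w(G[X])$ is a monotone (non-decreasing) function of $X$, both one-sided level sets $\{X : w(G[X]) \ge c\}$ and $\{X : w(G[X]) \le c\}$ are indeed monotone — thereby concluding $\mathbb P(w(G[V(m)]) \ne (1\pm\eta)p^r w(G)) \le 3\cdot 2 \cdot n^{-3} < n^{-3}$ after absorbing constants, which gives the claimed bound (adjusting the constant in the exponent at the Kim--Vu step so that $3\cdot 2 \cdot (\text{failure prob}) \le n^{-3}$).
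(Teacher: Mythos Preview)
Your approach is correct and essentially the same as the paper's: apply Kim--Vu in the independent model and transfer each one-sided tail to the fixed-size model via \Cref{lem:hypergeometric}, using that $w(G[X])$ is monotone in $X$. The paper makes the same choices (with $\lambda$ of order $n^{1/(2r+1)}$ and the slightly sharper bound $\mathcal E' \le \mathbb E[Y_G]/(\varepsilon p n)$), so apart from the detours in your write-up---the false start with $\lambda=\log n$ and the momentary confusion about monotonicity of the lower-tail event, which you correctly resolve---there is no substantive difference.
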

\begin{proof}
    We bound the probability that $w(G[X])$ is at most $(1-\eta)p^rw(G)$ and the other direction separately.
    Since the event that $w(G[X])<(1-\eta)p^rw(G)$ is monotone in the sense that any subset of $X$ satisfies the same, by \Cref{lem:hypergeometric}, it suffice to prove the statement for a $p$-random subset $X$.
    
    For each vertex $v$, let $X_v$ be the indicator random variable for the event $v \in X$ and let $Y_{G} = \sum_{e \in E(G)} w(e) \prod_{v \in e} X_v$ as in \Cref{lem:KimVu}.
    Then by the construction, $Y_{G} = w(G[X])$, and $\mathbb{E}[Y_{G}] = p^{r}w(G) \geq \varepsilon p^{r} n^{r}$.

    Also, for every $i \in [r-1]$ and a subset $A = \{v_1, \ldots, v_i\} \subseteq V(G)$ of size $i$, we have $$\mathbb{E}[Y_{G_A}] = \mathbb{E}[\frac{1}{(r-i)!}d_G(\{v_1\}, \ldots, \{v_i\}, X, \ldots, X)] \leq p^{r-i} n^{r-i}\leq \mathbb{E}[Y_{\mathcal{G}}] \cdot \frac{1}{\varepsilon p n}$$ 
    because $e(\{v_1\}, \ldots, \{v_i\}, V(G), \ldots, V(G))/(r-i)! \leq n^{r-i}$ as all edges of $G$ have weight at most one.

    Thus, $\mathcal{E} = \mathbb{E}[Y_{\mathcal{G}}]$ and $\mathcal{E}' \leq \mathbb{E}[Y_{\mathcal{G}}] \cdot \frac{1}{\varepsilon p n}$.
    By taking $\lambda$ as follows $$\lambda := \left( \frac{(1-\eta)\mathbb{E}[Y_{\mathcal{G}}]}{ 8^{r} \cdot \sqrt{r! \mathcal{E} \mathcal{E}'}} \right)^{1/r} \geq \left(\frac{(1-\eta)\sqrt{\varepsilon p n}}{8^{r} \cdot \sqrt{ r!}}\right)^{1/r} \geq n^{1/(2r+1)},$$
    \Cref{lem:KimVu} implies  
    \begin{align*}
        \mathbb{P}(Y_\mathcal{G} < (1-\eta) p^{r} w(G)) \leq \mathbb{P}\left(|\mathbb{E}[Y_\mathcal{G}] - Y_\mathcal{G}| >  8^{r}  \sqrt{r!\mathcal{E}\mathcal{E'}} \lambda^{r} \right) \leq 2e^2e^{-n^{1/(2r+1)}} n^{r-1} = o(n^{-3}).
    \end{align*}
    It concludes the proof of one direction.

    To show the opposite direction, the collection of $X$ with $w(G[X]) > (1+\eta) p^{r} e(G)$ is closed under superset, one can use \Cref{lem:hypergeometric} again and apply the Kim-Vu polynomial concentration with the same way.
\end{proof}

The next lemma shows the case of $t=2$ of \Cref{lem:general_concent_weighted}.
\begin{lemma}\label{lem:two_sets_concentration}
    Let $p, \varepsilon, \eta \in (0, 1)$ be fixed constants and $n$ is sufficiently large.
    Let $G$ be a weighted $n$-vertex $r$-graph with all the edge weight at most $1$ and $G$ has total weight at least $\varepsilon n^r$.
    Let $X \subseteq V(G)$ be a random subset of $G$ of size $pn$ chosen uniformly at random.
    Then with probability at least $1- 2^{r} n^{-3}$, for every $0 \leq \ell \leq r$, the total weight of edges $e \in E(G)$ such that $|e \cap X|= \ell$ is $(1 \pm \eta) {r \choose \ell} p^{\ell}(1-p)^{r-\ell}w(G)$.
\end{lemma}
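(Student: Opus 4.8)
The plan is to deduce this from \Cref{lem:number_of_edge_concentration} applied to a family of auxiliary ``shadow'' hypergraphs, and then to recover the weight of the edges meeting $X$ in exactly $\ell$ vertices by binomial inversion. For each $1\le \ell\le r$, I will introduce the weighted $\ell$-uniform hypergraph $G^{(\ell)}$ on $V(G)$ whose edges are all $\ell$-subsets of $V(G)$, with weight
\[
w^{(\ell)}(T):=n^{-(r-\ell)}\sum_{e\in E(G):\,T\subseteq e}w(e).
\]
Since at most $\binom{n-\ell}{r-\ell}\le n^{r-\ell}$ edges of $G$ contain a fixed $\ell$-set and each has weight at most $1$, every $w^{(\ell)}(T)$ lies in $[0,1]$; and the total weight of $G^{(\ell)}$ is $n^{-(r-\ell)}\binom{r}{\ell}w(G)\ge \varepsilon n^{\ell}$. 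So \Cref{lem:number_of_edge_concentration} applies to $G^{(\ell)}$, with $\ell$, $\varepsilon$, and a small parameter $\eta'$ (fixed below, depending only on $\eta,p,r$) playing the roles of $r$, $\varepsilon$, $\eta$. Writing $W_{\ge\ell}(X):=\sum_{e\in E(G)}w(e)\binom{|e\cap X|}{\ell}=\sum_{T\in\binom{X}{\ell}}\sum_{e\supseteq T}w(e)$, this gives, with probability at least $1-n^{-3}$,
\[
W_{\ge\ell}(X)=(1\pm\eta')\binom{r}{\ell}p^{\ell}w(G).
\]
A union bound over $\ell\in[r]$ (together with the trivial identity $W_{\ge 0}(X)=w(G)$) makes this hold for all $0\le\ell\le r$ simultaneously with probability at least $1-rn^{-3}\ge 1-2^{r}n^{-3}$.

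Next I condition on this event and set $W_\ell(X):=\sum_{e:\,|e\cap X|=\ell}w(e)$, the target quantity. From $W_{\ge j}(X)=\sum_{\ell\ge j}\binom{\ell}{j}W_\ell(X)$, binomial inversion yields $W_\ell(X)=\sum_{j=\ell}^{r}(-1)^{j-\ell}\binom{j}{\ell}W_{\ge j}(X)$. Substituting the estimate above and using $\binom{j}{\ell}\binom{r}{j}=\binom{r}{\ell}\binom{r-\ell}{j-\ell}$ together with the binomial theorem (equivalently, this is the content of \eqref{eq: binomial}), the ``main term'' evaluates to
\[
w(G)\sum_{j=\ell}^{r}(-1)^{j-\ell}\binom{j}{\ell}\binom{r}{j}p^{j}
=\binom{r}{\ell}p^{\ell}w(G)\sum_{k=0}^{r-\ell}\binom{r-\ell}{k}(-p)^{k}
=\binom{r}{\ell}p^{\ell}(1-p)^{r-\ell}w(G),
\]
exactly the asserted value, while the accumulated error is at most $\eta' w(G)\sum_{j=\ell}^{r}\binom{j}{\ell}\binom{r}{j}\le 4^{r}(r+1)\,\eta'\,w(G)$.

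Finally, to convert this absolute error into the multiplicative form $(1\pm\eta)\binom{r}{\ell}p^{\ell}(1-p)^{r-\ell}w(G)$, I use that $p$ is a fixed constant in $(0,1)$: then $\binom{r}{\ell}p^{\ell}(1-p)^{r-\ell}\ge\min(p,1-p)^{r}=:c_0>0$ for every $\ell$, so it suffices to choose $\eta':=\eta c_0/\big(4^{r}(r+1)\big)$, which depends only on $\eta,p,r$ and hence is an admissible parameter in each application of \Cref{lem:number_of_edge_concentration} for $n$ large. This yields $W_\ell(X)=(1\pm\eta)\binom{r}{\ell}p^{\ell}(1-p)^{r-\ell}w(G)$ for all $\ell$, as required. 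I do not expect any serious obstacle here: the only points requiring care are the rescaling by $n^{-(r-\ell)}$ (needed to keep the shadow weights in $[0,1]$ so that \Cref{lem:number_of_edge_concentration} applies verbatim) and the observation that, because $p$ is bounded away from $0$ and $1$, every target quantity $\binom{r}{\ell}p^\ell(1-p)^{r-\ell}w(G)$ is $\Theta(w(G))$, so the additive error coming out of the inversion remains small relative to it.
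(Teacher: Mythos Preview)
Your proof is correct and follows essentially the same approach as the paper: both define the weighted $\ell$-uniform shadow hypergraph (your $G^{(\ell)}$ is identical to the paper's $G'$), apply \Cref{lem:number_of_edge_concentration} to each shadow to control $W_{\ge\ell}(X)=\sum_e w(e)\binom{|e\cap X|}{\ell}$, and then invert the upper-triangular relation $W_{\ge j}=\sum_{\ell\ge j}\binom{\ell}{j}W_\ell$. The only cosmetic difference is that the paper performs this inversion by backward induction on $\ell$ (subtracting off the already-controlled $m_{\ell'}$ for $\ell'>\ell$, with error parameters $\eta_\ell=\eta/(3r)^\ell$), whereas you invoke the closed-form binomial inversion $W_\ell=\sum_{j\ge\ell}(-1)^{j-\ell}\binom{j}{\ell}W_{\ge j}$ and bound the error in one shot using that $p$ is bounded away from $0$ and $1$; the two are equivalent, and your version is arguably a touch cleaner.
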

\begin{proof}
    For each $0\leq i\leq r$, let $\eta_{i} = \eta/(3r)^{i}$ and $m_{i}$ be the total weight of edges $e \in E(G)$ such that $|e \cap X|=i$.

We apply induction on $i$ to prove that
\begin{align}\label{eq: mri}
    m_{r-i} = (1 \pm \eta_{r-i} ) {\binom{r}{i}} p^{r-i}(1-p)^{i}w(G)
\end{align} holds with probability at least $1- 2^{i} n^{-3}$.
When $i = 0$, this follows from  \Cref{lem:number_of_edge_concentration}.

Suppose that the statement holds for $0\leq i\leq r-\ell-1$, namely $m_r,m_{r-1},\cdots, m_{r+1}$ satisfy \eqref{eq: mri}. Now we shall prove that $m_{\ell}$ satifies \eqref{eq: mri}.
 We define an auxiliary weighted $\ell$-graph $G'$ on the vertex set $V(G)$ where each $\ell$-set $f$ has weight $\frac{d_G(f)}{n^{r-\ell}}$.
 Then we have  $w(G') = \binom{r}{\ell} w(G)/n^{r-\ell}$.
    By \Cref{lem:number_of_edge_concentration}, with probability at least $1- n^{-3}$, the weight sum in $G'[X]$ is $(1 \pm \eta_\ell/3) \binom{r}{\ell} p^{\ell} w(G)/n^{r-\ell}$.

   We now consider the contribution of each edge $e \in E(G)$ with $|e \cap X| = \ell'$ in $G'[X]$ for each $0 \leq \ell' \leq r$.
    If $\ell' < \ell$, then it is not counted in $G'[X]$ and if $\ell' \geq \ell$, then it contributes ${\ell' \choose \ell}w(e)/n^{r-\ell}$ in $G'[X]$.
    Thus 
    $$w(G'[X]) =\sum_{\ell' =\ell}^r {\ell' \choose \ell} m_{\ell'}/n^{r-\ell}.$$
As $w(G'[X])= (1 \pm \eta_\ell/3) \binom{r}{\ell} p^{\ell} w(G)/n^{r-\ell}$, this implies that
    $$ \sum_{\ell' =\ell}^r {\ell' \choose \ell} m_{\ell'} = (1 \pm \eta_\ell/3) {r \choose \ell} p^{\ell} w(G)$$
    holds with probability at least $1- n^{-3}$.

By the induction hypothesis, with probability at least $1 - (2^0+2^1+\dots + 2^{r-\ell-1}) n^{-3} = 1 - (2^{r-\ell}-1)n^{-3}$, \eqref{eq: mri} holds for all $0\leq i\leq r-\ell -1$.
Thus a union bound yields that, with probability at least 
$1- 2^{r-\ell} n^{-3}$, we have
    $$ m_{\ell} = (1 \pm \eta_\ell/3) {r \choose \ell}p^{\ell} w(G) - \sum_{\ell' > \ell} (1 \pm \eta_{\ell'}) {r \choose \ell'}{\ell' \choose \ell} p^{\ell'}(1-p)^{r-\ell'} w(G). $$
We simplify this using \eqref{eq: binomial}, and obtain 
$m_{\ell} = (1\pm \eta_\ell) \binom{r}{\ell} p^{\ell} (1-p)^{r-\ell} w(G)$. This proves \eqref{eq: mri}, thus also proves the lemma.
\end{proof}
We now prove \Cref{lem:general_concent_weighted}.
\begin{proof}[Proof of \Cref{lem:general_concent_weighted}]
    We apply induction on $t$ to show that the statement holds with probability at least $1- t^{rt} n^{-2}$.
    The base case $t=2$ is derived from \Cref{lem:two_sets_concentration}.

Fix a choice $Y_1,\dots, Y_r\in \{X_1,\dots, X_t\}$.
Without loss of generality, assume that $Y_1 = \dots = Y_{\ell}=X_1$ and
$Y_{\ell+1},\dots, Y_r \in \{X_2,\dots, X_t\}$ for some $0\leq \ell\leq r$.

We first select a set $X_1$ of size $p_1 n$ uniformly at random and then select a random partition $X_2, \ldots, X_t$ of $V(G) \setminus X_1$ with the appropriate sizes.
    By \Cref{lem:two_sets_concentration}, with probability $1-o(n^{-2})$, the total weight of edges $e \in E(G)$ such that $|e \cap X_1| = \ell$ is $(1 \pm \eta/3) {r \choose \ell} p_1^{\ell}(1-p_1)^{r-\ell}w(G)$.
    We now define an auxilary weighted $(r-\ell)$-graph $H$ on $V(G) \setminus X_1$ by letting the weight of any $(r-\ell)$-set $f \subseteq V(G) \setminus X_1$ be $\frac{\sum_{e\setminus X_1=f}w(e)}{n^{\ell}}$.
     Then, with probability $1-o(n^{-2})$, we have 
    $$w(H) = (1 \pm \eta/3) {r \choose \ell} p_1^{\ell}(1-p_1)^{r-\ell}w(G)/n^{\ell}.$$

    By the induction hypothesis, with probability at least $1- (t-\ell)^{r(t-\ell)} n^{-2} \geq 1- (t-1)^{r(t-1)} n^{-2}$, $$w_H(Y_{\ell+1},\dots, Y_{r}) = (1 \pm \eta/3) (r-\ell)! w(H) \prod_{\ell+1 \leq i \leq r} \frac{|Y_i|}{n-p_1 n}.$$
    Since $w_G(Y_1, \ldots, Y_r) = w_H(Y_{\ell+1}, \ldots, Y_r) \times n^{\ell} \times \ell!$ where the last $\ell!$ comes from the ordering of $\ell$ elements in $X_1$, we have 
    $$w_G(Y_1,\dots, Y_{r}) = (1 \pm \eta) r! w(G) \prod_{i\in [r]} \frac{|Y_i|}{n}.$$
    By taking union bound for all $t^r$ choices of $Y_i$, we conclude that the statement holds with probability at least 
    $1 - t^{r}((t-1)^{r(t-1)} \cdot n^{-2} ) \geq  1- t^{rt} n^{-2}$. This establishes the induction, hence the lemma as well.
\end{proof}


\begin{thebibliography}{10}

    \bibitem{Alonprobabilistic}
    Noga Alon and Joel~H. Spencer.
    \newblock {\em The probabilistic method}.
    \newblock Wiley Series in Discrete Mathematics and Optimization. John Wiley \& Sons, Inc., Hoboken, NJ, fourth edition, 2016.
    
    \bibitem{Balogh2016triangle}
    J\'{o}zsef Balogh, Theodore Molla, and Maryam Sharifzadeh.
    \newblock Triangle factors of graphs without large independent sets and of weighted graphs.
    \newblock {\em Random Structures Algorithms}, 49(4):669--693, 2016.
    
    \bibitem{Barany1982colorful}
    Imre B\'ar\'any.
    \newblock A generalization of {C}arath\'eodory's theorem.
    \newblock {\em Discrete Math.}, 40(2-3):141--152, 1982.
    
    \bibitem{Bedenknecht2019powers}
    Wiebke Bedenknecht, Jie Han, Yoshiharu Kohayakawa, and Guilherme~O. Mota.
    \newblock Powers of tight {H}amilton cycles in randomly perturbed hypergraphs.
    \newblock {\em Random Structures Algorithms}, 55(4):795--807, 2019.
    
    \bibitem{Bohman2004adding}
    Tom Bohman, Alan Frieze, Michael Krivelevich, and Ryan Martin.
    \newblock Adding random edges to dense graphs.
    \newblock {\em Random Structures Algorithms}, 24(2):105--117, 2004.
    
    \bibitem{Bohman2003random}
    Tom Bohman, Alan Frieze, and Ryan Martin.
    \newblock How many random edges make a dense graph {H}amiltonian?
    \newblock {\em Random Structures Algorithms}, 22(1):33--42, 2003.
    
    \bibitem{bottcher19}
    Julia B\"ottcher, Jie Han, Yoshiharu Kohayakawa, Richard Montgomery, Olaf Parczyk, and Yury Person.
    \newblock Universality for bounded degree spanning trees in randomly perturbed graphs.
    \newblock {\em Random Structures Algorithms}, 55:854--864, 2019.
    
    \bibitem{bottcher2009proof}
    Julia B\"{o}ttcher, Mathias Schacht, and Anusch Taraz.
    \newblock Proof of the bandwidth conjecture of {B}ollob\'{a}s and {K}oml\'{o}s.
    \newblock {\em Math. Ann.}, 343(1):175--205, 2009.
    
    \bibitem{Chakraborti2023bandwidth}
    Debsoumya Chakraborti, Seonghyuk Im, Jaehoon Kim, and Hong Liu.
    \newblock A bandwidth theorem for graph transversals.
    \newblock arXiv:2302.09637, 2023.
    
    \bibitem{Chang2023powers}
    Yulin Chang, Jie Han, and Lubos Thoma.
    \newblock On powers of tight {H}amilton cycles in randomly perturbed hypergraphs.
    \newblock {\em Random Structures Algorithms}, 63(3):591--609, 2023.
    
    \bibitem{Chen2023powers}
    Ming Chen, Jie Han, Yantao Tang, and Donglei Yang.
    \newblock On powers of {H}amilton cycles in {R}amsey-{T}ur\'{a}n {T}heory.
    \newblock arXiv:2305.17360, 2023.
    
    \bibitem{Cheng2023transversal}
    Yangyang Cheng, Jie Han, Bin Wang, Guanghui Wang, and Donglei Yang.
    \newblock Transversal {H}amilton cycle in hypergraph systems.
    \newblock arXiv:2111.07079, 2023.
    
    \bibitem{Chung1991regularity}
    Fan R.~K. Chung.
    \newblock Regularity lemmas for hypergraphs and quasi-randomness.
    \newblock {\em Random Structures Algorithms}, 2(2):241--252, 1991.
    
    \bibitem{dirac1952some}
    Gabriel~Andrew Dirac.
    \newblock Some theorems on abstract graphs.
    \newblock {\em Proc. London Math. Soc. (3)}, 2:69--81, 1952.
    
    \bibitem{Draganic2024}
    Nemanja Dragani\'c, David Munh\'a{}~Correia, and Benny Sudakov.
    \newblock A generalization of {B}ondy's pancyclicity theorem.
    \newblock {\em Combin. Probab. Comput.}, 33(5):554--563, 2024.
    
    \bibitem{Frankl1992uniformity}
    Peter Frankl and Vojtech R\"{o}dl.
    \newblock The uniformity lemma for hypergraphs.
    \newblock {\em Graphs Combin.}, 8(4):309--312, 1992.
    
    \bibitem{Gupta2023general}
    Pranshu Gupta, Fabian Hamann, Alp M\"{u}yesser, Olaf Parczyk, and Amedeo Sgueglia.
    \newblock A general approach to transversal versions of {D}irac-type theorems.
    \newblock {\em Bull. Lond. Math. Soc.}, 55(6):2817--2839, 2023.
    
    \bibitem{hajnal1970proof}
    Andr{\'a}s Hajnal and Endre Szemer{\'e}di.
    \newblock Proof of a conjecture of {P}. {E}rd{\H{o}}s.
    \newblock In {\em Combinatorial theory and its applications, {I}-{III} ({P}roc. {C}olloq., {B}alatonf\"{u}red, 1969)}, volume~4 of {\em Colloq. Math. Soc. J\'{a}nos Bolyai}, pages 601--623. North-Holland, Amsterdam-London, 1970.
    
    \bibitem{Han2017decision}
    Jie Han.
    \newblock Decision problem for perfect matchings in dense {$k$}-uniform hypergraphs.
    \newblock {\em Trans. Amer. Math. Soc.}, 369(7):5197--5218, 2017.
    
    \bibitem{Han2018}
    Jie Han.
    \newblock On perfect matchings and tilings in uniform hypergraphs.
    \newblock {\em SIAM J. Discrete Math.}, 32(2):919--932, 2018.
    
    \bibitem{Han2024Spanning}
    Jie Han, Jie Hu, Lidan Ping, Guanghui Wang, Yi~Wang, and Donglei Yang.
    \newblock Spanning trees in graphs without large bipartite holes.
    \newblock {\em Combinatorics, Probability and Computing}, 33(3):270–285, 2024.
    
    \bibitem{Han2021nonlinear}
    Jie Han, Xichao Shu, and Guanghui Wang.
    \newblock Non-linear {H}amilton cycles in linear quasi-random hypergraphs.
    \newblock In {\em Proceedings of the 2021 {ACM}-{SIAM} {S}ymposium on {D}iscrete {A}lgorithms ({SODA})}, pages 74--88. [Society for Industrial and Applied Mathematics (SIAM)], Philadelphia, PA, 2021.
    
    \bibitem{Han2019complex}
    Jie Han and Andrew Treglown.
    \newblock The complexity of perfect matchings and packings in dense hypergraphs.
    \newblock {\em J. Combin. Theory Ser. B}, 141:72--104, 2020.
    
    \bibitem{Han2020Hamiltonicity}
    Jie Han and Yi~Zhao.
    \newblock Hamiltonicity in randomly perturbed hypergraphs.
    \newblock {\em J. Combin. Theory Ser. B}, 144:14--31, 2020.
    
    \bibitem{Holmsen2008caratheodory}
    Andreas~F. Holmsen, J\'anos Pach, and Helge Tverberg.
    \newblock Points surrounding the origin.
    \newblock {\em Combinatorica}, 28(6):633--644, 2008.
    
    \bibitem{Im2022proof}
    Seonghyuk Im, Jaehoon Kim, Joonkyung Lee, and Abhishek Methuku.
    \newblock A proof of the {E}lliott-{R}\"{o}dl conjecture on hypertrees in {S}teiner triple systems.
    \newblock arXiv:2208.10370, 2022.
    
    \bibitem{Joos2020rainbow}
    Felix Joos and Jaehoon Kim.
    \newblock On a rainbow version of {D}irac's theorem.
    \newblock {\em Bull. Lond. Math. Soc.}, 52(3):498--504, 2020.
    
    \bibitem{Joos2020Spanning}
    Felix Joos and Jaehoon Kim.
    \newblock Spanning trees in randomly perturbed graphs.
    \newblock {\em Random Structures Algorithms}, 56(1):169--219, 2020.
    
    \bibitem{Kalai2005Helly}
    Gil Kalai and Roy Meshulam.
    \newblock A topological colorful {H}elly theorem.
    \newblock {\em Adv. Math.}, 191(2):305--311, 2005.
    
    \bibitem{Kang2024perfect}
    Dong~Yeap Kang, Tom Kelly, Daniela K\"{u}hn, Deryk Osthus, and Vincent Pfenninger.
    \newblock Perfect matchings in random sparsifications of {D}irac hypergraphs.
    \newblock arXiv:2211.01325, 2024.
    
    \bibitem{Kim2000concentration}
    Jeong~Han Kim and Van~H. Vu.
    \newblock Concentration of multivariate polynomials and its applications.
    \newblock {\em Combinatorica}, 20(3):417--434, 2000.
    
    \bibitem{Knierim2021Kr}
    Charlotte Knierim and Pascal Su.
    \newblock {$K_r$}-factors in graphs with low independence number.
    \newblock {\em J. Combin. Theory Ser. B}, 148:60--83, 2021.
    
    \bibitem{komlos1995proof}
    J\'{a}nos Koml\'{o}s, G\'{a}bor~N. S\'{a}rk\"{o}zy, and Endre Szemer\'{e}di.
    \newblock Proof of a packing conjecture of {B}ollob\'{a}s.
    \newblock {\em Combin. Probab. Comput.}, 4(3):241--255, 1995.
    
    \bibitem{komlos2001spanning}
    J\'{a}nos Koml\'{o}s, G\'{a}bor~N. S\'{a}rk\"{o}zy, and Endre Szemer\'{e}di.
    \newblock Spanning trees in dense graphs.
    \newblock {\em Combin. Probab. Comput.}, 10(5):397--416, 2001.
    
    \bibitem{Krivelevich2010embedding}
    Michael Krivelevich.
    \newblock Embedding spanning trees in random graphs.
    \newblock {\em SIAM J. Discrete Math.}, 24(4):1495--1500, 2010.
    
    \bibitem{Krivelevich2017bounded}
    Michael Krivelevich, Matthew Kwan, and Benny Sudakov.
    \newblock Bounded-degree spanning trees in randomly perturbed graphs.
    \newblock {\em SIAM J. Discrete Math.}, 31(1):155--171, 2017.
    
    \bibitem{Krivelevich2006}
    Michael Krivelevich and Benny Sudakov.
    \newblock Pseudo-random graphs.
    \newblock In {\em More Sets, Graphs and Numbers, Bolyai Society Mathematical Studies}, volume~15, pages 199--262. Springer, 2006.
    
    \bibitem{Kuhn2009embedding}
    Daniela K\"{u}hn and Deryk Osthus.
    \newblock Embedding large subgraphs into dense graphs.
    \newblock In {\em Surveys in combinatorics 2009}, volume 365 of {\em London Math. Soc. Lecture Note Ser.}, pages 137--167. Cambridge Univ. Press, Cambridge, 2009.
    
    \bibitem{Kuhn2014hamilton}
    Daniela K\"{u}hn and Deryk Osthus.
    \newblock Hamilton cycles in graphs and hypergraphs: an extremal perspective.
    \newblock In {\em Proceedings of the {I}nternational {C}ongress of {M}athematicians---{S}eoul 2014. {V}ol. {IV}}, pages 381--406. Kyung Moon Sa, Seoul, 2014.
    
    \bibitem{Lee2023highdimensional}
    Hyunwoo Lee.
    \newblock Towards a high-dimensional {D}irac's theorem.
    \newblock arXiv:2310.15909, 2023.
    
    \bibitem{Lenz2016Perfect}
    John Lenz and Dhruv Mubayi.
    \newblock Perfect packings in quasirandom hypergraphs {I}.
    \newblock {\em J. Combin. Theory Ser. B}, 119:155--177, 2016.
    
    \bibitem{Lenz2016Hamilton}
    John Lenz, Dhruv Mubayi, and Richard Mycroft.
    \newblock Hamilton cycles in quasirandom hypergraphs.
    \newblock {\em Random Structures Algorithms}, 49(2):363--378, 2016.
    
    \bibitem{McDiarmid2017Hamilton}
    Colin McDiarmid and Nikola Yolov.
    \newblock Hamilton cycles, minimum degree, and bipartite holes.
    \newblock {\em J. Graph Theory}, 86(3):277--285, 2017.
    
    \bibitem{McDowell2018Hamilton}
    Andrew McDowell and Richard Mycroft.
    \newblock Hamilton {$\ell$}-cycles in randomly perturbed hypergraphs.
    \newblock {\em Electron. J. Combin.}, 25(4):Paper No. 4.36, 30, 2018.
    
    \bibitem{Montgomery2019spanning}
    Richard Montgomery.
    \newblock Spanning trees in random graphs.
    \newblock {\em Adv. Math.}, 356:106793, 92, 2019.
    
    \bibitem{Montgomery2022transversal}
    Richard Montgomery, Alp M\"{u}yesser, and Yani Pehova.
    \newblock Transversal factors and spanning trees.
    \newblock {\em Adv. Comb.}, pages Paper No. 3, 25, 2022.
    
    \bibitem{Montgomery2020embedding}
    Richard Montgomery, Alexey Pokrovskiy, and Benny Sudakov.
    \newblock Embedding rainbow trees with applications to graph labelling and decomposition.
    \newblock {\em J. Eur. Math. Soc. (JEMS)}, 22(10):3101--3132, 2020.
    
    \bibitem{Rodl2009perfect}
    Vojtech R\"{o}dl, Andrzej Ruci\'{n}ski, and Endre Szemer\'{e}di.
    \newblock Perfect matchings in large uniform hypergraphs with large minimum collective degree.
    \newblock {\em J. Combin. Theory Ser. A}, 116(3):613--636, 2009.
    
    \bibitem{Spielman2004smoothness}
    Daniel~A. Spielman and Shang-Hua Teng.
    \newblock Smoothed analysis of algorithms: why the simplex algorithm usually takes polynomial time.
    \newblock {\em J. ACM}, 51(3):385--463, 2004.
    
    \bibitem{Steger1990kleitman}
    Angelika Steger.
    \newblock {\em Die Kleitman-Rothschild Methode}.
    \newblock PhD thesis, Rheinische Friedrich-Wilhelms-Universit{\"a}t Bonn. Forschungsinstitut f\"{u}r Diskrete Mathematik, 1990.
    
    \bibitem{Szemeredi1978regular}
    Endre Szemer\'edi.
    \newblock Regular partitions of graphs.
    \newblock In {\em Probl\`emes combinatoires et th\'eorie des graphes ({C}olloq. {I}nternat. {CNRS}, {U}niv. {O}rsay, {O}rsay, 1976)}, volume 260 of {\em Colloq. Internat. CNRS}, pages 399--401. CNRS, Paris, 1978.
    
    \bibitem{Tang2023rainbow}
    Yucong Tang, Bin Wang, Guanghui Wang, and Guiying Yan.
    \newblock Rainbow {H}amilton cycle in hypergraph system.
    \newblock arXiv:2302.00080, 2023.
    
    \bibitem{Towsner}
    Henry Towsner.
    \newblock $\sigma$-algebras for quasirandom hypergraphs.
    \newblock {\em Random Structures Algorithms}, 50:114--139, 2017.
    
    \bibitem{Zhao2016recent}
    Yi~Zhao.
    \newblock Recent advances on {D}irac-type problems for hypergraphs.
    \newblock In {\em Recent trends in combinatorics}, volume 159 of {\em IMA Vol. Math. Appl.}, pages 145--165. Springer, [Cham], 2016.
    
\end{thebibliography}
\end{document}